\newcommand{\res}{\text{\upshape Res\,}}
\newcommand{\re}{\text{\upshape Re\,}}
\newcommand{\Arg}{\text{\upshape Arg\,}}
\newcommand{\im}{\text{\upshape Im\,}}
\newtheorem{figuretext}{Figure}
\numberwithin{equation}{section}
\numberwithin{figure}{section}
\theoremstyle{plain}
\newtheorem{thm}{\protect\theoremname}
  \theoremstyle{plain}
  \newtheorem{lemma}[thm]{\protect\lemmaname}
   \newtheorem{prop}[thm]{\protect\propname}
\newtheorem{remark}[thm]{Remark}
\numberwithin{thm}{section}
\newtheorem{cor}[thm]{Corollary}
\theoremstyle{remark}
\newtheorem*{rem}{Remark}
\providecommand{\propname}{Proposition}
\providecommand{\lemmaname}{Lemma}
\providecommand{\theoremname}{Theorem}
\newcommand{\imag}{\operatorname{Im} }
\newcommand{\real}{\operatorname{Re} }
\renewcommand{\Im}{\imag}
\renewcommand{\Re}{\real}
\newcommand{\ee}{\epsilon}
\newcommand{\HH}{\mathbb{H}}
\newcommand{\DD}{\mathbb{D}}
\newcommand{\RR}{\mathbb{R}}
\newcommand{\E}{\mathbf{E}}
\newcommand{\PP}{\mathbf{P}}
\newcommand{\Prob}{\mathbf{P}}
\newcommand{\hcap}{\operatorname{hcap}}
\newcommand{\diam}{\operatorname{diam}}
\newcommand{\dist}{\operatorname{dist}}
\newcommand{\hm}{\omega}
\newcommand{\ball}{\mathcal{B}}
\newcommand{\Z}{{\mathbb Z}}
\newcommand{\R}{{\mathbb{R}}}
\newcommand{\C}{{\mathbb C}}
\newcommand{\rad}{{\rm rad}}
\newcommand{\GG}{{\mathcal G}} %Denotes the CFT prediction for the SLE_\kappa(2) Green's function
\let \le \leqslant
\let \leq \leqslant
\let \ge \geqslant
\let \geq \geqslant
\title[Schramm's formula and Green's function]{Schramm's formula and the Green's function \\for multiple SLE}
\author{Jonatan Lenells and Fredrik Viklund}
\begin{document}
\address{Department of Mathematics, KTH Royal Institute of Technology, \\ 100 44 Stockholm, Sweden.}
\email{jlenells@kth.se, fredrik.viklund@math.kth.se}

\begin{abstract}
We construct martingale observables for systems of multiple SLE curves by applying screening techniques within the CFT framework recently developed by Kang and Makarov, extended to admit multiple SLEs.
We illustrate this approach by rigorously establishing explicit expressions for the Green's function and Schramm's formula in the case of two curves growing towards infinity. 
In the special case when the curves are ``fused'' and start at the same point, some of the formulas we prove were predicted in the physics literature.
\end{abstract}

\maketitle

\noindent
{\small{\sc AMS Subject Classification (2010)}: 30E15, 33C70, 81T40.}

\noindent
{\small{\sc Keywords}: Integral asymptotics, conformal field theory, Schramm-Loewner evolution.}

%\noindent
%{\small{\sc Keywords}: Integral asymptotics, conformal field theory, Schramm-Loewner evolution.}

\setcounter{tocdepth}{1}
\tableofcontents

\section{Introduction}
Schramm-Loewner evolution (SLE) processes are universal lattice size scaling limits of interfaces in critical planar lattice models. SLE with parameter $\kappa > 0$ is a random continuous curve constructed using Loewner's differential equation driven by Brownian motion with speed $\kappa$. Solving the Loewner equation gives a continuous family of conformal maps and the SLE curve is obtained by tracking the image of the singularity of the equation. Various geometric observables are useful and important in SLE theory. To name a few examples, the SLE Green's function, i.e., the renormalized probability that the interface passes near a given point, is important in connection with the Minkowski content parametrization \cite{LR2015}; Smirnov proved Cardy's formula for the probability of a crossing event in critical percolation which then entails conformal invariance \cite{Sm2001}; left or right passage probabilities known as Schramm formulae \cite{S2001} were recently used in connection with finite Loewner energy curves \cite{YW2018}; and observables involving derivative moments of the SLE conformal maps are important in the study of fractal and regularity properties, see, e.g., \cite{BS2009, LV2012}. By the Markovian property of SLE, such observables give rise to martingales with respect to the natural SLE filtration and, conversely, it is sometimes possible to construct martingales carrying some specific geometric information about the SLE. 

%An important feature of SLE martingale observables is that they satisfy certain differential equations which can be derived using It\^o calculus.
Assuming sufficient regularity, SLE martingale observables satisfy differential equations which can be derived using It\^o calculus. If a solution of these differential equations with the correct boundary values can be found, it is sometimes possible to apply a probabilistic argument using the solution's boundary behavior to show that the solution actually represents the desired quantity. In the simplest case, the differential equation is an ODE, but generically, in the case of multipoint correlations, it is a semi-elliptic PDE in several variables and it is difficult to construct solutions with the desired boundary data. (But see \cite{D2007, BS2009, KP2016a}.) Seeking new ways to construct explicit solutions and methods for extracting information from them therefore seems to be worthwhile. 
 
 It was observed early on that the differential equations that arise in this way in SLE theory also arise in conformal field theory (CFT) as so-called level two degeneracy equations satisfied by certain correlation functions, see \cite{BPZ1984, C1984, BB2002, BB2003, D2015b}. As a consequence, a clear probabilistic and geometric interpretation of the degeneracy equations is obtained via SLE theory. On the other hand, CFT is a source of ideas and methods for how to systematically construct solutions of these equations, cf. \cite{BB2003, BBK2005, KM2013}. Thus CFT provides a natural setting for the construction of martingale observables for SLE processes. 

In \cite{KM2013}, a rigorous Coulomb gas framework was developed in which CFTs are modeled using families of fields built from the Gaussian free field (GFF). SLE martingales for any $\kappa$ can then be represented as GFF correlation functions involving special fields inserted in the bulk or on the boundary. By making additional, carefully chosen, field insertions, the scaling behavior at the insertion points can in some cases be prescribed. In this way many chordal SLE martingale observables were recovered in \cite{KM2013} as CFT correlation functions. 
%However, the problem of systematically generalizing known one-point functions to multipoint %correlations was not addressed.

Multiple SLE arises, e.g., when considering scaling limits of models with alternating boundary conditions forcing the existence of several interacting interfaces. See \cite{D2006} for several examples and results closely related to those of the present paper. Many single-path observables generalize to this setting but when considering several paths, additional boundary points need to be marked thus increasing the dimensionality of the problem. One purpose of this paper is to suggest and explore a method for the explicit construction of at least some martingale observables for multiple SLE starting from single-path observables and exploiting ideas based in CFT considerations. Boundary insertions are easier to handle than insertions in the bulk, so multiple SLE provides a natural first arena in which to consider the extension of one-point functions to multipoint correlations. 

The method involves three steps:
 
\begin{enumerate}[$(1)$]
\item{The first step uses screening techniques and ideas from CFT to generate a non-rigorous prediction for the observable \cite{DF1984, KM2013, AKM} (see also \cite{DMS1997, KP2016a}). The prediction is expressed in terms of a contour integral with an explicit integrand. 
We refer to these integrals as \emph{Dotsenko-Fateev integrals} (after \cite{DF1984}) or sometimes simply as screening integrals. The main difficulty is to choose the appropriate integration contour, but this choice can be simplified by considering appropriate limits. }

\item{The second step is to prove that the prediction from Step 1 satisfies the correct boundary conditions. This technical step involves the computation of rather complicated integral asymptotics. In a separate paper \cite{LV2018B}, we present an approach for computing such asymptotics and carry out the estimates required for this paper.}
% in Section \ref{asymptoticssec}.}

\item{The last step is to use probabilistic methods together with the estimates of Step 2 to rigorously establish that the prediction from Step 1 gives the correct quantity.}
\end{enumerate}

\begin{rem}
Step 1 can be viewed as a way to ``add'' a commuting SLE curve to a known observable by first inserting an appropriate marked boundary point and then employing screening to readjust the boundary conditions.
\end{rem}
\begin{rem}We stress that we do not need use \emph{a-priori} information on the regularity of the considered observables as would be the case, e.g., if one would work directly with the differential equations.\end{rem}

%We believe (but do not make precise)
%that this step can be carried out in some generality.
% Furthermore, depending on the amount of available information, difficult regularity issues may need to be resolved when analyzing the PDEs \cite{D2015b}.

\subsection{Two examples}
We illustrate the method by presenting two examples in detail. Both examples involve a system of two SLEs aiming for infinity with one marked point in the interior, but we will indicate how the arguments may be generalized to more complicated configurations. 

The first example concerns the probability that the system of SLEs passes to the right of a given interior point; that is, the analogue of Schramm's formula \cite{S2001}. This probability obviously depends only on the behavior of the leftmost curve. (So it is really an SLE$_{\kappa}(2)$ quantity.) The main difficulty in this case lies in implementing Steps 1 and 2; the latter step is discussed in detail in \cite{LV2018B}.

The second example concerns the limiting renormalized probability that the system of SLEs passes near a given point, that is, the Green's function. We first check that this Green's function actually exists as a limit. The main step is to verify existence in the case when only one of the two curves grows. We complete this step by establishing the existence of the SLE$_\kappa(\rho)$ Green's function when the force point lies on the boundary and $\rho$ belongs to a certain interval. The proof gives a representation formula in terms of an expectation for two-sided radial SLE stopped at its target point; the formula is similar to that obtained in the main result of \cite{AKL2012}. In Step 1, we make a prediction for the observable by choosing an appropriate linear combination of the screening integrals such that the leading order terms in the asymptotics cancel (thereby matching the asymptotics we expect). In Step 2, which is detailed in \cite{LV2018B}, we carefully analyze the candidate solution and estimate its boundary behavior. Lastly, given these estimates, we show that the candidate observable enjoys the same probabilistic representation as the Green's function defined as a limit -- so they must agree.    

For both examples, the  asymptotic analysis of the screening integrals in Step 2 is quite involved. 
The integrals are natural generalizations of hypergeometric functions and belong to a class sometimes referred to as Dotsenko-Fateev integrals. 
Even though Dotsenko-Fateev integrals and other generalized hypergeometric functions have been considered before in related contexts (see, e.g., \cite{DF1984, D2007, KP2016a, KP2016, GI2016}), we have not been able to find the required analytic estimates in the literature. We discuss these issues in a separate paper \cite{LV2018B} which also includes details of the precise estimates needed here.

\subsection{Fusion}
By letting the seed points of the SLEs collapse, we obtain rigorous proofs of \emph{fused} formulas as corollaries. One can verify by direct computation that the limiting one-point observables satisfy specific third-order ODEs which can be alternatively obtained from the non-rigorous fusion rules of CFT, cf. \cite{DMS1997}. 
%This setting represents one of the possible generalizations of \cite{KM2013} and is the natural starting point for understanding ``fusion rules'' from this point of view. The latter was actually our initial motivation to study these problems. 
In fact, in the case of the Schramm probability, the formulas we prove here were predicted using fusion in \cite{GC2006}. The formulas we derive for the fused Green's functions appear to be new.

The interpretation of fusion in SLE theory as the successive merging of seeds was described in \cite{D2015b}. In \cite{D2015b}, the difficult fact that fused one-point observables actually do satisfy higher order ODEs was also established.  
The ODEs for the Schramm formula for several fused paths were derived rigorously in \cite{D2015b} and the two-path formula in the special case $\kappa=8/3$ (also allowing for two interior points) was established in \cite{BJV2013}. We do not need to use any of these results in this paper.

Regarding the solution of the equation corresponding to Schramm's formula for two SLE curves started from two distinct points $x_1,x_2 \in \R$ it was noted in \cite{GC2006} that ``explicit analytic progress is only feasible in the limit when $\delta = x_2 - x_1 \to 0$'', that is, in the fusion limit. It is only by applying the screening techniques mentioned above that we are able to obtain explicit expressions for Schramm's formula in the case of two distinct point $x_1 \neq x_2$ in this paper.

%\subsection{Differential equations}
%Assuming sufficient regularity, the differential equations satisfied by a martingale observables can be derived using It\^o calculus. If a solution of these differential equations with the correct boundary values can be found, it is sometimes possible to apply a probabilistic argument using the solution's boundary behavior to show that the solution actually represents the desired observable. In the simplest case, the differential equation is an ODE, but generically, in the case of multipoint correlations, it is a semi-elliptic PDE in several variables and it is difficult to construct solutions with the desired boundary data. (But see \cite{D2007, KP2016a}.) 

%We refer to \cite{D2007, KP2016a} for other cases where solutions are available.
% Even if one cannot directly solve the equations, they may be useful to analyze the observables; for %example, they can be used as a basis for estimates, see \cite{S2001, BS2009}.  

\subsection{Outline of the paper} 
The main results of the paper are stated in Section \ref{mainsec}, while we review some aspects of  SLE$_\kappa$ and SLE$_\kappa(\rho)$ processes in Section \ref{sect:prelim}.

In Section \ref{martingalesec}, we review and use ideas from CFT to generate predictions for Schramm's formula and Green's function for multiple SLEs with two curves growing toward infinity in terms of screening integrals. 

In Section \ref{schrammsec}, we prove rigorously that the predicted Schramm's formula indeed gives the probability that a given point lies to the left of both curves. The proof relies on a number of technical asymptotic estimates; proofs of these estimates are given in \cite{LV2018B}.

In Section \ref{greensec}, we give a rigorous proof that the predicted Green's function equals the renormalized probability that the system passes near a given point. The proof relies both on pure SLE estimates (established in Sections~\ref{greensec}-\ref{lemmasec}) and on asymptotic estimates for contour integrals (established in \cite{LV2018B}). 

In Section \ref{lemmasec}, we prove a lemma which expresses the fact that it is very unlikely that both curves in a commuting system get near a given point.

In Section \ref{fusionsec}, we consider the special case of two fused curves, i.e., the case when both curves in the commuting system start at the same point. In the case of Schramm's formula, this provides rigorous proofs of some predictions for Schramm's formula due to Gamsa and Cardy \cite{GC2006}.

%In Section \ref{diffsec}, we discuss our results from the point of view of differential equations.

In Appendix \ref{alphaintegersec}, we consider the Green's function when $8/\kappa$ is an integer and derive explicit formulas in terms of elementary functions in a few cases.

\subsection{Acknowledgements} 
Lenells acknowledges support from the European Research Council, Consolidator Grant No. 682537, the Swedish Research Council, Grant No. 2015-05430, and the Gustafsson Foundation, Sweden. Viklund acknowledges support from the Knut and Alice Wallenberg Foundation, the
Swedish Research Council, the National Science Foundation, and the Gustafsson Foundation, Sweden. 

It is our pleasure to thank Julien Dub\'edat and Nam-Gyu Kang for interesting and useful discussions, Dapeng Zhan for a helpful comment on a previous version of the paper, and Tom Alberts, Nam-Gyu Kang, and Nikolai Makarov for sharing with us ideas from their preprint \cite{AKM}. 

We would also like to thank the referees for their careful reading of the manuscript and for providing many suggestions that helped improve the quality of our paper.

\section{Main results}\label{mainsec}
Before stating the main results, we briefly review some relevant definitions.  

Consider first a system of two SLE paths $\{\gamma_j\}_1^2$ in the upper half-plane $\mathbb{H} := \{\im z > 0\}$ growing toward infinity. 
Let $0 < \kappa \le 4$. Let $(\xi^1, \xi^2) \in \R^2$ with $\xi^1 \neq \xi^2$.  
The Loewner equation corresponding to two growing curves is
\begin{align}\label{multiplegtdef}
dg_t(z) = \frac{\lambda_1dt}{g_t(z) - \xi^{1}_t} + \frac{\lambda_2 dt}{g_t(z) - \xi^{2}_t}, \quad g_0(z) = z,
\end{align}
where $\xi_t^1$ and $\xi_t^2$, $t \ge 0$, are the driving terms for the two curves and the growth speeds $\lambda_j $ satisfy $\lambda_j \ge 0$.
The solution of (\ref{multiplegtdef}) is a family of conformal maps $(g_t(z))_{t \geq 0}$ called the Loewner chain of $(\xi_t^1, \xi_t^2)_{t \ge 0}$. 
The {\it  multiple SLE system started from $(\xi^1,\xi^2)$} is obtained by taking $\xi_t^1$ and $\xi_t^2$ as solutions of the system of SDEs
\begin{align}\label{multiplexidef}
\begin{cases}
 d\xi_t^{1} = \frac{\lambda_1 +\lambda_2 }{\xi_t^{1} - \xi_t^{2}} dt + \sqrt{\frac{\kappa}{2} \lambda_1 } dB_t^1,	& \xi_0^1 = \xi^1, 
	\\
 d\xi_t^{2} = \frac{\lambda_1 +\lambda_2 }{\xi_t^{2} - \xi_t^{1}} dt + \sqrt{\frac{\kappa}{2} \lambda_2 } dB_t^2,	& \xi_0^2 = \xi^2, 
\end{cases}
\end{align}
where $B_t^{1}$ and $B_t^2$ are independent standard Brownian motions with respect to some measure $\PP = \PP_{\xi^1, \xi^2}$. The  paths are defined by
\begin{align}\label{gammajtdef}
\gamma_j(t) = \lim_{y \downarrow 0} g^{-1}_t(\xi_t^j + iy), \quad \gamma_{j,t}: = \gamma_j[0,t], \qquad j = 1,2.
\end{align}
For $j = 1,2$, $\gamma_{j,\infty}$ is a continuous curve from $\xi^j$ to $\infty$ in $\HH$. It can be shown that the system (\ref{multiplegtdef}) is commuting in the sense that the order in which the two curves are grown does not matter \cite{D2007}. Since our theorems only concern the distribution of the fully grown curves $\gamma_{1,\infty}$ and $\gamma_{2,\infty}$, the choice of growth speeds is irrelevant. When growing one single curve we will often choose the growth rate to equal $a:=2/\kappa$.

Let us also recall the definition of (chordal) SLE$_\kappa(\rho)$ for a single path $\gamma_1$ in $\mathbb{H}$ growing toward infinity. Let $0 < \kappa < 8$,  $\rho \in \R$, and let $(\xi^1, \xi^2)\in\R^2$ with $\xi^1 \neq \xi^2$. Let $W_t$ be a standard Brownian motion with respect to some measure $\PP^{\rho}$. Then {\it SLE$_\kappa(\rho)$ started from $(\xi^1,\xi^2)$} is defined by the equations
\begin{subequations}
\begin{align}\label{SLEkapparhodefa}
& \partial_t g_t(z) = \frac{2/\kappa}{g_t(z) - \xi_t^1}, \quad g_0(z) = z,
	\\
&  d\xi^1_t = \frac{\rho/\kappa}{\xi^1_t-g_t(\xi^2)}dt + dW_t, \quad \xi_0^1 = \xi^1.
\end{align}
\end{subequations}
Depending on the choice of parameters, a solution may not exist for some range of $t$.
When referring to SLE$_\kappa(\rho)$ started from $(\xi^1,\xi^2)$, we always assume that the curve starts from the first point of the tuple $(\xi^1, \xi^2)$ while the second point (in this case $\xi^2$) is the force point. 
The SLE$_\kappa(\rho)$ path $\gamma_1$ is defined as in (\ref{gammajtdef}), assuming existence of the limit. In general, SLE$_\kappa(\rho)$ need not be generated by a curve, but it is in all the cases considered in this paper. 

The marginal law of either of the SLEs in a commuting system is that of an SLE$_\kappa(\rho), \, \rho =2,$ with the force point at the seed of the other curve. A similar statement also holds for stopped portions of the curve(s), see \cite{D2007}. 
%
%Finally, given $z \in \C \smallsetminus \gamma_{1,\infty}$, let $\Upsilon_\infty(z)$ denote $1/2$ times the conformal radius of $\mathbb{H} \smallsetminus \gamma_{1,\infty}$ seen from $z$. 
%
%Given $z \in \C \smallsetminus (\gamma_{1,\infty} \cup \gamma_{2,\infty})$, we let $\Upsilon_\infty(z)$ denote $1/2$ times the conformal radius of $\mathbb{H} \smallsetminus (\gamma_{1,\infty} \cup \gamma_{2,\infty})$ seen from $z$.

\subsection{Schramm's formula}
Our first result provides an explicit expression for the probability that an SLE$_\kappa(2)$ path passes to the right of a given point. (See below for a precise definition of this event.) The probability is expressed in terms of the function $\mathcal{M}(z, \xi)$ defined for $z = x + iy \in \mathbb{H}$ and $\xi >0$ by
\begin{align}\nonumber
\mathcal{M}(z, \xi) = &\; y^{\alpha - 2} z^{-\frac{\alpha}{2}}(z- \xi)^{-\frac{\alpha}{2}}\bar{z}^{1-\frac{\alpha}{2}}(\bar{z} - \xi)^{1-\frac{\alpha}{2}}
	\\ \label{Mdef}
&\times \int_{\bar{z}}^z(u-z)^{\alpha}(u- \bar{z})^{\alpha - 2} u^{-\frac{\alpha}{2}}(u - \xi)^{-\frac{\alpha}{2}} du,
\end{align}
where $\alpha = 8/\kappa > 1$ and the integration contour from $\bar{z}$ to $z$ passes to the right of $\xi$, see Figure \ref{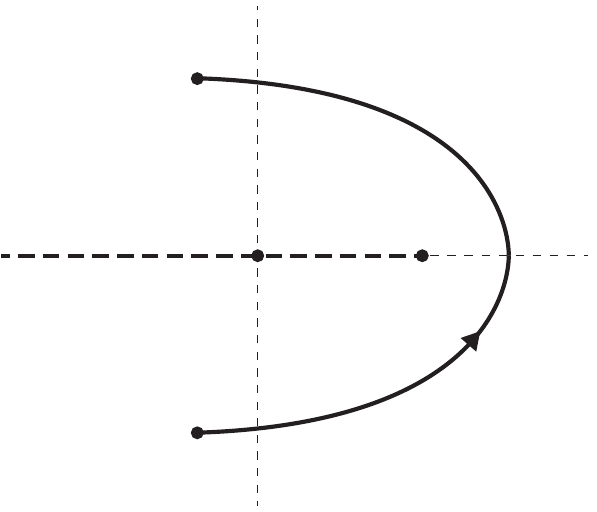}. (Unless otherwise stated, we always consider complex powers defined using the principal branch of the complex logarithm.)

\begin{figure}
\bigskip\medskip
\begin{center}
\begin{overpic}[width=.5\textwidth]{Jcontour.pdf}
      \put(28,71.5){\small $z$}
      \put(28,12){\small $\bar{z}$}
      \put(39.5,38){\small $0$}
      \put(69.5,37.5){\small $\xi$}
      \put(103,41.6){\small $\re z$}
      \put(39,89){\small $\im z$}
\end{overpic}
     \begin{figuretext}\label{Jcontour.pdf}
       The integration contour used in the definition (\ref{Mdef}) of $\mathcal{M}(z, \xi)$ is a path from $\bar{z}$ to $z$ which passes to the right of $\xi$. 
       \end{figuretext}
     \end{center}
\end{figure}

\begin{thm}[Schramm's formula for SLE$_\kappa(2)$]\label{thm:schramm}
Let $0 < \kappa \le 4$. Let $\xi > 0$ and consider chordal SLE$_\kappa(2)$ started from $(0,\xi)$. 
Then the probability $P(z, \xi)$ that a given point $z = x + iy \in \mathbb{H}$ lies to the left of the curve is given by 
\begin{align}\label{Pdef}
P(z, \xi) = \frac{1}{c_\alpha} \int_x^\infty \re \mathcal{M}(x' +i y, \xi) dx', \qquad z \in \mathbb{H}, \  \xi > 0,
\end{align}
where the normalization constant $c_\alpha \in \R$ is defined by
\begin{align}\label{calphadef}
c_\alpha & = -\frac{2 \pi ^{3/2} \Gamma \left(\frac{\alpha -1}{2}\right) \Gamma \left(\frac{3 \alpha
   }{2}-1\right)}{\Gamma \left(\frac{\alpha }{2}\right)^2 \Gamma (\alpha )}.
\end{align}
\end{thm}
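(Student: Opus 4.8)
The plan is to prove Theorem~\ref{thm:schramm} following the three-step scheme outlined in the introduction: construct a candidate martingale observable from the Dotsenko--Fateev integral $\mathcal{M}$, verify the boundary behavior (importing the asymptotic estimates from \cite{LV2018B}), and then run a probabilistic identification argument. Concretely, fix $z \in \mathbb{H}$ and set $P(z,\xi)$ equal to the right-hand side of \eqref{Pdef}. The first task is to show that along the SLE$_\kappa(2)$ flow the process $t \mapsto P(g_t(z) - \xi_t^1, g_t(\xi) - \xi_t^1)$ (suitably interpreted, since the observable is conformally covariant of weight $0$) is a local martingale. This should follow from an It\^o computation: because $\mathcal{M}(z,\xi)$ is built as a screening integral designed to satisfy the null-vector (level-two degeneracy) PDE in the variables $(z,\bar z, \xi)$ together with the appropriate homogeneity, applying the Loewner/SDE generator to $\re\mathcal{M}$ and integrating in $x'$ kills the drift. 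In practice one verifies that $\mathcal M$ solves the relevant second-order PDE by differentiating under the integral sign and checking that the integrand is a total derivative of an explicit function vanishing at the endpoints $\bar z, z$ (this is the standard Dotsenko--Fateev "screening charge" cancellation), and that the $x'$-antiderivative structure of $P$ converts the remaining first-order piece into the correct null-vector equation for a weight-zero field.

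Next I would pin down the boundary data. The event "$z$ lies to the left of the curve" has probability that degenerates predictably as $z$ approaches the real line, as $\xi \to 0^+$ (force point collides with seed), as $\xi \to \infty$, and as $y \to \infty$; the candidate $P$ must match all of these. For $y\to 0$: if $x<0$ the point is swallowed on the left so $P\to 1$, if $x>\xi$ it is swallowed on the right so $P\to 0$, and for $0<x<\xi$ one needs the finer statement that $P$ tends to the probability that the SLE$_\kappa(2)$ hull swallows $x$ from a particular side. For $x \to +\infty$ with $y$ fixed, $P \to 0$; for $x \to -\infty$, $P\to 1$. These limits are exactly the "complicated integral asymptotics" referenced in Step~2, and I would cite the precise lemmas of \cite{LV2018B} rather than reprove them; what I do need to do here is identify which limiting values of $\re\mathcal M$ and its $x'$-integral correspond to which SLE events, and check the normalization: integrating $\re\mathcal M(x'+iy,\xi)$ over all of $\R$ should give $c_\alpha$, which forces $P(-\infty,\xi)=1$, and this is where the Beta-integral evaluation producing the constant $c_\alpha=-2\pi^{3/2}\Gamma(\tfrac{\alpha-1}{2})\Gamma(\tfrac{3\alpha}{2}-1)/(\Gamma(\tfrac{\alpha}{2})^2\Gamma(\alpha))$ enters. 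The continuity/boundedness of $P$ up to the boundary (away from the corner at $0$) is also needed so that optional stopping applies.

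The third step is the probabilistic identification. Run the SLE$_\kappa(2)$ curve and stop at the time $\tau$ when $z$ is absorbed into the hull or sent to the boundary of the slit domain. By the local martingale property plus the boundary estimates (which give uniform integrability / allow Fatou or dominated convergence), $P(z,\xi) = \mathbf{E}\big[\lim_{t\uparrow\tau} P(g_t(z)-\xi_t^1, g_t(\xi)-\xi_t^1)\big]$, and by the boundary analysis of Step~2 the limiting random variable is the indicator that $z$ ended up on the left side — giving exactly the desired probability. One must check that $\tau<\infty$ a.s. and that the point does end up strictly on one side (the curve a.s. does not pass through $z$), which for $\kappa\le 4$ is standard. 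The main obstacle, as the authors themselves flag, is Step~2: establishing that the screening integral $\mathcal M$ — a generalized hypergeometric/Dotsenko--Fateev integral — has precisely the claimed boundary asymptotics with the right constants, including the contour choice (passing to the right of $\xi$) being the one that yields a genuine probability rather than some other solution of the PDE. Verifying the martingale property is a mechanical It\^o calculation once the null-vector equation for $\mathcal M$ is in hand, and the identification argument is routine SLE technology; it is the asymptotic analysis, deferred to \cite{LV2018B}, that carries the real weight.
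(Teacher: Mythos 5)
Your proposal follows essentially the same route as the paper: verify that the screened observable satisfies the relevant PDEs (by showing the integrand of $\mathcal{M}$ is annihilated up to a total $u$-derivative and handling the $x'$-antiderivative), deduce via It\^o that $P_t(z)=P(Z_t-\xi^1_t,\xi^2_t-\xi^1_t)$ is a bounded true martingale, import the boundary estimates from \cite{LV2018B}, and identify the limit at the swallowing time with the left/right indicator via Schramm's argument and optional stopping plus dominated convergence. The only point worth keeping in mind when you execute it is that the estimates you need from \cite{LV2018B} must be uniform in the force-point variable and phrased in terms of $\arg z$ alone (as in $|P|\le C(\arg z)^{\alpha-1}$, $|P-1|\le C(\pi-\arg z)^{\alpha-1}$), since along the flow $\xi^2_t-\xi^1_t$ may degenerate; your list of pointwise boundary limits in the original coordinates is not quite the form in which they enter the stopping argument.
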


The proof of Theorem \ref{thm:schramm} will be given in Section \ref{schrammsec}. 
The formula (\ref{Pdef}) for $P(z,\xi)$ is motivated by the CFT and screening considerations of Section \ref{martingalesec}.

A point $z \in \mathbb{H}$ lies to the left of both curves in a commuting system iff it lies to the left of the leftmost curve. Since each of the two curves of a commuting process  has the distribution of an SLE$_\kappa(2)$ (see Section \ref{relationshipsubsubsec}), Theorem \ref{thm:schramm} can be interpreted as the following result for multiple SLE.
%
%\begin{cor}[Schramm's formula for two commuting SLEs]\label{schrammcor}
%Let $0 < \kappa <  8$. Let $\xi > 0$ and consider a system of two commuting SLE$_\kappa$ curves in $\mathbb{H}$ started from $(0,\xi)$ and growing toward infinity. 
%Then the probability $P(z, \xi)$ that a given point $z = x + iy \in \mathbb{H}$ lies to the left of both curves is given by (\ref{Pdef}).
%\end{cor}

\begin{cor}[Schramm's formula for multiple SLE]\label{schrammcor}
Let $0 < \kappa \le 4$. Let $\xi > 0$ and consider a multiple SLE$_\kappa$ system in $\mathbb{H}$ started from $(0,\xi)$ and growing toward infinity. 
Then the probability $P(z, \xi)$ that a given point $z = x + iy \in \mathbb{H}$ lies to the left of both curves is given by (\ref{Pdef}).
\end{cor}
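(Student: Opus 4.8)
The plan is to reduce the statement to Theorem~\ref{thm:schramm} using two structural properties of commuting SLE recalled in Section~\ref{mainsec} and established in~\cite{D2007}: the two curves are ordered and disjoint, and the marginal law of each of them is SLE$_\kappa(2)$ with force point at the seed of the other. First I would make the reduction ``left of both curves $=$ left of the leftmost curve'' precise. In the system started from $(0,\xi)$ the seeds satisfy $\xi^1=0<\xi^2=\xi$, and since $0<\kappa\le4$ the fully grown curves $\gamma_{1,\infty}$ and $\gamma_{2,\infty}$ are a.s.\ disjoint simple curves, each joining its seed to $\infty$ in $\HH$. A topological argument then shows that $\gamma_{2,\infty}$ lies in the closure of the component of $\HH\setminus\gamma_{1,\infty}$ whose boundary contains the ray $(\xi,\infty)$; equivalently, the open region to the left of $\gamma_{1,\infty}$ is contained in the open region to the left of $\gamma_{2,\infty}$. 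Hence, for every fixed $z\in\HH$ (which a.s.\ lies on neither curve), the event $\{z\text{ lies to the left of both curves}\}$ coincides up to a null set with $\{z\text{ lies to the left of }\gamma_{1,\infty}\}$, and the latter is precisely the event appearing in Theorem~\ref{thm:schramm}.

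Next I would invoke the fact that the marginal law of $\gamma_{1,\infty}$ under $\PP_{0,\xi}$ is that of chordal SLE$_\kappa(2)$ started from $(0,\xi)$, with force point $\xi$. Combining this with the previous step and Theorem~\ref{thm:schramm} gives
\begin{align*}
\PP_{0,\xi}\bigl(z\text{ lies to the left of both curves}\bigr)=\PP_{0,\xi}\bigl(z\text{ lies to the left of }\gamma_{1,\infty}\bigr)=P(z,\xi),
\end{align*}
which is the claimed formula.

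The argument is essentially bookkeeping once Theorem~\ref{thm:schramm} is available. The only point requiring genuine care is the first step: verifying the ordering and disjointness of the two commuting curves and checking that ``being to the left of the curve'' is literally the same event in the one-curve and two-curve settings, so that Theorem~\ref{thm:schramm} applies verbatim. I do not expect any substantial obstacle beyond this.
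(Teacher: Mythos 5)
Your proposal is correct and follows essentially the same route as the paper: the paper also reduces ``left of both curves'' to ``left of the leftmost curve,'' uses the fact (Section~\ref{relationshipsubsubsec}, via \cite{D2007}) that the marginal law of $\gamma_1$ is SLE$_\kappa(2)$ started from $(0,\xi)$, and then applies Theorem~\ref{thm:schramm}. Your added care about the ordering/disjointness of the two curves is just a more explicit version of the one-line reduction the paper states.
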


Corollary \ref{schrammcor} together with translation invariance immediately yields an expression for the probability that a point $z$ lies to the left of a system of two SLEs started from two arbitrary points $(\xi^1, \xi^2)$ in $\R$. The probabilities that $z$ lies to the right of or between the two curves then follow by symmetry. For completeness, we formulate this as another corollary.

\begin{cor}
Let $0 < \kappa \le 4$. Suppose $-\infty < \xi^1 < \xi^2 < \infty$ and consider a multiple SLE$_\kappa$ system in $\mathbb{H}$ started from $(\xi^1, \xi^2)$ and growing toward infinity. Let $P(z, \xi)$ denote the function in (\ref{Pdef}).
Then the probability $P_{left}(z, \xi^1, \xi^2)$ that a given point $z = x + iy \in \mathbb{H}$ lies to the left of both curves is given by 
$$P_{left}(z, \xi^1, \xi^2) = P(z - \xi^1, \xi^2 - \xi^1);$$
the probability $P_{right}(z, \xi^1, \xi^2)$ that a point $z \in \mathbb{H}$ lies to the right of both curves is
$$P_{right}(z, \xi^1, \xi^2) %= P_{left}(-\bar{z}, -\xi^2, -\xi^1) 
= P(-\bar{z} + \xi^2, \xi^2 - \xi^1);$$
and the probability $P_{middle}(z, \xi^1, \xi^2)$ that $z$ lies between the two curves is given by
$$P_{middle}(z, \xi^1, \xi^2) = 1 -  P_{left}(z, \xi^1, \xi^2)- P_{right}(z, \xi^1, \xi^2).$$
\end{cor}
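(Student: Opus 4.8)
The plan is to derive everything from Corollary~\ref{schrammcor} together with the elementary symmetries of the half-plane. First I would establish the translation statement: if $(\xi^1,\xi^2)$ are the seeds of a bichordal SLE$_\kappa$ system, then applying the translation $w \mapsto w - \xi^1$ conjugates the Loewner chain (\ref{multiplegtdef}) and the driving SDEs (\ref{multiplexidef}) to the chain started from $(0,\xi^2-\xi^1)$, because both the Loewner flow and the interaction drift depend only on differences $g_t(z)-\xi^j_t$ and $\xi^1_t-\xi^2_t$. Hence the pushed-forward curves $\gamma_{j,\infty} - \xi^1$ have exactly the law of the curves in a system started from $(0,\xi^2-\xi^1)$, and the event ``$z$ lies to the left of both curves'' is mapped to ``$z-\xi^1$ lies to the left of both curves'' (translation preserves the left/right side of a curve in $\HH$). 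Applying Corollary~\ref{schrammcor} to the translated system yields $P_{left}(z,\xi^1,\xi^2) = P(z-\xi^1, \xi^2-\xi^1)$.

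Next I would treat $P_{right}$ by combining translation with the reflection $w \mapsto -\bar w$ across the imaginary axis, which is an anti-conformal automorphism of $\HH$. Reflecting the system started from $(\xi^1,\xi^2)$ gives a bichordal system started from $(-\xi^2,-\xi^1)$ (the order of the seeds reverses), and it swaps left and right: a point to the right of both original curves becomes a point to the left of both reflected curves. Concretely, $z$ lies to the right of both curves of the $(\xi^1,\xi^2)$-system iff $-\bar z$ lies to the left of both curves of the $(-\xi^2,-\xi^1)$-system, so $P_{right}(z,\xi^1,\xi^2) = P_{left}(-\bar z, -\xi^2, -\xi^1)$. Then I would apply the translation formula just proved, with the first seed $-\xi^2$, to rewrite this as $P(-\bar z - (-\xi^2),\, -\xi^1 - (-\xi^2)) = P(-\bar z + \xi^2,\, \xi^2 - \xi^1)$, which is the claimed expression. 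One has to check that $-\xi^2 < -\xi^1$ so that the hypotheses of the translation formula (and of Corollary~\ref{schrammcor}, via $\xi^2-\xi^1>0$) are met; this follows from $\xi^1 < \xi^2$.

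Finally, $P_{middle}$ is immediate once one observes that, almost surely, the point $z\in\HH$ does not lie on either curve (the curves have zero Lebesgue measure, or one invokes the fact that each marginal is an SLE$_\kappa(2)$ curve which a.s.\ avoids a fixed interior point), so $\HH$ is partitioned up to a null event into the region left of both curves, the region right of both curves, and the region strictly between them; summing the three probabilities to $1$ gives $P_{middle} = 1 - P_{left} - P_{right}$. Here one should note that ``between the two curves'' is well-defined because in a commuting system the two curves are ordered (the $\gamma^1$ curve stays to the left of $\gamma^2$), so the complement of the union of the two closed curves in $\HH$ has exactly these three relevant components for the purpose of locating $z$.

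The main obstacle, such as it is, is purely bookkeeping: correctly tracking how the seed tuple transforms under $w\mapsto -\bar w$ (the reversal of order) and under translation, and making sure the composed map sends ``right of both'' to the standard ``left of both'' configuration with seeds in increasing order so that Corollary~\ref{schrammcor} applies verbatim. There is no analytic difficulty; the content is entirely the conformal (and anti-conformal) covariance of the bichordal SLE law, which is already implicit in the differences-only structure of (\ref{multiplegtdef})--(\ref{multiplexidef}).
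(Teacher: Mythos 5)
Your argument is correct and is exactly the paper's intended route: the paper derives this corollary from Corollary \ref{schrammcor} by translation invariance for $P_{left}$, the reflection $w\mapsto-\bar w$ (which reverses the seed order and swaps left/right) for $P_{right}$, and complementation for $P_{middle}$. Your write-up simply makes these symmetry and bookkeeping steps explicit, so there is nothing to add.
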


By letting $\xi \to 0+$ in (\ref{Pdef}), we obtain proofs of formulas for ``fused'' paths. See Section \ref{fusionsec} for a derivation of the following corollary.

\begin{cor}\label{fusioncor}
Let $0 < \kappa \le 4 $ and define $P_{fusion}(z) = \lim_{\xi \downarrow 0} P(z,\xi)$, where $P(z,\xi)$ is as in \eqref{Pdef}. Then
\begin{align}\label{Pfusiondef}
P_{fusion}(z) = \frac{\Gamma(\frac{\alpha}{2})\Gamma(\alpha)}{2^{2 - \alpha}\pi \Gamma(\frac{3\alpha}{2} - 1)} \int_{\frac{x}{y}}^\infty S(t') dt',
\end{align}
where the real-valued function $S(t)$ is defined by
\begin{align*}S(t) = &\; (1 + t^2)^{1- \alpha} 
\bigg\{{}_2F_1\bigg(\frac{1}{2} + \frac{\alpha}{2}, 1 - \frac{\alpha}{2}, \frac{1}{2}; - t^2\bigg)
	\\
& - \frac{2\Gamma(1 + \frac{\alpha}{2})\Gamma(\frac{\alpha}{2})t}{\Gamma(\frac{1}{2} + \frac{\alpha}{2})\Gamma(- \frac{1}{2} + \frac{\alpha}{2})}
 {}_2F_1\bigg(1 + \frac{\alpha}{2}, \frac{3}{2} - \frac{\alpha}{2}, \frac{3}{2}; -t^2\bigg)\bigg\}, \qquad t \in \R.
\end{align*}
\end{cor}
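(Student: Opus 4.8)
The plan is to compute the limit $\xi\downarrow 0$ of the integrand $\re\mathcal{M}(x'+iy,\xi)$ inside the $x'$-integral in \eqref{Pdef}, show the limit can be taken under the integral sign, and then identify the resulting contour integral with the explicit hypergeometric expression $S$. First I would scale out $y$: writing $z=x'+iy$ and substituting $u=yv$ in the contour integral defining $\mathcal{M}$, together with $x'=yt$, one sees that $\mathcal{M}(yt+iy,\xi)$ and the measure $dx'$ combine so that the whole quantity depends, in the limit $\xi/y\to 0$, only on the ratio $t=x'/y$; this explains the form $\int_{x/y}^\infty S(t')\,dt'$ in \eqref{Pfusiondef} and reduces everything to a single-variable statement. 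Next I would set $\xi=0$ directly in \eqref{Mdef}: the prefactor becomes $y^{\alpha-2}z^{-\alpha}\bar z^{2-\alpha}$ and the contour integral becomes $\int_{\bar z}^{z}(u-z)^\alpha(u-\bar z)^{\alpha-2}u^{-\alpha}\,du$ with the contour now passing to the right of the (merged) origin. One must check that this naive limit is legitimate: the force point $\xi$ collides with the branch point at $0$, so I would verify that after the substitution $u=zw$ (or $u=\bar z w$) the contour can be deformed to a fixed contour avoiding a neighborhood of the coalescing singularities uniformly in small $\xi$, making the convergence locally uniform in $t$ on $[x/y,\infty)$; integrability at the endpoints (controlled by $\alpha>1$) then gives the interchange of limit and $x'$-integral by dominated convergence.

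The remaining task is to evaluate the $\xi=0$ contour integral $\int_{\bar z}^{z}(u-z)^\alpha(u-\bar z)^{\alpha-2}u^{-\alpha}\,du$ and recognize the answer. Writing $z=re^{i\theta}$ with $t=\cot\theta$ (so $x'/y = t$ after the scaling), I would substitute $u = z - (z-\bar z)s$ or, better, a Möbius-type change of variables sending the three points $\{z,\bar z, 0\}$ to $\{0,1,\infty\}$, turning the integral into a standard Euler-type representation of ${}_2F_1$. The branch structure (the contour passing to the right of $0$) determines which solution of the hypergeometric equation — or which combination of the two solutions around $w=\infty$ — appears; matching local exponents at the singular points $w=1$ (giving the factor $(1+t^2)^{1-\alpha}$ from $(u-\bar z)^{\alpha-2}$ together with Jacobian factors) and at $w=\infty$ forces the particular linear combination of ${}_2F_1\big(\tfrac12+\tfrac\alpha2,1-\tfrac\alpha2,\tfrac12;-t^2\big)$ and $t\,{}_2F_1\big(1+\tfrac\alpha2,\tfrac32-\tfrac\alpha2,\tfrac32;-t^2\big)$ that is real-valued; taking the real part $\re\mathcal{M}$ then kills the imaginary combination and leaves exactly $S(t)$ up to the constant. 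The connection coefficients produce the Gamma-function ratio in front of the second ${}_2F_1$ in $S$, and combining with $c_\alpha$ from \eqref{calphadef} via standard Gamma-function identities (duplication formula) yields the prefactor $\Gamma(\tfrac\alpha2)\Gamma(\alpha)/(2^{2-\alpha}\pi\Gamma(\tfrac{3\alpha}2-1))$ in \eqref{Pfusiondef}.

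I expect the main obstacle to be Step one — justifying the interchange of the $\xi\downarrow 0$ limit with the $x'$-integration — precisely because the force point $\xi$ merges with the branch point at the origin, so one cannot simply appeal to continuity. The delicate point is a uniform (in small $\xi$) bound on $\re\mathcal{M}(x'+iy,\xi)$ that is integrable in $x'$ near $x'=\infty$: as $x'\to\infty$ one has $\mathcal{M}=O((x')^{-2})$ uniformly, but one must confirm the implied constant does not degenerate as $\xi\to 0$, which requires controlling the contour integral when $z$ is large and $\xi$ small simultaneously. A careful rescaling argument, deforming the integration contour to one of fixed shape after normalizing by $|z|$, should give the needed uniform estimate; alternatively, since \eqref{Pdef} already defines a probability $P(z,\xi)\in[0,1]$ that is monotone in $x$, one could instead establish convergence of the cumulative quantity $P(z,\xi)$ for each fixed $z$ and then differentiate, trading the dominated-convergence argument for a monotonicity/pointwise-limit argument. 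The hypergeometric evaluation in Step two, while computational, is essentially bookkeeping of branches and connection formulas and should present no conceptual difficulty.
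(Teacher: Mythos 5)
Your proposal is correct and follows essentially the same route as the paper: the paper also obtains $P_{fusion}$ by setting $\xi=0$ in \eqref{Pdef} (continuity in $\xi$ being the only justification given there) and then identifies $\re\mathcal{M}_f$ with $S(t)$ by mapping the contour integral to an Euler representation of ${}_2F_1$, applying a quadratic transformation and the standard connection formula at $-t^{2}$, and extending from $x>0$ to all $x$ by branch/analytic-continuation considerations. Your more careful dominated-convergence justification of the limit--integral interchange and your choice of a M\"obius rather than affine normalization of the three branch points are only cosmetic differences from the paper's argument.
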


Corollary \ref{fusioncor} provides a proof of the predictions of \cite{GC2006} where the formula (\ref{Pfusiondef}) was derived by solving a third order ODE obtained from so-called fusion rules. (We prove the result for $\kappa \le 4$ but the formulas match those from \cite{GC2006} in general.) We note that even given the explicit predictions of \cite{GC2006}, it is not clear how to proceed to verify them rigorously without additional non-trivial information. Indeed, as soon the evolution starts, the tips of the curves are separated and the system leaves the fused state. However, \cite{D2015b} provides a different rigorous approach by exploiting the hypoellipticity of the PDEs to show that the fused observables satisfy the higher order ODEs. In the special case $\kappa=8/3$, the formula for $P_{fusion}(z)$ was proved in \cite{BJV2013} using Cardy and Simmons' prediction \cite{SC2009} for a two-point Schramm formula.

\subsection{The Green's function}
Our second main result provides an explicit expression for the Green's function for SLE$_\kappa(2)$. 

Let $\alpha = 8/\kappa$. Define the function $I(z,\xi^1, \xi^2)$ for $z \in \mathbb{H}$ and $-\infty < \xi^1 < \xi^2 < \infty$ by
\begin{align}\label{Idef}
I(z,\xi^1, \xi^2) = \int_A^{(z+,\xi^2+,z-,\xi^2-)} (u - z)^{\alpha -1} (u - \bar{z})^{\alpha -1} (u - \xi^1)^{-\frac{\alpha}{2}} (\xi^2 - u)^{-\frac{\alpha}{2}} du,
\end{align}
where $A = (z + \xi^2)/2$ is a basepoint and the Pochhammer integration contour is displayed in Figure \ref{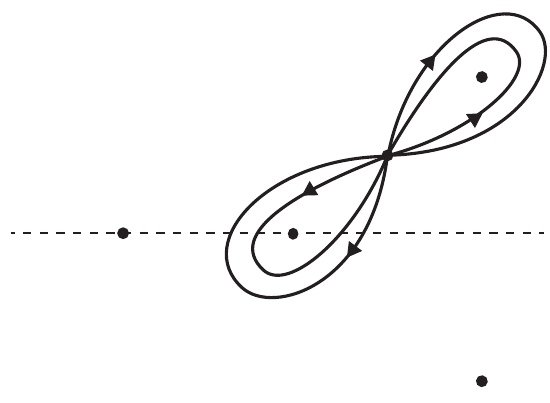}. More precisely, the integration contour begins at the base point $A$, encircles the point $z$ once in the positive (counterclockwise) sense, returns to $A$, encircles $\xi^2$ once in the positive sense, returns to $A$, and so on. The points $\bar{z}$ and $\xi^1$ are exterior to all loops. The factors in the integrand take their principal values at the starting point and are then analytically continued along the contour. The use of a Pochhammer contour ensures that the integrand is analytic everywhere on the contour despite the fact that the integrand involves (multiple-valued) non-integer powers.

\begin{figure}
\begin{center}
\begin{overpic}[width=.55\textwidth]{Pochhammernew.pdf}
      \put(71,40.5){\small $A$}
      \put(89.5,58.5){\small $z$}
      \put(89.5,3){\small $\bar{z}$}
      \put(21.2,25.8){\small $\xi^1$}
      \put(52,25.8){\small $\xi^2$}
      \put(103,30){\small $\re z$}
      \put(84.5,53.5){\small $1$}
      \put(58.5,36){\small $2$}
      \put(75.7,63.5){\small $3$}
      \put(66,25){\small $4$}
\end{overpic}
     \begin{figuretext}\label{Pochhammernew.pdf}
       The Pochhammer integration contour in (\ref{Idef}) is the composition of four loops based at the point $A = (z + \xi^2)/2$ halfway between $z$ and $\xi^2$. The loop denoted by $1$ is traversed first, then the loop denoted by $2$ is traversed, and so on.
       \end{figuretext}
     \end{center}
\end{figure}

For $\alpha \in (1, \infty) \smallsetminus \Z$, we define the function $\GG(z, \xi^1, \xi^2)$ for $z = x+iy \in \mathbb{H}$ and $\xi^1 < \xi^2$ by
\begin{align}\label{GGdef}
\GG(z, \xi^1, \xi^2) = \frac{1}{\hat{c}}y^{\alpha + \frac{1}{\alpha} - 2} |z - \xi^1|^{1 - \alpha} |z - \xi^2|^{1 - \alpha}  \Im\big(e^{-i\pi\alpha} I(z,\xi^1,\xi^2) \big), 
\end{align}
where the constant $\hat{c} = \hat{c}(\kappa)$ is given by
\begin{align}\label{hatcdef}
\hat{c} = \frac{4 \sin ^2\left(\frac{\pi  \alpha }{2}\right) \sin (\pi  \alpha ) \Gamma
   \left(1-\frac{\alpha }{2}\right) \Gamma \left(\frac{3 \alpha }{2}-1\right)}{\Gamma
   (\alpha )} \quad \text{with} \quad \alpha = \frac{8}{\kappa}.
   \end{align}
We extend this definition of $\GG(z,\xi^1,\xi^2)$ to all $\alpha > 1$ by continuity.
The following lemma shows that even though $\hat{c}$ vanishes as $\alpha$ approaches an integer, the function $\GG(z,\xi^1,\xi^2)$ has a continuous extension to integer values of $\alpha$. 

\begin{lemma}
For each $z \in \mathbb{H}$ and each $(\xi^1, \xi^2) \in \R^2$ with $\xi^1 < \xi^2$, $\GG(z,\xi^1,\xi^2)$ can be extended to a continuous function of $\alpha \in (1, \infty)$. 
\end{lemma}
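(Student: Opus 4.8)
The plan is to show that the apparent singularity of $\GG(z,\xi^1,\xi^2)$ at integer values $\alpha = n \in \{2, 3, 4, \dots\}$ is removable by exhibiting the numerator $\Im\big(e^{-i\pi\alpha} I(z,\xi^1,\xi^2)\big)$ as vanishing to the same order in $\alpha - n$ as the prefactor $1/\hat c$ blows up. Since the remaining factors $y^{\alpha + 1/\alpha - 2}|z-\xi^1|^{1-\alpha}|z-\xi^2|^{1-\alpha}$ are manifestly analytic in $\alpha$ for $\alpha > 1$ (and nonvanishing), everything reduces to a statement about $I$ near $\alpha = n$, namely that $\Im\big(e^{-i\pi\alpha} I(z,\xi^1,\xi^2)\big)$ has a zero of appropriate order at each integer $\alpha$, with the quotient admitting a finite limit.

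First I would unfold the Pochhammer contour. For $\alpha \notin \Z$, the integral over the four-loop Pochhammer contour can be written, by the standard deformation argument, as a linear combination with coefficients built from $e^{\pm i\pi \alpha}$, $e^{\pm i \pi \alpha /2}$ (monodromy factors picked up by the multivalued integrand) of the ``open'' integrals along the segment between $z$ and $\xi^2$ and along the segment between $\bar z$ and $\xi^2$ (or suitable contours joining the branch points), exactly as one does in reducing a Pochhammer integral of Beta type to $1 - e^{2\pi i a}$ times an Euler integral. Concretely, $I(z,\xi^1,\xi^2) = (1 - e^{2\pi i \alpha})(1 - e^{-\pi i \alpha}) J(z,\xi^1,\xi^2) + \dots$ for an appropriate combination; the point is that $I$ is divisible by a product of factors of the form $(1 - e^{2\pi i \alpha})$, each of which vanishes at integer $\alpha$. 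Tracking the monodromy carefully around all four branch points $z$, $\xi^2$, $z$, $\xi^2$ (with $\bar z$, $\xi^1$ outside), one finds that $e^{-i\pi\alpha}I$ has imaginary part proportional to a product of sine factors $\sin(\pi\alpha)$ and $\sin(\pi\alpha/2)$ times a function of $\alpha$ that is analytic and generically nonzero near each integer. Comparing with $\hat c = 4 \sin^2(\pi\alpha/2)\sin(\pi\alpha)\,\Gamma(1 - \tfrac{\alpha}{2})\Gamma(\tfrac{3\alpha}{2}-1)/\Gamma(\alpha)$, the zeros of the trigonometric prefactor of $\Im(e^{-i\pi\alpha}I)$ must match $\sin^2(\pi\alpha/2)\sin(\pi\alpha)$, while the poles of $\Gamma(1-\tfrac{\alpha}{2})$ at even integers $\alpha$ supply the extra vanishing that would otherwise be missing. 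So the strategy is: (i) write $\Im(e^{-i\pi\alpha}I) = T(\alpha)\, R(z,\xi^1,\xi^2;\alpha)$ with $T(\alpha)$ an explicit product of $\sin(\pi\alpha/2)$, $\sin(\pi\alpha)$ factors and $R$ jointly continuous (indeed real-analytic) in $\alpha$ near each integer; (ii) observe that $\GG = \frac{1}{\hat c} \cdot (\text{analytic nonzero})\cdot T(\alpha) R = (\text{analytic nonzero}) \cdot \frac{T(\alpha)\,\Gamma(\alpha)}{4\sin^2(\pi\alpha/2)\sin(\pi\alpha)\,\Gamma(1-\tfrac{\alpha}{2})\Gamma(\tfrac{3\alpha}{2}-1)}\cdot R$; (iii) check that the ratio $T(\alpha)/\big(\sin^2(\pi\alpha/2)\sin(\pi\alpha)\,\Gamma(1-\tfrac{\alpha}{2})\big)$ extends continuously across every integer, using that at even $\alpha=2m$ the pole of $\Gamma(1-\tfrac{\alpha}{2})$ cancels a simple zero and at odd $\alpha=2m+1$ the zero counts work out directly from the trigonometric factors.

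The main obstacle I expect is the bookkeeping in step (i): correctly computing the monodromy coefficients of the Pochhammer contour for this particular integrand (four loops, two of the branch points coinciding in pairs, two branch points $\bar z, \xi^1$ held outside) and thereby pinning down exactly which product $T(\alpha)$ of sine factors appears. One has to be careful that $(u-z)^{\alpha-1}$ and $(u-\bar z)^{\alpha-1}$ contribute monodromy $e^{2\pi i(\alpha-1)} = e^{2\pi i \alpha}$ while $(u-\xi^1)^{-\alpha/2}$ and $(\xi^2-u)^{-\alpha/2}$ contribute $e^{-\pi i \alpha}$, and that only the loops around $z$ and $\xi^2$ are traversed, so the relevant monodromy products are around $z$ (factor from $(u-z)^{\alpha-1}$ only, since $\bar z, \xi^1$ are outside and $\xi^2 \ne z$) giving $e^{2\pi i(\alpha-1)}$, and around $\xi^2$ (factor from $(\xi^2-u)^{-\alpha/2}$) giving $e^{\pi i \alpha}$ or its inverse depending on orientation. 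Assembling the commutator-type product for a Pochhammer double-double loop then yields something like $(1-e^{2\pi i\alpha})(1-e^{-\pi i \alpha})$ up to an overall phase; taking $\Im(e^{-i\pi\alpha}\cdot)$ converts these into the advertised sine factors. Once $T(\alpha)$ is identified, steps (ii)–(iii) are a routine comparison of orders of vanishing of elementary and Gamma factors and the continuity of $R$ follows from the fact that, away from the removed monodromy factors, $I$ is given by a convergent integral depending real-analytically on $\alpha$ for $\alpha$ in a neighborhood of any point of $(1,\infty)$. I would also remark that the already-established formula (\ref{GGdef}) as the (renormalized) Green's function on the non-integer set, together with continuity in $\alpha$ proved here, is what makes the extension in the text legitimate, and that the explicit elementary-function evaluations of Appendix \ref{alphaintegersec} are consistent with — indeed are special cases of — this limiting procedure.
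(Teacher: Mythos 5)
Your overall strategy -- unfold the Pochhammer monodromy and match orders of vanishing of the numerator against the zeros of $\hat{c}$ -- is the same general route the paper takes in Appendix \ref{alphaintegersec}, but your step (i) contains a genuine gap that breaks the order count at even integers. When you write $\Im\big(e^{-i\pi\alpha}I\big)=T(\alpha)\,R$ with $T$ a product of monodromy sines and $R$ ``analytic and generically nonzero near each integer'', the residual $R$ is (up to phases) the analytic continuation in $\alpha$ of the open integral from $\xi^2$ to $z$; because of the endpoint factor $(\xi^2-u)^{-\alpha/2}$ that continuation has simple poles at $\alpha=2,4,6,\dots$ (these are precisely mirrored by the pole of $\Gamma(1-\tfrac{\alpha}{2})$ inside $\hat{c}$). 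Consequently the Pochhammer integral $I$, and a fortiori $\Im\big(e^{-i\pi\alpha}I\big)$, is only guaranteed to vanish to \emph{first} order at even integer $\alpha$, while $1/\hat{c}$ has a \emph{double} pole there (cf. (\ref{hatcexpansion})). So the ``routine comparison of orders'' in your step (iii) does not close: at even $\alpha=n$ there is an apparent mismatch of one order which no trigonometric bookkeeping supplies, and in addition $\Im$ does not pass through your factorization, since both the monodromy phases and $R$ are complex.

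The missing ingredient is exactly the extra cancellation the paper proves. After rewriting $\GG$ via the representation (\ref{hF}) and expanding the (convergent, $\epsilon$-distant) contour integrals in powers of $\alpha-n$, one gets $F=(\alpha-n)F_1+(\alpha-n)^2F_2+O((\alpha-n)^3)$ with $F_1$ generically nonzero; for even $n$ one must then show that the first-order coefficient does not contribute to the relevant imaginary part, i.e. $\im\big[e^{(n-1)i\theta^2}F_1\big]=0$, which is (\ref{imF1zero}). This is established not by monodromy factors but by an explicit residue formula for $F_1$ (see (\ref{F1def}), (\ref{F1even})) combined with a Schwarz-reflection symmetry of the integrand (the identity (\ref{ccidentity}) applied after the change of variables $v=1+e^{-i\theta^2}w$), showing the relevant closed-contour integral is purely imaginary. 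Without identifying and proving this reality statement (or an equivalent statement directly for $I$), continuity of $\GG$ at even integer $\alpha$ does not follow from your argument; at odd integers, where $1/\hat{c}$ has only a simple pole, your count is fine.
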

\begin{proof}
See Appendix \ref{alphaintegersec}.
\end{proof}

The CFT and screening considerations described in Section \ref{martingalesec} suggest that $\GG$ is the Green's function for SLE$_\kappa(2)$ started from $(\xi^1, \xi^2)$; that is, that $\GG(z, \xi^1, \xi^2)$ provides the normalized probability that an SLE$_\kappa(2)$ path originating from $\xi^1$ with force point $\xi^2$ passes through an infinitesimal neighborhood of $z$. 
Our next theorem establishes this rigorously. 

In the following statements,  $\Upsilon_\infty(z)$ denotes $1/2$ times the conformal radius seen from $z$ of the complement of the curve(s) under consideration (as indicated by the probability measure) in $\HH$. For example, in the case of two paths the conformal radius is with respect to the component of $z$ of $\mathbb{H} \smallsetminus (\gamma_{1,\infty} \cup \gamma_{2,\infty})$.

\begin{thm}[Green's function for SLE$_\kappa(2)$]\label{thm:green}
Let $0< \kappa \le 4$. Let $-\infty < \xi^1 < \xi^2 < \infty$ and consider chordal SLE$_\kappa(2)$ started from $(\xi^1,\xi^2)$. Then, for each $z = x + iy \in \mathbb{H}$,
\begin{equation}\label{greens-formula}
\lim_{\ee \to 0} \ee^{d-2}\PP^2_{\xi^1, \xi^2} \left( \Upsilon_\infty(z) \le \ee \right) = c_* \GG(z,\xi^1, \xi^2),
\end{equation}
where $d=1+\kappa/8$, $\PP^2$ is the SLE$_\kappa(2)$ measure, the function $\GG$ is defined in (\ref{GGdef}), and the constant $c_* = c_*(\kappa)$ is defined by 
\begin{align}\label{cstardef}
c_* = \frac{2}{\int_0^\pi \sin^{4a} x dx} = \frac{2\Gamma\left(1+2a\right)}{\sqrt{\pi } \Gamma \left(\frac{1}{2}+2a\right)} \quad \text{with} \quad a = \frac{2}{\kappa}.
\end{align} 
\end{thm}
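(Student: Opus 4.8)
The plan is to follow the three-step strategy outlined in the introduction, with the probabilistic Step 3 being the substance of the proof of Theorem~\ref{thm:green} (the analytic estimates of Step 2 being imported from \cite{LV2018B}). The starting point is the existence of the limit on the left-hand side of \eqref{greens-formula}. First I would reduce to the one-curve situation: since, by the lemma proved in Section~\ref{lemmasec}, it is very unlikely that both curves in the commuting system come close to $z$, the event $\{\Upsilon_\infty(z) \le \ee\}$ is, up to negligible error, the union of the events that the left curve alone or the right curve alone comes $\ee$-close to $z$ (with $\Upsilon_\infty$ now computed with respect to the single relevant curve). By the marginal law recalled in Section~\ref{mainsec}, each single curve is an SLE$_\kappa(2)$ with force point at the seed of the other, so it suffices to prove the analogue of \eqref{greens-formula} for SLE$_\kappa(\rho)$ with $\rho = 2$ and a boundary force point, i.e.\ to establish existence of the SLE$_\kappa(\rho)$ Green's function for $\rho$ in the relevant range. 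This existence is proved by adapting the argument of \cite{AKL2012}: one shows that $\ee^{d-2}\PP^{\rho}(\Upsilon_t(z) \le \ee)$ converges as $\ee \to 0$ and identifies the limit with an expectation for two-sided radial SLE$_\kappa(\rho)$ stopped at its target point $z$. Concretely, one writes down the local martingale obtained from the chordal-to-radial Girsanov change of measure weighting by the appropriate power of the conformal radius derivative, checks it is a genuine martingale up to the relevant stopping time using the SLE$_\kappa(\rho)$ estimates of Sections~\ref{greensec}--\ref{lemmasec}, and deduces that the renormalized hitting probability converges to the stated expectation.

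Next, for Step 1, I would take the candidate function $\GG(z,\xi^1,\xi^2)$ from \eqref{GGdef}, which was produced by the CFT/screening recipe of Section~\ref{martingalesec} as a specific linear combination of Dotsenko--Fateev integrals chosen so that the leading-order term in its $z \to$ (boundary/force point) asymptotics cancels, leaving exactly the behavior dictated by the level-two degeneracy equations and the expected scaling exponents. It needs to be checked that $\GG$ satisfies: (a) the correct second-order PDE (the null-vector equation for SLE$_\kappa(2)$), which follows formally from the CFT construction but should be verified directly; (b) positivity, $\GG \ge 0$; (c) the correct boundary normalization and decay — this is the content of Step 2, where the precise asymptotics of $I(z,\xi^1,\xi^2)$ as $z$ approaches $\xi^1$, as $z$ approaches $\xi^2$, as $z \to \infty$, and as $\Im z \to 0$ away from the force points are extracted from \cite{LV2018B}, and in particular the behavior that pins down the multiplicative constant $\hat c$ and identifies $\GG$ with a conformally covariant object of the right weight $d-2$ at $z$.

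The core of Step 3 is then a martingale/boundary-behavior argument: define $M_t := |g_t'(z)|^{2-d}\, \GG\big(g_t(z), \xi^1_t, \xi^2_t\big)$ for the SLE$_\kappa(2)$ driven by \eqref{multiplexidef} (single-curve version). Using the null-vector PDE for $\GG$ together with It\^o's formula, one verifies that $M_t$ is a local martingale; using the asymptotic estimates of Step 2 one controls it near the relevant stopping times and upgrades it to a genuine (closed) martingale on $[0,T_z]$, where $T_z$ is the time at which the curve swallows $z$. On the event that the curve passes through an infinitesimal neighborhood of $z$, the conformal covariance of $\GG$ forces $M_t$ to blow up at the right rate; quantitatively, optional stopping at the stopping time $\sigma_\ee := \inf\{t : \Upsilon_t(z) \le \ee\}$ gives
\begin{equation*}
\GG(z,\xi^1,\xi^2) = \E\big[ M_{\sigma_\ee} \mathbf{1}_{\sigma_\ee < T_z}\big] = \ee^{2-d}\, \E\big[ |g_{\sigma_\ee}'(z)|^{2-d}\Upsilon_{\sigma_\ee}(z)^{d-2}\, \GG(g_{\sigma_\ee}(z),\dots)\, \mathbf{1}_{\sigma_\ee<T_z}\big],
\end{equation*}
and, after normalizing by the constant factor coming from $\GG$ evaluated near the target point (this is where $c_*$ and the integral $\int_0^\pi \sin^{4a}x\,dx$ enter, via the stationary measure of the driving diffusion seen from the target), one finds that $\ee^{d-2}\PP^2(\Upsilon_\infty(z) \le \ee)$ has the \emph{same} two-sided-radial-SLE representation as the limit established in the existence step. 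Since the two quantities admit the same probabilistic representation, they must coincide, which is exactly \eqref{greens-formula}. Finally I would feed this single-curve result back through the reduction of the first paragraph to obtain the statement for the commuting system.

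The main obstacle I expect is Step 2 / the interface between the analytic and probabilistic parts: establishing that the candidate $\GG$ has precisely the right boundary asymptotics — with the correct constants — near the force point $\xi^2$ and near $z$ itself, since this is what both fixes $\hat c$ and guarantees that $M_t$ is uniformly integrable enough for optional stopping to be legitimate; the Pochhammer-contour integral $I$ degenerates in a delicate way in those limits and the required estimates are exactly the nontrivial Dotsenko--Fateev asymptotics of \cite{LV2018B}. A secondary difficulty is the SLE$_\kappa(\rho)$ existence step: one must control the behavior of SLE$_\kappa(\rho)$ near a boundary force point for $\rho$ in the stated interval, ensuring the curve does not get absorbed at the force point before approaching $z$, which requires the pure-SLE estimates developed in Sections~\ref{greensec}--\ref{lemmasec}.
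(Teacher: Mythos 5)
Your proposal takes essentially the same route as the paper: it proves existence of the SLE$_\kappa(\rho)$ Green's function together with a two-sided radial representation for the renormalized limit (Proposition \ref{prop:slekr}), and then shows, via the null-vector PDEs, the local martingale $|g_t'(z)|^{2-d}\,\GG(g_t(z),\xi^1_t,\xi^2_t)$, optional stopping, and the boundary asymptotics of $\GG$ imported from \cite{LV2018B}, that the candidate $\GG$ admits the identical two-sided radial representation (Proposition \ref{G2prop}), whence the two quantities coincide and the constant $c_*$ arises from the invariant density of the radial Bessel angle process. The only inessential deviation is your opening reduction via Lemma \ref{2SLElemma}: Theorem \ref{thm:green} already concerns a single chordal SLE$_\kappa(2)$ curve, so that correlation lemma is needed only for the subsequent corollary on the commuting system, not for the theorem itself.
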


The proof of Theorem \ref{thm:green} will be presented in Section \ref{greensec}.

\begin{rem}\upshape
The function $\GG(z, \xi^1, \xi^2)$ can be written as
\begin{align}\label{Gh}
\GG(z, \xi^1, \xi^2) = (\Im z)^{d-2} h(\theta^1, \theta^2), \qquad z \in \mathbb{H}, \  -\infty < \xi^1 < \xi^2 < \infty,
\end{align} 
where $h$ is a function of $\theta^1 = \arg(z - \xi^1)$ and $\theta^2 = \arg(z - \xi^2)$.
This is consistent with the expected translation invariance and scale covariance of the Green's function.
\end{rem}

In the appendix, we derive formulas for $\GG(z, \xi^1, \xi^2)$ when $\alpha$ is an integer. In particular, we obtain a proof of the following proposition which provides explicit formulas for the SLE$_\kappa(2)$ Green's function in the case of $\kappa = 4$, $\kappa = 8/3$, and $\kappa = 2$.

\begin{prop}\label{hexplicit234prop}
For $\kappa = 4$, $\kappa = 8/3$, and $\kappa = 2$ (i.e. for $\alpha = 2,3,4$), the SLE$_\kappa(2)$ Green's function is given by equation (\ref{Gh}) where $h(\theta^1, \theta^2)$ is given explicitly by
\begin{align}\nonumber
h(\theta^1, \theta^2) = &\; \frac{1}{4 \pi\sin (\theta^1-\theta^2)}\big\{\sin (2 \theta^1-2\theta^2)+2 \theta^1(1-\cos{2 \theta^2})+2 \theta^2 (\cos{2 \theta^1} -1)	
\\ \label{halpha2}
&-\sin{2 \theta^1} +\sin{2 \theta^2}\big\}, \qquad \kappa = 4,
\end{align}
\begin{align}\nonumber
h(\theta^1, \theta^2) = &\; \frac{1}{30 \pi  (\cos (\theta^1-\theta^2)+1)}\bigg\{\sqrt{\sin\theta^1 \sin\theta^2}\bigg[-6 \cos\bigg(\frac{\theta^1-3\theta^2}{2}\bigg)
	\\\nonumber
&+\cos\bigg(\frac{3 \theta^1- 5 \theta^2}{2}\bigg)
 +\cos\bigg(\frac{5 \theta^1 - 3 \theta^2}{2}\bigg)
-6 \cos\bigg(\frac{3 \theta^1 - \theta^2}{2}\bigg)
-38 \cos\bigg(\frac{\theta^1+\theta^2}{2}\bigg)
   	\\\nonumber
& +20 \cos\bigg(\frac{3 \theta^1+ 3\theta^2}{2}\bigg)
+14 \cos\bigg(\frac{5 \theta^1+\theta^2}{2}\bigg)
 +14 \cos\bigg(\frac{\theta^1+5 \theta^2}{2}\bigg)\bigg]  
	\\\nonumber
& -2 \cos ^2\left(\frac{\theta^1-\theta^2}{2}\right) \big[-9 \sin{2 \theta^1} \sin{2 \theta^2} +(7 \cos{2\theta^2}+8)\cos{2 \theta^1}+8 \cos2 \theta^2
	\\\label{halpha3}
& -23\big]\arg \left(\cos \left(\frac{\theta^1+\theta^2}{2}\right)+i \sqrt{\sin\theta^1 \sin\theta^2}\right)\bigg\}, \qquad \kappa = 8/3,
\end{align}
and
\begin{align}\nonumber
h(\theta^1, \theta^2) = &\; \frac{1}{192 \pi}\bigg\{\frac{72 \sin ^5(\theta^1) \cos (\theta^2) \cos (\theta^1-3\theta^2)}{\sin^3(\theta^1-\theta^2)} +\frac{\sin^3(\theta^2)}{(\cot\theta^1 -\cot\theta^2)^3} 
	\\\nonumber
& \times \bigg[ 96\frac{(3 \theta^1 \cot \theta^2 +2)\cot \theta^1
+\theta^1(3-2 \csc^2\theta^1)-3 \cot\theta^2}{\sin \theta^1}
	\\\nonumber
& +\csc^6(\theta^2)\big[3 \big(16 \theta^2 (3 \sin (\theta^1-2 \theta^2)+\sin\theta^1) \sin \theta^1 + 5 \sin{2\theta^2} -4 \sin{4 \theta^2}\big) \sin\theta^1
	\\\nonumber
& +6 \cos (\theta^1-6\theta^2)-\cos (3\theta^1- 6\theta^2)
+(75 \cos{2 \theta^2} - 30 \cos{4 \theta^2} -33)\cos\theta^1
	\\\label{halpha4}
& -17 \cos{3 \theta^1}\big]\bigg]\bigg\}, \qquad \kappa = 2.
\end{align}
\end{prop}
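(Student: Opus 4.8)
The plan is to read off the values of $\GG(z,\xi^1,\xi^2)$ at the integers $\alpha=2,3,4$ from a closed form for the Pochhammer integral $I$ in (\ref{Idef}), and then simplify. Since by the remark following Theorem~\ref{thm:green} the answer is already a function of the two angles $\theta^1,\theta^2$ only (equation (\ref{Gh})), the first step is to normalise the configuration by a M\"obius change of variables — it is convenient to send $z,\bar z$ to $0,\infty$, or simply to put $z=i$ — after which $I$ becomes $(\Im z)^{\alpha-1}$ times an integral over a fixed contour whose integrand depends only on the images of $\xi^1,\xi^2$, i.e.\ on $\theta^1,\theta^2$. The second step is to open up the Pochhammer double loop in (\ref{Idef}) into an ordinary integral over a path joining $z$ to $\xi^2$ (which can be kept away from $\bar z$ and $\xi^1$): the standard computation gives $I=(e^{2\pi i\alpha}-1)(e^{-\pi i\alpha}-1)\,K(\alpha)$, where $K(\alpha)=\int_{z}^{\xi^2}(u-z)^{\alpha-1}(u-\bar z)^{\alpha-1}(u-\xi^1)^{-\alpha/2}(\xi^2-u)^{-\alpha/2}\,du$ is understood as the meromorphic continuation in $\alpha$ of the convergent integral. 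This $K$ can be evaluated for general $\alpha$ in terms of Gamma functions and a ${}_2F_1$ in the cross ratio $(z,\bar z;\xi^1,\xi^2)=e^{2i(\theta^1-\theta^2)}$; combined with $1/\hat c$ this exhibits $\GG$ as a function which is manifestly analytic in $\alpha$ away from the integers, which also proves the preceding Lemma.

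The third step is the passage to $\alpha\in\{2,3,4\}$. Here $\hat c$ in (\ref{hatcdef}), the prefactor $(e^{2\pi i\alpha}-1)(e^{-\pi i\alpha}-1)$, and $K(\alpha)$ all degenerate: $\hat c$ has a double zero (one $\sin$ factor being compensated by the pole of $\Gamma(1-\tfrac{\alpha}{2})$), the prefactor has a double zero, and $K$ has a simple pole at each of these integers. Because the residue of $K$ there turns out to be real, $\Im(e^{-i\pi\alpha}I)$ vanishes to one order higher than $I$ itself, and the surviving finite value of $\GG$ is governed by the finite part (the constant term in the Laurent expansion) of $K$ at $\alpha=n$. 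Concretely, expanding the integrand of $K$ in powers of $(\alpha-n)$ produces the $\alpha=n$ integrand multiplied by logarithms $\log(u-z),\log(u-\bar z),\log(u-\xi^1),\log(\xi^2-u)$; at $\alpha=n$ that integrand is rational (for $n=2,4$) or rational times $[(u-\xi^1)(\xi^2-u)]^{-1/2}$ (for $n=3$), so the resulting integrals are elementary. For $n=2,4$ one uses partial fractions, and the endpoint logarithms $\log(z-\xi^1),\log(z-\xi^2)$ contribute, via $\Im\log(z-\xi^j)=\theta^j$, exactly the terms linear in $\theta^1,\theta^2$ in (\ref{halpha2}) and (\ref{halpha4}), while the remaining rational pieces give the trigonometric-rational parts. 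For $n=3$ the bare factor $[(u-\xi^1)(\xi^2-u)]^{-1/2}$ integrates to an $\arcsin$/$\arctan$ which, evaluated at $u=z$ and using $(\cos\tfrac{\theta^1+\theta^2}{2})^2+\sin\theta^1\sin\theta^2=\cos^2\tfrac{\theta^1-\theta^2}{2}$, is the source of the term $\arg(\cos\tfrac{\theta^1+\theta^2}{2}+i\sqrt{\sin\theta^1\sin\theta^2})$ in (\ref{halpha3}).

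The last step is the bookkeeping: insert the exact values of $\hat c$ and of the Gamma factors from Steps~1--2, substitute $\cot\theta^j=(\Re z-\xi^j)/\Im z$ and $|z-\xi^j|=\Im z/\sin\theta^j$ into the prefactor $y^{\alpha+\frac1\alpha-2}|z-\xi^1|^{1-\alpha}|z-\xi^2|^{1-\alpha}$ of (\ref{GGdef}) (which collapses to $(\Im z)^{d-2}(\sin\theta^1\sin\theta^2)^{\alpha-1}$), and simplify the trigonometric expression by repeated multiple-angle identities until it matches (\ref{halpha2})--(\ref{halpha4}); in particular the rational prefactors $\tfrac1{4\pi},\tfrac1{30\pi},\tfrac1{192\pi}$ should emerge from combining $1/\hat c$ with the constants produced in Step~3. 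As internal checks one can confirm translation invariance and scale covariance (built into (\ref{Gh})), positivity on $\mathbb{H}$, the expected vanishing of $\GG$ as $z\to\mathbb{R}\setminus\{\xi^1,\xi^2\}$, and — for $\kappa=8/3$ — agreement with the formula of \cite{BJV2013}.

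The main obstacle is Step~3: since $I$ itself vanishes at each of $\alpha=2,3,4$, the value of $\GG$ is controlled entirely by subleading terms, so one must carry the Laurent expansions of $\hat c$, of the Pochhammer prefactor, and of $K$ to sufficiently high order and correctly identify the finite part of $K$ (equivalently, the degenerate limit of the ${}_2F_1$). Once that is done the remaining difficulty is purely the bulk of the trigonometric simplification, which is by far the heaviest for $\kappa=2$ — hence the length of (\ref{halpha4}).
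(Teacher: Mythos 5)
Your overall strategy is the same as the paper's: expand everything in powers of $\alpha-n$, observe that the leading imaginary part of the screening integral cancels (precisely because the relevant residue is real, which is the analogue of the paper's identity (\ref{imF1zero})), and then evaluate the surviving coefficients at $\alpha=2,3,4$ by elementary integration — partial fractions and $\Im\log(z-\xi^j)=\theta^j$ producing the $\theta$-linear terms, and the square-root antiderivative producing the $\arg(\cdot)$ term at $\alpha=3$. This is exactly how the paper proceeds, via the coefficients $F_1,F_2$ of Lemma \ref{halphaintegerlemma} and the representation (\ref{hF}). However, two steps of your set-up are incorrect as stated. First, the claimed closed form of $K$ as Gamma factors times a single ${}_2F_1$ in the cross ratio $e^{2i(\theta^1-\theta^2)}$ cannot hold for generic $\alpha$: besides $z,\bar z,\xi^1,\xi^2$, the point at infinity is a branch point of the integrand one-form (exponent $-\alpha$), so the integral is a genuinely two-variable function of Appell-$F_1$/Dotsenko--Fateev type; indeed $h$ depends on $\theta^1$ and $\theta^2$ separately (see the terms $2\theta^1(1-\cos 2\theta^2)$, $2\theta^2(\cos 2\theta^1-1)$ in (\ref{halpha2})), and a single ${}_2F_1$ appears only after fusion $\xi^1=\xi^2$, cf.\ (\ref{hfdef}). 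Since you lean on this closed form both to prove continuity in $\alpha$ and to organize the degenerate limit, that step must be replaced, e.g.\ by expanding the integrand in $(\alpha-n)$ under the integral sign while keeping loop/Pochhammer contours (which stay finite for all $\alpha$), which is what the paper does.

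Second, the degeneration bookkeeping is misstated at $\alpha=3$. The meromorphic continuation of the collapsed line integral $K$ has poles only at even integers (they come from the endpoint exponent $-\alpha/2$ at $\xi^2$), and at $\alpha=3$ both $\hat c$ in (\ref{hatcdef}) and the collapse factor $(1-e^{2\pi i\alpha})(1-e^{-\pi i\alpha})$ have only \emph{simple} zeros; so the ``double zero / double zero / simple pole'' pattern and the real-residue cancellation are even-$n$ phenomena, while at odd $n$ the finite value of $\GG$ comes directly from the leading term, with no extra cancellation needed. Relatedly, at even $n$ the answer is not governed by the finite part of $K$ alone: the residue re-enters through cross terms with the subleading coefficients of the collapse factor and of $1/\hat c$, which is exactly why both $F_1$ and $F_2$ appear in (\ref{halphaevenodd}); your closing remark about carrying all Laurent expansions to sufficient order would catch this, but the earlier sentence is misleading. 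With these corrections (and with care about phases when collapsing the Pochhammer contour, which is legitimate only for $1<\alpha<2$ before continuing in $\alpha$), your elementary evaluations at $\alpha=2,3,4$ do reproduce (\ref{halpha2})--(\ref{halpha4}).
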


It is possible to derive an explicit expression for the Green's function for a system of two multiple SLEs as a consequence of Theorem \ref{thm:green}. To this end, we need a correlation estimate which expresses the intuitive fact that it is very unlikely that both curves pass near a given point $z \in \mathbb{H}$. 
\begin{lemma}\label{2SLElemma} Let $0< \kappa \le 4$. Then,
$$\lim_{\ee \downarrow 0} \ee^{d-2}\PP_{\xi^1, \xi^2} \left( \Upsilon_\infty(z) \le \ee \right)
= \lim_{\ee \downarrow 0} \ee^{d-2}\Big[\PP_{\xi^1, \xi^2}^2 \left(\Upsilon_\infty(z) \le \ee \right) + \PP_{\xi^2, \xi^1}^2 \left( \Upsilon_\infty(z) \le \ee \right) \Big],$$
where $\PP_{\xi^1, \xi^2}$ denotes the law of a system of two multiple SLE$_\kappa$ in $\HH$ started from $(\xi^1, \xi^2)$ and aiming for $\infty$, and $\PP^2_{\xi^1, \xi^2}$ denotes the law of chordal SLE$_\kappa(2)$ in $\HH$ started from $(\xi^1, \xi^2)$. 
\end{lemma}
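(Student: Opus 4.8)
The plan is to reduce the statement to the single‑curve Green's function of Theorem~\ref{thm:green} by an inclusion–exclusion argument, exploiting that the two curves of the commuting system are disjoint and that it is very unlikely that both pass near a fixed point $z$. Fix $z\in\HH$, write $\gamma_1,\gamma_2$ for the two curves under $\PP:=\PP_{\xi^1,\xi^2}$, and for $j=1,2$ let $\Upsilon_\infty^{(j)}(z)$ denote $1/2$ times the conformal radius from $z$ of the component of $z$ in $\HH\setminus\gamma_{j,\infty}$, so that $\Upsilon_\infty(z)\le\Upsilon_\infty^{(j)}(z)$ by monotonicity of conformal radius. The facts I would use are: the marginals of $\gamma_{1,\infty}$ and $\gamma_{2,\infty}$ are the SLE$_\kappa(2)$ laws $\PP^2_{\xi^1,\xi^2}$ and $\PP^2_{\xi^2,\xi^1}$ (Section~\ref{relationshipsubsubsec}); $\gamma_{1,\infty}$ and $\gamma_{2,\infty}$ are a.s.\ disjoint simple curves from $\RR$ to $\infty$, so with $\xi^1<\xi^2$ they partition $\HH$ into a left, a middle and a right region with $z$ a.s.\ in exactly one; and on the left region $\gamma_{2,\infty}$ does not meet the component of $z$ in $\HH\setminus\gamma_{1,\infty}$, so there $\Upsilon_\infty(z)=\Upsilon_\infty^{(1)}(z)$ (symmetrically on the right). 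These give the exact decomposition $\{\Upsilon_\infty(z)\le\ee\}=\big(\{\Upsilon_\infty^{(1)}(z)\le\ee\}\cup\{\Upsilon_\infty^{(2)}(z)\le\ee\}\big)\sqcup E$, where $E$ is the ``middle discrepancy'' event that $z$ lies between the curves, $\Upsilon_\infty(z)\le\ee$, but $\Upsilon_\infty^{(1)}(z)>\ee$ and $\Upsilon_\infty^{(2)}(z)>\ee$. Hence, using inclusion–exclusion, the marginal identities, and Theorem~\ref{thm:green} (for the reflected SLE$_\kappa(2)$ in the second term), everything reduces to showing that $\PP(\mathcal A_\ee)$ and $\PP(E)$ are $o(\ee^{2-d})$, where $\mathcal A_\ee:=\{\Upsilon_\infty^{(1)}(z)\le\ee\}\cap\{\Upsilon_\infty^{(2)}(z)\le\ee\}$ and $2-d=1-\kappa/8>0$.

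For $\PP(\mathcal A_\ee)$ — indeed for $\PP(\dist(z,\gamma_{1,\infty})\le K\ee,\ \dist(z,\gamma_{2,\infty})\le K\ee)$ with any fixed $K$ — I would condition on $\gamma_{1,\infty}$. By the commutation/resampling property of the bichordal SLE \cite{D2007}, conditionally on $\gamma_{1,\infty}$ the curve $\gamma_{2,\infty}$ is a chordal SLE$_\kappa$ from $\xi^2$ to $\infty$ in the component $D$ of $\HH\setminus\gamma_{1,\infty}$ bordered by $\xi^2$. On $\{\dist(z,\gamma_{1,\infty})\le K\ee\}$ the point $z$ lies within $K\ee$ of $\partial D$, so the further requirement $\dist(z,\gamma_{2,\infty})\le K\ee$ forces $\gamma_{2,\infty}$ to come within $2K\ee$ of a nearest boundary point $w\in\gamma_{1,\infty}$, which for small $\ee$ (with $z$ fixed in the bulk) stays conformally bounded away from $\xi^2$ and $\infty$ in $D$. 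A one‑point boundary estimate for chordal SLE$_\kappa$, $\kappa\le4$ — uniform over simply connected domains once these conformal moduli are controlled — then bounds the conditional probability by $o(1)$; integrating against $\PP(\dist(z,\gamma_{1,\infty})\le K\ee)=O(\ee^{2-d})$ (Theorem~\ref{thm:green}) gives the claim. Since $\{\Upsilon_\infty^{(j)}(z)\le\ee\}\subseteq\{\dist(z,\gamma_{j,\infty})\le2\ee\}$, this covers $\PP(\mathcal A_\ee)$. The main obstacle is the \emph{uniformity} of the boundary estimate over the random slit domain $D$, whose boundary piece $\gamma_{1,\infty}$ is rough; note however that only a soft non‑sharp version (any positive power, or even just $o(1)$) is needed, and that the argument is robust to the precise conditional law, since the same estimate holds at a generic boundary point even for an SLE$_\kappa(\rho)$.

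For $\PP(E)$ I would fix a large constant $M$ and split according to whether the curve \emph{farther} from $z$ comes within $M\ee$ of $z$. If it does, $E$ is contained in the ``both near $z$'' event already handled. If not — say $\gamma_{2,\infty}$ is the farther curve and $\dist(z,\gamma_{2,\infty})>M\ee$ — then, using that on $E$ the nearer curve forces $\operatorname{crad}$ of the component $D_1$ of $z$ in $\HH\setminus\gamma_{1,\infty}$ to be $O(\ee)$, the region cut off from $D_1$ by $\gamma_{2,\infty}$ lies at distance $\gg\operatorname{crad}(D_1,z)$ from $z$; the Koebe distortion theorem then shows that excising it changes the conformal radius at $z$ by at most a factor $\zeta(M)\uparrow1$ as $M\to\infty$. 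Hence on this part of $E$ one has $\ee<\Upsilon_\infty^{(1)}(z)\le\ee/\zeta(M)$, a thin shell of $\PP^2_{\xi^1,\xi^2}$–probability $O\big((\zeta(M)^{-(2-d)}-1)\,\ee^{2-d}\big)$ by Theorem~\ref{thm:green}, with the constant vanishing as $M\to\infty$. Taking $\limsup_{\ee\to0}\ee^{d-2}\PP(E)$ and bounding it for every $M$ by $0+C(\zeta(M)^{-(2-d)}-1)$, then letting $M\to\infty$, gives $\PP(E)=o(\ee^{2-d})$. Substituting the two error bounds into the decomposition and multiplying by $\ee^{d-2}$ then finishes the proof.
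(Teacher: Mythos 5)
Your decomposition is the same as the paper's (inclusion–exclusion with a ``both curves near $z$'' term and a discrepancy term $\{\Upsilon^1_\infty>\ee,\Upsilon^2_\infty>\ee,\Upsilon_\infty\le\ee\}$), and your treatment of the discrepancy term by a fixed large $M$ and a double limit ($\ee\downarrow 0$ first, then $M\to\infty$), with the thin shell $(\ee,\ee/\zeta(M)]$ controlled by the sharp one-point asymptotics of Proposition~\ref{prop:slekr}, is a legitimate variant of the paper's $\sqrt\ee$-threshold argument; it does have the advantage of only needing a soft saving in the correlation estimate at scale comparable to $\ee$. However, the two technical pillars of the proof are asserted rather than proved, and one rests on an incorrect geometric claim. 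For the correlation estimate you condition on the \emph{whole} curve $\gamma_{1,\infty}$ (a global resampling statement the paper never uses; it conditions on $\gamma^1$ stopped upon hitting $B(z,4\ee)$ and invokes the stopped-segment commutation property, so the conditional law is SLE$_\kappa(2)$) and then appeal to a ``one-point boundary estimate, uniform over simply connected domains once these conformal moduli are controlled'', conceding that uniformity over the rough random slit domain is ``the main obstacle''. That obstacle \emph{is} the content of the step: the phrase ``$w$ stays conformally bounded away from $\xi^2$ and $\infty$'' is not a usable uniform hypothesis when $\partial D$ passes within $K\ee$ of $z$. What makes the estimate uniform is a quantitative conformally invariant input: Lemma~\ref{lem:boundary-excursion} bounds the hitting probability of a crosscut by $\mathcal{E}^\beta$, and a Beurling argument (the paper's Lemmas~\ref{explicitlemma}--\ref{lem:dec10.4}) shows the relevant excursion measure is $O((\ee/y)^{1/2})$, giving the bound \eqref{dec10.1}. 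Your sketch does not supply this, so the claimed $o(1)$ conditional bound is unsupported as written, even though the tools to obtain it exist.

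The second gap is in the distortion step. The claim that on $\{\dist(z,\gamma_2)>M\ee\}$ ``the region cut off from $D_1$ by $\gamma_2$ lies at distance $\gg$ the conformal radius from $z$'' is false in general: points of $D_1\setminus D_{12}$ can be Euclidean-close to $z$ (for instance in another component of $B(z,M\ee)\cap D_1$ on the far side of a fjord of $\gamma_1$); only the separating crosscut $\gamma_2\cap D_1$ is known to be far from $z$. Consequently the Koebe distortion theorem alone does not yield the factor $\zeta(M)\uparrow 1$. The correct statement is exactly the paper's Lemma~\ref{lemma:dec8.2}, whose proof uniformizes $D_1$, uses the Beurling estimate to show the image of the far crosscut has small diameter, and then bounds $\psi'(0)$ by a logarithmic-capacity estimate; applied with $r\asymp M\ee$ it gives $\Upsilon_\infty\ge(1+c/M)^{-1}\Upsilon^1_\infty$, after which your shell argument goes through. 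So the architecture of your proof matches the paper's, but both of the genuinely technical inputs — the domain-uniform correlation/boundary estimate and the conformal-radius stability lemma — still have to be established, essentially by the Beurling-type arguments the paper carries out.
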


The proof of Lemma \ref{2SLElemma} will be given in Section \ref{lemmasec}.

Assuming Lemma \ref{2SLElemma}, it follows immediately from Theorem \ref{thm:green} that the Green's function for a system of multiple SLEs started from $(-\xi, \xi)$ is given by
$$G_\xi(z) = \GG(z,-\xi,\xi) + \GG(-\bar{z},-\xi,\xi), \qquad z \in \mathbb{H}, \quad \xi > 0.$$
In other words, given a system of  two multiple SLE$_\kappa$ paths started from $-\xi$ and $\xi$ respectively, $G_\xi(z)$ provides the normalized probability that at least one of the two curves passes through an infinitesimal neighborhood of $z$. 
We formulate this as a corollary.

\begin{cor}[Green's function for multiple SLE]
Let $0< \kappa \le 4$. Let $\xi > 0$ and consider a system of two multiple SLE$_\kappa$ paths in $\mathbb{H}$ started from $(-\xi, \xi)$ and growing towards $\infty$. Then, for each $z = x + iy \in \mathbb{H}$,
\begin{equation}\label{2SLEgreens-formula}
\lim_{\ee \to 0} \ee^{d-2}\PP_{-\xi, \xi} \left( \Upsilon_\infty(z) \le \ee \right) = c_* G_{\xi}(z),
\end{equation}
where $d=1+\kappa/8$, the constant $c_* = c_*(\kappa)$ is given by (\ref{cstardef}), and the function $G_\xi$ is defined for $z \in \mathbb{H}$ and $\xi > 0$ by 
\begin{align*}
G_{\xi}(z) = \frac{1}{\hat{c}} y^{\alpha + \frac{1}{\alpha} - 2} |z + \xi|^{1 - \alpha} |z - \xi|^{1 - \alpha} 
\Im\big[e^{-i\pi\alpha}(I(z,-\xi,\xi) + I(-\bar{z},-\xi,\xi))\big].
\end{align*}
\end{cor}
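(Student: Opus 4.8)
The statement reduces, essentially immediately, to Theorem~\ref{thm:green} and Lemma~\ref{2SLElemma} together with a reflection symmetry, and the plan is to assemble these three ingredients. First I would apply Lemma~\ref{2SLElemma} with $(\xi^1,\xi^2)=(-\xi,\xi)$ to rewrite the left-hand side of \eqref{2SLEgreens-formula} as
\[
\lim_{\ee\downarrow 0}\ee^{d-2}\Big[\PP^2_{-\xi,\xi}\big(\Upsilon_\infty(z)\le\ee\big) + \PP^2_{\xi,-\xi}\big(\Upsilon_\infty(z)\le\ee\big)\Big],
\]
thereby reducing the commuting two-curve problem to the single-curve SLE$_\kappa(2)$ Green's function.

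Next I would evaluate the two terms separately. The first term is covered directly by Theorem~\ref{thm:green} (the seeds are in the correct order $-\xi<\xi$) and equals $c_*\GG(z,-\xi,\xi)$. For the second term the seeds are in the ``wrong'' order, so I would use the reflection $R(w)=-\bar w$, which is a conformal automorphism of $\HH$ fixing $\R$: from the defining SDE/Loewner system for SLE$_\kappa(\rho)$ one checks that chordal SLE$_\kappa(2)$ started from $(\xi,-\xi)$ is the pushforward under $R$ of chordal SLE$_\kappa(2)$ started from $(-\xi,\xi)$, and since a conformal automorphism of $\HH$ preserves conformal radii it follows that $\PP^2_{\xi,-\xi}\big(\Upsilon_\infty(z)\le\ee\big)=\PP^2_{-\xi,\xi}\big(\Upsilon_\infty(-\bar z)\le\ee\big)$. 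Applying Theorem~\ref{thm:green} again at the point $-\bar z\in\HH$ (still with $-\xi<\xi$) gives $c_*\GG(-\bar z,-\xi,\xi)$ for the second term; this also confirms that both limits in the display above exist, which is what justifies the use of Lemma~\ref{2SLElemma}. Hence the left-hand side of \eqref{2SLEgreens-formula} equals $c_*\big(\GG(z,-\xi,\xi)+\GG(-\bar z,-\xi,\xi)\big)$.

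Finally I would identify this sum with $c_*G_\xi(z)$ by a short bookkeeping computation from the definition \eqref{GGdef}. Since $-\bar z=-x+iy$, one has $\Im(-\bar z)=\Im z$, $|{-\bar z}+\xi|=|z-\xi|$, and $|{-\bar z}-\xi|=|z+\xi|$, so the prefactors $\tfrac{1}{\hat c}y^{\alpha+\frac1\alpha-2}|z+\xi|^{1-\alpha}|z-\xi|^{1-\alpha}$ appearing in $\GG(z,-\xi,\xi)$ and in $\GG(-\bar z,-\xi,\xi)$ coincide; by linearity of $\Im$ the sum of the two is then exactly the expression defining $G_\xi(z)$, which completes the proof.

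I do not expect any genuine obstacle internal to this corollary: all of the analytic and probabilistic content is already contained in Theorem~\ref{thm:green} and in Lemma~\ref{2SLElemma} (the latter proved in Section~\ref{lemmasec}). The only points needing care are the reflection argument---verifying that $R$ intertwines the two SLE$_\kappa(2)$ laws and that the observable $\Upsilon_\infty$ transforms correctly under $R$---and making sure that Theorem~\ref{thm:green} is never invoked with the force point to the left of the seed.
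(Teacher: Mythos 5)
Your proposal is correct and follows essentially the same route as the paper, which deduces the corollary immediately from Theorem~\ref{thm:green} and Lemma~\ref{2SLElemma}; the paper leaves the reflection step $\PP^2_{\xi,-\xi}\big(\Upsilon_\infty(z)\le\ee\big)=\PP^2_{-\xi,\xi}\big(\Upsilon_\infty(-\bar z)\le\ee\big)$ implicit, and you have simply spelled it out, together with the elementary identities $|-\bar z+\xi|=|z-\xi|$, $|-\bar z-\xi|=|z+\xi|$ identifying the sum with $G_\xi(z)$.
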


\begin{remark}\label{Ganyxi1xi2remark}\upshape
If the system is started from two arbitrary points $(\xi^1, \xi^2) \in \R$ with $\xi^1 < \xi^2$, then it follows immediately from (\ref{2SLEgreens-formula}) and translation invariance that 
\begin{align*}
\lim_{\ee \to 0} \ee^{d-2}\PP_{\xi^1, \xi^2} \left( \Upsilon_\infty \le \ee \right)= c_* G_{\frac{\xi^2 - \xi^1}{2}}\bigg(z - \frac{\xi^1 + \xi^2}{2}\bigg).
\end{align*}
\end{remark}

We will prove Theorem \ref{thm:green} by establishing two independent propositions, which when combined imply Theorem~\ref{thm:green}. 
The first of these propositions (Proposition \ref{prop:slekr}) establishes existence of a Green's function for SLE$_\kappa(\rho)$ and provides a representation for this Green's function in terms of an expectation with respect to two-sided radial SLE$_\kappa$. For the proof of Theorem \ref{thm:green}, we only need this proposition for $\rho = 2$. However, since it is no more difficult to state and prove it for a suitable range of positive $\rho$, we consider the general case.
\begin{prop}[Existence and representation of Green's function for SLE$_\kappa(\rho)$]\label{prop:slekr}
Let $0 < \kappa \le 4$ and $0 \le \rho < 8-\kappa$. Given two points $\xi^1, \xi^2 \in \R$ with $\xi^1 < \xi^2$, consider chordal SLE$_\kappa(\rho)$ started from $(\xi^1, \xi^2)$. Then, for each $z \in \mathbb{H}$,
\[
\lim_{\ee \downarrow 0}\ee^{d-2} \PP^{\rho}_{\xi^1, \xi^2} \left( \Upsilon_\infty \le \ee  \right) = c_* G^{\rho}(z, \xi^1, \xi^2),\]
where the SLE$_\kappa(\rho)$ Green's function $G^{\rho}$ is given by
\begin{align}\label{Grhorepresentation}
G^{\rho}(z, \xi^1, \xi^2) = G(z-\xi^1) \, \E^*_{\xi^1,z} \left[M_T^{(\rho)} \right].
\end{align}
Here $G(z) = (\Im z)^{d-2} \sin^{4a-1}( \arg \, z)$ is the Green's function for  chordal SLE$_\kappa$ in $\HH$ from $0$ to $\infty$, the martingale $M^{(\rho)}$ is defined in (\ref{SLEkr_mg}), 
$\E^*_{\xi^1,z}$ denotes expectation with respect to two-sided radial SLE$_\kappa$ from $\xi^1$ through $z$, stopped at $T$, the hitting time of $z$, and the constant $c_*$ is given by \eqref{cstardef}.
\end{prop}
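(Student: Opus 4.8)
The plan is to follow the standard strategy for establishing SLE Green's functions via a change of measure, now adapted to the one-sided force point. First I would work with the chordal SLE$_\kappa(\rho)$ from $\xi^1$ with force point $\xi^2$, normalized so that the growth speed is $a = 2/\kappa$, and let $g_t$, $\xi^1_t$ denote the Loewner chain and driving function, with $h_t$ the conformal radius quantity so that $\Upsilon_t(z) := |g_t'(z)|^{-1}\,\Im g_t(z)/2$ decreases to $\Upsilon_\infty(z)$ as $t\to\infty$ (up to the disconnection time). The key object is the local martingale $M_t^{(\rho)}$ appearing in \eqref{SLEkr_mg}: this is the Radon--Nikodym derivative (restricted to $\mathcal F_t$) that transforms chordal SLE$_\kappa(\rho)$ aimed at $\infty$ into two-sided radial SLE$_\kappa$ from $\xi^1$ through $z$, reweighted by the appropriate power of $\Upsilon_t(z)$. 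Concretely one writes $M_t^{(\rho)} = (\text{chordal}\to\text{chordal Green weight})\times(\text{force-point correction})$; the first factor is the usual $|g_t'(z)|^{2-d}\,\Upsilon_t(z)^{?}\,\sin^{4a-1}(\arg g_t(z)-\xi^1_t)$ combination from the AKL construction \cite{AKL2012}, and the second factor is a ratio of powers of $(g_t(\xi^2)-\xi^1_t)$ and $|g_t(z)-\xi^1_t|$, $\Im g_t(z)$, engineered so that the drift vanishes. I would verify by a direct It\^o computation that $M_t^{(\rho)}$ is a local martingale under $\PP^{\rho}_{\xi^1,\xi^2}$; this is where the exponent constraints enter, and it is essentially the level-two degeneracy/null-vector computation in disguise.

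Next I would run the standard approximation argument. Fix a small $\ee>0$ and let $\tau_\ee = \inf\{t: \Upsilon_t(z)\le \ee\}$. On the event $\{\tau_\ee<\infty\}$ (equivalently $\{\Upsilon_\infty(z)\le\ee\}$ up to the disconnection subtlety), one has good two-sided control on the boundary behavior of $g_t$ near $z$ so that, at the stopping time $\tau_\ee$, the martingale $M^{(\rho)}_{\tau_\ee}$ is comparable to $\ee^{2-d}$ times an explicit quantity converging to $1$ as $\ee\to 0$ (this is the familiar ``Koebe plus hitting'' estimate). Applying optional stopping to $M^{(\rho)}_{t\wedge\tau_\ee}$ under $\PP^{\rho}$, and controlling the contribution of the event $\{\tau_\ee=\infty\}$ (which carries zero $M$-mass in the limit because two-sided radial SLE does hit $z$ a.s.\ for $\kappa\le 4$), yields
\[
\ee^{d-2}\,\PP^{\rho}_{\xi^1,\xi^2}(\Upsilon_\infty(z)\le\ee) \;\longrightarrow\; M_0^{(\rho)} \cdot (\text{explicit constant}),
\]
and $M_0^{(\rho)}$ is precisely $G(z-\xi^1)\,\E^*_{\xi^1,z}[M_T^{(\rho)}]$ once one recognizes that reweighting the two-sided radial law by $M^{(\rho)}_t/M_0^{(\rho)}$ and pushing $t$ to the hitting time $T$ of $z$ gives the stated expectation. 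Matching the multiplicative constant against $c_*$ is a bookkeeping computation using $\int_0^\pi \sin^{4a}x\,dx$, exactly as in \cite{AKL2012}; I would quote that reference for the normalization.

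The main obstacle I anticipate is not the It\^o calculus but the \emph{uniform integrability / boundary-behavior control} near the target point $z$ that justifies optional stopping and the interchange of limits — in particular, showing that the force point $\xi^2$ (which may a priori be swallowed by the curve before $z$ is approached, or may stay bounded away) does not spoil the estimate $M^{(\rho)}_{\tau_\ee}\asymp \ee^{2-d}$ uniformly. This requires knowing that under two-sided radial SLE$_\kappa$ from $\xi^1$ to $z$ the force-point factor $(g_t(\xi^2)-\xi^1_t)^{?}$ stays controlled up to and including the hitting time $T$; the constraint $0\le\rho<8-\kappa$ is exactly what guarantees the relevant exponent is such that this factor is bounded (the force point is not absorbed in a way that blows up the weight) and that $\E^*_{\xi^1,z}[M_T^{(\rho)}]<\infty$. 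I would handle this by a separate lemma bounding $M_t^{(\rho)}$ by an integrable dominating quantity on $[0,T]$, using distortion estimates for $g_t$ and the fact that, for $\kappa\le 4$ and $\rho$ in the stated range, the curve reaches $z$ before disconnecting $\xi^2$ from $z$ on the relevant event; the remaining convergence then follows from dominated convergence.
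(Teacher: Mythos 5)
Your overall skeleton — change measure with the force-point weight, stop at $\tau_\ee=\inf\{t:\Upsilon_t(z)\le\ee\}$, extract a constant of the form $\int_0^\pi\sin^{4a}x\,dx$, and finish with a uniform integrability argument — is the same as the paper's, but two of your central steps would fail as written. First, the martingale $M_t^{(\rho)}$ of \eqref{SLEkr_mg} is only the Radon--Nikodym derivative from chordal SLE$_\kappa$ to SLE$_\kappa(\rho)$; it depends on the force point alone and contains no factor in $z$. The object you describe instead — an explicit product of the chordal Green's weight and a force-point correction, ``engineered so that the drift vanishes'' under $\PP^\rho$, whose initial value is $G(z-\xi^1)\,\E^*_{\xi^1,z}[M_T^{(\rho)}]$ — does not exist in closed form: the drift-free product of powers that one can write down is the Girsanov-compensated martingale the paper explicitly warns about (dimensions \eqref{girsanovdimensions}), which has the wrong boundary behavior and corresponds to moving the target point, whereas the correct value $\E^*_{\xi^1,z}[M_T^{(\rho)}]$ is not explicit (for $\rho=2$ it is the Dotsenko--Fateev integral divided by $G$), so it cannot be exhibited as the initial value of a formula whose martingale property you ``verify by a direct It\^{o} computation.'' The paper's proof instead writes $\PP^\rho(\tau_\ee<\infty)=\E\big[M^{(\rho)}_{\tau_\ee}1_{\tau_\ee<\infty}\big]$, then reweights by the chordal Green's function martingale $G_t(z)$ to pass to two-sided radial SLE, obtaining $\ee^{2-d}G_0\,\E^*\big[S_{\tau_\ee}^{-\beta}M^{(\rho)}_{\tau_\ee}\big]$, and the real work is in decoupling the two factors inside this expectation.

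Second, your assertion that at $\tau_\ee$ the weight is ``$\ee^{2-d}$ times an explicit quantity converging to $1$'' is not correct: $S_{\tau_\ee}^{-\beta}$ is genuinely random and does not converge to anything deterministic. The constant $c_*$ arises from averaging it against the invariant density of the radial Bessel angle (Lemma~\ref{lem:radial-bessel}), after replacing $M^{(\rho)}_{\tau_\ee}$ by its value at a slightly earlier stopping time via a Beurling estimate. Both this decoupling and the uniform integrability needed for $\E^*[M^{(\rho)}_{\tau_\ee}]\to\E^*[M^{(\rho)}_T]$ require ruling out the scenario in which the curve nearly hits $z$, escapes to a large scale — where $M^{(\rho)}\sim(\diam\gamma)^{r}$ can be large — and then returns; the paper devotes most of the proof to exactly this, via the crosscut estimate of Lemma~\ref{lem:crossing} and a multi-scale decomposition, and it is there, in the comparison $r<\beta$ between the growth $\diam^{\,r}$ of the force-point weight and the decay $2^{-k\beta}$ of near-approach probabilities from scale $2^k$, that the hypothesis $\rho<8-\kappa$ is used. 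It is not about the force point being absorbed: for $\rho\ge0$ and $\kappa\le4$ one has $g_t'(\xi^2)^{\zeta}\le1$, so the weight is dangerous only through large-diameter excursions. Without these ingredients your optional stopping step and the passage to the limit at $T$ are unsupported, even though the intended endgame (Vitali/uniform integrability, constant matching) is the right one.
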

We expect that the analogous result is true for $\kappa \in (0,8)$ and for a wider range of $\rho$. We restrict to the stated ranges for simplicity, as these assumptions simplify some of the arguments, e.g., due to the relationship between the boundary exponent and the dimension of the path.  

The next result (Proposition \ref{G2prop}) shows that the function $\GG(z, \xi^1, \xi^2)$ predicted by CFT and defined in (\ref{GGdef}) can be represented in terms of an expectation with respect to two-sided radial SLE$_\kappa$. Since this representation coincides with the representation in (\ref{Grhorepresentation}), Theorem \ref{thm:green} will follow immediately once we establish Propositions \ref{prop:slekr} and \ref{G2prop}.

\begin{prop}[Representation of $\GG$]\label{G2prop}
Let $0 < \kappa \le 4$ and let $\xi^1, \xi^2 \in \R$ with $\xi^1 < \xi^2$. The function $\GG(z,\xi^1,\xi^2)$ defined in (\ref{GGdef}) satisfies
\begin{align}\label{G2propeq}
\GG(z,\xi^1,\xi^2) = G(z-\xi^1) \, \E^*_{\xi^1,z} \left[M_T^{(2)} \right], \qquad z \in \mathbb{H}, \  0 < \xi < \infty,
\end{align}
where $G(z) = (\Im z)^{d-2} \sin^{4a-1}( \arg \, z)$ is the Green's function for chordal SLE$_\kappa$ in $\HH$ from $0$ to $\infty$ and $\E^*_{\xi^1,z}$ denotes expectation with respect to two-sided radial SLE$_\kappa$ from $\xi^1$ through $z$, stopped at $T$, the hitting time of $z$.
\end{prop}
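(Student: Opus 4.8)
The plan is to verify the identity \eqref{G2propeq} by showing that both sides define, up to the common prefactor $G(z-\xi^1)$, the same function of the driving data of two-sided radial SLE$_\kappa$, and hence that it suffices to identify the martingale $M_t^{(2)}$ on the right with the Radon--Nikodym-type weight that the left-hand side produces along the two-sided radial flow. Concretely, I would first set up coordinates: apply the map $w \mapsto w - \xi^1$ so that the curve starts at $0$ with force point $\xi := \xi^2 - \xi^1 > 0$, and write $\GG(z,\xi^1,\xi^2) = \GG(z - \xi^1, 0, \xi)$; by the scaling identity \eqref{Gh} it is enough to treat, say, $|z-\xi^1|=1$, although I would probably keep $z$ general to make the Loewner computation transparent. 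Then I would let $g_t$ be the radial-type Loewner flow used to define two-sided radial SLE$_\kappa$ from $0$ through $z$ (equivalently, chordal SLE$_\kappa$ weighted by the chordal Green's function $G$ and reparametrized), track the images $Z_t = g_t(z)$, $\Xi_t$ (the driving point), and $g_t(\xi)$, and compute the It\^o differential of the candidate quantity
\[
N_t := \GG\big(Z_t,\Xi_t,g_t(\xi)\big)\big/ G(Z_t - \Xi_t)\cdot (\text{Loewner derivative factors}),
\]
with the conformal weights dictated by the covariance exponents appearing in \eqref{GGdef} (the powers $y^{\alpha+1/\alpha-2}$ and $|z-\xi^j|^{1-\alpha}$). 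The claim is that $N_t$ is a local martingale whose drift vanishes precisely because $\GG$ is built from the Dotsenko--Fateev integral $I(z,\xi^1,\xi^2)$ in \eqref{Idef}, whose integrand satisfies the null-vector (BPZ level-two) equation in the SLE$_\kappa$ variable together with the requisite conformal Ward identities in the remaining variables; the Pochhammer contour is closed, so differentiating under the integral sign produces no boundary contributions and the PDE is satisfied exactly.

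The key steps, in order, are: (i) recall from Section \ref{martingalesec} the explicit form of the martingale $M_t^{(2)}$ in \eqref{SLEkr_mg} and of the two-sided radial SLE$_\kappa$ dynamics, and record the Loewner ODEs for $Z_t - \Xi_t$, $g_t(\xi) - \Xi_t$, and $\Im Z_t$; (ii) verify that the integrand of $I$, namely $(u-z)^{\alpha-1}(u-\bar z)^{\alpha-1}(u-\xi^1)^{-\alpha/2}(\xi^2-u)^{-\alpha/2}$, satisfies the second-order degeneracy PDE in $\xi^1$ (the SLE variable) and first-order Ward-type equations in $z$, $\bar z$, $\xi^2$, so that after attaching the explicit prefactor the combination $\GG$ solves the system of PDEs that characterizes SLE$_\kappa(2)$ martingale observables --- this is essentially the content of Step 1 in the introduction, but here I must check it rigorously rather than heuristically; (iii) conclude from (ii), via It\^o's formula, that $N_t$ is a bounded local martingale on $[0,T)$, where $T$ is the hitting time of $z$; (iv) analyze the behavior of $N_t$ as $t \uparrow T$, i.e. as $Z_t \to \Xi_t$ (the curve approaches $z$): using the asymptotics of the Pochhammer integral $I$ as its two branch points $z$ and $\xi^2$ are held apart while $\Xi_t \to Z_t$, show that $N_t$ converges and, crucially, identify the limiting constant with the normalization $c_*/\hat c$ hidden in \eqref{GGdef}; (v) apply the optional stopping theorem, using a uniform integrability bound coming from the explicit algebraic form of all factors, to get $\GG(z,\xi^1,\xi^2)/G(z-\xi^1) = N_0 = \E^*_{\xi^1,z}[N_T] = \E^*_{\xi^1,z}[M_T^{(2)}]$, which is \eqref{G2propeq}.

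The main obstacle is step (iv): controlling the limit of the Pochhammer-type integral $e^{-i\pi\alpha} I(Z_t,\Xi_t,g_t(\xi))$ as $\Xi_t \to Z_t$ and matching the resulting constant. When the branch points $Z_t$ and $\Xi_t$ collide, two of the four loops of the Pochhammer contour pinch, and one must extract the dominant singular contribution and its exact coefficient; this is a fusion-type asymptotic computation of exactly the kind deferred to \cite{LV2018B}, and getting the constant $\hat c$ in \eqref{hatcdef} to come out correctly (so that the limit of $N_t$ is a pure number independent of $z$) is the delicate point. A secondary technical nuisance is justifying differentiation under the integral sign and the uniform integrability needed for optional stopping; these are handled by the estimates on Dotsenko--Fateev integrals from \cite{LV2018B}, together with the observation that away from $t=T$ all quantities are manifestly smooth and bounded. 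Once the constant is pinned down, the proof is essentially a bookkeeping of conformal weights plus an application of It\^o's formula and optional stopping.
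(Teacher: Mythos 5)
Your overall scaffold (a local martingale built from $\GG$, optional stopping at the hitting time of $z$, with the hard integral asymptotics deferred to \cite{LV2018B}) is the right kind of argument, and your step (ii) is essentially the paper's Proposition \ref{greenPDEprop}. But step (iv) misidentifies the degeneration that actually occurs at $T$, and this is where the proof as proposed would fail. Under two-sided radial SLE$_\kappa$ in the radial parametrization, the angle $\Theta_t=\arg(Z_t-\Xi_t)$ does \emph{not} converge as $t\uparrow T$: it solves the radial Bessel equation and is positive recurrent (Lemma \ref{lem:radial-bessel}). So there is no ``collision of the branch points $Z_t$ and $\Xi_t$'' to analyze (and, incidentally, $\xi^1$ is exterior to the Pochhammer loops in \eqref{Idef}, which encircle $z$ and $\xi^2$, so no pinching of two of the four loops occurs). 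What degenerates is the \emph{force-point} angle: $\theta^2_t=\arg(Z_t-g_t(\xi^2))\to\pi$, by a Beurling estimate, because the harmonic measure of $[\xi^2,\infty)$ seen from $z$ vanishes. Consequently the input you need from the Dotsenko--Fateev analysis is not a fusion-type constant but the boundary data of $h(\theta^1,\theta^2)=\GG/(\Im z)^{d-2}$ on the edge $\theta^2=\pi$ of $\Delta$, namely $h(\theta^1,\pi)=\sin^\beta\theta^1$ together with the quantitative bounds \eqref{h2estimate1}--\eqref{h2estimate2} (Lemma \ref{hboundarylemma}); the normalization $\hat c$ is fixed so that this boundary identity holds with constant exactly $1$. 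Relatedly, your claim that ``the limit of $N_t$ is a pure number independent of $z$'' is inconsistent with your own conclusion $\E^*[N_T]=\E^*[M_T^{(2)}]$: the ratio $h(\theta^1_t,\theta^2_t)/\sin^\beta\theta^1_t$ tends to $1$, but the factor $M_T^{(2)}$ from \eqref{SLEkr_mg} (which must appear explicitly in $N_t$ for it to be a $\PP^*$-local martingale --- your ``Loewner derivative factors'' leave this vague) is a genuine random variable.

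The second gap is that the passage to the limit is not ``bookkeeping of conformal weights.'' Even with the correct boundary data, one must show: (a) that the stopped process is a true martingale up to $\tau_\epsilon$ (the $h$-ratio is bounded by \eqref{h2estimate1}, but $M^{(2)}$ is unbounded --- the paper handles this by first working under $\PP^2$, where $\mathcal{M}_t=\Upsilon_t^{d-2}h(\theta^1_t,\theta^2_t)$ is bounded on $\{t\le\tau_\epsilon\wedge n\}$, and then transferring to $\PP^*$ by two explicit changes of measure, which also necessitates Lemma \ref{lem:jan11} to discard the $\{\tau_\epsilon>n\}$ contribution); (b) uniform integrability of $\{M^{(2)}_{\tau_\epsilon}\}$ under $\PP^*$, which does not follow from ``the explicit algebraic form of all factors'' but from a dyadic-scale argument combining the one-point estimate, the Beurling estimate, and the inequality $d-2+\beta/2\ge 0$, i.e.\ precisely where the hypothesis $\kappa\le 4$ enters; and (c) that the error term $\E^*\bigl[M^{(2)}_{\tau_\epsilon}\,|\pi-\theta^2_{\tau_\epsilon}|\,(\sin\theta^1_{\tau_\epsilon})^{-1}\bigr]$ vanishes, which uses the Beurling bound $|\pi-\theta^2_{\tau_\epsilon}|=O(\epsilon^{1/2})$ together with Proposition \ref{prop:slekr} to control the remaining expectation. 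Without (a)--(c), and with the asymptotic regime in (iv) corrected as above, your outline does not yet constitute a proof.
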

\begin{rem}\upshape
Note that equation \eqref{G2propeq} gives a formula for the two-sided radial SLE observable,
\[
\E^*_{\xi^1,z} \left[M_T^{(2)} \right] = \frac{\GG(z,\xi^1,\xi^2)}{G(z-\xi^1)},
\]
and as a consequence we obtain smoothness and the fact that it satisfies the expected PDE.
\end{rem}

The proofs of Propositions \ref{prop:slekr} and \ref{G2prop} are presented in Sections \ref{greensubsec1} and  \ref{greensubsec2}, respectively.

In Section~\ref{sec:fused-green}, we obtain fusion formulas by letting $\xi \to 0+$. The formulas simplify for some values of $\kappa$. In particular, we will prove the following result. 

\begin{prop}[Fused Green's functions]
Suppose $\kappa = 4, 8/3$, or $2$. Consider a system of two fused multiple SLE$_\kappa$ paths in $\mathbb{H}$ started from $0$ and growing toward $\infty$. Then, for each $z = x + iy \in \mathbb{H}$,
\begin{align*}
\lim_{\ee \to 0} \ee^{d-2}\PP_{0,0+} \left( \Upsilon_\infty(z) \le \ee \right) = c_* (\GG_f(z) + \GG_f(-\bar{z})),
\end{align*}
where $d=1+\kappa/8$, the constant $c_* = c_*(\kappa)$ is given by (\ref{cstardef}), and the function $\GG_f$ is defined by
\[\GG_f(x+iy) = y^{d-2} h_{f}(\theta)\]
with $h_f(\theta)$ given explicitly by
\begin{align*}
h_f(\theta) = \begin{cases} \frac{2}{\pi}(\sin\theta - \theta \cos\theta) \sin \theta, & \kappa = 4, 
	\\  
\frac{8}{15 \pi} (4 \theta-3 \sin{2 \theta} 
+2 \theta \cos{2 \theta})\sin ^2\theta, & \kappa = 8/3,
	\\ 
\frac{1}{12 \pi}(27 \sin\theta + 11 \sin{3 \theta}	
-6 \theta (9 \cos\theta + \cos{3 \theta})) \sin^3\theta, & \kappa = 2,
\end{cases} \ 0 < \theta < \pi.
\end{align*}
\end{prop}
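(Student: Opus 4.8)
The plan is to obtain the fused formulas by collapsing the two seeds in the Green's function of Theorem~\ref{thm:green}. By Theorem~\ref{thm:green} together with Lemma~\ref{2SLElemma}, for $\xi>0$ the passage probability of the commuting pair started from $(-\xi,\xi)$ satisfies $\lim_{\ee\to0}\ee^{d-2}\PP_{-\xi,\xi}(\Upsilon_\infty(z)\le\ee)=c_*\bigl(\GG(z,-\xi,\xi)+\GG(-\bar z,-\xi,\xi)\bigr)$, and by translation invariance of $\GG$ (the remark after Theorem~\ref{thm:green}), $\GG(z,-\xi,\xi)=\GG(z+\xi,0,2\xi)$ and $\GG(-\bar z,-\xi,\xi)=\GG(-\bar z+\xi,0,2\xi)$. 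Setting $\GG_f(w):=\lim_{\xi\downarrow0}\GG(w,0,\xi)$, and recording from \eqref{Gh} that $\GG(w,0,\xi)=(\Im w)^{d-2}h(\arg w,\arg(w-\xi))$, the problem reduces to two points: (i) showing that $\GG_f$ exists, that $\GG_f(w)=(\Im w)^{d-2}h_f(\arg w)$ for some function $h_f$, and that the limits $\ee\to0$ and $\xi\to0$ may be exchanged, so that $\lim_{\ee\to0}\ee^{d-2}\PP_{0,0+}(\Upsilon_\infty(z)\le\ee)=c_*\bigl(\GG_f(z)+\GG_f(-\bar z)\bigr)$; and (ii) for $\alpha=2,3,4$, computing $h_f(\theta)=\lim_{\theta'\downarrow\theta}h(\theta,\theta')$ from the closed forms recorded in Proposition~\ref{hexplicit234prop} (here $\arg(w-\xi)\downarrow\arg w$ as $\xi\downarrow0$, since $\xi>0$ and $w\in\HH$).

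For (i) I would use the probabilistic representation rather than the Pochhammer integral \eqref{Idef}. By Propositions~\ref{prop:slekr} and~\ref{G2prop} (which apply since $\rho=2<8-\kappa$ when $\kappa\le4$), $\GG(z,0,\xi)=G(z)\,\E^*_{0,z}\!\bigl[M_T^{(2),\xi}\bigr]$, where $G(z)=(\Im z)^{d-2}\sin^{4a-1}(\arg z)$, $M_t^{(2),\xi}$ is the SLE$_\kappa(2)$ martingale \eqref{SLEkr_mg} with force point at $\xi$, and $\E^*_{0,z}$ is expectation under two-sided radial SLE$_\kappa$ from $0$ through $z$ stopped at the hitting time $T$ of $z$. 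Along this law the image $g_t(\xi)$ of the force point stays strictly on one side of the driving function $\xi^1_t$ for $t>0$, only their common initial value degenerating as $\xi\downarrow0$, so $M_T^{(2),\xi}\to M_T^{(2),0+}$ almost surely; a dominated-convergence bound then gives $\GG_f(z)=G(z)\,\E^*_{0,z}[M_T^{(2),0+}]$. The same bound, combined with the fact that $\lim_{\ee\to0}\ee^{d-2}\PP^2_{0,\xi}(\Upsilon_\infty(z)\le\ee)=c_*G(z)\E^*_{0,z}[M_T^{(2),\xi}]$ (Theorem~\ref{thm:green}), lets one pass to the degenerate force point $0+$ and, via Lemma~\ref{2SLElemma}, to the fused commuting law $\PP_{0,0+}$. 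The required dominated-convergence estimate --- a bound on $M_t^{(2),\xi}$ under two-sided radial SLE$_\kappa$ that is uniform for small $\xi$, the delicate regime being $t$ near $0$, where the force point sits next to the seed --- is the step I expect to be the main obstacle; it should follow from the explicit product form of $M_t^{(2),\xi}$, the force-point factor carrying a sufficiently mild exponent when $\rho=2$.

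Step (ii) is then a sequence of trigonometric Taylor expansions. Write $\theta'=\theta+s$ with $s\downarrow0$. Each formula in Proposition~\ref{hexplicit234prop} carries in its denominator a factor that degenerates as $s\to0$: of order $s$ (from $\sin(\theta-\theta')$) for $\kappa=4$; of order $s^3$ (from $(\cot\theta-\cot\theta')^3$ and $\sin^3(\theta-\theta')$) for $\kappa=2$; while for $\kappa=8/3$ the prefactor $(\cos(\theta-\theta')+1)^{-1}\to\tfrac12$ but the bracket --- which contains the factor $\sqrt{\sin\theta\sin\theta'}$ and the term $\arg\!\bigl(\cos((\theta+\theta')/2)+i\sqrt{\sin\theta\sin\theta'}\bigr)$ --- must itself be expanded in $s$. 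One expands numerator and denominator to the matching order and divides. For $\kappa=4$ the bracket is $O(s)$ with leading coefficient $4\bigl(\theta\sin2\theta+\cos2\theta-1\bigr)$ and the denominator is $\sim-4\pi s$, so, using $1-\cos2\theta=2\sin^2\theta$ and $\sin2\theta=2\sin\theta\cos\theta$, this gives $h_f(\theta)=\tfrac{2}{\pi}(\sin\theta-\theta\cos\theta)\sin\theta$, as stated. The cases $\kappa=8/3$ and $\kappa=2$ are handled in exactly the same manner (the latter being the longest, as it requires expansion to third order) and produce the remaining two entries of $h_f$. As an independent consistency check one may verify that each $h_f$ obtained this way solves the third-order ODE coming from the CFT fusion rules (cf. Section~\ref{fusionsec}), with the boundary behaviour at $\theta\downarrow0$ inherited from the representation in step (i); this check is not logically required.
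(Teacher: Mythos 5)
Your overall skeleton --- pass to the fusion limit in Theorem \ref{thm:green}, symmetrize via Lemma \ref{2SLElemma}, and extract $h_f$ as the diagonal limit of $h$ --- is the same as the paper's, and your step (ii) is sound: the paper itself remarks that the formulas of Proposition \ref{hf234prop} can be obtained by letting $\theta^2 \downarrow \theta^1$ in Proposition \ref{hexplicit234prop} (its primary computation instead evaluates the contour-integral representation (\ref{hfdef}) of $h_f$, whose identification with the diagonal limit of $h$ is part of Lemma \ref{hboundarylemma}, proved in the companion paper). Your $\kappa=4$ expansion is correct, and for $\kappa=8/3$ no expansion is even needed, since the prefactor in (\ref{halpha3}) is regular on the diagonal.

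The genuine gap is in step (i). Under $\E^*_{0,z}$ the quantity $M_T^{(2),\xi} = \xi^{-a}\,(g_T(\xi)-\xi^1_T)^a\, g_T'(\xi)^{(3a-1)/2}$ is a functional of the ($\xi$-independent) two-sided radial curve, and as $\xi \downarrow 0$ it does not converge to anything obtained by ``plugging in $0+$'': the factor $g_T(\xi)-\xi^1_T$ tends to the strictly positive quantity $O_T-\xi^1_T$, the normalization $\xi^{-a}$ blows up, and $g_T'(\xi) \to g_T'(0+)$, which is typically $0$; the limit is an indeterminate form governed by the rate of decay of $g_T'$ at the seed (for a slit, $g_T'(\xi)\asymp\xi$ gives $M_T^{(2),\xi}\asymp\xi^{(a-1)/2}$, diverging for $2<\kappa\le4$ and vanishing for $\kappa<2$). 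So there is no well-defined ``$M_T^{(2),0+}$'', the almost sure convergence you assert is unfounded, the delicate regime is not small $t$ but the boundary behavior of $g_T$ at $0+$, and the dominated-convergence plan cannot be run as described. Moreover, even granting convergence of $\E^*[M_T^{(2),\xi}]$ (which for these three $\kappa$ you get for free from the explicit formulas, with no probabilistic input), that only yields $\GG(z,0,\xi)\to\GG_f(z)$; it does not by itself give the statement about the fused law, $\lim_{\ee\to0}\ee^{d-2}\PP_{0,0+}(\Upsilon_\infty(z)\le\ee)=c_*(\GG_f(z)+\GG_f(-\bar z))$, which requires either rerunning Proposition \ref{prop:slekr} and Lemma \ref{2SLElemma} for the degenerate configuration $(0,0+)$ (with a suitably renormalized weighting martingale) or an interchange of the $\ee\to0$ and $\xi\to0$ limits that is uniform in $\ee$ --- a step you assert (``lets one pass to the degenerate force point'') but do not supply. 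The paper's proof instead takes $\xi\to0+$ directly in Theorem \ref{thm:green} together with Lemma \ref{2SLElemma}, with the analytic continuity of $h$ up to the diagonal carried by Lemma \ref{hboundarylemma}, rather than by any pathwise limit of $M_t^{(2)}$.
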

\subsection{Remarks}
We end this section by making a few remarks.
\begin{itemize}
\item{We believe the method used in this paper will generalize to produce analogous results for observables for $N \ge 3$ multiple SLE paths depending on one interior point. This would require $N-1$ screening insertions, and the integrals will then be $N-1$ iterated contour integrals.}
\item{In \cite{KP2016a, KP2016} screening integrals for SLE \emph{boundary} observables (such as the ordered multipoint boundary Green's function) are given and shown to be closely related to a particular \emph{quantum group}. In fact, this algebraic structure is used to systematically make the difficult choices of integration contours. It seems reasonable to expect that a similar connection exists in our setting as well, allowing for an efficient generalization to several commuting SLE curves, but we will not pursue this  here. }

\item{Another way of viewing the system of two multiple SLEs growing towards $\infty$ is as one SLE path conditioned to hit a boundary point, also known as two-sided chordal SLE. Indeed, the extra $\rho=2$ at the second seed point forces a $\rho = \kappa-8$ at $\infty$.}

\item{Suppose one has an SLE$_{\kappa}$ martingale and wants to construct a similar martingale for SLE$_{\kappa}(\rho)$. The first idea that comes to mind is to try to ``compensate''  the SLE$_\kappa$ martingale by multiplying by a differentiable process. In the cases we consider this method does not give the correct observables (the boundary behavior is not correct), but rather corresponds to a change of coordinates moving the target point at $\infty$.  }
\end{itemize}

\section{Preliminaries}\label{sect:prelim}
Unless specified otherwise, all complex powers are defined using the principal branch of the logarithm, that is, $z^\alpha = e^{\alpha (\ln |z| + i \Arg z)}$ where $\Arg z \in (-\pi, \pi]$.  We write $z = x + iy$ and let
$$\partial = \frac{1}{2}\bigg( \frac{\partial}{\partial x} - i \frac{\partial}{\partial y}\bigg), \qquad \bar{\partial} = \frac{1}{2}\bigg( \frac{\partial}{\partial x} + i \frac{\partial}{\partial y}\bigg).$$
We let $\mathbb{H} = \{z \in \C: \im z > 0\}$ and $\mathbb{D} = \{z \in \C: |z| < 1\}$ denote the open upper half-plane and the open unit disk, respectively. 
The open disk of radius $\epsilon > 0$ centered at $z \in \C$ will be denoted by $\ball_\epsilon(z) = \{w \in \C : |w-z| < \epsilon\}$.
Throughout the paper, $c > 0$ and $C > 0$ will denote generic constants which may change within a computation.

Let $D$ be a simply connected domain with two distinct boundary points $p,q$ (prime ends). There is a conformal transformation $f:D \to \HH$ taking $p$ to $0$ and $q$ to $\infty$; in fact, $f$ is determined only up to a final scaling. We choose one such $f$, but the quantities we define do not depend on the choice. 
Given $z \in D$, we define the conformal radius $r_D(z)$ of $D$ seen from $z$ by letting
\[
\Upsilon_D(z) =  \frac{\Im f(z)}{|f'(z)|}, \quad r_D(z) = 2 \Upsilon_D(z).
\]
Schwarz' lemma and Koebe's 1/4 theorem give the distortion estimates
\begin{align}\label{Koebe}
\dist(z,\partial D)/2 \le \Upsilon_D(z) \le 2 \dist(z,\partial D).
\end{align}
We define
\[
S_{D,p,q}(z) = \sin[\arg f(z) ], \quad S(z) = S_{\HH, 0, \infty}(z),  
\]
and note that this is a conformal invariant.
Suppose $D$ is a Jordan domain and that $J_-,J_+$ are the boundary arcs $f^{-1}(\mathbb{R}_-)$ and $f^{-1}(\mathbb{R}_+)$, respectively. Let $\hm_D(z, E)$ denote the harmonic measure of $E \subset \partial D$ in $D$ from $z$. Then it is easy to see that
\begin{align}\label{Somegaestimate}
S_{D,p,q}(z) \asymp \min\{\hm_D(z, J_-), \, \hm_D(z, J_+) \},
\end{align}
with the implicit constants universal. By conformal invariance an analogous statement holds for any simply connected domain different from $\mathbb{C}$. We will use this relation several times without explicitly saying so in order to estimate $S_{D,p,q}$.

In many places we will estimate harmonic measure using the following lemma often referred to as the \emph{Beurling estimate}. It is derived from Beurling's projection theorem, see for example Theorem~9.2 and Corollary~9.3 of \cite{GM2005}.
\begin{lemma}[Beurling estimate]\label{beurling1}
There is a constant $C < \infty$ such that the following holds.
Suppose $K$ is a connected set in $\overline{\mathbb{D}}$ such that $K \cap \partial \mathbb{D} \neq \emptyset$. Then
\[
\hm_{\mathbb{D} \smallsetminus K}(0, \partial \mathbb{D}) \le C \, (\dist(0, K))^{1/2}.
\]
\end{lemma}
%\begin{lemma}[Beurling estimate 2]\label{beurling2}\footnote{\textcolor{red}{I don't think this lemma is used anywhere.}}
%There is a constant $C < \infty$ such that the following holds. Let $D$ be a simply connected domain, $z \in D$, and $\zeta \in \partial D$. Then for all $r$ small enough,
%\[
%\hm_{D}(z, \ball_r(\zeta) \cap \partial D) \le C \, (r/|z-\zeta|)^{1/2}.
%\]
%\end{lemma}

%We will also make use of \emph{excursion measure}.
%Suppose $D$ is analytic with two disjoint boundary arcs $A,B$. We define the excursion measure in $D$ between $A$ and $B$ by
%\[
%\mathcal{E}_{D}(A,B) = \int_A \partial_n \hm(\zeta, B) |d\zeta| = \int_B \partial_n \hm(\zeta, A) |d\zeta|,
%\]
%where $\hm$ is harmonic measure and $\partial_n$ denotes normal derivative in the inward pointing direction. For example, one has \[\pi  \, \mathcal{E}_{\HH}([-x,0], [1,y]) =  \ln y - \ln \frac{y+x}{1+x},\] so that 
%\begin{align}\label{excursionexample}
%\pi \mathcal{E}_{\HH}((-\infty,0], [1,1+x]) = \ln(1+x) = x+O(x^2),
%\end{align}
%as $x \downarrow 0$. Excursion measure is clearly a conformal invariant, and consequently we can use it in rough domains by mapping to the half-plane and computing there. 

\subsection{Schramm-Loewner evolution}
Let $0 < \kappa < 8$. Throughout the paper we will use the following parameters:
\[
a = \frac{2}{\kappa}, \quad r=r_{\kappa}(\rho)=\frac{\rho}{\kappa} = \frac{\rho a}{2}, \quad d=1+\frac{1}{4a}, \quad \beta = 4a-1.
\]
We will also sometimes write $\alpha = 4a$. The assumption $\kappa = 2/a< 8$ implies that $\alpha > 1$. 

We will work with the $\kappa$-dependent Loewner equation
\begin{align}\label{Loewnereq}
\partial_t g_t(z) = \frac{a}{g_t(z) - \xi_t}, \quad g_0(z) = z,
\end{align}
where $\xi_t$, $t \ge 0$, is the (continuous) Loewner driving term. The solution is a family of conformal maps $(g_t(z))_{t \ge 0}$ called the Loewner chain of $(\xi_t)_{t \ge 0}$. The SLE$_\kappa$ Loewner chain is obtained by taking the driving term to be a standard Brownian motion and $a=2/\kappa$. The chordal SLE$_\kappa$ path is the continuous curve connecting $0$ with $\infty$ in $\HH$ defined by
\[
\gamma(t) = \lim_{y \downarrow 0} g^{-1}_t(\xi_t + iy), \quad \gamma_t: = \gamma[0,t].
\]  
We write $H_t$ for the simply connected domain given by taking the unbounded component of $\HH \smallsetminus \gamma_t$.
 Given a simply connected domain $D$ with distinct boundary points $p,q$, we define chordal SLE$_\kappa$ in $D$ from $p$ to $q$ by conformal invariance. We write
\begin{align}\label{StUpsilontdef}
S_t(z) = S_{H_t, \gamma(t), \infty}(z), \quad \Upsilon_t(z) = \Upsilon_{H_t}(z) = \frac{\Im g_t(z)}{|g'_t(z)|}.
\end{align}
We will make use of the following sharp one-point estimate which also defines the Green's function for chordal SLE$_\kappa$, see Lemma~2.10 of \cite{LW2013}.

\begin{lemma}[Green's function for chordal SLE$_\kappa$]\label{lem:one-point}
Suppose $0 < \kappa < 8$. There exists a constant $c > 0$ such that the following holds. Let $\gamma$ be SLE$_\kappa$ in $D$ from $p$ to $q$, where $D$ is a simply connected domain with distinct boundary points (prime ends) $p,q$. 
As $\epsilon \to 0$ the following estimate holds uniformly with respect to all $z \in D$ satisfying $\dist(z,\partial D) \ge 2\ee$:
\[
\PP \left( \Upsilon_\infty(z) \le \ee \right) = c_{*}\ee^{2-d}G_D(z;p,q)\left[1+O(\ee^c)\right], 
\]
where, by definition, 
\[G_D(z;p,q) = \Upsilon_D(z)^{d-2} S_{D,p,q}(z)^\beta\] 
is the Green's function for SLE$_\kappa$ from $p$ to $q$ in $D$, and $c_*$ is the constant defined in \eqref{cstardef}.
\end{lemma}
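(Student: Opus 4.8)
The plan is to pass to the half-plane, reparametrize the Loewner flow so that the conformal radius at $z$ decays at unit exponential rate, and then read off the asymptotics from the spectral theory of the angular diffusion that results; the conceptual backbone is the ``Green's function local martingale'' $M_t:=\Upsilon_t(z)^{d-2}S_t(z)^\beta=G_{H_t}(z;\gamma(t),\infty)$, which an It\^o computation shows to be a local martingale and whose Doob $h$-transform by $h=\sin^\beta$ is the radial driving of two-sided radial SLE$_\kappa$ through $z$. As a first step I would use conformal invariance to reduce to $D=\HH$, $p=0$, $q=\infty$: if $\phi:D\to\HH$ is conformal with $\phi(p)=0$, $\phi(q)=\infty$, then chordal SLE$_\kappa$ in $D$ is the $\phi$-preimage of chordal SLE$_\kappa$ in $\HH$, the complement's conformal radius transforms by $\Upsilon_\infty^D(z)=\Upsilon_\infty^{\HH}(\phi(z))/|\phi'(z)|$, and $G_D(z;p,q)=|\phi'(z)|^{2-d}G_\HH(\phi(z);0,\infty)$; hence the assertion in $D$ at scale $\ee$ is exactly the assertion in $\HH$ at scale $\ee|\phi'(z)|$ and point $\phi(z)$, and \eqref{Koebe} turns $\dist(z,\partial D)\ge 2\ee$ into a comparable condition in $\HH$. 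So from now on $D=\HH$; write $z=x+iy$, $\theta_0=\arg z\in(0,\pi)$, $\Upsilon_0=y$, and note $\Upsilon_0\ge\dist(z,\R)/2\ge\ee$ by \eqref{Koebe}.

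Next let $g_t$ be the SLE$_\kappa$ chain \eqref{Loewnereq} driven by a standard Brownian motion $\xi_t$, and set $Z_t=g_t(z)-\xi_t=R_t e^{i\theta_t}$. A direct It\^o computation gives $d\log\Upsilon_t(z)=-2a(\Im Z_t)^2|Z_t|^{-4}\,dt$, so $\Upsilon_t(z)$ is strictly decreasing; let $\sigma=\sigma(t):=\log(\Upsilon_0/\Upsilon_t(z))$, so that $\Upsilon_t(z)=\Upsilon_0 e^{-\sigma(t)}$ and the total radial time is $S:=\log(\Upsilon_0/\Upsilon_\infty(z))$. For $\kappa<8$ a fixed point $z$ is a.s.\ not disconnected from $\infty$ by $\gamma$, so $\Upsilon_\infty(z)\ge\dist(z,\gamma\cup\R)/2>0$ and $S<\infty$ a.s., and
\[
\{\Upsilon_\infty(z)\le\ee\}=\{S\ge\log(\Upsilon_0/\ee)\}.
\]
Rewriting the It\^o equation for $\theta_t$ in the $\sigma$-clock, $\theta_\sigma$ solves
\[
d\theta_\sigma=-\tfrac{2a-1}{2a}\cot\theta_\sigma\,d\sigma+\tfrac{1}{\sqrt{2a}}\,dW_\sigma\qquad\text{on }(0,\pi),
\]
with generator $L=\tfrac{1}{4a}\partial_\theta^2-\tfrac{2a-1}{2a}\cot\theta\,\partial_\theta$; near each endpoint $\theta$ behaves, after a linear change of variable, like a Bessel process of dimension $3-4a<2$, so $0$ and $\pi$ are accessible and absorbing and $S$ is precisely the hitting time of $\{0,\pi\}$. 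The structural fact is that $h(\theta):=\sin^\beta\theta=\sin^{4a-1}\theta$ satisfies $Lh=-(2-d)h$ with $h>0$ on $(0,\pi)$ and $h(0)=h(\pi)=0$: it is the principal Dirichlet eigenfunction of $-L$, with eigenvalue $\lambda_1=2-d>0$.

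Now I would invoke the spectral theory of the sub-Markovian semigroup of $\theta$ killed at $\{0,\pi\}$: $-L$ is self-adjoint on $L^2((0,\pi),m)$ with speed density $m'(\theta)\propto\sin^{2-4a}\theta$, its killed realization has discrete spectrum with simple, isolated principal eigenvalue $\lambda_1<\lambda_2<\cdots$, and the killed heat kernel satisfies $p^{\mathrm{kill}}_\sigma(\theta_0,\phi)=\|h\|_m^{-2}e^{-\lambda_1\sigma}h(\theta_0)h(\phi)+O(e^{-\lambda_2\sigma})$ uniformly for $\theta_0$ in compact subsets of $(0,\pi)$. Integrating against $m'(\phi)\,d\phi$ and using $h m'\propto\sin\theta$, $h^2 m'\propto\sin^{4a}\theta$, and $\int_0^\pi\sin\theta\,d\theta=2$,
\[
\PP^{\theta_0}(S>\sigma)=\frac{\int_0^\pi\sin\theta\,d\theta}{\int_0^\pi\sin^{4a}\theta\,d\theta}\,h(\theta_0)\,e^{-\lambda_1\sigma}\bigl(1+O(e^{-(\lambda_2-\lambda_1)\sigma})\bigr)=c_*\sin^\beta(\theta_0)\,e^{-(2-d)\sigma}\bigl(1+O(e^{-c\sigma})\bigr),
\]
with $c=\lambda_2-\lambda_1>0$ and $c_*$ as in \eqref{cstardef}. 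Taking $\sigma=\log(\Upsilon_0/\ee)$, which $\to\infty$ as $\ee\to0$, and recalling $\Upsilon_0=y$ gives $\PP(\Upsilon_\infty(z)\le\ee)=c_*\ee^{2-d}y^{d-2}\sin^\beta(\arg z)(1+O(\ee^c))=c_*\ee^{2-d}G_\HH(z;0,\infty)(1+O(\ee^c))$, which is the assertion in $\HH$.

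The genuinely delicate point -- and the reason the statement carries a uniform error over all $z$ with $\dist(z,\partial D)\ge2\ee$ -- is \emph{uniformity up to the boundary}. When $z$ is close to $\partial\HH$, i.e.\ $\theta_0$ near $0$ or $\pi$ and $\ee$ comparable to $\Im z$, the radial time $\sigma=\log(\Upsilon_0/\ee)$ is only $O(1)$ and the global spectral expansion no longer controls the error uniformly. There one argues locally near the absorbing endpoint, where $\theta$ is comparable to a Bessel process and $\PP^{\theta_0}(S>\sigma)$ can be estimated directly; these estimates match the bulk asymptotics precisely because near $0$ the eigenfunction $h(\theta)\sim\theta^{4a-1}$ and the scale function $s(\theta)\sim\theta^{4a-1}$ carry the \emph{same} power $\beta=4a-1$ -- the coincidence underlying the conformal covariance of $G_\HH$ -- so the constant $c_*$ persists uniformly up to the boundary. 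Patching the bulk and boundary estimates, and tracking the distortion constants from the conformal reduction, yields the estimate with the stated uniform error; I expect this gluing, rather than either piece in isolation, to be the main technical obstacle.
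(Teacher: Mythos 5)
Your bulk argument is, in essence, the standard proof of this estimate and is correct as far as it goes; note, however, that the paper itself does not prove the lemma at all — it is quoted from Lemma~2.10 of \cite{LW2013} — and the ingredients you use are exactly those the paper assembles for other purposes in Section~3: the radial parametrization, the angular SDE, and Lemma~\ref{lem:radial-bessel}. Your killed generator $L$ with principal Dirichlet eigenfunction $h=\sin^{\beta}$, eigenvalue $2-d$, and speed density $\propto\sin^{2-4a}$ is precisely the object whose Doob $h$-transform is the radial Bessel process with drift $2a\cot\theta$ and invariant density $\tfrac{c_*}{2}\sin^{4a}$; Lawler--Werness run the same computation by weighting by the martingale $G_t$ (two-sided radial SLE) and invoking exponential ergodicity, which is the same spectral-gap fact in different clothing. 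Your identities $h\,m'\propto\sin\theta$, $h^2m'\propto\sin^{4a}\theta$, the constant \eqref{cstardef}, the Bessel dimension $3-4a<2$ at the endpoints, and the identification $\{\Upsilon_\infty(z)\le\ee\}=\{S\ge\log(\Upsilon_0/\ee)\}$ are all right. (One slip: for $4<\kappa<8$ a fixed $z$ \emph{is} a.s.\ disconnected from $\infty$; what you actually need is only that $z\notin\overline{\gamma}$ a.s., so $\Upsilon_\infty(z)>0$ and the event identification still holds.)

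Two genuine gaps remain. (i) You assert the killed heat-kernel expansion uniformly only for $\theta_0$ in compact subsets of $(0,\pi)$, but the lemma requires uniformity over all admissible $z$, including $\sin\theta_0$ arbitrarily small with $\Upsilon_0$ macroscopic; to get a \emph{multiplicative} error uniform in $\theta_0$ you must argue that all higher Dirichlet eigenfunctions vanish at the endpoints at the same rate $\theta^{\beta}$ as $h$ and control the tail of the series — or simply switch to the $h$-transform formulation, where Lemma~\ref{lem:radial-bessel} gives an error independent of the starting point. (ii) The concluding ``patching'' near the boundary aims at a target that cannot be reached: when $\Upsilon_0\asymp\ee$ the statement as literally printed fails, since by scale invariance $\PP\left(\Upsilon_\infty(2\ee i)\le\ee\right)=\PP\left(\Upsilon_\infty(i)\le\tfrac12\right)$ is independent of $\ee$, so a relation $=c_*2^{d-2}\left[1+O(\ee^{c})\right]$ would force the exact identity $\PP\left(\Upsilon_\infty(i)\le\tfrac12\right)=c_*2^{d-2}$, which there is no reason to expect. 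What your argument honestly delivers — and what the cited Lemma~2.10 of \cite{LW2013} actually asserts — is the scale-invariant form $\PP\left(\Upsilon_\infty(z)\le\ee\right)=c_*\left(\ee/\Upsilon_0\right)^{2-d}S_{D,p,q}^{\beta}(z)\left[1+O\left((\ee/\Upsilon_0)^{c}\right)\right]$; for $\dist(z,\partial D)\asymp\ee$ one only gets comparability up to constants, which is also all this paper ever uses in that regime. So state the error in terms of $\ee/\Upsilon_0(z)$ and drop the claim that ``the constant $c_*$ persists uniformly up to the boundary'' in the multiplicative sense; with those repairs your proof is a legitimate alternative write-up of the known argument.
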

\begin{rem}Using the relation between $\Upsilon_\infty(z)$ and Euclidean distance, Lemma~\ref{lem:one-point} shows that $\PP \left( \dist(x, \gamma) \le \ee \right) \leq C \ee^{2-d}G_D(z;p,q)$. In fact, the statement of Lemma~\ref{lem:one-point} holds with $\Upsilon_\infty(z)$ replaced by Euclidean distance, with another constant in place of $c_{*}$.
\end{rem}

We also need to use a boundary estimate for SLE which is convenient to express in terms of extremal distance, see Chapter~IV of \cite{GM2005} for the definition and basic properties we use here.
%\begin{lemma}\label{lem:boundary-excursion}
%Let $0 < \kappa < 8$. 
%Suppose $D$ is a simply connected Jordan domain and let $p,q \in \partial D$ be two distinct boundary points. Write $J_{-}, J_{+}$ for the boundary arcs connecting $q$ with $p$ and $p$ with $q$ in the counterclockwise direction, respectively. Suppose $\eta$ is a crosscut of $D$ starting and ending on $J_{+}$, see Figure \ref{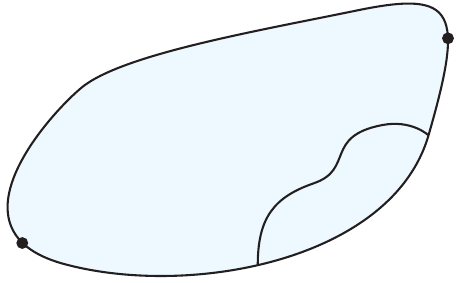}. Then, if $\gamma$ is chordal SLE$_{\kappa}$ in $D$ from $p$ to $q$,
%\begin{equation}\label{excursion-boundary}
%\PP \left( \gamma_{\infty} \cap \eta \neq \emptyset \right) \le c \, \mathcal{E}_{D \smallsetminus \eta}(J_{-}, \eta)^{\beta},
%\end{equation}
%where $\beta = 4a-1$ and the constant $c \in (0, \infty)$ is independent of $D, p, q$, and $\eta$.
%\end{lemma}
For a domain $D$ with $E,F \subset \overline{D}$, we write $d_D(E,F)$ for the conformally invariant extremal distance between $E$ and $F$ in $D$. Recall that if $D = \{z: \, r < |z| < R\}$ is the round annulus and $E,F$ are the two boundary components, then $d_D(E,F) = \ln(R/r)/(2\pi)$. A crosscut $\eta$ of $D$ is an open Jordan arc in $D$ with the property that the closure of $\eta$ equals $\eta \cup \{x,y\}$, where $x,y \in \partial D$, and $x,y$ may coincide.
%(Note that in this case, the module of the annulus equals $2 \pi d_D(E,F)$.)
\begin{lemma}\label{lem:boundary-excursion-ED}
Let $0 < \kappa < 8$. 
Suppose $D$ is a simply connected Jordan domain and let $p,q \in \partial D$ be two distinct boundary points. Write $J_{-}, J_{+}$ for the boundary arcs connecting $q$ with $p$ and $p$ with $q$ in the counterclockwise direction, respectively. Suppose $\eta$ is a crosscut of $D$ starting and ending on $J_{+}$, see Figure \ref{excursioncrosscut.pdf}. Then, if $\gamma$ is chordal SLE$_{\kappa}$ in $D$ from $p$ to $q$,
\begin{equation}\label{excursion-boundary}
\PP \left( \gamma \cap \eta \neq \emptyset \right) \le C \, e^{- \beta \pi  d_{D }(J_-, \eta)},
\end{equation}
where $\beta = 4a-1$ and the constant $C \in (0, \infty)$ is independent of $D, p, q$, and $\eta$.
\end{lemma}
\begin{proof}
By conformal invariance, we may assume $D = \mathbb{H}, p=0, q = \infty$ and that $\eta$ separates $1$ from $\infty$. Set $\delta = \max\{|z-1|: z \in \eta\}$ and let $w \in \eta$ be a point such that $|w-1| = \delta$. Then the unbounded annulus $A$ whose boundary components are $\R_-$ and $\eta \cup \bar{\eta}$ separates $w$ and $1$ from $\R_-$. Hence, first using the symmetry rule for extremal distance and then Teichm\"uller's module theorem (see Chapter~II.1.3 and Eq.\ II.2.10 of \cite{LV1970}) and the relation between extremal distance and module\[
d_{\mathbb{H} }(\R_-, \eta) = 2 d_{\mathbb{C}}(\R_-, \eta \cup \bar{\eta})  < \frac{2}{\pi}\ln\left(4/\sqrt{ \delta/(1+\delta) } \right) \le \frac{1}{\pi} \ln(1/(\delta \wedge 1)) + C. 
\]
(Note that \cite{LV1970} defines the module of a round annulus by $\ln(R/r)$, i.e., without the $1/2\pi$ factor.) 
Hence
\begin{align}\label{deltawedge1}
\delta \wedge 1 \leq C e^{-\pi d_{\mathbb{H} }(\R_-, \eta)}.
\end{align}
On the other hand, comparing with a half-disk of radius $\delta$ about $1$, the standard boundary estimate for SLE$_{\kappa}$ in the upper half-plane \cite[Theorem 1.1]{AK2008} implies that $\PP(\gamma \cap \eta \neq \emptyset) \le C \,\delta^{\beta}$, which together with (\ref{deltawedge1}) gives the desired bound.% In the notation of Lehto-Virtanen, we estimate by d \le \frac{2}{2\pi} \mu\left(\sqrt{ \delta/(1+\delta) }\right) < \frac{1}{\pi} \ln(4/...)$
%There is a constant $C$ such that if $\delta:=\diam(\eta) \ge 1/2$, then $d_{\mathbb{H}}(\R_-, \eta) \le C$, e.g., using the symmetry rule for extremal distance and Teichm\"uller's module theorem. On the other hand, if $\delta \le 1/2$, then comparing with a half-disk of radius $\delta$, the standard boundary estimate gives $\PP(\gamma_\infty \cap \eta \neq \emptyset) \le C \,\delta^{\beta}$. Using again the symmetry rule, and then the extension rule, we have $d_{\mathbb{H} }(\R_-, \eta)= 2d_{\mathbb{C}}(\R_-, \eta \cup \bar{\eta}) \ge 2d_{\mathbb{C}}(\R_-, \ball)$, where $\ball $ is the disk of radius $\delta$ about $1$. By mapping to a round annulus, $d_{\mathbb{C}}(\R_-, \ball) \ge \ln(1/\delta)/(2\pi) + C$.
\end{proof}
\begin{figure}
\bigskip\medskip
\begin{center}
\begin{overpic}[width=.35\textwidth]{excursioncrosscut.pdf}
      \put(43,30){$D$}
      \put(65,24){$\eta$}
      \put(-1,3){$p$}
      \put(101,52){$q$}
      \put(76,5){$J_+$}
      \put(40,55){$J_-$}
\end{overpic}
     \begin{figuretext}\label{excursioncrosscut.pdf}
       The domain $D$ and the crosscut $\eta$ of Lemma \ref{lem:boundary-excursion-ED}. 
       \end{figuretext}
     \end{center}
\end{figure}

\subsubsection{SLE$_{\kappa}(\rho)$}
Let $0 < \kappa < 8$. We will work with SLE$_\kappa(\rho)$, for $\rho \in \mathbb{R}$ chosen appropriately, as defined by weighting SLE$_\kappa$ by a local martingale via Girsanov's theorem; we will explain this below. Let $(\xi^1, \xi^2) \in \R^2$ be given with $ \xi^1 < \xi^2$. Suppose $(\xi^1_t)_{t \ge 0}$ is Brownian motion started from $\xi^1$ under the measure $\PP$, with filtration $\mathcal{F}_t$. We refer to $\PP$ as the SLE$_\kappa$ measure. 
Let $(g_t)_{t \ge 0}$ be the SLE$_\kappa$ Loewner flow defined by equation (\ref{Loewnereq}) with $\xi_t = \xi_t^1$ and set
\begin{align}\label{slekapparhoxi2}
\xi_t^2:=g_t(\xi^2).
\end{align}
We call $\xi^2$ the force point.
Define
\[
\lambda(r) =  \frac{r}{2 a}\left(r - \beta \right), \quad \zeta(r)=\lambda(-r)-r = \frac{r}{2a} \left(r+2a-1\right).
\]
Note that $\zeta \ge 0$ whenever $0< \kappa \le 4$ and $r \ge 0$.  It\^o's formula shows that 
\begin{equation}\label{SLEkr_mg}
M_t^{(\rho)} = \left(\frac{\xi^2_t-\xi^1_t}{\xi^2-\xi^1}\right)^{r} g'_t(\xi^2)^{\zeta(r)} , \quad t \ge 0,  
\end{equation}
is a local $\PP$-martingale for any $\rho \in \R$, where $r=\rho/\kappa$.  
In fact,
\[
 \frac{dM_t^{(\rho)}}{M_t^{(\rho)}}=\frac{-r}{\xi^2_t-\xi^1_t}d\xi^1_t.
\]
The SLE$_\kappa(\rho)$ measure $\PP^{\rho} = \PP_{\xi^1, \xi^2}^{\rho}$ is defined by weighting $\PP$ by the martingale $M^{(\rho)}$,  that is,
\begin{align}\label{PP2def}
\PP^{\rho} \left( V \right) = \E[M_t^{(\rho)} 1_V] \quad \text{for $V \in \mathcal{F}_t$}.
\end{align}
Then, using Girsanov's theorem, the equation for $(\xi^1_t)_{t \ge 0}$ changes to
\begin{align}\label{slekapparhoxi1}
  d\xi^1_t = \frac{r}{\xi^1_t-\xi^2_t}dt + dW_t, 
\end{align}
where $(W_t)_{t \ge 0}$ is $\PP^{\rho}$-Brownian motion. This is the defining equation for the driving term of SLE$_\kappa(\rho)$. (Since $M^{(\rho)}$ is a local martingale we need to stop the process before $M^{(\rho)}$ blows up; we will not always be explicit about this. We will not need to consider SLE$_\kappa(\rho)$ after the time the path hits or swallows the force point.) We refer to the Loewner chain driven by $(\xi^1_t)_{t \ge 0}$ under $\PP^{\rho}$ as {\it SLE$_\kappa(\rho)$ started from $(\xi^1, \xi^2)$}. If $\rho$ is sufficiently negative, the SLE$_{\kappa}(\rho)$ path will almost surely hit the force point. In this case it can be useful to reparametrize so that the quantity
\begin{align}\label{Ctdef}
C_{t} = C_{t}(\xi^{2}) = \frac{\xi_{t}^{2} - O_{t}}{g'_{t}(\xi^{2})}, 
\end{align}
decays deterministically; this is called the radial parametrization in this context. Here  $O_t$ is defined as the image under $g_t$ of the rightmost point in the hull at time $t$; in particular, $O_t = g_{t}(0+)$ if $0 < \kappa \leq 4$, see, e.g., \cite{ABV2016}. Geometrically $C_{t}$ equals (1/4 times) the conformal radius seen from $\xi^{2}$ in $H_{t}$ after Schwarz reflection. We define a time-change $s(t)$ so that $\hat{C}_{t}:=C_{s(t)} = e^{-at}$. A computation shows that if
\[
A_{t} = \frac{\xi^{2}_{t}-O_{t}}{\xi^{2}_{t}-\xi^{1}_{t}}
\] 
then $s'(t) = (\hat{\xi}^2_t-\hat{\xi}^1_t)^2(\hat{A}_t^{-1}-1)$, where $\hat{A}_t = A_{s(t)}$, see, e.g., Section~2.2 of \cite{ABV2016}. An important fact is that $(\hat{A}_t)_{t \ge 0}$ is positive recurrent with respect to SLE$_{\kappa}(\rho)$ if $\rho$ is chosen appropriately.

\begin{lemma}\label{lem:invariant}
Suppose $0 < \kappa < 8$ and  $ \rho < \kappa/2 - 4$. Consider SLE$_{\kappa}(\rho)$ started from $(0,1)$. Then $\hat{A}_{t}$ is positive recurrent with invariant density
\[
\pi_{A}(x) = c'\, x^{-\beta -a\rho}(1-x)^{2a-1}, \quad c' = \frac{\Gamma(2-2a - a\rho)}{\Gamma(2a) \Gamma(2-4a - a\rho)}.
\] 
In fact, there is $\alpha > 0$ such that if $f$ is integrable with respect to the density $\pi_A$, then as $t \to \infty$,
\[
\E \left[f(\hat{A}_t) \right] = c' \int_0^1 f(x) \, \pi_A(x) \, dx \left(1+O(e^{-\alpha t}) \right).
\]
\end{lemma}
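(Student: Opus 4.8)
The plan is to reduce the statement to a known ergodicity result for a one-dimensional diffusion on $(0,1)$, by identifying the stochastic differential equation satisfied by $\hat A_t = A_{s(t)}$ under the SLE$_\kappa(\rho)$ measure and recognizing its stationary density. First I would compute, using the Loewner equation \eqref{Loewnereq} with $\xi_t = \xi_t^1$ and the driving-term equation \eqref{slekapparhoxi1}, the It\^o differentials of the three building blocks $\xi_t^2 - \xi_t^1$, $\xi_t^2 - O_t$, and $g_t'(\xi^2)$ (recall $O_t = g_t(0+)$ satisfies $\partial_t O_t = a/(O_t - \xi_t^1)$). From these one gets the differential of $A_t = (\xi_t^2 - O_t)/(\xi_t^2 - \xi_t^1)$ in the original time $t$; applying the time change $s(t)$ with $s'(t) = (\hat\xi_t^2 - \hat\xi_t^1)^2(\hat A_t^{-1} - 1)$ — which, as recalled in the excerpt, is exactly the change that forces $\hat C_t = e^{-at}$ — the prefactors $(\xi_t^2 - \xi_t^1)$ drop out and $\hat A_t$ becomes an autonomous diffusion
\[
d\hat A_t = b(\hat A_t)\,dt + \sigma(\hat A_t)\,d\tilde B_t
\]
on $(0,1)$, for explicit $b,\sigma$ depending on $a$ and $\rho$. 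This is a standard (if slightly lengthy) It\^o computation of the type carried out in \cite{ABV2016}, Section~2.2, and I would simply cite that reference for the form of the SDE rather than reproduce it.

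Next I would verify that the claimed density $\pi_A(x) = c'\, x^{-\beta - a\rho}(1-x)^{2a-1}$ is the stationary (invariant) density of this diffusion, i.e. that it solves the stationary Fokker–Planck equation $\tfrac12 (\sigma^2 \pi_A)'' - (b\,\pi_A)' = 0$, equivalently that the scale and speed measures of the diffusion are consistent with $\pi_A \propto (\sigma^2 m')$ where $m$ is the speed density. Concretely one checks $b(x) = \tfrac12 (\sigma^2)'(x) + \tfrac12 \sigma^2(x)\,(\log\pi_A)'(x)$. Since $(\log \pi_A)'(x) = -(\beta + a\rho)/x - (2a-1)/(1-x)$, this is a one-line algebraic identity once $b$ and $\sigma$ are in hand. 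The normalizing constant $c' = \Gamma(2 - 2a - a\rho)/(\Gamma(2a)\Gamma(2 - 4a - a\rho))$ is just the reciprocal of the Beta integral $\int_0^1 x^{-\beta - a\rho}(1-x)^{2a-1}\,dx = B(1 - \beta - a\rho, 2a) = B(2 - 4a - a\rho, 2a)$, using $\beta = 4a - 1$; integrability at the endpoints requires $1 - \beta - a\rho > 0$, i.e. $\rho < (2 - 4a)/a = \kappa/2 - 4$ after dividing by $a$ (and recalling $a = 2/\kappa$), and $2a > 0$ always — this is precisely the hypothesis $\rho < \kappa/2 - 4$, and it also guarantees that both boundary points $0$ and $1$ are unattainable (entrance) boundaries, so the diffusion is positive recurrent and does not need to be stopped or reflected.

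Finally, for the quantitative ergodic statement — exponential convergence $\E[f(\hat A_t)] = c'\int_0^1 f\,\pi_A\,dx\,(1 + O(e^{-\alpha t}))$ with a spectral-gap rate $\alpha > 0$ — I would invoke a standard result on exponential ergodicity of one-dimensional diffusions with entrance boundaries: the generator acting on $L^2(\pi_A)$ is self-adjoint with discrete spectrum and a spectral gap, because the state space is a bounded interval and the coefficients are smooth and nondegenerate on the interior while the boundaries are entrance (see e.g. the classical theory of one-dimensional diffusions, or the Sturm–Liouville/eigenfunction-expansion arguments used for Jacobi-type diffusions; this is exactly the mechanism underlying \cite{ABV2016}). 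The existence of $\alpha>0$ then follows, and the bound transfers to all $\pi_A$-integrable $f$ by density of bounded functions together with the uniform (in the initial point, here $(0,1)$ started from the deterministic value coming from the seed $(0,1)$) contraction. The main obstacle is really just the bookkeeping in the first step — getting the drift $b$ of $\hat A_t$ correct, since it collects contributions from $d\xi_t^1$, the Loewner drift of $O_t$ and of $\xi_t^2$, the It\^o correction from the ratio, and the Jacobian of the time change — but this is routine It\^o calculus and can be lifted essentially verbatim from \cite{ABV2016}; the stationary-density check and the invocation of exponential ergodicity are then immediate.
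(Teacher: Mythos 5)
Your overall route---derive the autonomous SDE for $\hat A_t$ after the radial time change, check that the claimed density solves the stationary Fokker--Planck equation, normalize by a Beta integral, and quote exponential ergodicity for a one-dimensional diffusion---is sound in outline and is essentially the argument of the references the paper itself cites for this lemma (Section~2 of \cite{ABV2016} and Section~5.2 of \cite{L-Mink}; the paper gives no other proof). Carrying out your first step does give a Jacobi-type diffusion, $d\hat A_t=[(1-2a-r)+(a+r-1)\hat A_t]\,dt+\sqrt{\hat A_t(1-\hat A_t)}\,d\tilde B_t$ with $r=a\rho/2$, whose formal invariant density is proportional to $x^{-\beta-a\rho}(1-x)^{2a-1}$, and the Beta integral indeed produces $c'$.

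The genuine gap is in the step where you connect the hypothesis $\rho<\kappa/2-4$ to the argument, which is exactly where that hypothesis does its work. First, an arithmetic slip: $1-\beta-a\rho>0$ is equivalent to $\rho<(2-4a)/a=\kappa-4$ (since $2/a=\kappa$), not to $\rho<\kappa/2-4$; so mere integrability of $\pi_A$ at $x=0$ is strictly weaker than the hypothesis. Second, integrability of the invariant density does not imply that the boundaries are unattainable, and here it is false: the Feller boundary classification for the diffusion above shows that $x=0$ is inaccessible precisely when $2(1-2a-r)\ge 1$, i.e.\ $\rho\le\kappa/2-4$ (equivalently, the exponent $-\beta-a\rho$ is nonnegative). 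This, not integrability, is the content of the hypothesis; probabilistically it corresponds to the curve being absorbed at the force point rather than swallowing it, so the radial parametrization is defined for all time and $\hat A_t$ never reaches $0$. For $\kappa/2-4<\rho<\kappa-4$ your criterion holds but the lemma's conclusion fails. Third, the boundary $x=1$ is entrance only when $2a\ge 1$, i.e.\ $\kappa\le 4$; for $4<\kappa<8$, which the lemma allows, $x=1$ is a regular boundary that the process does hit (times when the curve touches $\R$), and one must identify the correct boundary behavior as instantaneous reflection (as in \cite{L-Mink} and Lawler's Bessel notes) before invoking positive recurrence and the spectral-gap convergence. With the entrance condition at $0$ tied correctly to $\rho<\kappa/2-4$ and reflection at $1$ handled for $\kappa>4$, the remainder of your argument (stationarity check, normalization, exponential rate) goes through.
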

\begin{proof}
See, e.g., Section~5.3 of \cite{L-Mink}.
\end{proof}
\subsubsection{Relationship between multiple SLE and SLE$_\kappa(\rho)$}\label{relationshipsubsubsec}
Suppose $\kappa \le 4$ and consider a system of two multiple chordal SLEs curves started from $(\xi^1, \xi^2)$ both aiming at $\infty$; recall \eqref{multiplegtdef} and \eqref{multiplexidef}. Suppose we first grow $\gamma_2$ up to a fixed capacity time $t$. The conditional law of $g_t \circ \gamma_1$ is then an SLE$_\kappa(2)$ in $\HH$ started from $(\xi^1_t, \xi^2_t)$. In particular, the marginal law of $\gamma_1$ is that of an SLE$_\kappa(2)$ started from $\xi^1$ with force point $\xi^2$.
%
%Since the system commutes, the distribution of the curve $\gamma_1$ is independent of whether we let the curve $\gamma_2$ grow or not. 
%The co
Indeed, if we choose the particular growth speeds $\lambda_1 = a$ and $\lambda_2 = 0$, then the defining equations (\ref{multiplegtdef}) and (\ref{multiplexidef}) reduce to
\begin{align}\label{commutingSLEreduced}
\begin{cases}
\partial_t g_t(z) = \frac{a}{g_t(z) - \xi^{1}_t}, & g_0(z) = z,
	\\
 d\xi_t^{1} = \frac{a}{\xi^{1} - \xi^{2}} dt + dB_t^{1}, & \xi_0^1 = \xi^1, 
	\\
 d\xi_t^{2} = \frac{a}{\xi_t^2 - \xi_t^1} dt,	& \xi_0^2 = \xi^2, 
\end{cases}
\end{align}
where $(B_t^1)_{t \ge 0}$ is $\PP$-Brownian motion. Evaluating the equation for $g_t(z)$ at $z = \xi^2$ we infer that $\xi_t^2 = g_t(\xi^2)$. 
Comparing this with the equations (\ref{slekapparhoxi2}) and (\ref{slekapparhoxi1}) defining SLE$_\kappa(\rho)$, we 
conclude that $\gamma_1$ has the same distribution under the multiple SLE$_\kappa$ measure $\PP$ as it has under the SLE$_\kappa(2)$-measure $\PP^2$ started from $(\xi^1, \xi^2)$.

\subsubsection{Two-sided radial SLE and radial parametrization}\label{twosidedradialsubsec}
Recall that if $z \in \HH$ is fixed then the SLE$_{\kappa}$ Green's function in $H_{t}$ equals
\begin{align}\label{Gtdef}
G_t = G_t(z)= \Upsilon_t^{d-2}(z) S_t(z)^\beta,
\end{align}
which is a covariant $\PP$-martingale. \emph{Two-sided radial SLE} in $\HH$ through $z$ is the process obtained by weighting chordal SLE$_\kappa$ by $G$. (This is the same as SLE$_{\kappa}(\kappa-8)$ with force point $z \in \HH$.)
Since two-sided radial SLE approaches its target point, it is natural to parametrize so that the conformal radius (seen from $z$) decays deterministically. More precisely, we change time so that $\Upsilon_{s(t)}(z) = e^{-2at}$; this parametrization depends on $z$. The Loewner equation implies
\[
d\ln \Upsilon_{t} = - 2a\frac{y_{t}^{2}}{|z_{t}|^{4}} dt, \qquad z_{t}=x_{t}+ iy_{t}=g_{t}(z) - \xi^{1}_{t}.
\]
Hence $s'(t) = |\tilde{z}_{t}|^{4}/\tilde{y}_{t}^{2}$, where $\tilde{S}_{t} = S_{s(t)}$, $\tilde{z}_{t} = z_{s(t)}$, etc., denote the time-changed processes.
If $\Theta_t = \arg z_t$, then using that
$$d\Theta_t = (1-2a)\frac{x_ty_t}{|z_t|^4}dt + \frac{y_t}{|z_t|^2}d\xi_t^1,$$
we find that $\tilde{\Theta}_{t} = \Theta_{s(t)}$ satisfies
\[
d \tilde{\Theta}_{t} = (1-2a) \cot\tilde{\Theta}_{t}\, dt + d\tilde{W}_{t},
\]
where $(\tilde{W}_{t})_{t \ge 0}$ is standard $\PP$-Brownian motion. 
The time-changed martingale can be written
\begin{align}\label{tildeGtdef}
\tilde{G}_{t} = e^{-2a(d-2) t}\tilde{S}_{t}^\beta.
\end{align}
The measure $\PP^{*} = \PP_z^{*}$ is defined by weighting chordal SLE$_{\kappa}$ by $\tilde{G}$, that is,
\begin{align}\label{twosidedradialdef}
\PP^* \left( V \right) = \tilde{G}_0^{-1}\E[\tilde{G}_t 1_V], \qquad V \in \tilde{\mathcal{F}}_t.
\end{align}
This produces two-sided radial SLE$_{\kappa}$ in the radial parametrization.

Since $d\tilde{G}_t = \beta \tilde{G}_t \cot(\tilde{\Theta}_t) d\tilde{W}_t$, Girsanov's theorem implies that the equation for $\tilde{\Theta}_{t}$ changes to the \emph{radial Bessel equation} under the new measure $\PP^{*}$:
\[
d \tilde{\Theta}_{t} = 2a \cot\tilde{\Theta}_{t} \, dt + d\tilde{B}_{t},
\]
where $\tilde{B}_{t}$ is $\PP^{*}$-standard Brownian motion. 

We will use the following lemma about the radial Bessel equation, see, e.g., Section~3 of \cite{Lawler_Bessel_notes}.

\begin{lemma}\label{lem:radial-bessel}
Let $0 < \kappa < 8, \, a = 2/\kappa$ and  suppose the process $(\Theta_t)_{t \ge 0}$ is a solution to the SDE 
\begin{equation}\label{twosided-SDE}
d \Theta_t = 2a \cot \Theta_t \, dt+ dB_t, \quad \Theta_0 =\Theta.
\end{equation}
Then $(\Theta_t)_{t \ge 0}$ is positive recurrent with invariant density
\begin{equation*}
\psi(x) = \frac{c_*}{2} \sin^{4a} x,
\end{equation*}
where $c_*$ is the constant in (\ref{cstardef}). 
In fact, there is $\alpha > 0$ such that if $f$ is integrable with respect to the density $\psi$, then as $t \to \infty$,
\[
\E\left[ f(\Theta_t) \right] = \int_0^\pi f(x) \, \psi(x) \, dx \, (1+O(e^{-\alpha t})),
\]
where the error term does not depend on $\Theta_0$.
\end{lemma}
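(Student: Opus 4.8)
\emph{Proof plan.} The plan is to recognize \eqref{twosided-SDE} as a one-dimensional diffusion on the open interval $(0,\pi)$, generated by $L=\tfrac12\partial_x^2+2a\cot x\,\partial_x$, and to deduce the statement from the classical ergodic theory of scalar diffusions (as in, e.g., Section~3 of \cite{Lawler_Bessel_notes}). First I would record the scale and speed data: a scale function has derivative $s'(x)=\exp\!\big(-\!\int 4a\cot x\,dx\big)=\sin^{-4a}x$, and the speed measure is $m(dx)=2\sin^{4a}x\,dx$ (equivalently, the stationary forward equation $\tfrac12\rho''=(2a\cot x\,\rho)'$ with zero probability flux forces $\rho(x)\propto\sin^{4a}x$). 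Since $4a=8/\kappa>1$ for $\kappa<8$, both $\int_{0}s'(x)\,dx$ and $\int^{\pi}s'(x)\,dx$ diverge, so $s(0^{+})=-\infty$ and $s(\pi^{-})=+\infty$; hence both endpoints are non-exit (inaccessible), and the solution of \eqref{twosided-SDE} stays in $(0,\pi)$ for all $t$ with no boundary condition to impose.

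Next I would invoke the standard one-dimensional ergodic theory (It\^o--McKean, Feller): because $4a>-1$ the total speed $m\big((0,\pi)\big)=2\int_0^\pi\sin^{4a}x\,dx$ is finite, so $\Theta_t$ is positive recurrent and its unique invariant probability density is the normalized speed density $m(x)\big/m\big((0,\pi)\big)=\sin^{4a}x\big/\int_0^\pi\sin^{4a}u\,du$. By the definition \eqref{cstardef} of $c_*$ one has $c_*/2=1/\int_0^\pi\sin^{4a}u\,du$, so this density is exactly $\psi(x)=\tfrac{c_*}{2}\sin^{4a}x$, as claimed.

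For the quantitative statement I would pass to the substitution $U_t=\cos\Theta_t$, which by It\^o's formula turns \eqref{twosided-SDE} into the symmetric Jacobi diffusion $dU_t=-(2a+\tfrac12)U_t\,dt+\sqrt{1-U_t^{2}}\,d\tilde B_t$ on $(-1,1)$, reversible with respect to $(1-u^{2})^{2a-1/2}\,du$, the pushforward of $\psi$. The generator of a symmetric Jacobi diffusion has a complete orthonormal eigenbasis of Jacobi polynomials with decay rates $\lambda_n=\tfrac12 n(n+4a)$, $n\ge0$; in particular the spectrum is purely discrete, $\lambda_0=0$ is simple with constant eigenfunction, and there is a spectral gap $\alpha:=\lambda_1=2a+\tfrac12>0$. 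The Jacobi semigroup is ultracontractive, so its transition density $p_{t}(x,y)$ with respect to $\psi$ is uniformly bounded for each fixed $t>0$; combined with the eigenfunction expansion this gives, for $t\ge t_0>0$,
\[
|p_t(x,y)-1|\le e^{-\alpha(t-t_0)}\,p_{t_0}(x,x)^{1/2}p_{t_0}(y,y)^{1/2}\le C\,e^{-\alpha t},
\]
uniformly in $x,y\in(0,\pi)$. Integrating against $f$ yields $\big|\E[f(\Theta_t)]-\int_0^\pi f\,\psi\big|\le C e^{-\alpha t}\int_0^\pi|f|\,\psi$, which is the asserted asymptotics with an error independent of $\Theta_0$. (Alternatively, geometric ergodicity uniform in the starting point follows from a Foster--Lyapunov drift inequality $LV\le -cV+b\,\mathbf 1_K$ with $V$ blowing up at $0$ and $\pi$, together with a minorization of $P_{t_0}$ on a compact $K\Subset(0,\pi)$, via the Harris/Meyn--Tweedie theorem.)

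The one genuinely technical point is the behaviour at the two singular endpoints, where the drift $2a\cot x$ is unbounded: one must verify that the conservative (zero-flux) realization of $L$ is the one diagonalized by the Jacobi polynomials, that the spectrum is indeed discrete, and that the on-diagonal kernel $p_{t_0}(x,x)$ remains bounded up to the boundary. After the change of variables $U=\cos\Theta$ these all reduce to standard facts about classical Jacobi diffusions (completeness of Jacobi polynomials in their weighted $L^2$ space and ultracontractivity of the Jacobi semigroup), so once those are quoted the remainder of the argument is routine.
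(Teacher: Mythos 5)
Your argument is correct, but it is worth noting that the paper does not prove this lemma at all: it simply quotes Section~3 of Lawler's notes on the Bessel process, where the radial Bessel equation is analyzed directly by probabilistic/coupling-type estimates for the transition density. Your route is self-contained and different in flavor: you first do the scale/speed analysis on $(0,\pi)$ (with $s'(x)=\sin^{-4a}x$, $m(dx)=2\sin^{4a}x\,dx$, and $4a>1$ for $\kappa<8$ making both endpoints inaccessible), which correctly identifies the normalized speed density $\tfrac{c_*}{2}\sin^{4a}x$ as the unique invariant law; then the substitution $U_t=\cos\Theta_t$ turns the process into a symmetric Jacobi (Gegenbauer) diffusion, and the classical facts you quote — discreteness of the spectrum with rates $\lambda_n=\tfrac12 n(n+4a)$, completeness of the Jacobi polynomials, and boundedness of the kernel $p_{t_0}(x,y)$ for fixed $t_0>0$ (which follows from the polynomial growth of the sup-norms of the normalized eigenfunctions against the Gaussian-in-$n$ decay of $e^{-\lambda_n t_0}$) — give exactly the uniform bound $|p_t(x,y)-1|\le Ce^{-\alpha t}$ and hence the convergence with error independent of $\Theta_0$. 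What your approach buys is an explicit spectral gap $\alpha=2a+\tfrac12$ and a transparent, quantitative kernel estimate; what the citation buys the authors is brevity, and Lawler's argument avoids spectral machinery. Two small remarks: your final additive bound $|\E[f(\Theta_t)]-\int f\psi|\le Ce^{-\alpha t}\int|f|\,\psi$ is the honest content of the lemma and yields the stated multiplicative form only for nonnegative $f$ (which is how the lemma is used in the paper, e.g.\ with $f=\sin^{-\beta}$ and $f=\sin$); and the "technical point" you flag about the zero-flux realization is indeed settled by the inaccessibility of the endpoints, since then the minimal diffusion semigroup on $(-1,1)$ is conservative and unique, so it must coincide with the polynomially diagonalized Jacobi semigroup.
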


\section{Martingale observables as CFT correlation functions}\label{martingalesec}
\subsection{Screening}
The CFT framework of Kang and Makarov \cite{KM2013} can be used to generate martingale observables for SLE systems, see in particular Lecture~14 of \cite{KM2013}. The ideas of \cite{KM2013} have been extended to incorporate several multiple SLEs started from different points in \cite{AKM}. (See also, e.g.,  \cite{BBK2005, GC2006} for related work.) We will also make use of the screening method \cite{DF1984} which produces observables in the form of contour integrals, which we call Dotsenko-Fateev integrals. 
%Let us briefly and informally describe the method???syftar "the method" pÃÃ¥ the screening method \cite{DF1984} som precis nÃÃ¢¬mndes eller Kang Makarov \cite{KM2013}???. 
From the CFT perspective (in the sense of \cite{KM2013}), one starts from a CFT correlation function with appropriate field insertions giving a corresponding (known) SLE$_{\kappa}$ martingale. 
Adding additional paths means inserting additional boundary fields. This will create observables for the system of SLEs. But in the cases we consider, the extra fields change the boundary behavior so that the new observable does not encode the desired geometric information anymore. To remedy this, carefully chosen auxiliary fields are inserted and then integrated out along integration contours. (The mismatching ``charges'' are ``screened'' by the contours.) The correct choices of insertions and integration contours depend on the particular problem, and different choices correspond to solutions with different boundary behavior. 

\begin{rem}\upshape We mention in passing that from a different point of view, it is known that the Gaussian free field with suitable boundary data can be coupled with SLE paths as local sets for the field \cite{MS2012}. Jumps in boundary conditions for the GFF are implemented by vertex operator insertions on the boundary. By the nature of the coupling, correlation functions for the field will give rise to SLE martingales. 
\end{rem}

In what follows, we briefly summarize how we used these ideas to arrive at the explicit expressions (\ref{Pdef}) and (\ref{GGdef}) for the Schramm probability $P(z, \xi)$ and the Green's function $\GG(z, \xi^1, \xi^2)$, respectively. We refer to \cite{KM2013, AKM} for an introduction to the underlying CFT framework and we will use notation from these references. Since the discussion is purely motivational, we make no attempt in this section to be complete or rigorous. This is in contrast to the other sections of the paper which are rigorous. Indeed, we shall only use the results of this section as guesses for solutions to be studied more closely later on. 

Consider a system of two multiple SLEs started from $(\xi^1, \xi^2) \in \R^2$. If $\lambda_1$ and $\lambda_2$ denote the growth speeds of the two curves, the evolution of the system is described by equations (\ref{multiplegtdef}) and (\ref{multiplexidef}).
In the language of \cite{KM2013}, the presence of two multiple SLE curves in $\mathbb{H}$ started from $\xi^1$ and $\xi^2$ corresponds to the insertion of the operator
$$\mathcal{O}(\xi^1, \xi^2) = V_{\star, (b)}^{i\sqrt{a}}(\xi^1)V_{\star, (b)}^{i\sqrt{a}}(\xi^2),$$
where $V_{\star,(b)}^{i\sigma}(z)$ denotes a rooted vertex field inserted at $z$ (see \cite{KM2013}, p. 96) and the parameter $b$ satisfies the relation 
\begin{align}\label{abrelation}
2\sqrt{a}(\sqrt{a} + b) =1, \qquad a=2/\kappa.
\end{align}
Notice that we define $a=2/\kappa$ while \cite{KM2013} defines ``$a$'' by $\sqrt{2/\kappa}$. The framework of \cite{KM2013} (or rather an extension of this framework to the case of multiple curves \cite{AKM}) suggests that if $\{z_j\}_1^n \subset \C$ are points and $\{X_j\}_1^n$ are fields satisfying certain properties, then the correlation function  
\begin{align}\label{Mtz1zn}
M_t^{(z_1, \dots, z_n)} = \hat{\E}_{\mathcal{O}(\xi_t^1, \xi_t^2)}^{\mathbb{H}}[(X_1||g_t^{-1})(z_1) \cdots (X_n|| g_t^{-1})(z_n)]
\end{align}
is a (local) martingale observable for the system when evaluated in the ``Loewner charts'' $(g_t)$, where for each $t$ the map $g_t$ ``removes'' the whole system of curves which is growing at constant capacity speed.
It turns out that the observables relevant for Schramm's formula and for the Green's function belong to a class of correlation functions of the form
\begin{align}\label{Mtzudef}
M_t^{(z,u)} = \hat{\E}_{\mathcal{O}(\boldsymbol{\xi}_t)}[(V_{\star, (b)}^{i\sigma_1}||g_t^{-1})(z)\overline{(V_{\star, (b)}^{i\sigma_2}||g_t^{-1})(z)}(V_{\star, (b)}^{is}||g_t^{-1})(u)],
\end{align}
where $z \in \mathbb{H}$, $u \in \C$, and $\sigma_1, \sigma_2, s \in \R$ are real constants. 
We will later integrate out the variable $u$, but it is essential to include the screening field $(V_{\star, (b)}^{is}||g_t^{-1})(u)$ in the definition (\ref{Mtzudef}) in order to arrive at observables with the appropriate conformal dimensions at $z$ and at infinity. 
The observable $M_t^{(z,u)}$ can be written as
\begin{align}\label{Mmartingale}
M_t^{(z,u)} = &\; g_t'(z)^{\frac{\sigma_1^2}{2} -\sigma_1 b} \overline{g_t'(z)}^{\frac{\sigma_2^2}{2}-\sigma_2 b}  g_t'(u)^{\frac{s^2}{2}-s b}
A(Z_t, \xi_t^1, \xi_t^2, U_t),
\end{align}
where $Z_t = g_t(z)$, $U_t = g_t(u)$, and the function $A(z,\xi^1, \xi^2, u)$ is defined by
\begin{align}\nonumber
A(z,\xi^1, \xi^2, u) 
= &\;  (z - \bar{z})^{\sigma_1\sigma_2} \big[(z - \xi^1)(z - \xi^2)\big]^{\sigma_1 \sqrt{a}} 
 \big[(\bar{z} - \xi^1)(\bar{z} - \xi^2)\big]^{\sigma_2 \sqrt{a}} 	
	\\ \label{Adef}
& \times  (z - u)^{\sigma_1s} (\bar{z} - u)^{\sigma_2 s}
\big[(u - \xi^1)(u - \xi^2)\big]^{s \sqrt{a}}. 
\end{align}
It\^{o}'s formula implies that the CFT generated observable $M_t^{(z,u)}$ is indeed a local martingale for any choice of $z,u \in \mathbb{H}$ and $\sigma_1,\sigma_2,s \in \R$. 
Since (\ref{Mmartingale}) is a local martingale for each value of the screening variable $u$, and the observable transforms as a one-form in $u$, we expect the integrated observable 
\begin{align}\label{integratedMdef}
\mathcal{M}_t^{(z)} = \int_\gamma M_t^{(z,u)} du
\end{align}
to be a local martingale for any choice of $z \in \mathbb{H}$, $\sigma_1,\sigma_2,s \in \R$, and of the integration contour $\gamma$, at least as long as the integral in (\ref{integratedMdef}) converges and the branches of the complex powers in (\ref{Adef}) are consistently defined. The integral in (\ref{integratedMdef}) is referred to as a ``screening'' integral.

By choosing $\lambda_2 = 0$, we expect the observable $\mathcal{M}_t^{(z)}$ defined in (\ref{integratedMdef}) to be a local martingale for SLE$_\kappa(2)$ started from $(\xi^1, \xi^2)$. We later check these facts in the cases of interest by direct computation, see Propositions \ref{schrammPDEprop} and \ref{greenPDEprop}. We next describe how the martingales relevant for Schramm's formula and for the Green's function for SLE$_\kappa(2)$ arise as special cases of $\mathcal{M}_t^{(z)}$ corresponding to particular choices of $\sigma_1,\sigma_2,s \in \R$ and of the contour $\gamma$. 

\subsection{Prediction of Schramm's formula}
In order to obtain the local martingale relevant for Schramm's formula we choose the following values for the parameters (``charges'') in (\ref{Mmartingale}):
\begin{align}\label{schrammchoice}
\sigma_1 = -2\sqrt{a}, \qquad \sigma_2 = 2b, \qquad s = -2\sqrt{a}.
\end{align}
The choice (\ref{schrammchoice}) can be motivated as follows. First of all, by choosing $s = -2\sqrt{a}$ we ensure that $s^2/2 -s b = 1$ (see (\ref{abrelation})). This implies that $\mathcal{M}_t^{(z)}$ involves the one-form $g_t'(u)^{s^2/2-s b}du = g_t'(u) du$. After integration with respect to $du$ this leads to a conformally invariant screening integral. 
To motivate the choices of $\sigma_1$ and $\sigma_2$, let $P(z, \xi^1, \xi^2)$ denote the probability that the point $z\in \mathbb{H}$ lies to the left of an SLE$_\kappa(2)$-path started from $(\xi^1,\xi^2)$. Then we expect $\partial_zP$ to be a martingale observable with conformal dimensions \begin{align}\label{schrammdimensions}
\lambda(z) = 1, \qquad \lambda_*(z) = 0, \qquad \lambda_\infty = 0.
\end{align}
The parameters in (\ref{schrammchoice}) are chosen so that the observable $\mathcal{M}_t^{(z)}$ in (\ref{integratedMdef}) has the conformal dimensions in (\ref{schrammdimensions}). We emphasize that it is the inclusion of the screening field in (\ref{Mtzudef}) that makes it possible to obtain these dimensions. In particular, by including it we can have $\lambda_{\infty} = 0$. We have considered the derivative $\partial_zP$ instead of $P$ because then we are able to  construct a nontrivial martingale with the correct dimensions. 

In the special case when the parameters $\sigma_1,\sigma_2,s$ are given by (\ref{schrammchoice}), the local martingale (\ref{integratedMdef}) takes the form
\begin{align}\nonumber
\mathcal{M}_t^{(z)} = &\; g_t'(z) 
(Z_t - \bar{Z}_t)^{\alpha - 2}(Z_t - \xi_t^1)^{-\frac{\alpha}{2}}(Z_t - \xi_t^2)^{-\frac{\alpha}{2}} (\bar{Z}_t - \xi_t^1)^{1-\frac{\alpha}{2}}(\bar{Z}_t - \xi_t^2)^{1-\frac{\alpha}{2}}
	\\ \label{MSchrammmartingale}
& \times \int_\gamma (Z_t - U_t)^\alpha (\bar{Z}_t - U_t)^{\alpha -2} \big[(U_t - \xi_t^1)(U_t - \xi_t^2)\big]^{-\frac{\alpha}{2}} g_t'(u) du.
\end{align}
We expect from the above discussion that there exists an appropriate choice of the integration contour $\gamma$ in (\ref{integratedMdef}) such that $\partial_z P(z, \xi^1, \xi^2) = \text{const} \times \mathcal{M}_0^{(z)}$, that is, we expect
\begin{align*}
\partial_zP(z, \xi^1, \xi^2) 
= & \; c(\kappa) y^{\alpha - 2} (z - \xi^1)^{-\frac{\alpha}{2}}(z- \xi^2)^{-\frac{\alpha}{2}}(\bar{z} - \xi^1)^{1-\frac{\alpha}{2}}(\bar{z} - \xi^2)^{1-\frac{\alpha}{2}}
	\\
& \times \int_\gamma (u-z)^\alpha (u-\bar{z})^{\alpha -2} (u - \xi^1)^{-\frac{\alpha}{2}}(u - \xi^2)^{-\frac{\alpha}{2}} du,
\end{align*} 
where $c(\kappa)$ is a complex constant and $y = \im z$. 
Setting $\xi^1= 0$ and $\xi^2 = \xi$ in this formula, we arrive at the prediction (\ref{Pdef}) for the Schramm probability $P(z,\xi)$. Indeed, the integration with respect to $x$ in (\ref{Pdef}) recovers $P$ from $\partial_z P$ and ensures that $P$ tends to zero as $\re z \to \infty$. On the other hand, the choice of the integration contour from $\bar{z}$ to $z$ in (\ref{Mdef}) is mandated by the requirement that $P(z, \xi)$ should satisfy the correct boundary conditions as $z$ approaches the real axis. See \cite[Lecture 15]{KM2013} (see also, e.g., \cite{KBZ2012} and the references therein).
Moreover, $P(z,\xi)$ must be a real-valued function tending to $1$ as $\re z \to -\infty$; this fixes the constant $c(\kappa)$. 
\subsection{Prediction of the Green's function}
In order to obtain the local martingale relevant for the SLE$_\kappa(2)$ Green's function, we choose the following values for the parameters in (\ref{Mmartingale}):
\begin{align}\label{greenchoice}
\sigma_1 = b - \sqrt{a}, \qquad \sigma_2 = b - \sqrt{a}, \qquad s = -2\sqrt{a},
\end{align}
As in the case of Schramm's formula, the choice $s = -2\sqrt{a}$ ensures that
$\mathcal{M}_t^{(z)}$ involves the one-form $g_t'(u) du$.
Moreover, if we let $\GG(z, \xi^1, \xi^2)$ denote the Green's function for SLE$_\kappa(2)$ started from $(\xi^1,\xi^2)$, then we expect $\GG$ to have the conformal dimensions (cf. page 124 in \cite{KM2013})
\begin{align}\label{greendimensions}
\lambda(z)= \lambda_*(z) = \frac{2-d}{2}, \qquad \lambda_\infty = 0.
\end{align}
The parameters $\sigma_1$ and $\sigma_2$ in (\ref{greenchoice}) are determined so that the observable $\mathcal{M}_t^{(z)}$ in (\ref{integratedMdef}) has the conformal dimensions in (\ref{greendimensions}). For example, a generalization of Proposition~15.5 in \cite{KM2013} to the case of two curves implies that $\lambda_\infty = (2\sqrt{a}-b)\Sigma + \frac{\Sigma^2}{2} = 0$ where $\Sigma = \sigma_1 + \sigma_2 - 2\sqrt{a}$.

\begin{rem}\label{remarkongenerality}\upshape
We can see here that the choice $\rho=2$ is special: we have only two possible ways to add one screening field, corresponding to $s=-2\sqrt{a}$ or $s=1/\sqrt{a}$. But the extra $\rho = 2$ corresponds to additional charges $\sigma = \sigma_* = 2/\sqrt{8 \kappa}$ (we are using $\sigma = \rho/\sqrt{8 \kappa}$), so at infinity we have an additional charge $\sigma+ \sigma_* = 2 \sqrt{a}$. Consequently, the $\rho = 2$ charge can be screened by only one screening field. If we add more $\rho$ insertions, they can be screened by one screening field if their charges sum up to $2\sqrt{a}$. This suggests that every SLE$_\kappa$ observable with $\lambda_q=0$ gives an SLE$_\kappa(2)$ observable with $\lambda_q=0$ after screening. Similarly, since adding $n$ additional $\rho_{j} =2$ gives additional charges at $\infty$ of $2n\sqrt{a}$, one could expect that one can construct a martingale for a system of $n$ SLEs by adding $n$ screening charges.
\end{rem}
 
In the special case when the parameters $\sigma_1,\sigma_2,s$ are given by (\ref{greenchoice}), the local martingale (\ref{integratedMdef}) takes the form
\begin{align}\label{greenintegratedM}
\mathcal{M}_t^{(z)} = &\; |g_t'(z)|^{2-d} \int_\gamma \mathcal{A}(Z_t,\xi_t^1, \xi_t^2, g_t(u)) g_t'(u)du,
\end{align}
where
\begin{align}\nonumber
\mathcal{A}(z,\xi^1, \xi^2, u) 
= &\; (z - \bar{z})^{\alpha + \frac{1}{\alpha} - 2} |z - \xi^1|^{-\beta} |z - \xi^2|^{-\beta} 
	\\ 
& \times (z - u)^\beta (\bar{z} - u)^{\beta} \big[(u - \xi^1)(u - \xi^2)\big]^{-\frac{\alpha}{2}}.
 \end{align}
We expect from the above discussion that there exists an appropriate choice of the integration contour $\gamma$ in (\ref{integratedMdef}) such that $\GG(z, \xi^1, \xi^2) = \text{const} \times \mathcal{M}_0^{(z)}$, that is, we expect
\begin{align}\label{GGJ}
\GG(z, \xi^1, \xi^2) 
= & \; c(\kappa) y^{\alpha + \frac{1}{\alpha} - 2} 
|z - \xi^1|^{-\beta} |z - \xi^2|^{-\beta} J(z,\xi^1, \xi^2),
\end{align} 
where 
$$J(z,\xi^1, \xi^2) = \int_\gamma (u-z)^\beta (u-\bar{z})^{\beta} (u - \xi^1)^{-\frac{\alpha}{2}} (\xi^2-u)^{-\frac{\alpha}{2}} du$$
and $c(\kappa)$ is a complex constant. 
By requiring that $G$ satisfy the correct boundary conditions, we arrive at the prediction (\ref{GGdef}) for the Green's function for SLE$_\kappa(2)$. 
The trickiest step is the determination of the appropriate screening contour $\gamma$. This contour must be chosen so that the Green's function satisfies the appropriate boundary conditions as $(z,\xi^1, \xi^2)$ approaches the boundary of the domain $\mathbb{H} \times \{-\infty  < \xi^1 < \xi^2 < \infty\}$. The complete verification that the Pochhammer integration contour in (\ref{Mdef}) leads to the correct boundary behavior is presented in Lemma \ref{hboundarylemma} and relies on a complicated analysis of integral asymptotics.
We first arrived at the Pochhammer contour in (\ref{Mdef}) via the following simpler argument.

Let $\GG_\xi(z) = \GG(z, -\xi,\xi)$, $J_\xi(z) = J(z, -\xi,\xi)$. Let also
$I_\xi(z) = I(z, -\xi,\xi)$ where $I$ is the function defined in (\ref{Idef}), i.e.,
\begin{align}\label{Idef2}
I_\xi(z) = \int_A^{(z+,\xi+,z-,\xi-)} (u - z)^{\alpha -1} (u - \bar{z})^{\alpha -1} (\xi + u)^{-\frac{\alpha}{2}} (\xi - u)^{-\frac{\alpha}{2}} du.
\end{align}

We make the ansatz that 
\begin{align}\label{Jxizansatz}
J_\xi(z) = \sum_{i=1}^4 c_i(\kappa) \int_{\gamma_i} (u - z)^{\alpha -1} (u - \bar{z})^{\alpha -1} (\xi + u)^{-\frac{\alpha}{2}} (\xi - u)^{-\frac{\alpha}{2}}  du,
\end{align}
where the contours $\{\gamma_i\}_1^4$ are Pochhammer contours surrounding the pairs $(\xi, z)$, $(\xi, \bar{z})$, $(-\xi, z)$, and $(-\xi, \bar{z})$, respectively.  
The integral involving the pair $(\xi, z)$ is $I_\xi(z)$. The integrals involving the pairs $(\pm \xi, z)$ are related via complex conjugation to the integrals involving the pairs $(\pm \xi, \bar{z})$. Moreover, by performing the change of variables $u \to -\bar{u}$, we see that the integral involving the pair $(-\xi, z)$ can be expressed in terms of $I(-\bar{z})$. 
Thus, using the requirement that $J(z,\xi)$ be real-valued, we can without loss of generality assume that $J(z,\xi)$ is a real linear combination of the real and imaginary parts of $I_\xi(z)$ and $I_\xi(-\bar{z})$.

At this stage it is convenient, for simplicity, to assume $4 < \kappa < 8$ so that $1 < \alpha < 2$. Then we can collapse the contour in the definition (\ref{Idef2}) of $I_\xi(z)$ onto a curve from $\xi$ from $z$; this gives
$$I_\xi(z) = (1-e^{2 i \pi \alpha}+e^{i \pi \alpha}-e^{-i \pi \alpha}) \hat{I}_\xi(z),$$
where $\hat{I}_\xi(z)$ is defined by
$$\hat{I}_\xi(z) = \int_\xi^z (u - z)^{\alpha -1} (u - \bar{z})^{\alpha -1} (\xi + u)^{-\frac{\alpha}{2}} (\xi - u)^{-\frac{\alpha}{2}} du.$$
Since $\hat{I}$ obeys the symmetry $\im \hat{I}_\xi(z) = \im \hat{I}_\xi(-\bar{z})$, our ansatz takes the form
\begin{align}\label{Jxiansatz}
J_\xi(z) = A_1 \re \hat{I}_\xi(z) + A_2\re \hat{I}_\xi(-\bar{z})  + A_3 \im \hat{I}_\xi(z),
\end{align}
where $A_j = A_j(\kappa)$, $j = 1,2,3$, are real constants. 

\begin{rem}\upshape
It is not necessary to include further contours surrounding pairs such as $(z, \bar{z})$ and $(-\xi, \xi)$ in the ansatz (\ref{Jxizansatz}) for $J_\xi(z)$, because the contributions from such pairs can be obtained as linear combinations of the contributions from the four pairs already included. This is most easily seen in the case $1 < \alpha < 2$ where each Pochhammer contour can be collapsed to a single curve connecting the two points in the pair.
\end{rem}

Up to factors which are independent of $y$, we expect the Green's function $\GG_\xi(z)$ to satisfy 
\begin{subequations}\label{GGboundaryvalues}
\begin{align}
& \GG_\xi(x+iy) \sim y^{d-2} = y^{\frac{1}{\alpha} - 1}, \qquad y \to \infty, \quad x \; \text{fixed},
	\\
& \GG_\xi(\xi + iy) \sim y^{d-2}y^{\beta + 2a} = y^{\frac{1}{\alpha} + \frac{3\alpha}{2} -2}, \qquad y \downarrow 0.
\end{align}
\end{subequations}
Indeed, since the influence of the force point $\xi^2$ goes to zero as $\im \gamma(t)$ becomes large, the first relation follows by comparison with SLE$_\kappa$. The second relation can be motivated by noticing that the boundary exponent for SLE$_\kappa(\rho)$ at the force point $\xi^2$ is $\beta + \rho a$, see Lemma \ref{lem:dec10.4}. 
In terms of $J_\xi(z)$, the estimates (\ref{GGboundaryvalues}) translate into
\begin{subequations}
\begin{align} \label{Jpropertiesa}
& J_\xi(x + iy) \sim y^{\alpha - 1}, \qquad y \to \infty, \ x \; \text{fixed},
	\\ \label{Jpropertiesb}
& J_\xi(\xi + iy) \sim y^{\frac{3\alpha}{2}-1}, \qquad y \downarrow 0.
\end{align}
\end{subequations}
We will use these conditions to fix the values of the $A_j$'s. 

We obtain one constraint on the $A_j$'s by considering the asymptotics of $J_\xi(iy)$ as $y \to \infty$. Indeed, for $x = 0$ we have
\begin{align*}
\hat{I}_\xi(iy) = & \int_\xi^{iy} (u^2 + y^2)^{\alpha -1} (\xi^2 - u^2)^{-\frac{\alpha}{2}} du
	\\
 = &\; \frac{i}{2}  \sqrt{\pi } \xi ^{-\alpha } y^{2 \alpha -2} \bigg\{\frac{y \Gamma (\alpha )}{\Gamma(\alpha +\frac{1}{2})} \,
   _2F_1\left(\frac{1}{2},\frac{\alpha }{2},\alpha +\frac{1}{2},-\frac{y^2}{\xi^2}\right)
   	\\
& +\frac{i \xi  \Gamma(1-\frac{\alpha }{2})}{\Gamma(\frac{3}{2}-\frac{\alpha }{2})} \, _2F_1\left(\frac{1}{2},1-\alpha,\frac{3}{2}-\frac{\alpha }{2},-\frac{\xi^2}{y^2}\right)\bigg\},
\end{align*}
where ${}_2F_1$ denotes the standard hypergeometric function. This implies
\begin{align*}
\hat{I}_\xi(iy) = &\;  
y^{\alpha -1 } \left(\frac{i \Gamma(\frac{1}{2}-\frac{\alpha }{2}) \Gamma
   (\alpha )}{2 \Gamma(\frac{\alpha +1}{2})}+O\left(\frac{1}{y^2}\right)\right)
   	\\
& + y^{2(\alpha - 1)} \left(-\frac{\pi ^{3/2} \xi ^{1-\alpha } (\csc(\frac{\pi  \alpha}{2})+i \sec(\frac{\pi  \alpha }{2}))}{2 (\Gamma(\frac{3}{2}-\frac{\alpha }{2}) \Gamma(\frac{\alpha}{2}))}+O\left(\frac{1}{y^2}\right)\right),
   \qquad y \to \infty.
\end{align*}
Substituting this expansion into (\ref{Jxiansatz}), we find an expression for $J_\xi(iy)$ involving two terms which are proportional to $y^{2(\alpha - 1)}$ and $y^{\alpha -1}$, respectively, as $y \to \infty$. In order to satisfy the condition (\ref{Jpropertiesa}), we must choose the $A_j$ so that the coefficient of the larger term involving $y^{2(\alpha - 1)}$ vanishes. This leads to the relation
\begin{align}\label{Aconstraint1}
\frac{A_1 + A_2}{A_3} = - \tan\frac{\pi\alpha}{2}.
\end{align}

We obtain a second constraint on the $A_j$'s by considering the asymptotics of $J_\xi(iy)$ as $z \to \xi$. Indeed, for $x = \xi$ we have
\begin{align*}
\hat{I}_\xi(\xi + iy) = e^{\frac{i\pi}{2}(1 + \frac{\alpha}{2})} \int_0^y (y^2 - s^2)^{\alpha -1} (2\xi + is)^{\alpha -1} s^{-\frac{\alpha}{2}} ds.
\end{align*}	
Hence
\begin{align}\nonumber
\hat{I}_\xi(\xi + iy)
& \sim 
e^{\frac{i\pi}{2}(1 + \frac{\alpha}{2})} (2\xi)^{-\frac{\alpha}{2}} \int_0^y (y^2 - s^2)^{\alpha -1} s^{-\frac{\alpha}{2}} ds
	\\\label{hatInearxi}
& = \frac{2^{-\frac{\alpha }{2}-1} e^{\frac{1}{4} i \pi  (\alpha +2)} \xi ^{-\alpha /2}
   \Gamma \left(\frac{1}{2}-\frac{\alpha }{4}\right) \Gamma (\alpha )}{\Gamma \left(\frac{3 \alpha }{4}+\frac{1}{2}\right)} y^{\frac{3 \alpha
   }{2}-1}, \qquad y \downarrow 0, \ \xi > 0.
\end{align}
Similarly, for $x = -\xi$, we have
\begin{align*}
\hat{I}(-\xi + iy, \xi)
= &\; \int_{\xi}^{-\xi}((u+\xi)^2 + y^2)^{\alpha -1} (\xi + u)^{-\frac{\alpha}{2}} (\xi - u)^{-\frac{\alpha}{2}} du
	\\
& + \int_0^y(i(s-y))^{\alpha -1} (i(s+y))^{\alpha -1} (is)^{-\frac{\alpha}{2}} (2\xi - is)^{-\frac{\alpha}{2}} i ds.
\end{align*}	
Hence
\begin{align}\nonumber
\hat{I}(-\xi + iy, \xi)
 \sim 
& -\int_{-\xi}^{\xi}(u+\xi)^{\frac{3\alpha}{2} -2} (\xi - u)^{-\frac{\alpha}{2}} du
+ e^{\frac{i\pi}{2}(1 - \frac{\alpha}{2})} (2\xi)^{-\frac{\alpha}{2}} 
\int_0^y (y^2 - s^2)^{\alpha -1} s^{-\frac{\alpha}{2}} ds
	\\\nonumber
=&\; 2 \xi ^{\alpha -1} \left(\frac{\, _2F_1\left(1,2-\frac{3 \alpha }{2},2-\frac{\alpha
   }{2};-1\right)}{\alpha -2}+\frac{\, _2F_1\left(1,\frac{\alpha }{2},\frac{3 \alpha
   }{2};-1\right)}{2-3 \alpha }\right)
   	\\ \label{hatInearminusxi}
&   +\frac{i 2^{-\frac{\alpha }{2}-1} e^{-\frac{1}{4} i
   \pi  \alpha } \xi ^{-\alpha /2} \Gamma \left(\frac{1}{2}-\frac{\alpha }{4}\right)
   \Gamma (\alpha )}{\Gamma \left(\frac{3 \alpha
   }{4}+\frac{1}{2}\right)} y^{\frac{3 \alpha }{2}-1}, \qquad y \downarrow 0, \ \xi > 0.
\end{align}
Substituting the expansions (\ref{hatInearxi}) and (\ref{hatInearminusxi}) into (\ref{Jxiansatz}), we find an expression for $J_\xi(\xi + iy)$ involving two terms which are of order  $O(y^{\frac{3 \alpha }{2}-1})$ and $O(1)$, respectively, as $y \to 0$. 
In order to satisfy the condition (\ref{Jpropertiesb}), we must choose the $A_j$ so that the coefficient of the larger term of $O(1)$ vanishes. This implies
\begin{align}\label{Aconstraint2}
A_2 = 0.
\end{align}
Using the constraints (\ref{Aconstraint1}) and (\ref{Aconstraint2}), the expression (\ref{Jxiansatz}) becomes
$$J_\xi(z) = B_1 \im\big(e^{-\frac{i\pi \alpha}{2}}\hat{I}_\xi(z)\big)
 = B_2 \im\big(e^{-i\pi \alpha}I_\xi(z)\big),$$
where $B_j = B_j(\kappa)$, $j = 1,2$, are real constants. Recalling (\ref{GGJ}), this gives the following expression for $\GG_\xi(z) = \GG(z, -\xi,\xi)$:
\begin{align*}
\GG(z, -\xi,\xi) %= G_2(z,-\xi,\xi)
= \frac{1}{\hat{c}}y^{\alpha + \frac{1}{\alpha} - 2} |z + \xi|^{1 - \alpha} |z - \xi|^{1 - \alpha}  \Im\big(e^{-i\pi\alpha} I(z,-\xi,\xi) \big), \qquad
z \in \mathbb{H}, \ \xi > 0,
\end{align*}
where $\hat{c}(\kappa)$ is an overall real constant yet to be determined.
Using translation invariance to extend this expression to an arbitrary starting point $(\xi^1, \xi^2)$, we find (\ref{GGdef}).
The derivation here used that $4 < \kappa < 8$, but by analytic continuation we expect the same formula to hold for $0 < \kappa \leq 4$.

\begin{rem}\upshape
We remark here that the non-screened martingale obtained via Girsanov has the conformal dimensions
\begin{align}\label{girsanovdimensions}
\lambda(z)= \lambda_*(z) = \frac{2-d}{2}, \qquad \lambda_\infty = -\beta.
\end{align}
\end{rem}
\section{Schramm's formula}\label{schrammsec}
This section proves Theorem~\ref{thm:schramm}. The strategy is the same as in Schramm's original argument \cite{S2001}. Assume $0 < \kappa \le 4$, i.e., $\alpha = 8/\kappa \ge 2$. We write the function $\mathcal{M}(z, \xi)$ defined in (\ref{Mdef}) as
\begin{align}\label{Mdef2}
\mathcal{M}(z, \xi) = &\; y^{\alpha - 2} z^{-\frac{\alpha}{2}}(z- \xi)^{-\frac{\alpha}{2}}\bar{z}^{1-\frac{\alpha}{2}}(\bar{z} - \xi)^{1-\frac{\alpha}{2}}J(z, \xi), \qquad z \in \mathbb{H}, \ \xi > 0,
\end{align}
where $z = x+iy$ and $J(z, \xi)$ is defined by
\begin{align}\label{Jdef}
J(z, \xi) = \int_{\bar{z}}^z(u-z)^{\alpha}(u- \bar{z})^{\alpha - 2} u^{-\frac{\alpha}{2}}(u - \xi)^{-\frac{\alpha}{2}} du, \qquad z \in \mathbb{H}, \ \xi > 0,
\end{align}
and the contour from $\bar{z}$ to $z$ passes to the right of $\xi$ as in Figure \ref{Jcontour.pdf}.
We want to prove that the probability that the system started from $(0,\xi)$ passes to the right of $z = x+iy$ is given by
$$P(z, \xi) = \frac{1}{c_\alpha} \int_x^\infty \re \mathcal{M}(x' +i y, \xi) dx', \qquad x \in \R, \ y > 0, \ \xi > 0.$$
The idea is to apply It\^o's formula and a stopping time argument to prove that the prediction is correct. 
%In the present situation the hard work lies in verifying that the function $P(z,\xi)$ has sufficient regularity for It\^o's formula to apply and that it satisfies the appropriate boundary conditions. Most of the needed analysis is carried out in \cite{LV2018B} and the argument presented in this section uses results from there. 
Once we have proved Theorem~\ref{thm:schramm}, we easily obtain fusion formulas by simply collapsing the seeds.

\subsection{Proof of Theorem~\ref{thm:schramm}}
In \cite{LV2018B}, we carefully analyze the function $P(z,\xi)$ and show that it is well-defined, smooth, and fulfills the correct boundary conditions. We summarize these facts here and then use them to give the short proof of Theorem~\ref{thm:schramm}.

\begin{lemma} \label{Plemma}
The function $P(z, \xi)$ defined in \eqref{Pdef} is a well-defined smooth function of $(z, \xi) \in \mathbb{H} \times (0, \infty)$ which satisfies
\begin{subequations}
\begin{align}\label{Peverywherea1}
&  |P(z, \xi)| \leq C(\arg z)^{\alpha -1}, \qquad z \in \mathbb{H}, \ \xi > 0,
  	\\ \label{Peverywhereb1}
 & |P(z, \xi) - 1| \leq C(\pi - \arg z)^{\alpha -1}, \qquad z \in \mathbb{H}, \ \xi > 0.
\end{align}
\end{subequations}
\end{lemma}
\begin{proof}
See \cite{LV2018B}.
\end{proof}

\begin{prop}[PDE for Schramm's formula]\label{schrammPDEprop}
Let $\alpha >1$. The function $\tilde{\mathcal{M}}$ defined by
$$\tilde{\mathcal{M}}(x,y,\xi^1, \xi^2) = \mathcal{M}(x - \xi^1 + iy, \xi^2 - \xi^1),$$ 
where $\mathcal{M}$ is given by (\ref{Mdef}), satisfies the two linear PDEs
\begin{align}\label{schrammPDEs}
\left(\mathcal{A}_j - \frac{2}{(x+iy-\xi^j)^2} \right)\tilde{\mathcal{M}} = 0, \qquad j = 1,2,
\end{align}
where the differential operators $\mathcal{A}_j$ are defined by
\begin{align}
\mathcal{A}_j = &\; \frac{4}{\alpha} \partial_{\xi^j}^2 + \frac{2(x-\xi^j)}{y^2 + (x-\xi^j)^2} \partial_x
- \frac{2y}{y^2 + (x-\xi^j)^2} \partial_y \nonumber \\ \label{eq:thePDE}
& + \frac{2}{\xi^1 - \xi^2} \partial_{\xi^1}
+ \frac{2}{\xi^2 - \xi^1} \partial_{\xi^2}, \qquad j = 1,2.
\end{align}
Moreover, the function $\tilde P$ defined by \[\tilde{P}(x,y,\xi^1,\xi^2)=P(x- \xi^1 + iy, \xi^2-\xi^1),\] where $P(z,\xi)$ is defined by \eqref{Pdef}, satisfies the linear PDEs
\[
\mathcal{A}_j\tilde{P}=0, \qquad j=1,2.
\]
\end{prop}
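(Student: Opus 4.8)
The plan is to verify all the stated PDEs by direct computation, exploiting the screening‑integral structure of $\mathcal{M}$ (equivalently, the fact that $\tilde{\mathcal{M}}$ is the $t=0$ value of the CFT observable $\mathcal{M}_t^{(z)}$ of \eqref{integratedMdef}, which is expected to be a local SLE$_\kappa(2)$‑martingale).

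I would first treat $\tilde{\mathcal{M}}$. Set $z=x+iy$ and write $\tilde{\mathcal{M}}=\Phi\int_{\mathcal{C}}F\,du$, where $\mathcal{C}$ runs from $\bar z$ to $z$ passing to the right of $\xi^2$, $\Phi=y^{\alpha-2}\,[(z-\xi^1)(z-\xi^2)]^{-\alpha/2}[(\bar z-\xi^1)(\bar z-\xi^2)]^{1-\alpha/2}$ and $F=(u-z)^{\alpha}(u-\bar z)^{\alpha-2}[(u-\xi^1)(u-\xi^2)]^{-\alpha/2}$ (this is \eqref{Mdef} expressed through $\tilde{\mathcal{M}}(x,y,\xi^1,\xi^2)=\mathcal{M}(x-\xi^1+iy,\xi^2-\xi^1)$). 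Since $\Phi$ and $F$ are products of powers of linear factors, every logarithmic derivative of $\Phi F$ in the variables $z,\bar z,\xi^1,\xi^2,u$ is a simple rational function; consequently, for each $j\in\{1,2\}$, applying the second‑order operator $\mathcal{A}_j-2(z-\xi^j)^{-2}$ to the integrand $\Phi F$ yields $\Phi F$ times an explicit rational function of $u$, and an elementary partial‑fractions computation produces a rational function $\rho_j(u)$, with poles only among $\{z,\bar z,\xi^1,\xi^2\}$, such that $(\mathcal{A}_j-2(z-\xi^j)^{-2})(\Phi F)=\partial_u(\rho_j(u)\,\Phi F)$. Integrating over $\mathcal{C}$ and accounting for the dependence of the endpoints $\bar z,z$ of $\mathcal{C}$ on $z,\bar z$ gives $(\mathcal{A}_j-2(z-\xi^j)^{-2})\tilde{\mathcal{M}}=[\rho_j\Phi F]_{u=\bar z}^{u=z}+\mathcal{B}_j$, where $\mathcal{B}_j$ collects the Leibniz boundary contributions from those moving endpoints. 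Since $F=O((u-z)^{\alpha})$ near $u=z$ and $F=O((u-\bar z)^{\alpha-2})$ near $u=\bar z$, for $\alpha>2$ both $\rho_j\Phi F$ and $\mathcal{B}_j$ vanish and the PDE follows; the remaining range $1<\alpha\le2$ (not needed for Theorem~\ref{thm:schramm}, where $\kappa\le4$ forces $\alpha\ge2$) then follows because both sides are analytic in $\alpha$ on $(1,\infty)$. The case $j=2$ is identical. Equivalently, this calculation is precisely the vanishing of the It\^o drift of $t\mapsto\mathcal{M}_t^{(z)}$ when the curve from $\xi^j$ is grown.

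For $\tilde P$, the fundamental theorem of calculus gives $\partial_x\tilde P=-c_\alpha^{-1}\re\tilde{\mathcal{M}}$. Using $x=(z+\bar z)/2$, $y=(z-\bar z)/(2i)$, the $\partial_x,\partial_y$‑part of $\mathcal{A}_j$ rewrites as $\frac{2}{z-\xi^j}\partial_z+\frac{2}{\bar z-\xi^j}\partial_{\bar z}$, whence the commutator $[\mathcal{A}_j,\partial_z]=2(z-\xi^j)^{-2}\partial_z$ and therefore $(\mathcal{A}_j-2(z-\xi^j)^{-2})\partial_z\tilde P=\partial_z(\mathcal{A}_j\tilde P)$. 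The key point is that $\tilde{\mathcal{M}}=-2c_\alpha\,\partial_z\tilde P$: as $\re(\partial_z\tilde P)=\tfrac12\partial_x\tilde P=-\tfrac{1}{2c_\alpha}\re\tilde{\mathcal{M}}$, this identity is equivalent to $\partial_y\tilde P=c_\alpha^{-1}\im\tilde{\mathcal{M}}$, i.e.\ to $\partial_y\re\tilde{\mathcal{M}}+\partial_x\im\tilde{\mathcal{M}}=0$ together with $\im\tilde{\mathcal{M}}\to0$ as $x\to+\infty$. The first of these is the statement $\im\partial_{\bar z}\tilde{\mathcal{M}}=0$, which I would verify directly from the explicit formula (it reflects that $\mathcal{M}$ was constructed to be $-2c_\alpha$ times $\partial_z$ of a real scalar, so that $\partial_{\bar z}\mathcal{M}$ is a real multiple of a Laplacian), and the decay is among the estimates established in \cite{LV2018B}. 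Granting these, $\partial_z(\mathcal{A}_j\tilde P)=-\tfrac{1}{2c_\alpha}(\mathcal{A}_j-2(z-\xi^j)^{-2})\tilde{\mathcal{M}}=0$ by the first part, so $\mathcal{A}_j\tilde P$, being real‑valued and killed by $\partial_z$, is constant in $z$; since $\tilde P\to0$ (with its derivatives) as $x\to+\infty$, this forces $\mathcal{A}_j\tilde P\equiv0$.

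The main obstacle is the bookkeeping in the first step: verifying that $(\mathcal{A}_j-2(z-\xi^j)^{-2})(\Phi F)$ is a total $u$‑derivative and that the endpoint terms $[\rho_j\Phi F]_{\bar z}^{z}+\mathcal{B}_j$ cancel. This is the standard Coulomb‑gas screening mechanism (cf.\ \cite{DF1984,KM2013}), but the non‑trivial contour endpoints $\bar z,z$ and the analytic‑continuation step from $\alpha>2$ to $\alpha>1$ require care. For $\tilde P$ the only inputs beyond the first step are the reality of $\partial_{\bar z}\tilde{\mathcal{M}}$—equivalently, that $\re(\tilde{\mathcal{M}}\,dz)$ is a closed one‑form, so that $\tilde P$ is a genuine potential for $\tilde{\mathcal{M}}$—and the decay of $\tilde{\mathcal{M}}$ as $\re z\to+\infty$.
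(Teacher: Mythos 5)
Your first step is essentially the paper's own argument: the paper also writes $\tilde{\mathcal M}=\int_{\bar z}^z m\,du$ with the full integrand $m=\Phi F$ and verifies (the same ``long but straightforward'' computation you defer to partial fractions) that $\bigl(\mathcal{A}_j-2(z-\xi^j)^{-2}\bigr)m=-\partial_u\bigl(\tfrac{2}{u-\xi^j}\,m\bigr)$, i.e.\ your $\rho_j(u)$ is just $-2/(u-\xi^j)$; for $\alpha>2$ the operator passes inside the integral, the total $u$-derivative integrates to the endpoint values, and these vanish thanks to the factors $(u-z)^{\alpha}$ and $(u-\bar z)^{\alpha-2}$. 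For $\tilde P$ you take a genuinely different route. The paper applies $\mathcal{A}_j$ directly to $\int_x^\infty\Re\tilde{\mathcal M}\,dx'$, pushes it through the integral, integrates by parts twice and uses the relation $\partial_y\Re\tilde{\mathcal M}=-\partial_{x'}\Im\tilde{\mathcal M}$ (Lemma~7.8 of \cite{LV2018B}) to cancel the leftover terms. You instead integrate that same relation once to obtain the potential identity $\tilde{\mathcal M}=-2c_\alpha\,\partial_z\tilde P$, use the (correct) commutator $[\mathcal{A}_j,\partial_z]=2(z-\xi^j)^{-2}\partial_z$ to get $\partial_z(\mathcal{A}_j\tilde P)=0$, and conclude by reality of $\mathcal{A}_j\tilde P$ plus decay as $\Re z\to+\infty$. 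This is a clean alternative; it rests on the same two external inputs (the closedness relation and decay from \cite{LV2018B}), though your Liouville-type ending additionally needs decay of the $\xi$-derivatives of $\tilde P$ as $x\to+\infty$, an estimate of the same kind as those already being imported, so the two routes are of comparable cost.

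The one thin spot is the range $1<\alpha\le2$. First, your parenthetical is slightly off: $\kappa\le4$ gives $\alpha\ge2$, so the boundary case $\alpha=2$ (i.e.\ $\kappa=4$) is needed for Theorem~\ref{thm:schramm} and is \emph{not} covered by your $\alpha>2$ argument. Second, ``both sides are analytic in $\alpha$'' is precisely the point requiring work: for $\alpha\le2$ the factor $(u-\bar z)^{\alpha-2}$ is singular at the moving endpoint, so differentiating $\tilde{\mathcal M}$ in $x,y$ under the integral produces non-integrable terms, and it is not immediate that $\mathcal{A}_j\tilde{\mathcal M}$ is given by the naive formula or that it is analytic (even continuous) in $\alpha$ down to and through $\alpha=2$. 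The paper deals with exactly this by rewriting $\tilde{\mathcal M}$ via a Pochhammer contour (for non-integer $\alpha$), on which the integrand is regular so that smoothness and the integration by parts are unproblematic, and by an explicit elementary computation at $\alpha=2$, where the integrand is rational. To make your continuation argument stand you should route it through such a representation (or otherwise establish smoothness of $\tilde{\mathcal M}$ together with analyticity of its derivatives in $\alpha$ near $\alpha=2$); as written, this step is asserted rather than proved.
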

\begin{proof}
Let $z = x+iy$ and $\bar{z} = x-iy$. 
We have
\begin{align}\label{tildecalMm}
\tilde{\mathcal{M}}(x,y,\xi^1, \xi^2) = \int_{\bar{z}}^{z} m(x,y, \xi^1, \xi^2,u) du,
\end{align}
where the integrand $m$ is given by
\begin{align*}
m(x,y,\xi^1, \xi^2, u) = &\; y^{\alpha - 2} (z-\xi^1)^{-\frac{\alpha}{2}}(z- \xi^2)^{-\frac{\alpha}{2}}(\bar{z}-\xi^1)^{1-\frac{\alpha}{2}} (\bar{z} - \xi^2)^{1-\frac{\alpha}{2}}
	\\
&\times (u-z)^{\alpha}(u-\bar{z})^{\alpha - 2} (u-\xi^1)^{-\frac{\alpha}{2}}(u - \xi^2)^{-\frac{\alpha}{2}}.
\end{align*}
Let
\[
\mathcal{B}_j=\mathcal{A}_j - \frac{2}{(x+iy-\xi^j)^2}, \quad j=1,2.
\]
A long but straightforward computation shows that $m$ obeys the equations
\begin{align}\label{BjmPDE}
\mathcal{B}_jm + \frac{2}{u-\xi^j} \partial_u m - \frac{2}{(u-\xi^j)^2} m = 0, \qquad j = 1,2.
\end{align}
Suppose first that $\alpha > 2$. Then we can take the differential operator $\mathcal{B}_j$ inside the integral when computing $\mathcal{B}_j\mathcal{M}$ without any extra terms being generated by the variable endpoints. Hence (\ref{BjmPDE}) implies
$$\mathcal{B}_j\mathcal{M} = - \int_{\bar{z}}^{z} \bigg(\frac{2}{u-\xi^j} \partial_u m - \frac{2}{(u-\xi^j)^2} m\bigg) du, \qquad j = 1,2.$$
An integration by parts with respect to $u$ shows that the integral on the right-hand side vanishes. This shows (\ref{schrammPDEs}) for $\alpha > 2$. The equations in (\ref{schrammPDEs}) follow in the same way for $\alpha \in (1,2)$ if we first replace the contour from $\bar{z}$ to $z$ in (\ref{tildecalMm}) by a Pochhammer contour:
$$\tilde{\mathcal{M}}(x,y,\xi^1, \xi^2) = \frac{1}{(1 - e^{2\pi i \alpha})^2}\int_A^{(z+, \bar{z}+,z-,\bar{z}-)} m(x,y, \xi^1, \xi^2,u) du, \qquad \alpha \neq \Z.$$
If $\alpha = 2$, then
$$m(x,y,\xi^1, \xi^2, u) = \frac{(u-z)^2}{(z-\xi^1)(z-\xi^2)(u-\xi^1)(u-\xi^2)}$$
%which implies
%$$\tilde{\mathcal{M}}(x,y,\xi^1, \xi^2) = \frac{2iy(\xi^1-\xi^2) + (z - \xi^1)^2 \ln\frac{z-\xi^1}{\bar{z} - \xi^1} - (z - \xi^2)^2 \ln\frac{z-\xi^2}{\bar{z} - \xi^2}}{(z-\xi^1)(z-\xi^2)(\xi^1-\xi^2)},$$
and (\ref{schrammPDEs}) can be verified by a direct computation. 

It remains to check the last assertion. We have
\[
\tilde P(x,y,\xi^1,\xi^2) =  \int_x^\infty \tilde{m}(x',y,\xi^1, \xi^2)dx',
\]
where $\tilde m = \frac{1}{c_\alpha} \Re \tilde{\mathcal{M}}$. 
Write
\begin{align}\label{calDjdef}
\mathcal{A}_j(x) = \mathcal{D}_j
+ f_j(x)\partial_x + g_j(x) \partial_y,
\end{align}
where
\begin{align*}
& \mathcal{D}_j = \frac{4}{\alpha} \partial_{\xi^j}^2 + \frac{2}{\xi^1 - \xi^2} \partial_{\xi^1}
+ \frac{2}{\xi^2 - \xi^1} \partial_{\xi^2},
	\\
& f_j(x) = \frac{2(x-\xi^j)}{y^2 + (x-\xi^j)^2}, \qquad 
g_j(x) = - \frac{2y}{y^2 + (x-\xi^j)^2},
\end{align*}
and we have only indicated the dependence on $x$ explicitly.
Since $c_\alpha \in \R$ and $\mathcal{A}_j$ has real coefficients, we have
$$(\mathcal{A}_j \tilde{P})(x) = \frac{1}{c_\alpha} \Re \mathcal{A}_j(x) \int_x^\infty \tilde{\mathcal{M}}(x',y,\xi^1, \xi^2)dx'.$$
Employing (\ref{calDjdef}) twice, we find
\begin{align}\nonumber
\mathcal{A}_j(x) \int_x^\infty \tilde{\mathcal{M}}(x')dx'
= &\; \int_x^\infty \mathcal{D}_j\tilde{\mathcal{M}}(x')dx'
- f_j(x)\tilde{\mathcal{M}}(x) + g_j(x) \int_x^\infty \partial_y\tilde{\mathcal{M}}(x')dx'
	\\\nonumber
= &\; \int_x^\infty (\mathcal{A}_j(x') - f_j(x')\partial_{x'} - g_j(x') \partial_y)\tilde{\mathcal{M}}(x')dx'
	\\\label{AjxintMtilde}
& - f_j(x) \tilde{\mathcal{M}}(x) + g_j(x) \int_x^\infty \partial_y\tilde{\mathcal{M}}(x')dx'.
\end{align}	
Using (\ref{schrammPDEs}) to replace $\mathcal{A}_j(x')$ and integrating by parts in the term involving $f_j(x')$, it follows that the right-hand side of (\ref{AjxintMtilde}) equals
\begin{align*}
& \int_x^\infty \bigg(\frac{2}{(x'+iy-\xi^j)^2}  - g_j(x') \partial_y\bigg)\tilde{\mathcal{M}}(x')dx'
%	\\
%& - [f_j(x')\tilde{\mathcal{M}}(x')]_x^\infty 
% + \int_x^\infty \partial_{x'}f_j(x') \tilde{\mathcal{M}}(x')dx'
%	\\
%& - f_j(x) \tilde{\mathcal{M}}(x) + g_j(x) \int_x^\infty \partial_y\tilde{\mathcal{M}}(x')dx'
%	\\
%= &\; \int_x^\infty \bigg(\frac{2}{(x'+iy-\xi^j)^2} - g_j(x') \partial_y\bigg)\tilde{\mathcal{M}}(x')dx'
%	\\
%&  + \int_x^\infty \partial_{x'}f_j(x') \tilde{\mathcal{M}}(x')dx'
% + g_j(x) \int_x^\infty \partial_y\tilde{\mathcal{M}}(x')dx'
	\\
& =  \int_x^\infty \bigg(\frac{2}{(x'+iy-\xi^j)^2} + \partial_{x'}f_j(x')  + (g_j(x) - g_j(x')) \partial_y\bigg)\tilde{\mathcal{M}}(x')dx'.
\end{align*}
Since
$$\frac{2}{(x'+iy-\xi^j)^2} + \partial_{x'}f_j(x')
= -\frac{4iy(x'-\xi^j)}{((x'-\xi^j)^2 + y^2)^2}$$
is purely imaginary and $g_j(x)$ is real-valued, this yields
\begin{align}\nonumber
\mathcal{A}_j \tilde{P} 
= &\; \frac{1}{c_\alpha} \Re \int_x^\infty \bigg(\frac{2}{(x'+iy-\xi^j)^2} + \partial_{x'}f_j(x')  + (g_j(x) - g_j(x')) \partial_y\bigg)\tilde{\mathcal{M}}(x')dx'
	\\\nonumber
= &\; \frac{1}{c_\alpha}  \int_x^\infty \frac{4y(x'-\xi^j)}{((x'-\xi^j)^2 + y^2)^2}
\im \tilde{\mathcal{M}}(x')dx'
	\\\label{calAjtildeP}
& + \frac{1}{c_\alpha}  \int_x^\infty (g_j(x) - g_j(x')) \partial_y\Re \tilde{\mathcal{M}}(x')dx'.
\end{align}
Since $\partial_y\Re \tilde{\mathcal{M}}(x') = -\partial_{x'} \Im \tilde{\mathcal{M}}(x')$ (see Lemma 7.8 in \cite{LV2018B}), we can integrate by parts again to see that
\begin{align}\label{intxinftygj}
 \int_x^\infty (g_j(x) - g_j(x')) \partial_y\Re \tilde{\mathcal{M}}(x')dx'
% = -  \int_x^\infty (g_j(x) - g_j(x')) \partial_{x'} \Im \tilde{\mathcal{M}}(x')dx'
%	\\
%& = - \bigg((g_j(x) - g_j(x')) \Im \tilde{\mathcal{M}}(x')\bigg|_x^\infty
%- \int_x^\infty \partial_{x'}(g_j(x) - g_j(x')) \Im \tilde{\mathcal{M}}(x')dx'\bigg)
%	\\
%& = - \int_x^\infty \partial_{x'}g_j(x') \Im \tilde{\mathcal{M}}(x')dx'
%	\\
 = - \int_x^\infty \frac{4y(x'-\xi^j)}{(y^2 + (x'-\xi^j)^2)^2} \Im \tilde{\mathcal{M}}(x')dx'.
\end{align}
Combining (\ref{calAjtildeP}) and (\ref{intxinftygj}), we conclude that $\mathcal{A}_j \tilde{P}=0 $.
%= \frac{1}{c_\alpha}  \int_x^\infty \frac{4y(x'-\xi^j)}{((x'-\xi^j)^2 + y^2)^2}
%\im \tilde{\mathcal{M}}(x')dx'
%- \frac{1}{c_\alpha} \int_x^\infty \frac{4y(x'-\xi^j)}{(y^2 + (x'-\xi^j)^2)^2} \Im \tilde{\mathcal{M}}(x')dx'
%= 0.
%\]
\end{proof}

Consider a system of multiple SLEs in $\HH$ started from $0$ and $\xi > 0$, respectively. 
%We grow both paths simultaneously with some given speeds $\lambda_j \geq 0$. 
Write $\xi_t^1$ and $\xi_t^2$ for the Loewner driving terms of the system and let $g_t$ denote the solution of (\ref{multiplegtdef}) which uniformizes the whole system at capacity $t$. Then $\xi_t^1$ and $\xi_t^2$ are the images of the tips of the two curves under the conformal map $g_t$.
Given a point $z \in \mathbb{H}$, let $Z_t = g_t(z) $ and let  $\tau(z)$ denote the time that $\Im g_t(z)$ first reaches $0$.

A point $z \in \mathbb{H}$ lies to the left of both curves iff it lies to the left of the leftmost curve $\gamma_1$ started from $0$. Moreover, since the system is commuting, its distribution is independent of the order at which the two curves are grown. Hence we may assume that the growth speeds $\lambda_1$ and $\lambda_2$ are given by $\lambda_1 = 1$ and $\lambda_2 =0$, but this assumption is not essential. We are therefore now  in the setting of SLE$_\kappa(2)$ started from $\xi^1$ with force point at $\xi^2$.

%\begin{lemma}
%Let $z \in \mathbb{H}$. Define $P_t(z)$ by
%\begin{align*}
%  P_t(z) = P(Z_t - \xi_t^1, \xi_t^2 - \xi_t^1), \qquad 0 \leq t < \tau(z).
%\end{align*}
%Then $P_t(z)$ is a martingale for the system of commuting SLEs.
%\end{lemma}
%\begin{proof}
%See Lemma~\ref{Ptclaim}.
%\end{proof}

\begin{lemma}\label{Ptclaim}
Let $z \in \mathbb{H}$. Define $P_t(z)$ by
\begin{align*}
  P_t(z) = P(Z_t - \xi_t^1, \xi_t^2 - \xi_t^1), \qquad 0 \leq t < \tau(z).
\end{align*}
Then $P_t(z)$ is an SLE$_\kappa(2)$ martingale.
\end{lemma}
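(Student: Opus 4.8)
The plan is to obtain the martingale property by a direct application of It\^o's formula to the composite process $P_t(z)=\tilde P(X_t,Y_t,\xi^1_t,\xi^2_t)$, using Proposition~\ref{schrammPDEprop} to kill the drift and the a priori bound on $P$ imported from \cite{LV2018B} to upgrade the resulting local martingale to a genuine martingale.

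First I would fix the growth speeds $\lambda_1=1$, $\lambda_2=0$ (legitimate by commutation), so that $g_t$ solves $\partial_t g_t(z)=\tfrac{1}{g_t(z)-\xi^1_t}$, only $\xi^1_t$ carries a Brownian part with $d\langle\xi^1\rangle_t=\tfrac{\kappa}{2}\,dt$, and $\xi^2_t=g_t(\xi^2)$ evolves by the pure-drift equation $d\xi^2_t=\tfrac{dt}{\xi^2_t-\xi^1_t}$; this is exactly the SLE$_\kappa(2)$ setting of Section~\ref{relationshipsubsubsec}. Writing $Z_t=g_t(z)=X_t+iY_t$, we have $dX_t=\re\tfrac{1}{Z_t-\xi^1_t}\,dt$ and $dY_t=\im\tfrac{1}{Z_t-\xi^1_t}\,dt$. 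For $t<\tau(z)$ one has $Y_t>0$, hence $Z_t-\xi^1_t\in\mathbb{H}$, and for $0<\kappa\le 4$ the SLE$_\kappa(2)$ curve never swallows the force point, so $\xi^2_t-\xi^1_t>0$; thus $(X_t,Y_t,\xi^1_t,\xi^2_t)$ stays in the region $\mathbb{H}\times(0,\infty)$ on which $\tilde P$ is smooth by the regularity lemma recalled from \cite{LV2018B}, and It\^o's formula applies.

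Next I would compute $dP_t(z)$. Since $X_t,Y_t,\xi^2_t$ are of finite variation, the only second-order contribution is $\tfrac12\partial_{\xi^1}^2\tilde P\,d\langle\xi^1\rangle_t=\tfrac{\kappa}{4}\partial_{\xi^1}^2\tilde P\,dt$, and collecting the first-order terms $\partial_x\tilde P\,dX_t+\partial_y\tilde P\,dY_t+\partial_{\xi^1}\tilde P\,d\xi^1_t+\partial_{\xi^2}\tilde P\,d\xi^2_t$ one finds that the $dt$-coefficient of $dP_t(z)$ equals $\tfrac12(\mathcal{A}_1\tilde P)(X_t,Y_t,\xi^1_t,\xi^2_t)$, with $\mathcal{A}_1$ exactly the operator in \eqref{eq:thePDE}; this is precisely why Proposition~\ref{schrammPDEprop} is stated for the shifted function $\tilde P(x,y,\xi^1,\xi^2)=P(x-\xi^1+iy,\xi^2-\xi^1)$. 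By the last assertion of Proposition~\ref{schrammPDEprop} we have $\mathcal{A}_1\tilde P=0$, so
\[
dP_t(z)=\sqrt{\tfrac{\kappa}{2}}\,\partial_{\xi^1}\tilde P(X_t,Y_t,\xi^1_t,\xi^2_t)\,dB^1_t ,
\]
and $P_t(z)$ is a local martingale on $[0,\tau(z))$.

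Finally, to conclude that $P_t(z)$ is a true martingale on $[0,\tau(z))$ I would invoke the bound \eqref{Peverywherea1}: $|P_t(z)|=|P(Z_t-\xi^1_t,\xi^2_t-\xi^1_t)|\le C(\arg(Z_t-\xi^1_t))^{\alpha-1}\le C\pi^{\alpha-1}$, so $P_t(z)$ is a uniformly bounded local martingale, hence a bounded martingale (localize by a reducing sequence and pass to the limit by bounded convergence). The genuine analytic content — the PDE identity, the smoothness of $P$, and its boundary asymptotics — is entirely absorbed into Proposition~\ref{schrammPDEprop} and the results from \cite{LV2018B}; the only point requiring care here is the bookkeeping in the It\^o computation, namely checking that the generator of the driving diffusion applied to the composite function $\tilde P$ reproduces $\mathcal{A}_1$ exactly, and this is a mechanical verification.
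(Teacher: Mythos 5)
Your proof is correct and follows essentially the same route as the paper: It\^o's formula together with the PDE $\mathcal{A}_1\tilde P=0$ from Proposition~\ref{schrammPDEprop} kills the drift (your bookkeeping with $\lambda_1=1$, $\lambda_2=0$, giving $d\langle\xi^1\rangle_t=\tfrac{\kappa}{2}dt$ and the factor $\tfrac{4}{\alpha}=\tfrac{\kappa}{2}$, checks out), and the uniform bound \eqref{Peverywherea1} upgrades the bounded local martingale to a true martingale. You have merely written out the details the paper leaves implicit.
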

\begin{proof}
It\^{o}'s formula combined with Proposition~\ref{schrammPDEprop} immediately implies that $P_t$ is a local martingale for the SLE$_\kappa(2)$ flow; the drift term vanishes. Since $P$ is bounded by Lemma \ref{Plemma}, it follows that $P_t$ is actually a martingale.
%Let $\Theta^2_t = \arg(Z_t - \xi_t^2)$ and for  $n=1,2,\ldots,$ let
%\begin{multline*}
%\tau_n = \inf \left\{t \ge 0 : \, \Im Z_t \le 1/n\right\} \wedge \inf \left\{t \ge 0 : \,  \Upsilon_t(z) \le 1/n\right\} \\ \wedge \inf \left\{t \ge 0 : \,  \sin \Theta^2_t \le 1/n\right\} \wedge \inf \left\{t \ge 0: \diam \gamma_t \ge n \right\}.
%\end{multline*}
%Then $\tau_n$ is a localizing sequence for $Q_t$. Indeed, by distortion estimates $Q_{t \wedge \tau_n}$ is a bounded martingale (each of the factors in $Q_t$ is bounded by a constant depending only on $n$, uniformly on the integration contour, see Figure~\ref{schrammmartingale.pdf}) and for each $z$, $\tau_n \to \infty$ a.s as $n \to \infty$. Hence for each $n$, Fubini's theorem implies that $P_{t \wedge \tau_n}(z)$ is a martingale. This shows that $P_t(z)$ is a local martingale and since $P(z,\xi)$ is bounded it is actually a martingale. 

%SHOW MARTINGALE PROPERTY.
%Using the smoothness of $P$, we can apply It\^o's formula to see that
%\begin{align*}
%dP_t(z) = \frac{1}{c_\alpha} \int_z^\infty \re\bigg[
%\int_{\bar{u}}^{u} dQ dw du\bigg],
%\end{align*}
%where
 \end{proof}

%
%\begin{figure}
%\bigskip\medskip
%\begin{center}
%\begin{overpic}[width=.9\textwidth]{schrammmartingale.pdf}
%      \put(5.4,35.7){\small $z$}
%      \put(5.4,7.1){\small $\bar{z}$}
%      \put(19,42.2){\small $u$}
%      \put(19,0.3){\small $\bar{u}$}
%      \put(31.5,33.7){\small $w$}
%      \put(11.3,19.3){\small $\xi_0^1$}
%      \put(22,19){\small $\xi_0^2$}
%      \put(58,29.4){\small $Z_t$}
%      \put(58,13.2){\small $\bar{Z}_t$}
%      \put(76.2,31.2){\small $U_t$}
%      \put(76,11){\small $\bar{U}_t$}
%      \put(90.3,25.5){\small $W_t$}
%      \put(52,19){\small $g_t(\xi^{1}_-)$}
%      \put(71,19){\small $g_t(\xi^{1}_+)$}
%      \put(64,19){\small $\xi_t^1$}
%      \put(81,19){\small $\xi_t^2$}
%      \put(44.5,34){\small $g_t$}
%\end{overpic}
%     \begin{figuretext}\label{schrammmartingale.pdf}
%       If $t \le \tau_\delta$ we have: $|g'_t(z)| \le \delta^{-1} \Im z$ so $|f'_t(Z_t)| \ge \delta/\Im z$. By distortion estimates: $\inf_{x \ge 0}|f'_t(Z_t + x)| \ge c_\delta  |f'_t(Z_t)|$ and $\inf_{\zeta \in \eta_x}|f'_t(\zeta)| \ge c'_\delta |f'_t(Z_t + x)|$.
%       
%       If $\Upsilon_t(z) \ge \delta$ and $\Im g_t(z) \ge \delta$, then  
%       \end{figuretext}
%     \end{center}
%\end{figure}
%

\begin{lemma}\label{Theta1claim}
Let $z \in \mathbb{H}$, and $\Theta_t^1 = \arg(Z_t - \xi_t^1)$. 
Then,  
$$\lim_{t \uparrow \tau(z)} \Theta_t^1 = 0 \quad 
\bigg(\text{resp. } \lim_{t \uparrow \tau(z)} \Theta_t^1 = \pi\bigg),$$ 
if and only if $z$ lies to the right (resp. left) of the curve $\gamma_1$ starting at $0$. 
\end{lemma}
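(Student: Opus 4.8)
The plan is to identify the geometric meaning of the argument $\Theta_t^1 = \arg(Z_t - \xi_t^1)$ under the Loewner flow and to track its behaviour as $t \uparrow \tau(z)$. First I would recall that $g_t$ maps the component of $\HH \setminus (\gamma_{1,t} \cup \gamma_{2,t})$ containing $z$ conformally onto $\HH$, sending the tip $\gamma_1(t)$ to $\xi_t^1$ and mapping the two sides of the curve $\gamma_{1,t}$ (together with the part of $\R$ to its left up to the base of $\gamma_2$, or up to $+\infty$ if we are in the SLE$_\kappa(2)$ picture with $\lambda_2=0$) to the two real half-lines emanating from $\xi_t^1$. Consequently $\Theta_t^1 \in (0,\pi)$ is, up to the conformal map $g_t$, the harmonic-measure-type angle at $z$ subtended between the ``left boundary arc'' and the ``right boundary arc'' determined by $\gamma_{1,t}$; equivalently, $\sin \Theta_t^1 = S_{H_t, \gamma_1(t), \infty}(z)$ in the notation of the Preliminaries. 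Since $z$ has not yet been swallowed for $t < \tau(z)$, $\Theta_t^1$ stays strictly inside $(0,\pi)$.

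Next I would argue that as $t \uparrow \tau(z)$ the point $z$ is absorbed into the hull, so $g_t(z)$ approaches $\xi_t^1$ and $S_t(z) = \sin\Theta_t^1 \to 0$; hence $\Theta_t^1 \to 0$ or $\Theta_t^1 \to \pi$, and by continuity of $t \mapsto \Theta_t^1$ exactly one of the two limits occurs. To decide which, I would use the distortion/Beurling-type comparison between $S_{H_t,\gamma_1(t),\infty}(z)$ and the harmonic measures $\omega_{H_t}(z, J_-)$, $\omega_{H_t}(z,J_+)$ of the ``left'' and ``right'' prime-end arcs of $\partial H_t$ recorded in \eqref{Somegaestimate}: $\Theta_t^1 \to 0$ forces $\omega_{H_t}(z, J_+) \to 0$, i.e. the right side of the curve separates $z$ from a vanishing portion of the boundary, which is precisely the statement that $z$ ends up on the left side of $\gamma_1$ — wait, I should be careful with the orientation convention; $\arg$ small means $Z_t - \xi_t^1$ is nearly a positive real, so $z$ is being pinched against the part of $\partial H_t$ corresponding to $(\xi_t^1, +\infty)$, which is the image of the \emph{right} side of $\gamma_1$ together with the real axis to its right, so $\Theta_t^1 \to 0$ corresponds to $z$ lying to the right of $\gamma_1$, and $\Theta_t^1 \to \pi$ to $z$ lying to the left. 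This matches the statement. The cleanest way to make this rigorous is to fix a small $\delta$ and note that once $\Theta_t^1 < \delta$ (resp. $> \pi - \delta$) for some $t$, monotonicity of the swallowed region implies $z$ is disconnected from $+\infty$ along $\R$ (resp. from $-\infty$) at time $\tau(z)$, which is one of the two mutually exclusive events ``$z$ to the right of $\gamma_1$'' / ``$z$ to the left of $\gamma_1$''.

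The main obstacle I anticipate is the precise topological bookkeeping: one has to make sense of ``$z$ lies to the left/right of $\gamma_{1,\infty}$'' when $\gamma_{1,\infty}$ is a curve to $\infty$ (so that $\HH \setminus \gamma_{1,\infty}$ has exactly two components, left and right), verify that $z$ is a.s.\ not on the curve and hence lies in one of them, and check that membership in the left (resp. right) component is equivalent to the corresponding boundary arc of $H_t$ collapsing as $t \uparrow \tau(z)$. This requires knowing that $\gamma_1$ is a.s.\ a continuous simple-enough curve (true for $0<\kappa\le 4$, where SLE$_\kappa(2)$ is generated by a simple curve) and that $\tau(z) < \infty$ a.s.\ with $z$ swallowed exactly at $\tau(z)$; both follow from standard SLE$_\kappa(\rho)$ facts for the parameter range at hand together with \eqref{Koebe} relating $\Upsilon_t(z)$ and $\dist(z, \partial H_t)$. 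Once this framework is in place, the argument reduces to the continuity and monotonicity observations above, together with the elementary fact that $\Theta_t^1$ cannot converge to an interior point of $(0,\pi)$ because $\sin\Theta_t^1 = S_t(z) \to 0$.
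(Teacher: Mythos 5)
The paper itself does not reprove this statement; its ``proof'' is a one-line citation to Lemma 3 of \cite{S2001}, so the relevant question is whether your self-contained sketch is sound. Your geometric identification is the right one (indeed, one can be even more direct: $\Theta_t^1/\pi$ is exactly the harmonic measure $\omega_{H_t}(z,J_-^t)$ of the left boundary arc, so the lemma is the statement that this harmonic measure tends to $0$ or $1$ according to the side of $\gamma_1$ containing $z$), and after your self-correction the orientation is correct. But the central analytic step is justified incorrectly. For the paper's regime $0<\kappa\le 4$, the SLE$_\kappa(2)$ curve is simple, avoids $\R$ and a.s.\ avoids $z$, and it never swallows interior points: $\tau(z)=\infty$ a.s. Your claims that ``as $t\uparrow\tau(z)$ the point $z$ is absorbed into the hull, so $g_t(z)$ approaches $\xi_t^1$'' and that ``$\tau(z)<\infty$ a.s.\ with $z$ swallowed exactly at $\tau(z)$'' are false here, so the key assertion $S_t(z)=\sin\Theta_t^1\to 0$ --- which is what forces the limit to lie in $\{0,\pi\}$ --- is left without proof. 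The correct input is the a.s.\ transience of the curve, $|\gamma_1(t)|\to\infty$: any Brownian path from $z$ in $H_t$ reaching the boundary arc on the far side of the curve must first hit $\gamma_1([t,\infty))$, which for large $t$ lies outside any fixed disk of radius $R$, so by a gambler's-ruin/Beurling bound that arc has harmonic measure $O(\Im z/R)\to 0$. With that in hand, your continuity argument (the angle cannot recross $\pi/2$ once $\sin\Theta_t^1$ is small) and the identification of the limit with the side of the curve do go through.

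A second, smaller defect: the closing step ``once $\Theta_t^1<\delta$ for some $t$, monotonicity of the swallowed region implies $z$ is disconnected from $+\infty$ along $\R$ at time $\tau(z)$'' does not make sense in this regime --- for simple curves nothing is ever disconnected, and a small angle at one time does not persist without the convergence argument above. So the proposal, as written, has a genuine gap precisely at the point where Schramm's Lemma 3 does the work; repairing it requires the transience-plus-harmonic-measure estimate (or simply invoking \cite{S2001}, as the paper does).
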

\begin{proof}
See the proof of Lemma 3 in \cite{S2001}.
\end{proof}

\begin{lemma}
Let $\tilde{P}(z, \xi)$ be the probability that the point $z \in \mathbb{H}$ lies to the left of the two curves starting at $0$ and $\xi > 0$, respectively. Then $\tilde{P}(z,\xi) = P(z,\xi)$, where $P(z,\xi)$ is the function defined in \eqref{Pdef}.
\end{lemma}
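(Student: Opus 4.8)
The plan is to run Schramm's original stopping-time argument \cite{S2001} in the present SLE$_\kappa(2)$ setting, feeding in the three facts already established: that $P_t(z)=P(Z_t-\xi_t^1,\xi_t^2-\xi_t^1)$ is a bounded SLE$_\kappa(2)$ martingale (Lemma~\ref{Ptclaim}); that $\Theta_t^1=\arg(Z_t-\xi_t^1)$ converges to $0$ (resp.\ $\pi$) as $t\uparrow\tau(z)$ precisely when $z$ ends up to the right (resp.\ left) of $\gamma_1$ (Lemma~\ref{Theta1claim}); and the boundary bounds $|P(w,\xi)|\le C(\arg w)^{\alpha-1}$ and $|P(w,\xi)-1|\le C(\pi-\arg w)^{\alpha-1}$, valid uniformly in $\xi>0$. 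Here $\gamma_1$ denotes the curve started from $0$, whose marginal law is that of SLE$_\kappa(2)$ from $(0,\xi)$ (Section~\ref{relationshipsubsubsec}); since $\kappa\le4$ (so that in particular $\rho=2\ge\kappa/2-2$) this curve is a.s.\ simple and does not swallow the force point, so $\tau(z)=\infty$ and $z$ almost surely lies strictly to one side of $\gamma_1$.

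First I would observe that $P$ is bounded, so $P_t(z)$ is a bounded (hence uniformly integrable) martingale on $[0,\infty)$; therefore it converges almost surely and in $L^1$, and
\[
\tilde P(z,\xi)\ \overset{?}{=}\ P(z,\xi)=P_0(z)=\E\Big[\lim_{t\to\infty}P_t(z)\Big],
\]
where we used $\xi^1=0$, $\xi^2=\xi$ and $Z_0=z$ to get $P_0(z)=P(z,\xi)$. Next I would identify the limit via Lemma~\ref{Theta1claim}: on the event that $z$ lies to the left of $\gamma_1$ we have $\Theta_t^1\to\pi$, whence $|P_t(z)-1|\le C(\pi-\Theta_t^1)^{\alpha-1}\to0$; on the complementary event $\Theta_t^1\to0$, whence $|P_t(z)|\le C(\Theta_t^1)^{\alpha-1}\to0$. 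Consequently $\lim_{t\to\infty}P_t(z)=\mathbf{1}\{z\text{ lies to the left of }\gamma_1\}$ almost surely.

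Combining these two steps gives $P(z,\xi)=\PP\big(z\text{ lies to the left of }\gamma_1\big)$. Since a point of $\HH$ lies to the left of both curves of the commuting system if and only if it lies to the left of the leftmost curve $\gamma_1$, the right-hand side equals $\tilde P(z,\xi)$, which finishes the proof.

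I do not expect a real obstacle here: the substantive analytic work — the PDE of Proposition~\ref{schrammPDEprop}, the regularity and boundary bounds for $P$ taken from \cite{LV2018B}, and the near-$\tau(z)$ behavior of $\Theta_t^1$ — has already been carried out, so what remains is the routine packaging described above. The only mild points needing care are the martingale convergence as $t\to\infty$ (handled by the boundedness of $P$) and the almost sure left/right dichotomy for a fixed interior point (handled by the simplicity of $\gamma_1$ when $\kappa\le4$), both of which are classical.
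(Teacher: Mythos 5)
Your proof is correct and follows essentially the same route as the paper: the bounded martingale $P_t(z)$ from Lemma~\ref{Ptclaim}, the angle dichotomy of Lemma~\ref{Theta1claim}, and the boundary bounds \eqref{Peverywherea1}--\eqref{Peverywhereb1} are combined to identify the terminal value as the indicator of the event that $z$ lies to the left of $\gamma_1$. The only (harmless) difference is that you invoke the bounded martingale convergence theorem on $[0,\infty)$ (using $\tau(z)=\infty$ a.s.\ for $\kappa\le 4$), whereas the paper stops at the times $\tau_n(z)=\inf\{t:\sin\Theta_t^1\le 1/n\}$ and applies optional stopping together with dominated convergence.
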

\begin{proof}
By Lemma~\ref{Theta1claim}, the angle $\Theta_t^1 = \arg(Z_t - \xi_t^1)$ approaches $\pi$ as $t \uparrow \tau(z)$ on the event that $z \in \mathbb{H}$ lies to the left of both curves. But \eqref{Peverywhereb1} shows that
$$|P_t(z) - 1|
= |P(Z_t - \xi_t^1, \xi_t^2 - \xi_t^1) - 1|
 \leq C(\pi - \Theta_t^1)^{\alpha -1}, \qquad z \in \mathbb{H}, \ t \in [0, \tau(z)).$$
Consequently, on the event that $z$ lies to the left of both curves, $P_t(z) \to 1$ as $t \uparrow \tau(z)$.
 A similar argument relying on \eqref{Peverywherea1} shows that on the event that $z \in \mathbb{H}$ lies between or to the right of the two curves, then $P_t(z) \to 0$ as $t \uparrow \tau(z)$. 

Let $\tau_n(z)$ be the stopping time defined by
$$\tau_n(z) = \inf\bigg\{ t \geq 0\, : \, \sin \Theta_t^1 \leq \frac{1}{n}\bigg\}.$$
Since $P_t(z)$ is a martingale, we have
$$P_0(z) = \E \left[P_{\tau_n(z)}(z) \right], \qquad z \in \mathbb{H}, \ n = 1, 2, \dots.$$
By using the dominated convergence theorem,
$$\lim_{n \to \infty} \E \left[P_{\tau_n(z)}(z) \right] =\tilde{P}(z, \xi).$$
Since $P_0(z) = P(z, \xi)$, this concludes the proof of the lemma and of Theorem~\ref{thm:schramm}. 
\end{proof}

%\subsection{The special case $\kappa = 4$}\label{specialcasessec}
If $\alpha = \frac{8}{\kappa} > 1$ is an integer, the integral (\ref{Jdef}) defining $J(z, \xi)$ can be computed explicitly. However, the formulas quickly get very complicated as $\alpha$ increases. We consider here the simplest case of $\alpha =2$ (i.e. $\kappa = 4$). We remark that this case is particularly simple for one curve as well;  indeed, the probability that an SLE$_4$ path passes to the right of $z$ equals $(\arg z)/\pi$. %The following result follows from a direct computation; we omit the details.

\begin{prop}
Let $\kappa = 4$. Then the function $P(z,\xi)$ in (\ref{Pdef}) is given explicitly by
\begin{align}\nonumber
P(z, \xi) = &\; \frac{1}{4 \pi ^2 \xi }\bigg\{-2 \arctan\left(\frac{x}{y}\right) \left(\pi  \xi -2 \xi  \arctan\left(\frac{x-\xi
   }{y}\right)+2 y\right)
	\\ \label{Palpha2}
& + \pi ^2 \xi +(4 y-2 \pi  \xi ) \arctan\left(\frac{x-\xi }{y}\right)\bigg\}, \qquad z = x+iy \in \mathbb{H}, \ \xi > 0.
   \end{align}
\end{prop}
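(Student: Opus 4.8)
The plan is to specialize the general formula \eqref{Pdef} to $\alpha = 2$ (i.e.\ $\kappa = 4$) and evaluate the two integrals involved --- the inner one defining $\mathcal{M}$ in \eqref{Mdef} and the outer $x'$-integral --- in closed form; at $\alpha = 2$ the inner integrand is rational, so everything is elementary.

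First I would record that \eqref{calphadef} gives $c_2 = -2\pi^2$ (using $\Gamma(1/2) = \sqrt{\pi}$ and $\Gamma(1) = \Gamma(2) = 1$), so that $P(z,\xi) = -\frac{1}{2\pi^2}\int_x^\infty \re\mathcal{M}(x'+iy,\xi)\,dx'$. Next, when $\alpha = 2$ the prefactor in \eqref{Mdef} collapses to $\frac{1}{z(z-\xi)}$ and the integrand of $J$ in \eqref{Jdef} becomes the rational function $(u-z)^2/[u(u-\xi)]$. The partial fraction identity
\[
\frac{(u-z)^2}{u(u-\xi)} = 1 - \frac{z^2}{\xi}\cdot\frac{1}{u} + \frac{(z-\xi)^2}{\xi}\cdot\frac{1}{u-\xi}
\]
produces the antiderivative $u - \frac{z^2}{\xi}\ln u + \frac{(z-\xi)^2}{\xi}\ln(u-\xi)$, which I would evaluate from $\bar{z}$ to $z$ along the contour of Figure \ref{Jcontour.pdf}. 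The one point requiring care is the branch of the two logarithms: because the contour passes to the right of $\xi$ (and hence of $0$), the continuous argument of $u$ increases by $2\theta^1$ and that of $u - \xi$ by $2\theta^2$ along the path, where $\theta^1 = \arg z$ and $\theta^2 = \arg(z-\xi)$. Together with $z - \bar{z} = 2iy$ this gives the closed form
\[
\mathcal{M}(z,\xi) = 2i\left(\frac{y}{z(z-\xi)} - \frac{\theta^1 z}{\xi(z-\xi)} + \frac{\theta^2(z-\xi)}{\xi z}\right).
\]

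Taking real parts with $z = x'+iy$ (so that $\theta^1 = \arg(x'+iy)$ and $\theta^2 = \arg(x'-\xi+iy)$ are the relevant real-valued branches) yields three rational terms in $x'$, and the key observation is that their sum is a \emph{total derivative} in $x'$. Writing $a_1(x') = \arg(x'+iy)$ and $a_2(x') = \arg(x'-\xi+iy)$, so that $a_1'(x') = -y/|x'+iy|^2$ and $a_2'(x') = -y/|x'-\xi+iy|^2$, a short computation (the only mildly non-obvious step being that the $\theta^1$-term pairs with $|z-\xi|^2$ and the $\theta^2$-term with $|z|^2$) gives
\[
\re\mathcal{M}(x'+iy,\xi) = \frac{d}{dx'}\left[\frac{2y}{\xi}\big(a_1(x') - a_2(x')\big) + 2a_1(x')a_2(x')\right].
\]
Hence the outer integral in \eqref{Pdef} is immediate: the boundary term at $x' = +\infty$ vanishes since $a_1, a_2 \to 0$ there, and one is left with $\int_x^\infty \re\mathcal{M}\,dx' = -\frac{2y}{\xi}(\theta^1 - \theta^2) - 2\theta^1\theta^2$. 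Dividing by $c_2 = -2\pi^2$ and rewriting $\arg(x+iy) = \frac{\pi}{2} - \arctan(x/y)$ and $\arg(x-\xi+iy) = \frac{\pi}{2} - \arctan((x-\xi)/y)$, then expanding the product, reproduces \eqref{Palpha2}; as sanity checks the resulting function tends to $0$ as $x \to +\infty$ and to $1$ as $x \to -\infty$, as it must. (Alternatively one could verify that the right-hand side of \eqref{Palpha2} solves the PDEs of Proposition \ref{schrammPDEprop} with the correct boundary values and invoke uniqueness, but the direct evaluation above is more self-contained.)

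I do not expect a genuine obstacle here: once the total-derivative structure is spotted, the computation collapses to a two-line evaluation. The only places that require attention are the bookkeeping of the logarithm branches along the contour passing to the right of $\xi$ --- a slip there would introduce a stray additive $2\pi i$ --- and checking that the antiderivative of $\re\mathcal{M}$ has vanishing limit as $x' \to +\infty$, which is precisely what makes the integral in \eqref{Pdef} convergent and equal to the stated expression.
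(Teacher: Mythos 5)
Your proposal is correct and follows essentially the same route as the paper: evaluate $J(z,\xi)$ at $\alpha=2$ in closed form (your partial-fraction antiderivative with the $2\theta^1$, $2\theta^2$ branch increments reproduces the paper's $J(z,\xi)=2iy+\frac{2i}{\xi}\big((z-\xi)^2\arg(z-\xi)-z^2\arg z\big)$), then integrate $\re\mathcal{M}$ in $x'$ via an explicit antiderivative and evaluate at the endpoints. Your "total derivative" $\frac{2y}{\xi}(a_1-a_2)+2a_1a_2$ agrees with the paper's antiderivative up to an additive constant, so the two computations coincide.
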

\begin{proof}
Let $\alpha = 2$. Then $c_\alpha = -2\pi^2$ and an explicit evaluation of the integral in (\ref{Jdef}) gives  
\begin{align*}
 J(z, \xi) = 2iy + \frac{2i}{\xi}\big((z - \xi)^2\arg(z - \xi) - z^2\arg z\big), \qquad z \in \mathbb{H}, \ \xi > 0.
\end{align*}
Using that
$$\arg z = \frac{\pi}{2} - \arctan \frac{x}{y} \qquad \text{and} \qquad \arg(z -\xi) = \frac{\pi}{2} - \arctan \frac{x - \xi}{y},$$
it follows that the function $\mathcal{M}$ in (\ref{Mdef}) can be expressed as
\begin{align*}
& \mathcal{M}(z, \xi) = \frac{2i}{z(z-\xi)\xi} \bigg\{(z - \xi)^2\bigg(\frac{\pi}{2} - \arctan \frac{x - \xi}{y}\bigg) - z^2\bigg(\frac{\pi}{2} - \arctan \frac{x}{y}\bigg) + \xi y\bigg\}
\end{align*}
for $z = x + iy \in \mathbb{H}$ and $\xi > 0$. 
Taking the real part of this expression and integrating with respect to $x$, we find that the function $P(z,\xi)$ in (\ref{Pdef}) is given by
\begin{align*}
P(z, \xi) = &\; \frac{1}{c_\alpha} \int_x^\infty \re \mathcal{M}(x' +i y, \xi) dx'
	\\
= &\; \frac{1}{2 \pi ^2\xi }\bigg\{(2 y-\pi  \xi ) \left(\frac{\pi }{2}-\arctan\frac{x'-\xi }{y}\right)
-(\pi  \xi +2 y) \left(\frac{\pi }{2}-\arctan\frac{x'}{y}\right)
	\\
& -2 \xi \arctan\Big(\frac{x'}{y}\Big) \arctan\Big(\frac{x'-\xi }{y}\Big)\bigg\}\bigg|_{x'=x}^\infty.
\end{align*}
The expression (\ref{Palpha2}) follows.
\end{proof}

\begin{rem}\upshape
In the fusion limit, equation (\ref{Palpha2}) is consistent with the results of \cite{GC2006}. Indeed, in the limit $\xi \downarrow 0$ the expression (\ref{Palpha2}) for $P(z, \xi)$ reduces to 
$$P(z, 0+) = \frac{1}{4}-\frac{1}{\pi ^2 (1+t^2)}-\frac{\arctan{t}}{\pi } + \frac{(\arctan t)^2}{\pi^2}, \qquad t := \frac{x}{y},$$
which is equation (25) in \cite{GC2006}. 
\end{rem}

\section{The Green's function}\label{greensec}
In this section we prove Theorem~\ref{thm:green}. We recall from the discussion in Section \ref{mainsec} that the proof breaks down into proving Propositions \ref{prop:slekr} and \ref{G2prop}. Proposition \ref{prop:slekr} establishes existence of a Green's function for SLE$_\kappa(\rho)$ and provides a representation for this Green's function in terms of an expectation with respect to two-sided radial SLE. Proposition \ref{G2prop} then shows that the CFT prediction $\GG_\xi(z)$ defined in (\ref{GGdef}) obeys this representation in the case of $\rho = 2$.

\subsection{Existence of the Green's function: Proof of Proposition \ref{prop:slekr}}\label{greensubsec1}
The basic idea of the proof of existence is similar to the ``standard'' one for the Green's function for SLE$_\kappa$ in $\HH$ which we now briefly recall. (See, e.g., \cite{LR2015} for further discussion.) Consider chordal SLE$_\kappa$ from $0$ to $\infty$ in $\HH$. If $\tau = \tau_\ee = \inf\{t\ge 0 : \Upsilon_t(z) \le \ee\}$, we have
\[
 \lim_{\ee \downarrow 0 } \ee^{d-2}\PP(\tau < \infty) = G(z)  \lim_{\ee \downarrow 0 }\E^*[1_{\tau < \infty} S_\tau(z)^{-\beta}]  = G(z)  \lim_{\ee \downarrow 0 }  \E^*[S_\tau(z)^{-\beta}] = c_* G(z).
\]
Here $G(z) = \Upsilon(z)^{d-2} S(z)^\beta$ is the SLE$_\kappa$ Green's function and $\E^*$ refers to the two-sided radial SLE measure with marked point $z \in \HH$. The computation uses that $\PP^*(\tau < \infty) = 1$ and knowledge of the invariant distribution of $S$ under $\PP^*$. 

In the present setting, there are several complications. The SLE$_\kappa$ measure $\PP$ is now weighted by a local martingale \eqref{mg} to accommodate the boundary force point, and one of the main hurdles is to control the magnitude of this weight. Moreover, the expression corresponding to $\E^*[S_\tau(z)^{-\beta}]$ involves an extra weight. Proving convergence is therefore significantly harder and our argument requires control of crossing events for paths near the marked interior point (Lemma~\ref{lem:crossing}).  

Now let us proceed to the proof.
Let $0 < \kappa \le 4$ and $0 \le \rho < 8-\kappa$ and consider SLE$_\kappa(\rho)$ started from  $(\xi^1, \xi^2)$ with $\xi^1 < \xi^2$. We recall our parameters 
\[a=2/\kappa, \quad r=\rho a/2=\rho/\kappa, \quad \zeta(r)=\frac{r}{2a} \left(r+2a-1\right),\]
and the normalized local martingale
\begin{equation}\label{mg}
M_t^{(\rho)} = \left(\frac{\xi^2_t-\xi^1_t}{\xi^2-\xi^1}\right)^{r} g'_t(\xi^2)^{\zeta(r)} 
\end{equation}
by which we can weight SLE$_{\kappa}$ in order to obtain SLE$_{\kappa}(\rho)$, see Section~\ref{sect:prelim}. Let us first derive a few simple estimates on $M_t^{(\rho)}$. 
Note that for our parameter choices we have $r, \zeta \ge 0$. 
The identity
\begin{align}\label{gtprimexi2expression}
g_t'(\xi^2) = \exp\bigg(-\int_0^t \frac{ads}{(\xi_s^2 - \xi_s^1)^2}\bigg)
\end{align}
which follows from (\ref{SLEkapparhodefa}) shows that 
\begin{align}\label{gtprimexi2bound}
0 \leq g_t'(\xi^2) \leq 1, \qquad t \geq 0. 
\end{align}
Moreover, if $E_t$ denotes the interval $E_t = (\xi_t^1, \xi_t^2) \subset \R$ and $\gamma$ the curve generating the Loewner chain $(g_t)_{t \ge 0}$, then conformal invariance of harmonic measure gives
$$\lim_{s\to \infty} s \pi \hm_{\mathbb{H}\smallsetminus \gamma[0,t]}(is, g_t^{-1}(E_t))
= \lim_{s\to \infty} s \pi \hm_{\mathbb{H}}(g_t(is), E_t) = |\xi_t^2 - \xi_t^1|.$$ 
Since the left-hand side is bounded above by a constant times $1 + \diam(\gamma[0,t])$, this gives the estimate 
\begin{align}\label{xi1xi2diam}
|\xi_t^2 - \xi_t^1| \leq C (1 + \diam(\gamma[0,t])), \qquad t \geq 0,
\end{align}
which together with \eqref{gtprimexi2bound} shows that $M_t^{(\rho)}$ only gets large when $\diam(\gamma[0,t])$ gets large. 

We will also need a geometric regularity estimate. In order to state it, let $z \in \HH$ and $0<\ee_1< \ee_2 < \Im z$. Let $\gamma: (0,1] \to \HH$ be a simple curve such that
\[
\gamma(0+) = 0, \quad |\gamma(1)-z| = \ee_1, \quad |\gamma(t) - z| > \ee_1, \, t \in [0,1). 
\]
Write $H = \HH \smallsetminus \gamma$ where $\gamma= \gamma[0,1]$. For $\ee > 0$ let $\ball_\ee = \ball_\ee(z)$ be the disk of radius $\ee$ about $z$  and let $U$ be the connected component containing $z$ of $\ball_{\ee_2} \cap H$. The set $\partial \ball_{\ee_2} \cap \partial U$ consists of crosscuts of $H$. There is a unique outermost one which separates $z$ from $\infty$ in $H$ and we denote this crosscut
\begin{equation}\label{crosscutdef}
\ell = \ell(z, \gamma, \ee_2).
\end{equation}
Outermost means that $\ell$ separates $z$ and any other such crosscut from $\infty$.
See Figure~\ref{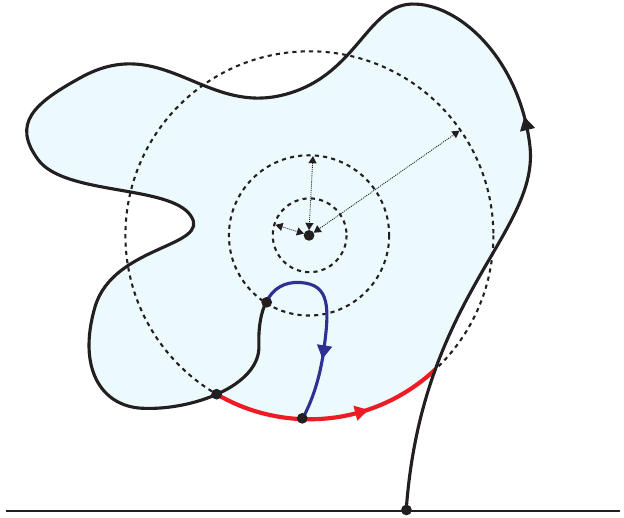}.

\begin{lemma}\label{lem:crossing}
Let $0 < \kappa \le 4$.
There exists $C < \infty$ such that the following holds. Let $z \in \HH$ and $0 < \ee_1 <  \ee_2 < \Im z$. For $\ee>0$ define the stopping times
\begin{align}\label{tautauprimedef}
\tau_\ee = \inf\{t \ge 0 :  \Upsilon_t(z) \le \ee\}, \qquad \tau_\ee'= \inf\{t \ge 0 :  |\gamma(t) - z| \le \ee\}.
\end{align}
If 
\begin{align}\label{lambdaee1ee2def}
\lambda= \lambda_{\ee_1,\ee_2}=\inf\{t \ge \tau_{\ee_1}' :  \gamma(t) \cap \ell \neq \emptyset \}, 
\end{align}
where 
\begin{align}\label{crosscutelldef}
\ell = \ell(z, \gamma_{\tau_{\ee_1}'}, \ee_2)
\end{align}
is as in \eqref{crosscutdef}, then for $0 < 10\ee < \ee_1$, on the event $\{\tau_{\ee_1}' < \infty\}$, 
\begin{align}\label{PPlambdatauepsilon}
\PP \left( \lambda < \tau_\ee < \infty \mid \gamma_{\tau_{\ee_1}'} \right)\le C \left(\frac{\ee}{\ee_1} \right)^{2-d} \left(\frac{\ee_1}{\ee_2} \right)^{\beta/2},
\end{align}
where $\beta = 4a -1$.
\end{lemma}

\begin{figure}
\begin{center}
\begin{overpic}[width=.5\textwidth]{crosscutL.pdf}
      \put(29.3,32.5){\small $\gamma(\tau_{\epsilon_1}')$}
      \put(30,15){\small $\gamma(\sigma)$}
      \put(44,11){\small $\gamma(\lambda)$}
      \put(57,18.5){\small $\ell$}
      \put(54,26){\small $\gamma'$}
      \put(86,63){\small $\gamma$}
      \put(64.5,-3){\small $0$}
      \put(61.5,57){\small $\epsilon_2$}
      \put(46,53){\small $\epsilon_1$}
      \put(45.3,43.5){\small $\epsilon$}
      \put(48.5,42){\small $z$}
      \put(32,58){\small $V$}
\end{overpic}
\smallskip
     \begin{figuretext}\label{crosscutL.pdf}
       Schematic picture of the curve $\gamma$ (solid), the open set $V$ (shaded), and the crosscut $\ell$ defined in (\ref{crosscutdef}). If the path reenters $V$ and hits $\ball_\epsilon(z)$ the ``bad'' event that $\lambda < \tau < \infty$ occurs. The probability of this event is estimated in Lemma~\ref{lem:crossing}. The path $\gamma'=\gamma[\sigma, \lambda]$ is a crosscut of $V$ that may either separate $z$ from the $\partial V \smallsetminus \ell$ or not. In either case, the function $S_{\lambda}(z)$ defined in (\ref{StUpsilontdef}) can be estimated by the Beurling estimate since there is only one ``side'' of the curve facing $z$ inside $V$.        
       \end{figuretext}
     \end{center}
\end{figure}

\begin{proof}
Let $\ball_1 = \ball_{\ee_1}(z)$ and $\ball_2 = \ball_{\ee_2}(z)$. Given $\gamma_{\tau_{\ee_1}'}$ we consider the outermost separating crosscut $\ell = \ell(z, \gamma_{\tau_{\ee_1}'}, \ee_2)$. Let $\sigma = \max \{t \le \tau_{\ee_1}': \gamma(t) \in \ell\}$, which is not a stopping time but almost surely $\ell$ is a crosscut of $H_\sigma$ which separates $z$ from $\infty$. Write $V$ for the simply connected component containing $z$ of $H_{\sigma} \smallsetminus \ell$. Because one of the endpoints of $\ell$ is the tip $\gamma(\sigma)$, $g_\sigma(\partial V \smallsetminus \ell) - W_\sigma$ is a bounded open interval $I$ contained in either the positive or negative real axis. (This also uses that $\ell$ is a crosscut.)  Almost surely, the curve $\gamma'=\gamma[\sigma, \lambda]$ is a crosscut of $V$ starting and ending in $\ell$. Note that $g_\sigma(\gamma') -W_\sigma$ is a curve in $\HH$ connecting $0$ with $g_\sigma(\ell)-W_\sigma$, the latter which is a crosscut of $\HH$ separating $I$ and the point $g_\sigma(z) - W_\sigma$ from $\infty$ in $\HH$. Therefore, there is one ``side'' (i.e., one of $g_\sigma^{-1}(W_\sigma \pm \mathbb{R}_+)$) of the curve $\gamma$ such that any curve connecting $z$ with it must intersect $\ell$. If we now write $\delta := \dist(\gamma_\lambda, z) \le \ee_1$, we can use \eqref{Somegaestimate}, the maximum principle, and then the Beurling estimate (Lemma~\ref{beurling1}) to see that (see Figure~\ref{crosscutL.pdf}) on the event $\tau_{\ee_1}' < \infty$,
\[
S_{\lambda}(z) \le C \hm_{V \smallsetminus \gamma'}(z, \ell) \le C \hm_{(\ball_2 \cap V) \smallsetminus \gamma'}(z,\partial \ball_2) \le C \left(\frac{\delta}{\ee_2} \right)^{1/2}.
\] 
Consequently, on the event that $\tau_{\ee_1}' < \infty$ and $\delta \ge 2 \ee$, the one-point estimate Lemma~\ref{lem:one-point} and the distortion estimate (\ref{Koebe}) show that
\[
\PP \left( \tau_\ee < \infty \mid \gamma_{\lambda} \right) \leq C \left(\frac{\ee}{\Upsilon_\lambda(z)}\right)^{2-d} S_\lambda(z)^\beta
\le C \left(\frac{\ee}{\delta}\right)^{2-d} \left(\frac{\delta}{\ee_2} \right)^{\beta/2}.
\]
Since  $\delta \le \ee_1$ and 
\begin{align}\label{beta2dgeq0}
\beta/2 - (2-d) \ge 0 \quad \text{for} \quad  \kappa \le 4,
\end{align}
the right-hand side gets larger if $\delta$ is replaced by $\epsilon_1$. Thus, on the event that $\tau_{\ee_1}' < \infty$ and $\delta \ge 2 \ee$,
\[
\PP \left( \tau_\ee < \infty \mid \gamma_{\lambda} \right) \leq C \left(\frac{\ee}{\ee_1}\right)^{2-d} \left(\frac{\ee_1}{\ee_2} \right)^{\beta/2},
\]
which proves (\ref{PPlambdatauepsilon}) for all curves with $\delta \ge 2 \ee$.
% as well as the fact that $\lambda$ is a stopping time. 

On the event that  $\tau_{\ee_1}' < \infty$ and $\ee < \delta \le 2\ee$, we can use the boundary estimate of Lemma~\ref{lem:boundary-excursion-ED} as follows. We may view $\partial \ball_\delta(z)$ as a crosscut of $\HH \smallsetminus \gamma[0, \lambda]$ possibly considering only a subarc. We use the extension rule (see Chapter~IV of \cite{GM2005}) to estimate from below the extremal distance between $g^{-1}_{\lambda}(W_\lambda + \mathbb{R}_+)$ (or $g^{-1}_{\lambda}(W_\lambda - \mathbb{R}_+)$, whichever does not intersect $\partial \ball_\delta(z)$ viewed as a crosscut) and $\ball_\delta(z)$ in $\mathbb{H} \smallsetminus \gamma[0, \lambda]$ by the extremal distance between $\partial \ball_2$ and $\ball_\delta(z)$ in $\ball_2 \cap V$. By comparing with the round annulus, the latter is at least $\ln(\ee_2/\ee)/(2\pi)$. Therefore, Lemma~\ref{lem:boundary-excursion-ED} gives
\[
\PP \left(\tau_\ee < \infty \mid \gamma_{\lambda}\right) \leq C e^{-\beta \pi \frac{\ln(\ee_2/\ee)}{2\pi}} = C \, \left(\frac{\ee}{\ee_2} \right)^{\beta/2}.
\] 
It follows from (\ref{beta2dgeq0}) that, on the event that $\tau_{\ee_1}' < \infty$ and $\ee < \delta \le 2\ee$,
$$\PP \left(\tau_\ee < \infty \mid \gamma_{\lambda}\right)  \leq C \left(\frac{\ee}{\ee_1}\right)^{2-d} \left(\frac{\ee_1}{\ee_2} \right)^{\beta/2},$$
which proves (\ref{PPlambdatauepsilon}) also for curves with $\ee < \delta \le 2\ee$.
\end{proof}

\subsubsection{Proof of Proposition \ref{prop:slekr}}
%\begin{proof}[Proof of Proposition \ref{prop:slekr}]
We may without loss of generality assume $\xi^{1}=0$ and $|z| = 1$. Constants are allowed to depend on $z$ and $\xi^2$ as well as on $\kappa$ and $\rho$.

We will apply Lemma~\ref{lem:crossing} with 
$$\ee_1 = \ee^{1/2} \quad \text{and} \quad \ee_2 = \ee^{1/4}.$$ 
By choosing $\ee$ sufficiently small, we may assume that $\ee_2 = \ee^{1/4} < \Im z$. For $\ee > 0$, let $\tau = \tau_\epsilon$, $\tau' = \tau_\epsilon'$, and $\lambda = \lambda_{\ee_1, \ee_2}$ be defined by (\ref{tautauprimedef}) and (\ref{lambdaee1ee2def}). Let $\ell = \ell(z, \gamma_{\tau'_{\ee_1}} , \ee_2)$ denote the separating crosscut in (\ref{crosscutelldef}). 
Let $ p \in(0, 1/4)$ be a constant (to be chosen later) and set $\Sigma=\Sigma_{\ee} = \inf\{t \ge 0: |\gamma(t)| \ge \ee^{-p}\}$.

We define a ``good'' event $E = E_{\ee}$  by 
\[E = E_{\ee}  = E_1 \cap E_2\]
where
\[ E_1 = \{\tau < \lambda\}, \qquad E_2 = \{ \tau < \Sigma \}.
\]

We claim that 
\begin{equation}\label{aug17.1}
\lim_{\ee \downarrow 0}\ee^{d-2}\PP^{\rho} \left( \tau < \infty, E^{c} \right)= 0,
\end{equation}
where $E^c$ denotes the complement of $E$. To prove (\ref{aug17.1}), it is clearly enough to show that
\begin{align}\label{limitE1c}
\lim_{\ee \downarrow 0}\ee^{d-2}\PP^{\rho} \left( \tau < \infty, E_1^{c} \right)= 0
\end{align}
and
\begin{align}\label{limitE2c}
\lim_{\ee \downarrow 0}\ee^{d-2}\PP^{\rho} \left( \tau < \infty, E_2^{c} \right)= 0.
\end{align}
Let $\sigma_{0}=0$ and define $\sigma_k$ and $U_k$ for $k=1,2,\ldots$, by
\[
\sigma_k = \inf_{t \ge 0}\{|\gamma(t)| \ge 2^k \}, \qquad
U_k = \{\sigma_{k-1} \le \tau < \sigma_k < \infty\}.
\]
Using \eqref{SLEkr_mg}, \eqref{gtprimexi2bound}, and \eqref{xi1xi2diam}, we obtain the estimates
\begin{align}\nonumber
\PP^\rho \left( \tau < \infty, \, E_i^c \right) 
& = \E\left[M_\tau^{(\rho)} 1_{\tau < \infty} 1_{E_i^c} \right] 
\leq C \E\left[(1 + \diam(\gamma[0,\tau])^r) 1_{\tau < \infty} 1_{E_i^c} \right] 
	\\\label{aug17.01}
& \le  C \left( \Prob(\tau<\infty, E_i^c) +   \sum_{k=1}^\infty 2^{kr} \PP\left(E_i^c, \, U_k \right) \right), \quad i = 1,2.
\end{align}
We will use (\ref{aug17.01}) to prove (\ref{limitE1c}) and (\ref{limitE2c}).

We first prove (\ref{limitE1c}).
Lemma~\ref{lem:crossing} and Lemma~\ref{lem:one-point} yield 
\begin{align}\nonumber
\Prob(\tau<\infty, E_1^c)
& \leq \PP \left( \lambda < \tau < \infty \mid \gamma_{\tau_{\ee_1}'} \right) \Prob(\tau_{\ee_1}'<\infty)
	\\ \label{ProbtauE1c}
& \leq C \left(\frac{\ee}{\ee_1} \right)^{2-d} \left(\frac{\ee_1}{\ee_2} \right)^{\beta/2}
\bigg(\frac{\ee_1}{\Upsilon_{\mathbb{H}}(z)}\bigg)^{2-d}
= o(\ee^{2-d}).
\end{align}
We next estimate the series on the right-hand side of (\ref{aug17.01}). We first consider $i=1$. For this, suppose $j = -1, \ldots, J := \lfloor \log_2(\ee^{-1}) \rfloor +1$ and define
\[
V_j^k = \{ \tau'_{\ee 2^{j}} \le \sigma_{k-1} < \tau'_{\ee 2^{j-1}}\}.
\]
The distortion estimate (\ref{Koebe}) implies $\tau \leq \tau_{\epsilon 2^{-1}}'$. Also, $\tau'_{\ee 2^J} = 0$ because $|z| = 1$. 
Hence, $U_k = \cup_{j=-1}^J (U_k \cap V_j^k)$ and so
\begin{equation} \label{eq:Us}
\PP \left(E_1^c, \, U_k \right)= \sum_{j =-1}^{J}\PP\left(E_1^c, \, U_k, \, V_j^k\right). 
\end{equation}
Let us first assume $-1 \leq j \leq \lfloor \frac{1}{2}\log_2 \ee^{-1} \rfloor$. We claim that, on the event $V_{j}^k \cap \{\sigma_{k-1} < \infty\}$,
\begin{equation} \label{eq:Us2}
\PP\left(\tau < \infty, E_1^c \mid \gamma_{\sigma_{k-1}} \right)
\le \PP \left( \tau < \infty \mid \gamma_{\sigma_{k-1}} \right)
\le C \bigg(\frac{2^j \ee}{2^{2k}}\bigg)^{\beta/2} \bigg(\frac{\ee}{2^j \ee}\bigg)^{2-d}. 
\end{equation}
Indeed, the first estimate in (\ref{eq:Us2}) is trivial and the second follows from Lemma~\ref{lem:one-point} as follows. The curve $\gamma_{\sigma_{k-1}}$ is a crosscut of  $D:= 2^{k-1} \mathbb{D} \cap \mathbb{H}$ and so partitions $D$ into exactly two components, one of which contains $z$. Consequently, using \eqref{Somegaestimate} and the maximum principle, we have $S_{\sigma_{k-1}}(z) \leq C \hm_{D \smallsetminus \gamma[0, \sigma_{k-1}]}(z, \partial D)$, that is, we can estimate $S_{\sigma_{k-1}}(z) $ by the probability of a Brownian motion from $z$ to reach distance $2^{k-1}$ from $0$ before hitting the real line or the curve $\gamma[0, \sigma_{k-1}]$. Thus, given the path up to time $\sigma_{k-1}$, on $V^k_j$, the Beurling estimate (Lemma~\ref{beurling1}) shows that the probability that a Brownian motion starting at $z$ reaches the circle of radius $2 \Im z$ about $z$ without exiting $H_{\sigma_{k-1}}$ is $O((\ee2^j / \Im z)^{1/2})$. Given this, we claim that the probability to reach distance  $2^{k-1}$ from $0$ is $O(\Im z/2^k)$. Indeed, this follows easily, e.g., from the fact that $\hm_{\DD \cap \HH}(z, \partial \DD)=2(\pi - \nu(z))/\pi$, where $\nu(z)$ is the obtuse angle $\angle (-1, z, 1)$. Hence, since $\Im z  \le 1,$ we see that $S_{\sigma_{k-1}}(z)^\beta \leq C \,(\ee 2^{j}/2^{2k})^{\beta/2}$. Moreover, by the distortion estimate (\ref{Koebe}), we have $\Upsilon_{H_{\sigma_{k-1}}}(z) \asymp 2^j \epsilon$ on $V_j^k$. 
Hence, the one-point estimate (Lemma~\ref{lem:one-point}) gives \eqref{eq:Us2}. 

Lemma~\ref{lem:one-point} also shows that
\[
\PP\left( V_{j}^k ,\,  \sigma_{k-1} < \infty \right) 
\le C  (2^{j-1}\ee)^{2-d} S_0^\beta,
\]
which combined with \eqref{eq:Us2} gives
$$ \PP\left(E_1^c, \, U_k, \, V_j^k\right)
\leq C  \bigg(\frac{2^j \ee}{2^{2k}}\bigg)^{\beta/2} \bigg(\frac{\ee}{2^j \ee}\bigg)^{2-d}
 (2^{j-1}\ee)^{2-d} 
 \leq C \epsilon^{2-d+ \frac{\beta}{2}} 2^{j(\frac{\beta}{2}-(2-d)) -k\beta}.$$
Summing over $j$ from $-1$ to $\lfloor \frac{1}{2}\log_2 \ee^{-1} \rfloor$ and recalling (\ref{beta2dgeq0}), we find
\begin{equation}\label{aug17.111}
\sum_{j=-1}^{\lfloor \frac{1}{2}\log_2 \ee^{-1} \rfloor} \PP\left(E_1^c, \, U_k, \, V_j^k\right) 
\leq C \epsilon^{2-d + \frac{\beta}{2}} \epsilon^{-\frac{\beta}{4} + \frac{2-d}{2}} 2^{-k \beta}
\le C \, \ee^{2-d + \frac{\beta}{4}} 2^{-k \beta}. 
\end{equation}

Suppose now that $\lfloor \frac{1}{2}\log_2 \ee^{-1} \rfloor + 1 \leq j \leq J$. Lemma~\ref{lem:crossing} with $\ee_1 = \ee^{1/2}$ and $\ee_2 = \ee^{1/4}$ implies that, on the event $\{\tau'_{\ee_1} < \infty\}$,
\begin{align}\label{PPtauE1cgamma}
\PP\left(\tau < \infty, \, E_1^c \, \middle| \, \gamma_{\tau'_{\ee_1}}\right) 
\leq C \left(\frac{\ee}{\ee_1} \right)^{2-d} \left(\frac{\ee_1}{\ee_2} \right)^{\beta/2}
= C \, \ee^{\frac{2-d}{2} + \frac{\beta}{8}}
\end{align}
for all sufficiently small $\epsilon$.
Moreover, on the event $V_j^k \cap \{ \sigma_{k-1} < \infty\}$, we can estimate $S_{\sigma_{k-1}}$ as when proving \eqref{eq:Us2} and use Lemma~\ref{lem:one-point} to see that
\begin{align}\label{PPtauprimeee12}
\PP\left(\tau'_{\ee_1} < \infty \mid \gamma_{\sigma_{k-1}}\right) \le C \, \left(\frac{2^j \ee}{2^{2k}} \right)^{\beta/2}  \left( \frac{\ee_1}{2^j \ee} \right)^{2-d}.
\end{align}
We conclude from (\ref{PPtauE1cgamma}) and (\ref{PPtauprimeee12}) that 
\begin{align*}
\PP \left(E_1^c, \, U_k, \, V_j^k \right) 
\le  C \, \ee^{2-d +\frac{\beta}{8}} (2^{j}\ee)^{\frac{\beta}{2}-(2-d)} 2^{-k \beta}.
\end{align*}
Summing over $j = \lfloor \frac{1}{2}\log_2 \ee^{-1} \rfloor +1, \ldots, J$ and using that $\beta/2-(2-d) \ge 0$ with equality only if $\kappa = 4$ (see (\ref{beta2dgeq0})), we infer that
\begin{equation*}
\sum_{j= \lfloor \frac{1}{2}\log_2 \ee^{-1} \rfloor +1}^J \PP\left(E_1^c, \, U_k, \, V_j^k\right) 
\leq C \epsilon^{2-d + \frac{\beta}{8}} \log_2(\epsilon^{-1}) \epsilon^{\frac{\beta}{4} - \frac{2-d}{2}} 2^{-k \beta}
= o(\ee^{2-d}) 2^{-k \beta},
\end{equation*}
where the factor $\log_2(\epsilon^{-1})$ needs to be included only when $\kappa = 4$.
Together with (\ref{eq:Us}) and (\ref{aug17.111}), this gives
\begin{align}\label{2rkPPtauinfty}
2^{rk} \PP \left(E_1^c, \, U_k \right) \le 2^{(r - \beta)k}o(\ee^{2-d}) .
\end{align}
Since $r-\beta < 0$ (this is equivalent to the condition $\rho  <  8-\kappa$), we can sum the right-hand side of (\ref{2rkPPtauinfty}) over all integers $k \geq 1$ and the result is $o(\ee^{2-d})$. 
In view of (\ref{aug17.01}) and (\ref{ProbtauE1c}), this proves (\ref{limitE1c}).

We next prove (\ref{limitE2c}). Note that $E^c_2 \subset \cup_{\lfloor p \log_2 (\ee^{-1}) \rfloor}^\infty U_k$. Given $\gamma_{\sigma_{k-1}}$, we can estimate harmonic measure as when proving \eqref{eq:Us2} to find $S_{\sigma_{k-1}}(z)^\beta \le C \Upsilon_{H_{\sigma_{k-1}}}^{\beta/2}2^{-\beta k}$ on $U_k$.
Therefore, estimating as in \eqref{aug17.01} with the help of (\ref{xi1xi2diam}), and then using the one-point estimate (see Lemma~\ref{lem:one-point}) and the fact that $\beta/2-(2-d) \ge 0$, we obtain 
\begin{equation}\label{Uk-estimate}
\PP^{\rho}(U_k) \le C 2^{rk}\PP(U_k) 
\leq C 2^{rk} \bigg(\frac{\epsilon}{\Upsilon_{H_{\sigma_{k-1}}}(z)}\bigg)^{2-d} S_{\sigma_{k-1}}(z)^\beta
\le C 2^{(r-\beta) k} \ee^{2-d}.
\end{equation}
Since $r-\beta < 0$, we can sum over all integers $k$ with $k \ge p \log_2 (\ee^{-1})$ and the result is $o(\ee^{2-d})$. 
We obtain $\lim_{\ee \downarrow 0} \ee^{d-2}\PP^\rho(\tau < \infty, E_2^c) = 0$ for any $p > 0$.
This proves (\ref{limitE2c}) and hence completes the proof of \eqref{aug17.1}.

In view of \eqref{aug17.1}, it only remains to prove that
\[
\lim_{\ee \downarrow 0}\ee^{d-2}\PP^\rho\left(\tau < \infty, \, E\right) = c_*G^{\rho}(z, \xi^1, \xi^2).
\]
 According to equation (\ref{twosidedradialdef}), we have
$$\E^*[f] = \tilde{G}_0^{-1}\E[\tilde{G}_t f], \qquad t \geq 0,$$
whenever $f \in L^1(\PP^*)$ is measurable with respect to $\tilde{\mathcal{F}}_t$.
We change to the radial time parametrization and set $t = -(\ln \epsilon)/(2a)$, so that $\epsilon = e^{-2at}$ and $s(t) = \tau=\tau_\epsilon$. Then $\tilde{G}_t = G_{\tau_\epsilon}$ and the function $M_{\tau_\ee }^{(\rho)} 1_{\tau_\ee < \infty} 1_E$ is measurable with respect to $\tilde{\mathcal{F}}_t = \mathcal{F}_{\tau_\epsilon}$, so we find
\begin{align}\label{PPrhotau}
\PP^\rho\left( \tau < \infty, \, E\right)= \E\left[ M_{\tau }^{(\rho)} 1_{\tau  < \infty} 1_E \right] 
= G_0 \, \E^*\left[G_{\tau }^{-1} M_{\tau }^{(\rho)} 1_{\tau   < \infty} 1_E \right],
\end{align}
where $G_{0}$ is the SLE$_{\kappa}$ Green's function. 
Thanks to the boundary conditions of the martingale $G_t$, we have $G_{s(t)} = G_\infty = 0$ on the event $\tau = \infty$. This means that $\E^*[1_{\tau = \infty}] = G_0^{-1}\E[G_\infty 1_{\tau = \infty}] = 0$. Hence we can remove the factor $1_{\tau  < \infty}$ from the right-hand side of (\ref{PPrhotau}).
Thus, using the definition (\ref{Gtdef}) of $G$,
$$
\PP^\rho \left( \tau  < \infty, \, E \right)
= \ee^{2-d} \, G_0 \, \E^*\left[ M_{\tau }^{(\rho)} S_{\tau  }^{-\beta}  1_E \right],
$$
where $S_{\tau } = S_{\tau  }(z)$.
We need to show that
\begin{equation}\label{eq:tau'}
\lim_{\ee \downarrow 0} \E^*\left[ M_{\tau}^{(\rho)} S_{\tau}^{-\beta}  1_{E}  \right] = c_* \E^* \left[ M_T^{(\rho)} \right], \qquad \tau=\tau_\ee,
\end{equation}
and where $T$ is the time at which the path reaches $z$. 
 Let $\tau'' = \tau_{\ee^{1/2}/4}$. Then $\tau'_{\ee^{1/2}} \le \tau'' \le \tau$ if $\ee$ is small enough. We claim that we can choose $p$ so that
\begin{equation} \label{m-estimate}
\left| M_{\tau}^{(\rho)} - M_{\tau''}^{(\rho)}  \right|1_E = o(1).
\end{equation}
Indeed, suppose we are on the event $E=E_1 \cap E_2$. Then, by the definition of $E_1$, $\diam \gamma[\tau'', \tau] \le 2\ee^{1/4}$ if $\ee$ is small enough. Moreover, if $R:=\diam \gamma[0, \tau]$ then $R \le \ee^{-p}$ by the definition of $E_2$. Hence $\ee^{1/4}R \le \delta$ where $\delta : = \ee^{1/4 -p}$. Combining these bounds, we see that (see Proposition~3.82 of \cite{L2005}) $\diam g_{s}(\gamma[s, \tau]) \leq C \delta^{1/2}$ uniformly for $s \in [\tau'', \tau]$. Using that $\hcap K \le C \, (\diam K)^2$ for a half-plane hull $K$, it follows that
\begin{align}\label{tauDeltatau}
\tau-\tau'' \le C \, \delta \quad \text{and} \quad \text{$\Delta_{s} = \Delta_{\tau''} + O(\delta^{1/2})$ for $s \in[ \tau'', \tau]$},
\end{align}
where $\Delta_t: = \xi^2_t - \xi^1_t$. Therefore, using also (\ref{gtprimexi2expression}) and \eqref{gtprimexi2bound}, 
\begin{align*}
\Delta_\tau^r g_\tau'(\xi^2)^{\zeta}-\Delta_{\tau''}^r g_{\tau''}'(\xi^2)^{\zeta} 
& = (\Delta_\tau^r - \Delta_{\tau''}^r)g_\tau'(\xi^2)^{\zeta} 
+ \Delta_{\tau''}^r(g_\tau'(\xi^2)^{\zeta} - g_{\tau''}'(\xi^2)^{\zeta})
	\\
& = \Delta_{\tau''}^r g_{\tau''}'(\xi^2)^{\zeta}\left[\exp\left(-\zeta \int_{\tau''}^\tau a/\Delta_s^2 ds\right) - 1\right] + O(\delta^{1/2}).
\end{align*}
% $O(\delta^{r/4}) + O(\delta^{1/2}) = o(1)$
On the event $\Delta_{\tau''} \le \delta^{1/4}$, the right-hand side is $o(1)$ by (\ref{gtprimexi2bound}). By \eqref{xi1xi2diam} we may therefore assume $ \delta^{1/4} <  \Delta_{\tau''} \le CR$. Now choose any $p \in (0,1/4)$ so that $\ee^{(1-r)p +1/4} = o(1)$. Then by Taylor expansion, using \eqref{xi1xi2diam} and (\ref{tauDeltatau}),
\[
\Delta_{\tau''}^r\left[\exp\left(-\zeta \int_{\tau''}^\tau a/\Delta_s^2 ds\right) - 1\right] 
\leq C \Delta_{\tau''}^r \int_{\tau''}^\tau \frac{a}{\Delta_s^2}ds 
\le C \, \Delta_{\tau''}^{r-2}|\tau-\tau''|= o(1),
\]
where we have used that $p \in (0,1/4)$ and $r \ge 0$.
This concludes the proof of \eqref{m-estimate}. We fix $p$ so that \eqref{m-estimate} holds for the remainder.

Using the invariant distribution (see Lemma~\ref{lem:radial-bessel} with $f(x) = \sin^{-\beta}(x)$) we have that $\E^*\left[S_{\tau}^{-\beta} \right] = O(1)$, so
\begin{equation}\label{a1}
\left| \E^*\left[\left( M_{\tau}^{(\rho)} - M_{\tau''}^{(\rho)}  \right) S_\tau^{-\beta} 1_E\right] \right| \le o(1) \, \E^*\left[S_{\tau}^{-\beta} \right] = o(1),
\end{equation}
On the other hand, since $M^{(\rho)}_{\tau''}1_{U_k} \le C 2^{kr}$ the same argument that proved \eqref{aug17.1} shows that
\begin{equation}\label{a2}
\E^*\left[  M_{\tau''}^{(\rho)} S_\tau^{-\beta}   1_{E^c} \right] = o(1)
\end{equation}
as $\ee \to 0$. It follows from \eqref{a1} and \eqref{a2} that
\[
\E^*\left[ M_{\tau}^{(\rho)} S_\tau^{-\beta}   1_{E}  \right] = \E^*\left[ M_{\tau''}^{(\rho)} S_\tau^{-\beta}   \right] + o(1).
\]
Moreover, 
\[
\E^*\left[  M_{\tau''}^{(\rho)} S_\tau^{-\beta}  \right] = \E^*\left[ M_{\tau''}^{(\rho)} \, \E^*\left[ S_\tau^{-\beta} \mid \mathcal{F}_{\tau''} \right]\right].
\]
Using Lemma~\ref{lem:radial-bessel} we see that there is $\alpha > 0$ such that
\[
\E^*\left[  S_\tau^{-\beta} \mid \mathcal{F}_{\tau''} \right] = \frac{c_*}{2} \int_0^{\pi} \sin \theta \, d\theta \, \left(1+O(\ee^\alpha)\right) = c_*(1+O(\ee^\alpha)).
\]
To complete the proof of Proposition~\ref{prop:slekr} it only remains to show that
\[
\lim_{\ee \downarrow 0}\E^*\left[ M_\tau^{(\rho)} \right] = \E^*\left[M_T^{(\rho)} \right].
\]
For this we check that the sequence of integrands $\{M_\tau^{(\rho)}\}$ is uniformly integrable as $\ee \downarrow 0$. Since the only way in which $M_\tau^{(\rho)}$ can get large is by the path reaching a large diameter, uniform integrability follows almost immediately from what we have done. Since we will need to use this below (for $\rho =2$) we formulate this as a separate lemma and give the proof in which we keep the notation introduced in this subsection.
%, that is, that for each $\ee_0 > 0$ there exists $R < \infty$ such that $\E^*[M_\tau^{(\rho)} 1_{\{M_\tau^{(\rho)} > R \}}] < \ee_0$ uniformly in $\ee$.
\begin{lemma}\label{uniformly-integrable}
Let $\rho  \in [0, 8-\kappa)$. Then the sequence of integrands $\{M_\tau^{(\rho)}\}$ is uniformly integrable as $\ee \downarrow 0$, where $\tau=\tau_\ee$.
\end{lemma}

\begin{proof}

We want to show that for every $\epsilon_0 > 0$, there exists an $R > 0$ such that
\begin{align}\label{Muniformlyintegrable}
\E^*\Big[M_{\tau}^{(\rho)} 1_{\{M_{\tau}^{(\rho)} > R\}}\Big] < \epsilon_0
\end{align}
for all small $\epsilon > 0$. Let us prove (\ref{Muniformlyintegrable}).

% (the proof is similar, but not identical, to the proof of (\ref{limitE1c}); we give it here for convenience).
%For $R > 0$, we define the stopping time $\lambda_R$ by
%$$\lambda_R = \inf\{t \geq 0: \, |\gamma(t)| = R\}$$
%and write $\{\tau_\epsilon < \infty\} = \cup_{j=0}^\infty E_j(\epsilon)$, where $E_0(\epsilon) = \{0 \leq \tau_\epsilon < \lambda_2\}$ and 
%$$E_j(\epsilon) = \{\lambda_{2^j} \leq \tau_\epsilon < \lambda_{2^{j+1}}\}, \qquad j \geq 1.$$

The estimate (\ref{xi1xi2diam}) yields
$$|\xi_{\tau}^2 - \xi_{\tau}^1| \leq C\, 2^{k} \quad \text{on} \quad U_k, \qquad k \geq0,$$
so, in view of (\ref{gtprimexi2bound}), there exists a constant $C_0>0$ such that
$$|M_{\tau}^{(\rho)}| \leq C_0 2^{kr} \quad \text{on} \quad U_k ,\qquad k \geq0.$$
%The constants $C$ and $c_0$ are independent of $\epsilon > 0$ and $k \geq 0$.

%\epsilon > 0, \ N > \log_2(4|z|)
Suppose $R > 0$ and let $N := \lfloor r^{-1}\log_2(R/C_0) \rfloor$.
Then
$$|M_{\tau}^{(\rho)}| \leq C_0 2^{kr} \leq R \quad \text{on} \quad U_k,\qquad 0 \leq k \leq N-1.$$
Hence
\begin{align}\label{EstarFepsilon}
\E^*[M_{\tau}^{(2)} 1_{\{M_{\tau}^{(2)} > R\}}] 
\leq \E^*[M_{\tau}^{(\rho)} 1_{\cup_{j= N}^\infty U_k}]
\leq C_0 \sum_{j=N}^\infty 2^{kr} \PP^*\big(U_k\big).
\end{align}
Since $U_k$ is $\mathcal{F}_{\tau}$-measurable, (\ref{Estargirsanov}) gives
$$\Prob^*\big( U_k\big) = \E^*\big[ 1_{U_k} \big] 
= G_0^{-1} \E[G_{\tau} 1_{U_k}]
\leq C \epsilon^{d-2} \E[  1_{U_k}],$$
where we have used the following estimate in the last step: 
$$|G_{\tau}| = (\Upsilon_0(z) \epsilon)^{d-2} S_\tau(z)^\beta \leq C \epsilon^{d-2}.$$
It follows from \eqref{Uk-estimate} that
\begin{align}\label{PEj}
\Prob \left(U_k\right) \leq C \epsilon^{2-d} 2^{-k\beta} \quad \text{for} \quad k > \log_2(4|z|).
\end{align}
Therefore, we find
$$\Prob^*\left(  U_k \right) \leq C 2^{-k\beta}, \qquad  k > \log_2(4|z|).$$
Employing this estimate in (\ref{EstarFepsilon}) we obtain
\begin{align*}
\E^*[M_{\tau}^{(\rho)} 1_{\{M_{\tau}^{(\rho)} > R\}}] 
\leq C \sum_{k=N}^\infty 2^{-k(\beta-r)} \le C 2^{-N(\beta-r)}.
%	\\
%\leq & \; C 2^{-N(\beta-r)} \leq  C \bigg(\frac{R}{c_0}\bigg)^{-\frac{3a-1}{a}}, \qquad .
\end{align*}
The condition $N > \log_2(4|z|)$ is fulfilled for all sufficiently large $R$. Since $N = [r^{-1}\log_2(R/C_0)]$ and $\beta - r > 0$, given $\epsilon_0 > 0$, by choosing $R$ large enough, we can make $\E^*[M_{\tau}^{(\rho)} 1_{\{M_{\tau}^{(\rho)} > R\}}] < \epsilon_0$ for all $\epsilon > 0$, which implies (\ref{Muniformlyintegrable}) and concludes the proof of Lemma~\ref{uniformly-integrable}. 
%
%It remains to verify (\ref{PEj}). Suppose $j > \log_2(4|z|)$, i.e., $|z| < 2^j/4$.
%Let $D_j = \mathbb{H} \smallsetminus \gamma([0, \lambda_{2^j}])$. Let $k \geq 1$ be the unique integer such that $\frac{y}{2^k} < \Upsilon_{\lambda_{2^j}} \leq \frac{2y}{2^k}$.
%By (\ref{Somegaestimate}), $S_{\lambda_{2^j}}(z)$ is bounded above by a constant times the probability that a Brownian motion starting at $z$ reaches the circle of radius $2^j$ centered at the origin without leaving $D_j$. The probability that it reaches the circle of radius $2y$ centered at $z$ without leaving $D_j$ is bounded by $C 2^{-k/2}$, and given this the probability to reach the circle of radius $2^j$ is bounded by $C y2^{-j}$ (cf. the proof of (\ref{eq:Us2})). 
%Hence, 
%\begin{align}\label{sinarggj}
% S_{\lambda_{2^j}}(z) \leq C 2^{-k/2}y 2^{-j} \leq C \Upsilon_{\lambda_{2^j}}^{1/2} \sqrt{y}2^{-j}.\end{align}
%On the other hand, by Lemma \ref{lem:one-point},
%\begin{align}\label{PEjepsilon}
%\Prob \left(E_j(\epsilon) \right) \leq C \bigg(\frac{\epsilon}{\Upsilon_{\lambda_{2^j}}}\bigg)^{2-d} S_{\lambda_{2^j}}(z)^\beta.
%\end{align}
%Combining (\ref{sinarggj}) and (\ref{PEjepsilon}), we find, since $y \leq |z| \leq 1$,
%$$\Prob\left( E_j(\epsilon) \right)\leq C \bigg(\frac{\epsilon}{\Upsilon_{\lambda_{2^j}}}\bigg)^{2-d} 
%\Upsilon_{\lambda_{2^j}}^{\frac{\beta}{2}}  y^{\frac{\beta}{2}} 2^{-j\beta}
%\leq C \epsilon^{2-d} 2^{-j\beta} \Upsilon_{\lambda_{2^j}}^{d-2 + \frac{\beta}{2}}.$$
%Since $d-2 + \beta/2 \geq 0$ for $0 < \kappa \leq 4$, this proves (\ref{PEj}). This completes the proof of (\ref{Estarlimit1}).
\end{proof}
%We omit the details, but remark that this argument also needs $r -\beta < 0$. The Vitali convergence theorem now implies that $M_\tau^{(\rho)}$ converges to $M_T^{(\rho)}$ in $L^1(\PP^*)$.  

Given Lemma~\ref{uniformly-integrable}, the proof of Proposition~\ref{prop:slekr} is now complete.
\qed
%\end{proof}

\subsection{Probabilistic representation for $\mathcal{G}$: Proof of Proposition \ref{G2prop}}\label{greensubsec2}
Let $0 < \kappa \leq 4$. Our goal is to show that
\begin{align}\label{GGGEstar}
\GG(z, \xi^1, \xi^2) = (\im z)^{d-2} \sin^{\beta}(\arg(z-\xi^1)) \E^*[M_T^{(2)}], \qquad z \in \mathbb{H}, \ \xi^1 < \xi^2,
\end{align}
where $\E^*$ denotes expectation with respect to two-sided radial SLE$_\kappa$ from $\xi^1$ through $z$, stopped at the hitting time $T$ of $z$ and $\mathcal{G}$ is our prediction for the Green's function. 
Our first step is to use scale and translation invariance to reduce the relation (\ref{GGGEstar}), which depends on the four real variables $x=\re z, y=\im z, \xi^1, \xi^2$, to an equation involving only two independent variables. 

\subsubsection{The function $h(\theta^1, \theta^2)$}
It follows from (\ref{Idef}) and (\ref{GGdef}) that $\GG$ satisfies the scaling behavior
$$\GG(\lambda z, \lambda \xi^1, \lambda \xi^2) = \lambda^{d-2} \GG(z, \xi^1, \xi^2), \qquad \lambda > 0.$$
Hence we can write
\begin{align*}
\GG(z,\xi^1, \xi^2) = y^{d-2} \mathcal{H}(z,\xi^1, \xi^2),
\end{align*}
where the function $\mathcal{H}$ is homogeneous and translation invariant:
\begin{subequations}\label{Hscaletranslation}
\begin{align}
& \mathcal{H}(\lambda z, \lambda\xi^1, \lambda\xi^2) = \mathcal{H}(z,\xi^1, \xi^2), \qquad \lambda > 0,
	\\ 
& \mathcal{H}(z,\xi^1, \xi^2) = \mathcal{H}(x + \lambda, y, \xi^1 + \lambda, \xi^2 + \lambda), \qquad \lambda \in \R.
\end{align}
\end{subequations}
It follows that the value of $\mathcal{H}(x,y, \xi^1, \xi^2)$ only depends on the two angles $\theta^1$ and $\theta^2$ defined by
$$\theta^1 = \arg(z - \xi^1), \qquad \theta^2 = \arg(z - \xi^2).$$ 
In particular, if we let $\Delta$ denote the triangular domain 
$$\Delta = \{(\theta^1, \theta^2) \in \R^2\, | \, 0 < \theta^1 < \theta^2 < \pi\},$$
then we can define a function $h:\Delta \to \R$ for $\alpha \in (1, \infty) \smallsetminus \Z$ by the equation
\begin{align}\label{hdef}
\GG(z,\xi^1, \xi^2) = y^{d-2} h(\theta^1, \theta^2), \qquad z \in \mathbb{H}, \; -\infty < \xi^1 < \xi^2 < \infty.
\end{align}
Using Lemma \ref{halphaintegerlemma}, we can extend the definition of $h$  to all $\alpha \in (1, \infty)$ by continuity. We write $h(\theta^1, \theta^2; \alpha)$ if we want to indicate the $\alpha$-dependence of $h(\theta^1, \theta^2)$ explicitly.
In terms of $h$, we can then reformulate equation (\ref{GGGEstar}) as follows:
\begin{align}\label{hEstar}
h(\theta^1, \theta^2; \alpha) = (\sin^{\beta} \theta^1 )\E^*[M_T^{(2)}], \qquad (\theta^1, \theta^2) \in \Delta, \ \beta \geq 1.
\end{align}

%\begin{figure}
%\bigskip\medskip
%\begin{center}
%\begin{overpic}[width=.55\textwidth]{h.pdf}
%    %  \put(-10,35){\small $h(\theta^1, \theta^2)$}
%\end{overpic}
%     \begin{figuretext}\label{h.pdf}
%       The graph of the function $h(\theta^1, \theta^2)$ for $\alpha = 5/2$.      \end{figuretext}
%     \end{center}
%\end{figure}

The following lemma, which is crucial for the proof of (\ref{hEstar}), describes the behavior of $h$ near the boundary of $\Delta$. In particular, it shows that $h(\theta^1, \theta^2)$ vanishes as $\theta^1$ approaches $0$ or $\pi$, and that the restriction of $h$ to the top edge $\theta^2 = \pi$ of $\Delta$ equals $\sin^{\beta} \theta^1$.
In other words, the lemma verifies that $\GG(z,\xi^1, \xi^2)$ satisfies the appropriate boundary conditions.
\begin{lemma}[Boundary behavior of $h$]\label{hboundarylemma}
Let $\alpha \geq 2$. Then the function $h(\theta^1, \theta^2)$ defined in (\ref{hdef}) is a smooth function of $(\theta^1, \theta^2) \in \Delta$ and has a continuous extension to the closure $\bar{\Delta}$ of $\Delta$. This extension satisfies
\begin{align}\label{h2ontopedge}
& h(\theta^1, \pi) = \sin^\beta{\theta^1}, \qquad \theta^1 \in [0, \pi],
	\\
& h(\theta, \theta) = h_f(\theta), \qquad \theta \in (0, \pi),
\end{align}
where $h_f(\theta)$ is defined in (\ref{hfdef}).
Moreover, there exists a constant $C > 0$ such that
\begin{align}\label{h2estimate1}
0 \leq h(\theta^1, \theta^2) \leq C \sin^{\beta}\theta^1, \qquad (\theta^1, \theta^2) \in \bar{\Delta},
\end{align}
and
\begin{align}\label{h2estimate2}
\frac{|h(\theta^1, \theta^2) - h(\theta^1, \pi)|}{\sin^{\beta}\theta^1} \leq C\frac{|\pi - \theta^2|}{\sin\theta^1}, \qquad (\theta^1, \theta^2) \in \Delta.
\end{align}
\end{lemma}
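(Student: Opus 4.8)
\textbf{Proof proposal for Lemma \ref{hboundarylemma}.}

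The plan is to reduce all statements to asymptotic properties of the Pochhammer integral $I(z,\xi^1,\xi^2)$ appearing in \eqref{Idef} and \eqref{GGdef}, and then to invoke the asymptotic estimates established in \cite{LV2018B}. First I would record, using \eqref{Idef} and the homogeneity and translation invariance \eqref{Hscaletranslation}, that after the substitution $u = \xi^1 + (\xi^2-\xi^1)v$ (or, more conveniently, a substitution bringing $z$ to $i$ and $\xi^1$ to $0$) the function $h(\theta^1,\theta^2)$ is expressed as an explicit prefactor in $\theta^1,\theta^2$ times the imaginary part of $e^{-i\pi\alpha}$ times a Pochhammer-type contour integral whose integrand is a product of powers of linear factors. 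Smoothness of $h$ on the open triangle $\Delta$ is then immediate: the integrand depends real-analytically on the parameters as long as the branch points $z,\bar z,\xi^1,\xi^2$ stay distinct, i.e. as long as $0 < \theta^1 < \theta^2 < \pi$, and the Pochhammer contour can be held fixed in a neighbourhood, so differentiation under the integral sign is justified and $h \in C^\infty(\Delta)$.

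The substance of the lemma is the boundary behaviour, and here there are three regimes to analyse. (i) The edge $\theta^1 \to 0$ or $\theta^1 \to \pi$: here $z$ approaches the real axis near $\xi^1$, so two branch points ($z$ or $\bar z$, and $\xi^1$) collide; a careful rescaling of the contour near the colliding points shows the integral grows at most like an inverse power that is beaten by the prefactor, yielding the bound $0 \le h \le C\sin^\beta\theta^1$ in \eqref{h2estimate1}. The nonnegativity should be read off either from the probabilistic representation being established in the rest of Section~\ref{greensubsec2}, or directly by deforming the Pochhammer contour to a real segment and checking the sign of $\Im(e^{-i\pi\alpha}I)$ as in the derivation in Section~\ref{martingalesec}. (ii) The top edge $\theta^2 \to \pi$: here $\xi^2 \to -\infty$ relative to $z$, the factor $(\xi^2-u)^{-\alpha/2}$ degenerates, and one shows that the leading contribution of $I$ decouples into the single-path integral whose value reproduces $\sin^\beta\theta^1$ — this is exactly the consistency required to match the chordal SLE$_\kappa$ Green's function $G(z) = (\Im z)^{d-2}\sin^{4a-1}(\arg z)$, giving \eqref{h2ontopedge}; the first-order correction in $|\pi-\theta^2|$ gives the Lipschitz-type bound \eqref{h2estimate2}. (iii) The diagonal $\theta^1 = \theta^2$, i.e. $\xi^1 \to \xi^2$: here $\xi^1$ and $\xi^2$ collide, and the Pochhammer contour's loops around $\xi^2$ must be tracked through the limit; the surviving integral is the fused one-point integral and its value is $h_f(\theta)$ as defined in \eqref{hfdef}. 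In each case, continuity of the extension to $\bar\Delta$ follows once the leading asymptotics are shown to be uniform.

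The key technical steps, in order, are: (1) rewrite $h$ as prefactor $\times\,\Im(e^{-i\pi\alpha}\times\text{Pochhammer integral})$ after normalizing $z=i$, $\xi^1=0$; (2) prove $C^\infty$ smoothness on $\Delta$ by differentiating under the integral with a locally fixed contour; (3) analyze the colliding-branch-point asymptotics in each of the three boundary regimes, extracting leading terms and one-term error bounds; (4) identify the limiting integrals with, respectively, $0$, the single-path Green's function value $\sin^\beta\theta^1$, and the fused value $h_f(\theta)$; (5) assemble \eqref{h2ontopedge}–\eqref{h2estimate2} and continuity on $\bar\Delta$. I expect step (3) — and within it especially the uniformity of the estimate \eqref{h2estimate2} near the corner $(\theta^1,\theta^2)=(0,\pi)$ and near the corner $(0,0)$ where two distinct degenerations interact — to be the main obstacle; this is precisely the delicate Dotsenko–Fateev integral asymptotics whose proof is deferred to \cite{LV2018B}, and here I would simply cite those estimates (the analogues of the asymptotic expansions \eqref{hatInearxi}, \eqref{hatInearminusxi} carried out uniformly and to the required order), and then deduce \eqref{h2estimate1} and \eqref{h2estimate2} from them by elementary manipulations.
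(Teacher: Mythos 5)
Your proposal matches the paper's treatment: the paper proves this lemma simply by deferring the technical Dotsenko--Fateev integral asymptotics to \cite{LV2018B}, and your outline (normalize via scaling/translation, differentiate under the integral with a locally fixed Pochhammer contour for smoothness on $\Delta$, then cite the uniform asymptotic estimates of \cite{LV2018B} in the three boundary regimes to get \eqref{h2ontopedge}--\eqref{h2estimate2}) is the same route with the structure spelled out. One small caution: for the nonnegativity in \eqref{h2estimate1}, rely on your second option (a direct sign analysis of $\Im\big(e^{-i\pi\alpha} I\big)$, as in \cite{LV2018B}) rather than the probabilistic representation of Section~\ref{greensubsec2}, since that representation is itself established using the estimates of this lemma and would make the argument circular.
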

\begin{proof}
The rather technical proof involves asymptotic estimates of the integral in (\ref{Idef}) and is given in \cite{LV2018B}.
\end{proof}

\begin{prop}[PDE for Green's function]\label{greenPDEprop}
Suppose $\alpha \geq 2$. The function $\mathcal{G}(x + iy, \xi^1, \xi^2)$ defined in (\ref{GGdef}) satisfies the two linear PDEs
$$\left(\mathcal{A}_j+\frac{2(\alpha-1)(y^2 - (x-\xi^j)^2)}{\alpha(y^2 + (x-\xi^j)^2)^2} \right)\mathcal{G} = 0, \qquad j =1,2,$$
where the differential operators $\mathcal{A}_j$ were defined in \eqref{eq:thePDE}.
%\begin{align*}
%\mathcal{A}_j = &\; \frac{4}{\alpha} \partial_{\xi^j}^2 + \frac{2(x-\xi^j)}{y^2 + (x-\xi^j)^2} \partial_x
%- \frac{2y}{y^2 + (x-\xi^j)^2} \partial_y
%+ \frac{2}{\xi^1 - \xi^2} \partial_{\xi^1}
%+ \frac{2}{\xi^2 - \xi^1} \partial_{\xi^2}
%	 , \qquad j = 1,2.
%\end{align*}
\end{prop}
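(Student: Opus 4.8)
## Proof proposal

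The plan is to follow the same template as the proof of Proposition~\ref{schrammPDEprop}: exhibit the integrand of $I(z,\xi^1,\xi^2)$ as a solution of a first-order ODE (in the integration variable $u$) twisted by the two differential operators $\mathcal{A}_j$ acting on $(x,y,\xi^1,\xi^2)$, and then integrate by parts, using that the Pochhammer contour in (\ref{Idef}) is closed so that no boundary terms are produced. Concretely, write
\begin{align*}
\mathcal{G}(z,\xi^1,\xi^2) = \frac{1}{\hat{c}}\,\Im\!\big(e^{-i\pi\alpha} \tilde I(x,y,\xi^1,\xi^2)\big),
\end{align*}
where $\tilde I = y^{\alpha+\frac1\alpha-2}|z-\xi^1|^{1-\alpha}|z-\xi^2|^{1-\alpha}\,I(z,\xi^1,\xi^2)$, and express $\tilde I = \int_A^{(z+,\xi^2+,z-,\xi^2-)} n\,du$ with
\begin{align*}
n = y^{\alpha+\frac1\alpha-2}|z-\xi^1|^{1-\alpha}|z-\xi^2|^{1-\alpha}(u-z)^{\alpha-1}(u-\bar z)^{\alpha-1}(u-\xi^1)^{-\alpha/2}(\xi^2-u)^{-\alpha/2}.
\end{align*}
Since $\mathcal{A}_j$ and the prefactors in (\ref{hatcdef}) have real coefficients and the constant $e^{-i\pi\alpha}$ is just a multiplicative scalar, it suffices to prove the PDE for $\tilde I$, i.e. that $\big(\mathcal{A}_j + q_j\big)\tilde I = 0$ with $q_j = \frac{2(\alpha-1)(y^2-(x-\xi^j)^2)}{\alpha(y^2+(x-\xi^j)^2)^2}$.

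First I would carry out the (long but routine) computation establishing the pointwise identity for the integrand $n$, in analogy with (\ref{BjmPDE}): there should exist a rational function $R_j(u)$, with at most simple poles at $u=\xi^j$ (and possibly at $u=z,\bar z$), such that
\begin{align}\label{eq:pointwise-green}
\big(\mathcal{A}_j + q_j\big)\,n \;=\; \partial_u\!\big(R_j(u)\,n\big), \qquad j=1,2.
\end{align}
This is where the specific structure of the exponents $\alpha-1$, $-\alpha/2$ and of the prefactor powers $\alpha+\frac1\alpha-2$, $1-\alpha$ is used: the second-order operator $\frac4\alpha\partial_{\xi^j}^2$ together with the first-order transport terms in $\mathcal{A}_j$ must conspire so that the right-hand side is a total $u$-derivative. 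Following the pattern of Proposition~\ref{schrammPDEprop}, I expect $R_j(u)$ to be a combination of $2/(u-\xi^j)$ and a constant (coming from the $\partial_{\xi^1}-\partial_{\xi^2}$ terms), but the exact form is to be read off from the computation. Because the operators $\mathcal{A}_j$ only differentiate in $(x,y,\xi^1,\xi^2)$ and $\alpha\ge 2$ makes the integrand $u$-integrable (indeed vanishing) at each of the four branch points $z,\bar z,\xi^1,\xi^2$ along the Pochhammer contour, differentiation under the integral sign is justified with no endpoint contributions.

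Given (\ref{eq:pointwise-green}), the proof concludes as in Proposition~\ref{schrammPDEprop}: integrating over the Pochhammer contour, which begins and ends at the common base point $A$,
\begin{align*}
\big(\mathcal{A}_j+q_j\big)\tilde I = \int_A^{(z+,\xi^2+,z-,\xi^2-)}\partial_u\!\big(R_j(u)n\big)\,du = \big[R_j(u)\,n\big]_{\text{around the contour}} = 0,
\end{align*}
provided $R_j(u)n$ returns to its initial value after the loop. Since $R_j$ is single-valued and $n$ returns to its starting value (the Pochhammer contour is designed precisely so that the product of monodromy factors around it is trivial for the two branch points it encircles, while $\xi^1,\bar z$ are exterior to all loops), the bracket vanishes; the only subtlety is at $u=\xi^j$ itself when $R_j$ has a pole there, but the factor $n$ vanishes to order $1-\alpha/2>0$ (for $\alpha\ge2$ this is $\le 0$... so instead one checks the residue cancels, exactly as the integration-by-parts argument in Proposition~\ref{schrammPDEprop} handles the $2/(u-\xi^j)$ term). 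For $\alpha$ a positive integer, where the Pochhammer normalization degenerates, the identity extends by continuity in $\alpha$ since both sides are real-analytic in $\alpha$ on $(1,\infty)$ by the Lemma following (\ref{hatcdef}); alternatively one verifies the PDE directly from the elementary closed forms in Proposition~\ref{hexplicit234prop}.

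The main obstacle is the pointwise identity (\ref{eq:pointwise-green}): unlike the Schramm case, the prefactor $y^{\alpha+\frac1\alpha-2}|z-\xi^1|^{1-\alpha}|z-\xi^2|^{1-\alpha}$ is \emph{not} holomorphic in $z$ (it involves $|z-\xi^j|$ and real powers of $y$), so when $\mathcal{A}_j$ — which contains the non-holomorphic combination $\frac{2(x-\xi^j)}{y^2+(x-\xi^j)^2}\partial_x - \frac{2y}{y^2+(x-\xi^j)^2}\partial_y$ — hits $n$, the prefactor contributes genuine extra terms that must combine with the contributions from the $(u-z)^{\alpha-1}(u-\bar z)^{\alpha-1}$ factors and with $q_j$. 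Verifying that all of these collapse to a single total $u$-derivative is the computational heart of the argument; it is best organized by writing $n = \Phi(x,y,\xi^1,\xi^2)\,\phi(u;x,y,\xi^1,\xi^2)$, computing $\mathcal{A}_j n/n$ as a sum of a part depending only on $(x,y,\xi^1,\xi^2)$ (which must be cancelled by $q_j$ plus the ``diagonal'' of the $\partial_u$ term) and a part that is a rational function of $u$ (which must match $\partial_u R_j + R_j\,\partial_u\log\phi$), and then matching coefficients of the partial-fraction decomposition in $u$.
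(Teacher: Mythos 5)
Your proposal is essentially the paper's own proof: the paper writes $\mathcal{G}=\Im\int_\gamma g\,du$ with the same integrand (constants included), asserts as a ``long but straightforward computation'' the pointwise identity $\bigl(\mathcal{A}_j+q_j\bigr)g+\partial_u\bigl(\tfrac{2}{u-\xi^j}g\bigr)=0$ (so your unknown $R_j(u)$ is exactly $2/(u-\xi^j)$), and then kills the resulting term by integrating the total $u$-derivative over the closed Pochhammer contour, whose trivial total monodromy makes the boundary contribution vanish. Your worry about the pole of $R_j$ at $u=\xi^j$ and about vanishing of $n$ at the branch points is a nonissue, since the Pochhammer contour stays at positive distance from all four points $z,\bar z,\xi^1,\xi^2$, so no residue or endpoint analysis is needed; apart from that slight detour the argument coincides with the paper's.
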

\begin{proof}
We have
$$\mathcal{G}(x + iy, \xi^1, \xi^2) = \im \int_{\gamma} g(x,y,\xi^1, \xi^2, u) du,$$
where $\gamma$ denotes the Pochhammer contour in (\ref{Idef}) and the integrand $g$ is given by
\begin{align*}
g(x,y,\xi^1, \xi^2, u) = &\; \hat{c}^{-1} y^{\alpha + \frac{1}{\alpha} - 2} |z - \xi^1|^{1 - \alpha} |z - \xi^2|^{1 - \alpha} e^{-i\pi\alpha} 
	\\
&\times (u - z)^{\alpha -1} (u - \bar{z})^{\alpha -1} (u - \xi^1)^{-\frac{\alpha}{2}} (\xi^2 - u)^{-\frac{\alpha}{2}}.
\end{align*}
A long but straightforward computation shows that $g$ obeys the equations
$$\mathcal{A}_j g + \frac{2}{u-\xi^j} \partial_u g - \frac{2}{(u-\xi^j)^2} g = 0, \qquad j = 1,2.$$
It follows that
$$\mathcal{A}_j \mathcal{G} = -\im \int_{\gamma}\bigg(\frac{2}{u-\xi^j} \partial_u g - \frac{2}{(u-\xi^j)^2} g\bigg) du, \qquad j = 1,2.$$
The lemma follows because an integration by parts with respect to $u$ shows that the integral on the right-hand side vanishes. 
\end{proof}

The derivation of formula (\ref{hEstar}) relies on an application of the optional stopping theorem to the martingale observable associated with $\GG$. The following lemma gives an expression for this local martingale in terms of $h$.

\begin{lemma}[Martingale observable for SLE$_\kappa(2)$]\label{greenmartingalelemma}
Let $\theta_t^j = \arg(g_t(z) - \xi_t^j)$, $j = 1,2$. Then
\begin{align}\label{Upsilonhmartingale}
\mathcal{M}_t = \Upsilon_t(z)^{d-2}h(\theta_t^1, \theta_t^2)
\end{align}
is a local martingale for SLE$_\kappa(2)$ started from $(\xi^1, \xi^2)$.
\end{lemma}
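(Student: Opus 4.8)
The plan is to verify directly, using Itô's formula, that the drift term of $\mathcal{M}_t = \Upsilon_t(z)^{d-2} h(\theta_t^1, \theta_t^2)$ vanishes under the SLE$_\kappa(2)$ dynamics, the PDE content being packaged in Proposition~\ref{greenPDEprop}. First I would recall the Loewner flow $\partial_t g_t(z) = a/(g_t(z) - \xi_t^1)$ with $\xi_t^1$ evolving as SLE$_\kappa(2)$, i.e.\ $d\xi_t^1 = \frac{r}{\xi_t^1 - \xi_t^2}\,dt + dW_t$ with $r = \rho a/2 = a$ (since $\rho = 2$), and $\xi_t^2 = g_t(\xi^2)$ evolving by $d\xi_t^2 = \frac{a}{\xi_t^2 - \xi_t^1}\,dt$. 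Writing $Z_t = g_t(z)$, $z_t^j = Z_t - \xi_t^j = x_t^j + i y_t^j$, one has $\theta_t^j = \arg z_t^j$ and $y_t := \Im Z_t = \Im z_t^j$ (independent of $j$). The point is that $\mathcal{M}_t$ is expressible as a function $F(x_t^1, y_t, \xi_t^1, \xi_t^2)$ — indeed by the translation/scaling reductions in Section~\ref{greensubsec2}, $\Upsilon_t(z)^{d-2} h(\theta_t^1,\theta_t^2)$ is precisely $\mathcal{G}(Z_t, \xi_t^1, \xi_t^2)$ up to the conformal factor $|g_t'(z)|^{2-d}$ coming from $\Upsilon_t = \Im Z_t / |g_t'(z)|$. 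So the cleanest route is to note
\[
\mathcal{M}_t = |g_t'(z)|^{2-d}\,\mathcal{G}(Z_t, \xi_t^1, \xi_t^2),
\]
using $\mathcal{G}(z,\xi^1,\xi^2) = (\Im z)^{d-2} h(\theta^1,\theta^2)$ from \eqref{hdef} together with the homogeneity \eqref{Hscaletranslation}, and then apply Itô to the right-hand side.

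The key computational step is to compute $d\mathcal{M}_t$ by Itô's formula. The factor $|g_t'(z)|^{2-d} = (g_t'(z))^{(2-d)/2}\,\overline{(g_t'(z))}^{(2-d)/2}$ evolves with zero quadratic variation since $g_t'$ solves an ODE: $\partial_t \log g_t'(z) = -a/(g_t(z)-\xi_t^1)^2$. For $\mathcal{G}(Z_t,\xi_t^1,\xi_t^2)$, the driving terms are: $dZ_t = \frac{a}{z_t^1}\,dt$ (no martingale part, since $z$ is interior), $d\xi_t^1 = \frac{a}{\xi_t^1-\xi_t^2}\,dt + dW_t$ (the only source of randomness), and $d\xi_t^2 = \frac{a}{\xi_t^2-\xi_t^1}\,dt$. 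Hence the generator acting on $\mathcal{G}$ is a first-order transport operator in $Z_t$ and $\xi_t^2$ plus the operator $\frac{a}{\xi^1-\xi^2}\partial_{\xi^1} + \frac12\partial_{\xi^1}^2$ in the $\xi^1$-variable; writing $\partial_{Z} = \frac12(\partial_x - i\partial_y)$ one expands $\frac{a}{z^1}\partial_Z \mathcal{G} + \overline{(\cdots)}$, combines with the $|g_t'|^{2-d}$ contribution $-\frac{(2-d)a}{(z^1)^2}\mathcal{G}$ (real part), and obtains exactly the differential operators $\mathcal{A}_j$ of \eqref{eq:thePDE} — more precisely the combination $\mathcal{A}_1 + \frac{2(\alpha-1)(y^2-(x^1)^2)}{\alpha(y^2+(x^1)^2)^2}$ appearing in Proposition~\ref{greenPDEprop}. (One uses here that $4/\alpha = 2a$, so $\frac{4}{\alpha}\partial_{\xi^1}^2$ accounts for the $\frac12 \cdot \kappa \cdot \frac12 = \frac\kappa4$... — wait, more carefully: $\frac12\partial_{\xi^1}^2$ with Brownian motion $W$ of unit speed matches $\frac{4}{\alpha}\partial_{\xi^1}^2$ only after the appropriate normalization; I would double-check that the $j=1$ PDE in Proposition~\ref{greenPDEprop} is exactly the vanishing-drift condition for the Loewner-speed-$a$ parametrization with unit-speed $W$, which is how \eqref{SLEkr_mg} was set up.) Proposition~\ref{greenPDEprop} then states precisely that this operator annihilates $\mathcal{G}$, so the $dt$-coefficient of $d\mathcal{M}_t$ vanishes and $\mathcal{M}_t$ is a local martingale.

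I expect the main obstacle to be purely bookkeeping: matching the conformal weights and the transport terms in the Itô expansion against the specific normalization of $\mathcal{A}_j$ and of the prefactor $\frac{2(\alpha-1)(y^2-(x-\xi^j)^2)}{\alpha(y^2+(x-\xi^j)^2)^2}$ in Proposition~\ref{greenPDEprop}, and in particular tracking the $|g_t'(z)|^{2-d}$ factor's derivative correctly (the exponent $d-2$ is what makes $\Upsilon^{d-2}$ the right covariant weight). There is no analytic subtlety — $h$ is smooth on $\Delta$ by Lemma~\ref{hboundarylemma}, so Itô's formula applies up to $\tau(z)$ without justification issues — and the identity $\mathcal{A}_j g + \frac{2}{u-\xi^j}\partial_u g - \frac{2}{(u-\xi^j)^2}g = 0$ for the integrand, together with the integration-by-parts argument already carried out in the proof of Proposition~\ref{greenPDEprop}, is what makes the PDE hold. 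So in the write-up I would simply: (i) express $\mathcal{M}_t = |g_t'(z)|^{2-d}\mathcal{G}(Z_t,\xi_t^1,\xi_t^2)$; (ii) apply Itô, isolating the martingale part $\propto \partial_{\xi^1}\mathcal{G}\,dW_t$; (iii) identify the drift with $|g_t'(z)|^{2-d}$ times the left-hand side of the $j=1$ PDE of Proposition~\ref{greenPDEprop} applied to $\mathcal{G}$; (iv) invoke that proposition to conclude the drift is zero.
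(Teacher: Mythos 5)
Your proposal is correct and follows essentially the same route as the paper: the paper's proof is exactly the identity $\Upsilon_t^{d-2}h(\theta_t^1,\theta_t^2)=|g_t'(z)|^{2-d}\,\GG(Z_t,\xi_t^1,\xi_t^2)$ followed by It\^o's formula, localization, and Proposition~\ref{greenPDEprop} to kill the drift. The normalization you hesitated over works out: with the speed-$a$ Loewner flow and unit-speed $W_t$, the full generator (including the transport terms from $dZ_t=\frac{a}{Z_t-\xi_t^1}dt$ and $d\xi_t^2=\frac{a}{\xi_t^2-\xi_t^1}dt$) is $\frac{a}{2}$ times $\mathcal{A}_1$, and the zeroth-order term from $\partial_t\log|g_t'(z)|^{2-d}$ matches the prefactor in Proposition~\ref{greenPDEprop}, so the $j=1$ PDE is precisely the vanishing-drift condition.
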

\begin{proof}
The proof follows from localization and a direct computation using It\^o's formula using Proposition~\ref{greenPDEprop}. In fact, since
$$\Upsilon_t^{d-2}h(\theta_t^1, \theta_t^2) = |g_t'(z)|^{2-d} \GG(Z_t,\xi_t^1, \xi_t^2),$$
we see that $\mathcal{M}_t$ is the martingale observable relevant for the Green's function found in Section \ref{martingalesec} (cf. equation (\ref{greenintegratedM})). 
%Hence, the computation is given in the special 
%case of Lemma \ref{Mmartingalelemma} for which the growth speed $\lambda_2$ of the second curve vanishes and the parameter $\sigma_1, \sigma_2, s$ are given by (\ref{greenchoice}). 
\end{proof}
%
%\begin{remark}\upshape
%The observable (\ref{Upsilonhmartingale}) is a local martingale also for the system of two commuting SLEs started from $(\xi^1, \xi^2)$ by the same proof.
%\end{remark}

Let $z = x  + iy \in \mathbb{H}$ and consider SLE$_\kappa(2)$ started from $(\xi^1, \xi^2)$ with $\xi^1 < \xi^2$. We can without loss of generality assume that $|z| \le 1$.
For each $\epsilon > 0$, we define the stopping time $\tau_\epsilon$ by
$$\tau_\epsilon = \inf\{t \geq 0: \, \Upsilon_t \leq \epsilon \Upsilon_0\},$$
where $\Upsilon_t = \Upsilon_t(z)$. 
Let $\epsilon > 0$ and $n \geq 1$. Then, since $\Upsilon_t$ is a nonincreasing function of $t$ and $\Upsilon_0 = y$, we have
\begin{align}\label{Upsilonbound}
y \geq \Upsilon_{t \wedge n \wedge \tau_\epsilon} \geq \Upsilon_{\tau_\epsilon} = \epsilon y, \qquad t \geq 0.
\end{align}
Hence, in view of the boundedness (\ref{h2estimate1}) of $h$, Lemma \ref{greenmartingalelemma} implies that 
$(\mathcal{M}_{t \wedge \tau_\epsilon \wedge n})_{t \geq 0}$ is a true martingale for SLE$_\kappa(2)$. 
The optional stopping theorem therefore shows that
\begin{align}\label{hoptionalstopping}
h(\theta^1, \theta^2) = \Upsilon_0^{2-d} \E^2\big[\Upsilon_{n \wedge \tau_\epsilon}^{d-2}h(\theta_{n \wedge \tau_\epsilon}^1, \theta_{n \wedge \tau_\epsilon}^2)\big].
\end{align}

Recall that $\PP$ and $\PP^2$ denote the SLE$_\kappa$ and SLE$_\kappa(2)$ measures respectively, and that $\E$ and $\E^2$ denote expectations with respect to these measures.
Equations (\ref{SLEkr_mg}) and (\ref{PP2def}) imply
$$\PP^{2}(V) = \E\big[M_t^{(2)} 1_V\big] \quad \text{for $V \in \mathcal{F}_t$},$$
where 
$$M_t^{(2)} = \left(\frac{\xi^2_t-\xi^1_t}{\xi^2-\xi^1}\right)^a g'_t(\xi^2)^{\frac{3a-1}{2}}.$$
In particular,
\begin{align}\label{E2Eshift}
\E^{2}[f] = \E\big[M_{n \wedge \tau_\epsilon}^{(2)} f\big],
\end{align}
whenever $M_{n \wedge \tau_\epsilon}^{(2)} f$ is an $\mathcal{F}_{n \wedge \tau_\epsilon}$-measurable $L^1(\PP)$ random variable.

\begin{lemma}\label{Mt2lemma}
 For each $t \geq 0$, we have $M_t^{(2)} \in L^1(\PP)$.  
\end{lemma}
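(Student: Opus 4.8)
The statement to prove is that $M_t^{(2)} = \left(\frac{\xi^2_t-\xi^1_t}{\xi^2-\xi^1}\right)^a g'_t(\xi^2)^{\frac{3a-1}{2}}$ lies in $L^1(\PP)$ for each fixed $t \geq 0$, where $\PP$ is the SLE$_\kappa$ measure ($\xi^1_t$ a Brownian motion, $\xi^2_t = g_t(\xi^2)$). The plan is to use the known fact that $M_t^{(2)}$ is a nonnegative local $\PP$-martingale (established in Section~\ref{sect:prelim} via It\^o's formula), so by Fatou's lemma applied to a localizing sequence it is automatically a nonnegative supermartingale; hence $\E[M_t^{(2)}] \leq \E[M_0^{(2)}] = 1 < \infty$, which is exactly $M_t^{(2)} \in L^1(\PP)$. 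The only subtlety is that one should first check $M_t^{(2)}$ is genuinely well-defined and nonnegative for all $t$ up to the relevant time; since $0 < \kappa \leq 4$ the force point $\xi^2 > \xi^1$ is not swallowed by the chordal SLE$_\kappa$ hull in finite time (the curve is to the ``left'' of $\xi^2$ almost surely, or more precisely $\xi^2$ stays on the boundary of $H_t$), so $g_t'(\xi^2) > 0$ and $\xi^2_t - \xi^1_t > 0$ for all $t \geq 0$, making $M_t^{(2)}$ a well-defined positive quantity.

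\textbf{Steps.} First I would recall from the discussion around \eqref{SLEkr_mg} that $M_t^{(\rho)}$ with $\rho = 2$ (so $r = a$, $\zeta(r) = \frac{r}{2a}(r+2a-1) = \frac{3a-1}{2}$) satisfies $\frac{dM_t^{(2)}}{M_t^{(2)}} = \frac{-a}{\xi^2_t - \xi^1_t}\,d\xi^1_t$, so $M_t^{(2)}$ is a nonnegative local $\PP$-martingale with $M_0^{(2)} = 1$. Second, I would note that for $0 < \kappa \le 4$ the chordal SLE$_\kappa$ path does not hit $(\xi^1,\infty)$ except at its start, so $\xi^2_t - \xi^1_t > 0$ and $g_t'(\xi^2)>0$ for all $t \ge 0$, hence $M_t^{(2)} > 0$ is well defined for all $t$. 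Third, pick a localizing sequence of stopping times $T_n \uparrow \infty$ such that $(M_{t \wedge T_n}^{(2)})_{t}$ is a true martingale for each $n$; then $\E[M_{t \wedge T_n}^{(2)}] = 1$ for all $n,t$. Fourth, by Fatou's lemma, $\E[M_t^{(2)}] = \E[\liminf_n M_{t \wedge T_n}^{(2)}] \le \liminf_n \E[M_{t \wedge T_n}^{(2)}] = 1$. Since $M_t^{(2)} \ge 0$, this gives $\|M_t^{(2)}\|_{L^1(\PP)} = \E[M_t^{(2)}] \le 1 < \infty$, which is the claim.

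\textbf{Main obstacle.} There is no real obstacle here; this is a soft argument. The only point requiring a word of care is justifying that $M_t^{(2)}$ is well-defined and finite almost surely for all $t \ge 0$ (as opposed to merely up to some swallowing time) — but this is immediate from the fact that, for $\kappa \le 4$, a chordal SLE$_\kappa$ curve is simple and stays in the closed upper half-plane without disconnecting a boundary point $\xi^2 > 0 = \xi^1$ from $\infty$, so both factors in $M_t^{(2)}$ remain strictly positive and finite. I would phrase the proof in three or four sentences: recall $M^{(2)}$ is a nonnegative local martingale starting from $1$, invoke Fatou along a localizing sequence to get the supermartingale property, and conclude $\E[M_t^{(2)}] \le 1$, hence $M_t^{(2)} \in L^1(\PP)$.
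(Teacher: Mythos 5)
Your proof is correct, but it takes a genuinely different route from the paper. You exploit the soft fact that $M^{(2)}$ is a nonnegative local $\PP$-martingale with $M_0^{(2)}=1$: for $0<\kappa\le 4$ the chordal SLE$_\kappa$ trace is simple and avoids $\R\setminus\{\xi^1\}$, so $\xi^2_t-\xi^1_t>0$ and $g_t'(\xi^2)>0$ for all $t$, the It\^o computation behind \eqref{SLEkr_mg} is valid on all of $[0,\infty)$, and Fatou along a localizing sequence gives the supermartingale bound $\E[M_t^{(2)}]\le 1$, which is even uniform in $t$. The paper instead proves integrability by a pathwise quantitative estimate: it notes $0\le g_t'(\xi^2)\le 1$ (equation \eqref{gtprimexi2bound}), bounds $|\xi_t^2-\xi_t^1|\le C(1+\diam\gamma_t)$ via a harmonic-measure computation (equation \eqref{xi1xi2diam}), and controls $\diam\gamma_t$ by $\max\{\sqrt{t},\sup_{s\le t}|\xi^1_s|\}$, so that $M_t^{(2)}\lesssim\bigl(1+\max\{\sqrt{t},\sup_{s\le t}|\xi^1_s|\}\bigr)^a$ and integrability follows from moments of the Brownian running maximum. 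Your argument is shorter and yields the clean bound $\E[M_t^{(2)}]\le 1$; the paper's argument is heavier but its byproducts are not wasted: the estimates \eqref{gtprimexi2bound} and \eqref{xi1xi2diam} are reused later (e.g., in the uniform integrability argument for $M_{\tau_\epsilon}^{(2)}$ and in Lemma \ref{lem:jan11}), which is presumably why the authors prove the lemma this way. The only point you must (and do) address in your route is that the nonnegativity and the local martingale property hold for all $t\ge 0$, i.e., that the force point is never swallowed when $\kappa\le 4$; with that checked, the Fatou/supermartingale argument is complete.
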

\begin{proof}
%The identity
%$$g_t'(\xi^2) = \exp\bigg(-\int_0^t \frac{ads}{(\xi_s^2 - \xi_s^1)^2}\bigg)$$
%shows that 
%\begin{align}\label{gtprimexi2bound}
%0 \leq g_t'(\xi^2) \leq 1, \qquad t \geq 0. 
%\end{align}
%Moreover, if $E_t$ denotes the interval $E_t = (\xi_t^1, \xi_t^2) \subset \R$, then conformal invariance of harmonic measure gives
%$$\lim_{s\to \infty} s \pi \omega(is, g_t^{-1}(E_t), \mathbb{H}\smallsetminus \gamma_t)
%= \lim_{s\to \infty} s \pi \omega(g_t(is), E_t, \mathbb{H}) = |\xi_t^2 - \xi_t^1|.$$ 
%Since the left-hand side is bounded above by a constant times $1 + \diam(\gamma_t)$, this gives the estimate 
%\begin{align}\label{xi1xi2diam}
%|\xi_t^2 - \xi_t^1| \leq C (1 + \diam(\gamma_t)), \qquad t \geq 0.
%\end{align}
Since $\xi_t^1$ is the driving function for the Loewner chain $g_t$, we have (see, e.g., Lemma 4.13 in \cite{L2005})
$$\diam(\gamma_t) \leq C \max\Big\{\sqrt{t}, \sup_{0 \leq s \leq t} |\xi_s^1|\Big\}, \qquad t \geq 0.$$
Combining this with \eqref{gtprimexi2bound} and \eqref{xi1xi2diam}, we find
$$|M_t^{(2)}| \le C\, |\xi^2_t-\xi^1_t|^a \leq C  (1 + \diam(\gamma_t))^a \leq C \Big(1 + \max\Big\{\sqrt{t}, \sup_{0 \leq s \leq t} |\xi_s^1|\Big\}\Big)^a, \qquad t \geq 0.$$
Since $\xi_t^1$ is a $\PP$-Brownian motion, it follows that $M_t^{(2)} \in L^1(\PP)$ for each $t \geq 0$.
\end{proof}

As a consequence of (\ref{h2estimate1}), (\ref{Upsilonbound}), and Lemma \ref{Mt2lemma}, the random variable $$M_{n \wedge \tau_\epsilon}^{(2)} \Upsilon_{n \wedge \tau_\epsilon}^{d-2}h(\theta_{n \wedge \tau_\epsilon}^1, \theta_{n \wedge \tau_\epsilon}^2)$$ is $\mathcal{F}_{n \wedge \tau_\epsilon}$-measurable and belongs to $L^1(\PP)$.
Thus, we can use (\ref{E2Eshift}) to rewrite (\ref{hoptionalstopping}) as
\begin{align*}
h(\theta^1, \theta^2) = \Upsilon_0^{2-d} \E\big[M_{n \wedge \tau_\epsilon}^{(2)} \Upsilon_{n \wedge \tau_\epsilon}^{d-2}h(\theta_{n \wedge \tau_\epsilon}^1, \theta_{n \wedge \tau_\epsilon}^2)\big].
\end{align*}
We split this into two terms depending on whether $\tau_\epsilon \leq n$ or $\tau_\epsilon > n$ as follows:
\begin{align}\label{hE}
h(\theta^1, \theta^2) = \Upsilon_0^{2-d} \E \big[M_{\tau_\epsilon}^{(2)} \Upsilon_{\tau_\epsilon}^{d-2}h(\theta_{\tau_\epsilon}^1, \theta_{\tau_\epsilon}^2)1_{\tau_\epsilon \leq n}\big]
+ F_{\epsilon, n}(\theta^1, \theta^2),
\end{align}
where
$$F_{\epsilon, n}(\theta^1, \theta^2) = \Upsilon_0^{2-d} \E\big[M_n^{(2)} \Upsilon_n^{d-2}h(\theta_n^1, \theta_n^2)1_{\tau_\epsilon > n}\big].$$
We prove in Lemma~\ref{lem:jan11} below that $F_{\epsilon, n}(\theta^1, \theta^2) \to 0$ as $n \to \infty$ for each fixed $\epsilon > 0$. 
Assuming this, we conclude from (\ref{hE}) that
\begin{align}\label{hE2}
h(\theta^1, \theta^2) = \Upsilon_0^{2-d}  \lim_{n\to \infty}  \E\big[M_{\tau_\epsilon}^{(2)} \Upsilon_{\tau_\epsilon}^{d-2}h(\theta_{\tau_\epsilon}^1, \theta_{\tau_\epsilon}^2) 1_{\tau_\epsilon \leq n}\big].
\end{align}
Equations (\ref{Gtdef}) and (\ref{twosidedradialdef}) imply
\begin{align}\label{Estargirsanov}
\PP^*\left( V\right) = G_0^{-1}\E[G_t 1_V] \quad \text{for $V \in \mathcal{F}_t$},
\end{align}
where $G_t = \Upsilon_t^{d-2} \sin^\beta\theta^1_t$.
In particular,
$$\E\big[M_{\tau_\epsilon}^{(2)} f\big] = G_0 \E^*\big[G_{\tau_\epsilon}^{-1} M_{\tau_\epsilon}^{(2)} f\big],$$
whenever $M_{\tau_\epsilon}^{(2)} f$ is an $\mathcal{F}_{\tau_\epsilon}$-measurable random variable in $L^1(\PP)$.
Using Lemma \ref{Mt2lemma} again, we see that the function $M_{\tau_\epsilon}^{(2)} \Upsilon_{\tau_\epsilon}^{d-2}h(\theta_{\tau_\epsilon}^1, \theta_{\tau_\epsilon}^2) 1_{\tau_\epsilon \leq n}$ is $\mathcal{F}_{\tau_\epsilon}$-measurable and belongs to $L^1(\PP)$ for $n \geq 1$ and $\epsilon > 0$.
Thus (\ref{hE2}) can be expressed in terms of an expectation for two-sided radial SLE$_\kappa$ through $z$ as follows:
\begin{align}\nonumber
h(\theta^1, \theta^2) & = \Upsilon_0^{2-d} G_0  \lim_{n\to \infty} \E^*\big[G_{\tau_\epsilon}^{-1} M_{\tau_\epsilon}^{(2)} \Upsilon_{\tau_\epsilon}^{d-2}h(\theta_{\tau_\epsilon}^1, \theta_{\tau_\epsilon}^2) 1_{\tau_\epsilon \leq n}\big]
	\\\label{hE3}
& = \Upsilon_0^{2-d} G_0 \E^*\big[G_{\tau_\epsilon}^{-1} M_{\tau_\epsilon}^{(2)} \Upsilon_{\tau_\epsilon}^{d-2}h(\theta_{\tau_\epsilon}^1, \theta_{\tau_\epsilon}^2)\big],
\end{align}
where the second equality is a consequence of dominated convergence and the fact that $\E^*[1_{\tau_\epsilon < \infty}] = 1$.
Using that $G_t = \Upsilon_t^{d-2} \sin^\beta\theta^1_t$, we arrive at
\begin{align}\label{hE4}
h(\theta^1, \theta^2) =  \sin^{\beta}\theta^1 \E^*\big[M_{\tau_\epsilon}^{(2)}\frac{ h(\theta^1_{\tau_\epsilon}, \theta^2_{\tau_\epsilon})}{\sin^{\beta}\theta_{\tau_\epsilon}^1}\big].
\end{align}
In the limit as $\tau_\epsilon \to T$, where $T$ denotes the hitting time of $z$, we have $\theta_{\tau_\epsilon}^2 \to \theta_T^2 = \pi$. Hence we use (\ref{h2ontopedge}) to write (\ref{hE4}) as
\begin{align*}
h(\theta^1, \theta^2) & = \sin^{\beta}\theta^1 \E^*\big[M_{\tau_\epsilon}^{(2)}\big]	
+ E(\theta^1, \theta^2),
\end{align*}
where
\begin{align*}
E(\theta^1, \theta^2) = &\; \sin^{\beta}\theta^1 \E^*\bigg[M_{\tau_\epsilon}^{(2)} \frac{ h(\theta^1_{\tau_\epsilon}, \theta^2_{\tau_\epsilon}) - h(\theta^1_{\tau_\epsilon}, \pi)}{\sin^{\beta}\theta_{\tau_\epsilon}^1} \bigg].
\end{align*}
But the estimate (\ref{h2estimate2}) implies
\begin{align}\label{Etheta1theta2Estar}
|E(\theta^1, \theta^2)| & \leq c \, \E^*\left[M_{\tau_\epsilon}^{(2)}  |\pi - \theta^2_{\tau_\epsilon}| (\sin \theta^1_{\tau_\ee})^{-1}\right]
\end{align}
and, as $\epsilon \downarrow 0$, we will show in Lemma \ref{Estarlemma} below that the right-hand side of (\ref{Etheta1theta2Estar}) tends to zero whereas by Lemma~\ref{uniformly-integrable}, $\E^*[M_{\tau_\epsilon}^{(2)} ] \to \E^*[M_T^{(2)}]$.
Equation (\ref{hEstar}) therefore follows from Lemma \ref{lem:jan11} and Lemma \ref{Estarlemma}, which we now prove.

\begin{lemma}\label{lem:jan11}
Let $$F_{\epsilon, n}(\theta^1, \theta^2) = \Upsilon_0^{2-d} \E\big[M_n^{(2)} \Upsilon_n^{d-2}h(\theta_n^1, \theta_n^2) 1_{\tau_\epsilon > n}\big].$$
For each $\ee > 0$,
\[
\lim_{n \to \infty}F_{\epsilon, n}(\theta^1, \theta^2)  = 0.
\]
\end{lemma}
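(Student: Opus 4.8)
The plan is to rewrite $F_{\epsilon,n}$ as an expectation under the SLE$_\kappa(2)$ measure of a sequence that is uniformly bounded in $n$, and then identify its $\PP^2$-a.s.\ pointwise limit. On the event $\{\tau_\epsilon>n\}$ we have $n\wedge\tau_\epsilon=n$ and $\Upsilon_n>\epsilon\Upsilon_0$, so by the bound $0\le h(\theta^1,\theta^2)\le C\sin^\beta\theta^1\le C$ from \eqref{h2estimate1} together with $d<2$,
\[
0\le \Upsilon_n^{d-2}h(\theta_n^1,\theta_n^2)\,1_{\tau_\epsilon>n}\le C(\epsilon\Upsilon_0)^{d-2}.
\]
Multiplying by $M_n^{(2)}$ keeps the product in $L^1(\PP)$ by Lemma~\ref{Mt2lemma}, so the change-of-measure identity \eqref{E2Eshift} applies (to the $\mathcal F_{n\wedge\tau_\epsilon}$-measurable random variable $\Upsilon_{n\wedge\tau_\epsilon}^{d-2}h(\theta_{n\wedge\tau_\epsilon}^1,\theta_{n\wedge\tau_\epsilon}^2)1_{\tau_\epsilon>n}$, which on $\{\tau_\epsilon>n\}$ equals the quantity displayed above), giving
\[
F_{\epsilon,n}(\theta^1,\theta^2)=\Upsilon_0^{2-d}\,\E^2\big[\Upsilon_n^{d-2}h(\theta_n^1,\theta_n^2)\,1_{\tau_\epsilon>n}\big],
\]
an expectation of a sequence dominated by the constant $C(\epsilon\Upsilon_0)^{d-2}$ uniformly in $n$.

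Next I would compute the $\PP^2$-a.s.\ limit of $\Upsilon_n^{d-2}h(\theta_n^1,\theta_n^2)1_{\tau_\epsilon>n}$ as $n\to\infty$. On $\{\tau_\epsilon<\infty\}$ the indicator vanishes for all large $n$, so the limit is $0$. On $\{\tau_\epsilon=\infty\}$ one has $\Upsilon_n(z)>\epsilon\Upsilon_0$ for every $n$; in particular $z$ is never swallowed and $\Upsilon_n(z)\downarrow\Upsilon_\infty(z)\ge\epsilon\Upsilon_0>0$, so $\Upsilon_n(z)^{d-2}$ stays bounded by the finite number $\Upsilon_\infty(z)^{d-2}$. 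Since $0<\kappa\le4$, the SLE$_\kappa(2)$ path is a.s.\ a simple curve from $\xi^1$ to $\infty$, and since $z$ is not on it, $z$ lies strictly in one of the two components of $\HH\setminus\gamma_\infty$. A Beurling estimate applied in $H_n$ then shows that the harmonic measure seen from $z$ of the part of $\partial H_n$ on the far side of the receding tip $\gamma(n)$ tends to $0$; by \eqref{Somegaestimate} this gives $S_n(z)=\sin\theta_n^1\to0$, hence $\Upsilon_n^{d-2}h(\theta_n^1,\theta_n^2)\le C\Upsilon_\infty(z)^{d-2}\sin^\beta\theta_n^1\to0$. With the a.s.\ limit and the uniform bound in hand, dominated convergence under $\PP^2$ (the dominating constant is $\PP^2$-integrable since $\PP^2$ is a finite measure) yields $F_{\epsilon,n}(\theta^1,\theta^2)\to0$.

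The step I expect to be the main obstacle is the geometric fact that $S_n(z)\to0$ on the event that the path never comes near $z$: that for a simple curve escaping to $\infty$ past a fixed point $z$, the harmonic measure of the far side of the curve seen from $z$ decays as the tip diverges. This is intuitively clear --- any crosscut realizing that harmonic measure must pass near the tip $\gamma(n)$, whose distance from $z$ tends to $\infty$, so the Beurling estimate applies --- but it requires a careful set-up of the domains $H_n$ and the arcs $J_\pm$ appearing in \eqref{Somegaestimate}. An alternative, closer in spirit to the uniform-integrability argument used in the proof of Proposition~\ref{G2prop}, is to bound $F_{\epsilon,n}\le C\,\Upsilon_0^{2-d}G_0\,\E^*\big[M_n^{(2)}1_{\tau_\epsilon>n}\big]$ using $h\le C\sin^\beta\theta^1$ and the two passages to the weighted measures, and then combine the fact that $\tau_\epsilon<\infty$ $\PP^*$-a.s.\ with uniform $\PP^*$-integrability of $\{M_n^{(2)}1_{\tau_\epsilon>n}\}_n$, obtained by a dyadic decomposition in $\diam\gamma_n$ (the required summability again coming from $r-\beta<0$, i.e.\ $\rho=2<8-\kappa$). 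I would present the first approach as the main line since it is shorter.
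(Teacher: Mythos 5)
Your argument is correct, but it takes a genuinely different route from the paper. You transfer to the SLE$_\kappa(2)$ measure first, writing $F_{\epsilon,n}=\Upsilon_0^{2-d}\,\E^2\big[\Upsilon_n^{d-2}h(\theta_n^1,\theta_n^2)1_{\tau_\epsilon>n}\big]$, so the unbounded density $M_n^{(2)}$ disappears into the measure; the integrand is then uniformly bounded by $C(\epsilon\Upsilon_0)^{d-2}$ via \eqref{h2estimate1}, and you conclude by bounded convergence once the integrand tends to $0$ $\PP^2$-a.s. (trivially on $\{\tau_\epsilon<\infty\}$, and via $\sin\theta^1_n\to0$ on $\{\tau_\epsilon=\infty\}$). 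The paper instead stays under the unweighted measure $\PP$ and fights the unbounded factor $M_n^{(2)}\lesssim|\xi^2_n-\xi^1_n|^a$ head on: it reduces to showing $\E\big[|\xi^2_n-\xi^1_n|^a\sin^\beta\theta_n^1\big]\to0$ and proves this quantitatively by a dyadic decomposition in $\rad\,\gamma[0,n]$ and $|\gamma(n)|$, using the escape estimate of \cite{FL2015} together with Beurling and gambler's ruin, with summability coming from $\beta>a$ (i.e.\ $\kappa\le4$). Your route is softer and shorter and makes transparent why the statement is true, but it yields no rate and leans on the $\PP^2$-a.s.\ statement that the SLE$_\kappa(2)$ path is a simple curve tending to $\infty$ (asserted in Section~\ref{sect:prelim} via \cite{D2007}); the paper's route needs only finite-time estimates under $\PP$ but imports \cite{FL2015} and is computationally heavier. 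One point of precision in your key geometric step: it is not that a Brownian motion from $z$ must pass near the tip $\gamma(n)$, but that to reach the far-side arc of $\partial H_n$ while staying in $H_n$ it must first hit the not-yet-drawn tail $\gamma[n,\infty)$ (removing the tail from $H_n$ separates the two components of $\HH\setminus\gamma_\infty$); transience gives $\dist(z,\gamma[n,\infty))\to\infty$, and since $\R\subset\partial H_n$ the probability of travelling that far before hitting $\R$ is $O\big(\Im z/\dist(z,\gamma[n,\infty))\big)\to0$, which with \eqref{Somegaestimate} gives $S_n(z)\to0$. With that adjustment your main line is complete; your alternative uniform-integrability route is essentially the dyadic argument the paper runs elsewhere and is closer to its spirit, but the first argument as you present it suffices.
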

\begin{proof}
Recall that $|z| \le 1$ and that all constants are allowed to depend on $z$.
Setting $\rho = 2$ in (\ref{SLEkr_mg}), we find
$$M_t^{(2)} = \left(\frac{\xi^2_t-\xi^1_t}{\xi^2-\xi^1}\right)^a g'_t(\xi^2)^{\frac{3a-1}{2}}.$$
Since $ |g'_t(\xi^2)|^{\frac{3a-1}{2}} \le 1$, $\Upsilon_n^{d-2} 1_{\tau_\ee > n} \le C\ee^{d-2}$, and  $h(\theta_n^1, \theta_n^2) \le C \sin^\beta \theta_n^{1}$, it is enough to prove that
\[
\lim_{n \to \infty} \E\left[ |\xi^2_{n}-\xi^1_{n}|^a  \sin^\beta \theta_n^{1}  \right]=0.
\]
For $k=1,2,\ldots$, let $U_k$ be the event that $2^{k} \sqrt{2a n} \le \rad \, \gamma[0,n] \le 2^{k+1} \sqrt{2 a n}$, where $\rad \, K = \sup\{|z|: z \in K\}$. Since our parametrization is such that $\hcap \gamma[0,t] = at$, we have $\Prob(\cup_{k \ge 0}^\infty U_k) =1$.
Fix $0< p < 1/4$. For each integer $j \ge 0 $, let $V_j$ be the event that $2^{j}n^{p} \le |\gamma(n)| \le 2^{j+1}n^{p}$ and let $V_{-1}$ be the event that $|\gamma(n)| < n^{p}$. (We could phrase these events in terms of stopping times.) We have
\[
\E_{}\left[ |\xi^2_{n}-\xi^1_{n}|^a  \sin^\beta \theta_n^{1}  \right] \le \sum_{k \ge 0}\E_{}\left[ |\xi^2_{n}-\xi^1_{n}|^a  \sin^\beta \theta_n^{1} 1_{U_k}\right].
\]
For each $k$, we will estimate 
\[
\E\left[ |\xi^2_{n}-\xi^1_{n}|^a \sin^\beta \theta_n^{1}  1_{U_k}\right] 
 \le \sum_{j=-1}^{J}\E\left[ |\xi^2_{n}-\xi^1_{n}|^a  \sin^\beta \theta_n^{1} 1_{V_j}  1_{U_k}\right]  ,
\]
where $ J=\lceil (\frac{1}{2}-p)\log_2 (n) + k  + \frac{1}{2}\log_2(2a)\rceil.$
By Theorem~1.1 of \cite{FL2015} we have for $j=-1,0,1, \ldots$,
\begin{equation}\label{jan11.1}
\Prob(U_k \cap V_j) \le C (n^{p-1/2}2^{j-k})^\beta \Prob(U_k) \le C (n^{p-1/2}2^{j-k})^\beta.
\end{equation}
On the event $V_{-1} \cap U_k$ we then use the trivial estimate $ \sin \theta_n^{1} \le 1$ to find
\[\E \left[ |\xi^2_{n}-\xi^1_{n}|^a  \sin^\beta \theta_n^{1}  1_{V_{-1}}  1_{U_k}\right] \le C (2^k n^{1/2})^a \Prob(U_k \cap V_{-1}) \le C  (2^k n^{1/2})^a  (n^{p-1/2}2^{-k})^\beta.\]
Since $a-\beta < 0$ for $a > 1/3$ this is summable over $k$, and we see that
\begin{equation}\label{jan11.2}
\E \left[ |\xi^2_{n}-\xi^1_{n}|^a \sin^\beta \theta_n^{1}  1_{V_{-1}} \right] \le C n^{-((1/2-p)\beta -a/2)}. 
\end{equation}
When $j\ge 0$, we can estimate harmonic measure (see the proof of Proposition~\ref{prop:slekr}), to find
\[
 \sin^\beta \theta_n^{1} 1_{V_j} \le C (n^{-p} 2^{-j})^\beta 1_{V_j}, 
\]
and therefore
\[
 |\xi^2_{n}-\xi^1_{n}|^a  \sin^\beta \theta_n^{1}  1_{V_{j}}  1_{U_k} \le C  (n^{1/2}2^k)^a(n^{-p} 2^{-j})^\beta1_{V_{j}}  1_{U_k}. 
\]
So using \eqref{jan11.1},
\[
\sum_{j=0}^{J}\E\left[ |\xi^2_{n}-\xi^1_{n}|^a  \sin^\beta \theta_n^{1}  1_{V_j}  1_{U_k}\right]  \le C_p  2^{-k(\beta-a)} n^{-(\beta/2-a/2)}  (\log_2 n + k).
\]
This is summable over $k$  when  $a>1/3$  for any choice of $p$ and the sum is $o(1)$ as $n\to \infty$. Since $0<p < 1/4$, the exponent in \eqref{jan11.2} is strictly negative whenever $a \ge 1/2$. The proof is complete. 
\end{proof}

\begin{lemma}\label{Estarlemma}
For any $(\theta^1, \theta^2) \in \Delta$, it holds that
\begin{align}\label{Estarlimit1}
\lim_{\epsilon \downarrow 0} \E^* \left[M_{\tau_\epsilon}^{(2)} \right] = \E^*\left[M_T^{(2)}\right]
\end{align}
and
\begin{align}\label{Estarlimit2}
\lim_{\epsilon \downarrow 0}\E^*\left[M_{\tau_\epsilon}^{(2)}  |\pi - \theta^2_{\tau_\epsilon}| (\sin \theta^1_{\tau_\ee})^{-1}\right] = 0.
\end{align}
\end{lemma}
\begin{proof}
For \eqref{Estarlimit1} it is enough to show that the family $\{M_{\tau_\epsilon}^{(2)}\}$ is uniformly integrable. This follows from Lemma~\ref{uniformly-integrable} in the special case $\rho = 2$ which is in the interval $[0, 8-\kappa)$ since $\kappa \le 4$.

It remains to prove \eqref{Estarlimit2}. Note that there is a constant $c$ (depending on $z$) such that
\[
\E^*\left[M_{\tau_\epsilon}^{(2)}  |\pi - \theta^2_{\tau_\epsilon}| (\sin \theta^1_{\tau_\ee})^{-1}\right] \le c \, \ee^{1/2} \, \E^*\left[M_{\tau_\epsilon}^{(2)}  (\sin \theta^1_{\tau_\ee})^{-1}\right].
\]
Indeed, $|\pi - \theta^2_{\tau_\epsilon}|$ is bounded above by a constant times the harmonic measure from $z$ in $H_{\tau_\epsilon}$ of $[\xi^2, \infty)$, which by the Beurling estimate (Lemma~\ref{beurling1}) is $O(\ee^{1/2})$. On the other hand, recalling the definition of the measure $\Prob^*$ and that $\beta - 1 \ge 0$ when $\kappa \le 4$, we see that
\[
\E^*\left[M_{\tau_\epsilon}^{(2)}  (\sin \theta^1_{\tau_\ee})^{-1}\right] 
= \frac{\ee^{d-2}}{\sin^\beta \theta^1} \E\left[M_{\tau_\epsilon}^{(2)} S_{\tau_\epsilon}^{\beta-1} 1_{{\tau_\epsilon} < \infty} \right] 
\le \frac{\ee^{d-2}}{\sin^\beta \theta^1}  \E\left[M_{\tau_\epsilon}^{(2)}1_{{\tau_\epsilon} < \infty} \right]. 
\]
Using Proposition~\ref{prop:slekr} we see that last term converges, and is in particular bounded as $\ee \to 0$. This completes the proof.
\end{proof}

\section{Two paths near the same point: Proof of Lemma \ref{2SLElemma}}\label{lemmasec}
This section proves the correlation estimate Lemma~\ref{2SLElemma} and this will complete the proof of Theorem~\ref{thm:green}.

We could quote Theorem~1.8 of \cite{MW} for a slightly different version of the next lemma, but since our proof is short and slightly different we will give it here.

\begin{lemma}\label{lem:dec10.4}
Suppose $0 < \kappa \le 4$ and $\rho > \max\{-2, \kappa/2-4\}$ and consider an SLE$_{\kappa}(\rho)$ curve $\gamma$ started from $(0,1)$.
Let $C_\infty$ denote the function $C_t$ defined in (\ref{Ctdef}) evaluated at $t = \infty$.
Then there exists a $q > 0$ such that
\begin{equation}\label{sept20.1}
\PP^{\rho }_{0,1} \left( C_{\infty}(1) \le \ee \right) = \tilde{c} \, \ee^{\beta + \rho a}\left(1+O(\ee^{q})\right), \qquad \epsilon \downarrow 0,
\end{equation}
where the constant $\tilde{c} = \tilde{c}(\kappa, \rho)$ is given by
$$\tilde{c}=\frac{\Gamma(6a + a\rho)}{2a\Gamma(2a) \Gamma(4a + a\rho)}.$$
In particular, there is a constant $C< \infty$ such that if $\eta$ is a crosscut separating $1$ from $0$ in $\HH$, then
\[
\PP_{0, 1}^\rho \left( \gamma_\infty \cap \eta \neq \emptyset \right) \le C\, e^{-\pi{(\beta + \rho a)}d_\HH(\RR_-, \eta)}
\]
\end{lemma}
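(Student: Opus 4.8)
The plan is to translate the first assertion into a survival-probability estimate for the one-dimensional diffusion that governs the radial parametrization of SLE$_\kappa(\rho)$, and then to evaluate that tail exactly by a Doob $h$-transform.

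\emph{Step 1: the driving diffusion.} Run the SLE$_\kappa(\rho)$ flow $g_t$ of \eqref{Loewnereq} from $(0,1)$ and set $P_t=\xi^2_t-\xi^1_t$, $Q_t=\xi^2_t-O_t$, so that $A_t=Q_t/P_t\in(0,1)$ with $A_0=1$. Using $d\xi^2_t=\tfrac{a}{P_t}\,dt$, $dO_t=\tfrac{a}{O_t-\xi^1_t}\,dt$ and \eqref{slekapparhoxi1}, a direct It\^o computation gives $d\log C_t=-\tfrac{a A_t}{P_t^2(1-A_t)}\,dt$ with $C_t$ as in \eqref{Ctdef}, which is exactly why the time change $s(t)$ with $\hat C_t:=C_{s(t)}=e^{-at}$ has $s'(t)=\hat P_t^2(\hat A_t^{-1}-1)$. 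Carrying this time change into the equation for $A_t$ one finds that $\hat A_t:=A_{s(t)}$ solves the autonomous SDE
\[
d\hat A_t=\tfrac12\bigl((1-\hat A_t)(1-\beta-a\rho)-2a\,\hat A_t\bigr)\,dt+\sqrt{\hat A_t(1-\hat A_t)}\,d\hat B_t,\qquad \hat A_0=1,
\]
a Jacobi-type diffusion whose speed density is $\propto x^{-\beta-a\rho}(1-x)^{2a-1}$, consistent with the recurrence lemma stated above. Near $x=1$, $1-\hat A_t=\tfrac14 R_t+o(\cdot)$ with $R$ a squared Bessel process of dimension $4a\ge 2$ (this is where $\kappa\le 4$ is used), so $1$ is an entrance boundary and $\hat A$ leaves it instantly; near $x=0$, since $\beta+a\rho>1$ — which is precisely the hypothesis $\rho>\kappa/2-4$ — the point $0$ is an exit boundary reached at a finite time $T^\ast$, and $C_\infty(1)=e^{-aT^\ast}$. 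Hence $\PP^\rho_{0,1}(C_\infty(1)\le\ee)=\PP^\rho_{0,1}\bigl(T^\ast\ge a^{-1}\log(1/\ee)\bigr)$, and it remains to prove $\PP^\rho_{0,1}(T^\ast>u)=\tilde c\,e^{-\Lambda u}(1+O(e^{-q'u}))$ with $\Lambda=a(\beta+a\rho)$.

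\emph{Step 2: the exact tail.} The key point is that the ground state of the absorbed generator $L$ is explicit: a one-line computation shows $L\psi=-\Lambda\psi$ for $\psi(x)=x^{\beta+a\rho}$ and $\Lambda=a(\beta+a\rho)$, while $\psi(0)=0$ and $\psi>0$ on $(0,1)$. Consequently $e^{\Lambda t}\psi(\hat A_t)\mathbf{1}_{\{t<T^\ast\}}$ is a positive martingale (bounded on bounded time intervals), and $h$-transforming $\hat A$ by it yields a positive-recurrent diffusion $\hat A^\sharp$ on $(0,1)$ with generator $L^\sharp f=Lf+x(1-x)\tfrac{\psi'}{\psi}f'$ and invariant probability density $\pi^\sharp(x)=x^{\beta+a\rho}(1-x)^{2a-1}/\mathrm{B}(\beta+a\rho+1,2a)$, i.e.\ the $\mathrm{Beta}(\beta+a\rho+1,2a)$ law, both of whose endpoints are non-absorbing. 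By the change of measure,
\[
\PP^\rho_{0,1}(T^\ast>u)=\psi(1)\,e^{-\Lambda u}\,\E^\sharp_1\bigl[\psi(\hat A^\sharp_u)^{-1}\bigr]=e^{-\Lambda u}\,\E^\sharp_1\bigl[(\hat A^\sharp_u)^{-(\beta+a\rho)}\bigr].
\]
Since $\hat A^\sharp$ is ergodic and $x\mapsto x^{-(\beta+a\rho)}$ is $\pi^\sharp$-integrable, $\E^\sharp_1[(\hat A^\sharp_u)^{-(\beta+a\rho)}]\to\int_0^1 x^{-(\beta+a\rho)}\pi^\sharp(x)\,dx=\tfrac{1}{2a\,\mathrm{B}(\beta+a\rho+1,2a)}=\tfrac{\Gamma(6a+a\rho)}{2a\,\Gamma(2a)\,\Gamma(4a+a\rho)}=\tilde c$, and exponential ergodicity supplies the error term; taking $\ee=e^{-au}$ turns $e^{-\Lambda u}$ into $\ee^{\,\beta+a\rho}$ and gives \eqref{sept20.1}.

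The main obstacle is the very last step: the observable $x\mapsto x^{-(\beta+a\rho)}$ is unbounded at the boundary $0$ that $\hat A^\sharp$ keeps visiting, so the textbook geometric-ergodicity estimate for bounded functions does not apply verbatim. I would circumvent this by splitting $x^{-(\beta+a\rho)}=x^{-(\beta+a\rho)}\mathbf{1}_{\{x>\delta\}}+x^{-(\beta+a\rho)}\mathbf{1}_{\{x\le\delta\}}$: the first piece is bounded by $\delta^{-(\beta+a\rho)}$ and handled by geometric ergodicity of $\hat A^\sharp$, while for the second one uses $\PP^\sharp_1(\hat A^\sharp_u\le\delta)\le C\int_0^\delta\pi^\sharp\asymp C\,\delta^{\beta+a\rho+1}$ for all large $u$, so this piece contributes $O(\delta)$ uniformly; optimizing over $\delta$ produces a genuine $O(e^{-q'u})$ error. (An equivalent route: start the clock at a fixed small time $u_0$; the scale-function estimate gives $\PP_x(T^\ast>u_0)\asymp x^{\beta+a\rho}$ near $0$, hence $\PP_\cdot(T^\ast>u_0)\in L^2(m)$, and one may use the spectral-gap expansion of the Dirichlet semigroup of $L$ from time $u_0$ on, the leading coefficient being $\tilde c$ by the eigenfunction computation above.) Uniformity of all constants is automatic since the configuration has been normalized to $(0,1)$. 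Finally, the second (``in particular'') assertion is the SLE$_\kappa(\rho)$-analogue of the chordal estimate in Lemma~\ref{lem:boundary-excursion}; it follows from \eqref{sept20.1} by a standard conformal-localization argument: when $\mathcal{E}_{\HH\setminus\eta}(\RR_-,\eta)$ is bounded below the bound is trivial, and when it is small the crosscut $\eta$ (with the boundary segment it cuts off) is confined to a small neighbourhood of the force point $1$, so that hitting $\eta$ forces $C_\infty(1)$ to be comparable to $\mathcal{E}_{\HH\setminus\eta}(\RR_-,\eta)$ — via Koebe's theorem and the Beurling estimate, after mapping the unbounded component of $\HH\setminus\eta$ to $\HH$ so that $\eta$ becomes an interval — whence \eqref{sept20.1} applies; alternatively one passes to chordal SLE$_\kappa$ through the Radon–Nikodym derivative $M^{(\rho)}$ of \eqref{SLEkr_mg}, which the same distortion estimates control on $\{\gamma\cap\eta\neq\emptyset\}$. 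This last step I expect to be routine though somewhat tedious; the substance of the lemma is the diffusion computation.
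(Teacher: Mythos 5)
Your proposal is correct in substance and, once unwrapped, it is the paper's own argument in the language of one--dimensional diffusions: the exponential martingale $e^{\Lambda t}\psi(\hat A_t)$ with $\psi(x)=x^{\beta+a\rho}$ and $\Lambda=a(\beta+a\rho)$ is exactly the radial--time version of the local martingale $N_t=C_t^{-(\beta+a\rho)}A_t^{\beta+a\rho}$ used in the paper, and your $h$-transformed diffusion $\hat A^{\sharp}$ is precisely the $\hat A$-process under SLE$_\kappa(\kappa-8-\rho)$ (this is the identity $M^{(\rho)}N=M^{(\kappa-8-\rho)}$). Both proofs then reduce to $\PP^{\rho}_{0,1}(T^\ast>u)=e^{-\Lambda u}\,\E^{\sharp}_1\big[(\hat A^{\sharp}_u)^{-(\beta+a\rho)}\big]$ and to convergence of this expectation, with exponential rate, to $\int_0^1 x^{-(\beta+a\rho)}\pi^{\sharp}(x)\,dx=\tilde c$. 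What genuinely differs is how that last convergence is obtained: the paper recognizes the transformed process as SLE$_\kappa(\rho')$ with $\rho'=\kappa-8-\rho<\kappa/2-4$ and quotes the positive recurrence statement for $\hat A$ (which is stated for all $\pi$-integrable observables, and $x^{-(\beta+a\rho)}$ is $\pi^\sharp$-integrable since the integrand is proportional to $(1-x)^{2a-1}$), whereas you stay at the SDE level and must supply the ergodic input yourself. Your truncation scheme is the right idea, but the uniform-in-$u$ bound $\PP^{\sharp}_1(\hat A^{\sharp}_u\le\delta)\lesssim\delta^{\beta+a\rho+1}$ (or, in your alternative route, the spectral expansion from a fixed positive time) is asserted rather than proved, so this step remains a sketch; it is exactly the point the paper disposes of by citation, and filling it in is the only real work left in your write-up. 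Two small corrections: the hypothesis $\rho>\kappa/2-4$ is equivalent to $\beta+a\rho>0$, not to $\beta+a\rho>1$; consequently, for $0<\beta+a\rho<1$ the boundary point $0$ of your Jacobi diffusion is regular rather than exit. This is harmless, because the SLE construction dictates absorption there (the radial clock simply terminates, and $\{C_\infty\le\ee\}=\{T^\ast\ge a^{-1}\log(1/\ee)\}$ holds regardless), and your $h$-transform identity only uses boundedness of $\psi$ on $[0,1]$; but the claimed equivalence should be fixed. Finally, for the ``in particular'' assertion your localization sketch is more elaborate than necessary: the bound is trivial unless $\mathcal{E}_{\HH\setminus\eta}(\RR_-,\eta)$ is small, in which case $\eta$ has both endpoints on $(0,\infty)$ straddling $1$ and $\diam\eta\lesssim\mathcal{E}_{\HH\setminus\eta}(\RR_-,\eta)$, so $\{\gamma\cap\eta\neq\emptyset\}$ forces $\dist(\gamma,1)\lesssim\diam\eta$, and the first part together with the Koebe distortion comparison $C_\infty\asymp\dist(\gamma,1)\wedge1$ finishes the proof, which is the paper's one-line argument.
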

%\mathcal{E}_{\mathbb{H}\smallsetminus \eta}(\RR_-, \eta)
\begin{proof}
Write $\xi^1_t$ for the driving term of $\gamma$ and let $\xi^2_t = g_t(1)$, where $g_t$ is the Loewner chain of $\gamma$. We get SLE$_\kappa(\rho)$ started from $(0, 1)$ by weighting SLE$_\kappa$ by the local martingale (see (\ref{SLEkr_mg}))
\[
M_t^{(\rho)} =(\xi^2_t-\xi^1_t)^{r} g'_t(1)^{\zeta(r)}, \quad r = \rho a/2.\]
Let
\[
N_{t} = C_{t}(1)^{-(\beta + a \rho)}A_{t}^{\beta + a \rho}, \quad A_{t} = \frac{\xi_{t}^{2} - O_{t}}{\xi_{t}^{2} - \xi_{t}^{1}}.
\]
Direct computation shows that $N_{t}$ is a local martingale for SLE$_{\kappa}(\rho)$ started from $(0,1)$, which satisfies $N_{0}=1$. Moreover, 
\[
M_{t}^{(\rho)}N_{t} = M_{t}^{(\kappa-8-\rho)},
\]
where $M_{t}^{(\kappa-8-\rho)}$ is the local SLE$_{\kappa}$ martingale corresponding to the choice $r=r_{\kappa}(\kappa-8-\rho) = -\beta-\rho a/2$. We set
\[
s(t) = \inf\{s \ge 0: C_{s}(1) =e^{-at} \}
\]
and write $\hat{M}_{t}^{\rho} = M_{s(t)}^{(\rho)}$, etc.\ for the time-changed processes.
We have
\begin{align*}
\PP_{0,1}^{\rho}\left( s(t) < \infty \right)& = \E\left[\hat{M}_{t}^{\rho} 1_{s(t) < \infty}\right] \\
& = \E\left[\hat{M}_{t}^{\rho} \hat{N}_{t} \hat{N}_{t}^{-1} 1_{s(t) < \infty}\right] \\
& = e^{-a(\beta+a\rho)t}\E\left[\hat{M}_{t}^{\kappa-8-\rho} \hat{A}_{t}^{-(\beta+a\rho)} 1_{s(t) < \infty}\right] \\
& = e^{-a(\beta+a\rho)t}\tilde\E^{*}\left[ \hat{A}_{t}^{-(\beta+a\rho)}1_{s(t) < \infty}\right],
\end{align*}
where $\tilde\E^{*}$ refers to expectation with respect to SLE$_{\kappa}(\tilde{\rho})$ started from $(0,1)$, where $\tilde{\rho} := \kappa-8 - \rho$. If $\rho > \kappa/2-4$, then $\beta+a\rho > 0$. The key observation is that under the measure $\tilde\PP^{*}$ we have that $s(t) < \infty$ almost surely and that $\hat{A}_{t}$ is positive recurrent and converges to an invariant distribution. This uses that $\rho > \kappa/2-4$ so that $\tilde \rho < \kappa/2-4$; see \cite[Section 5.3]{L-Mink}. Indeed, if we apply Lemma~\ref{lem:invariant} to the SLE$_{\kappa}(\tilde \rho)$ process from $(0,1)$ and note that $-\beta - a\tilde{\rho} = \beta + a\rho$, we find the following formula for the limiting distribution:
\[
\pi(x) = c' \, x^{\beta + a \rho}(1-x)^{2a-1}, \quad c'=\frac{\Gamma(6a + a\rho)}{\Gamma(2a) \Gamma(4a + a\rho)}.
\]
%(set $\nu = -r_{\kappa}(\kappa-8-\rho)=\beta+a\rho/2$ in Lemma~2.2 of \cite{ABV2016}):
It follows that there exists a $q > 0$ such that
\[
\tilde\E^{*}\left[ \hat{A}_{t}^{-(\beta+a\rho)}1_{s(t) < \infty}\right] = c' \int_{0}^{1}(1-x)^{2a-1}\, dx \left(1 + O(e^{-qt}) \right),
\]
which gives \eqref{sept20.1}. By (\ref{sept20.1}) and the distortion estimates (\ref{Koebe}), if $\tau_{\ee} = \inf\{t\ge 0 : \dist(\gamma_{t},1) \le \ee)$, then
\[
\PP_{0,1}^{\rho}\left( \tau_{\ee} < \infty \right) \asymp \ee^{\beta + a\rho}.
\]
The last assertion follows as in Lemma~\ref{lem:boundary-excursion-ED}.
%\[
%\mathcal{E}_{\mathbb{H}\smallsetminus \eta}(\RR_-, \eta) \gtrsim \frac{\diam(\eta)}{\dist(0,\eta)} \wedge 1. 
%\]
\end{proof}
\begin{lemma}\label{lemma:dec8.2}
There is a constant $0 < C_1 < \infty$ such that the following holds. 
Let $D$ be a simply connected domain containing $0$ and with three marked boundary points $u,v,w$. Suppose $\gamma_u, \gamma_v$ are crosscuts of $D$ which are disjoint except at $w$, and which connect $w$ with $u$ and $w$ with $v$, respectively, and such that neither crosscut disconnects $0$ from the other. Write $D_u$ and $D_v$ for the components of $0$ of $D \smallsetminus \gamma_u$ and $D\smallsetminus \gamma_v$ and let $D_{u, v} = D_u \cap D_v$. 
For all $\ee > 0$ small enough it holds that if
\begin{equation}\label{dec8.1}
\ee  \, \left( 1+C_1 \sqrt{\ee} \right) < \Upsilon_{D_u}(0)  \le 4\ee \quad \text{and} \quad \Upsilon_{D_v}(0) \ge \sqrt{\ee},
\end{equation}  
then
%if
%\[
%r_{D_u}(0) > \ee  \, \left( 1+3c\ee/r \right),
%\]
\[
\Upsilon_{D_{u,v}}(0) > \ee.
\]
\end{lemma}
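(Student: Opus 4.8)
\textbf{Proof proposal for Lemma~\ref{lemma:dec8.2}.}

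The plan is to estimate the conformal radius $r_{D_{\zeta,\xi}}(0)$ from below by controlling how much the extra crosscut $\gamma_\xi$ can shrink the conformal radius of $D_\zeta$, using the fact that $\gamma_\xi$ lives far from $0$ on the scale set by $r_{D_\xi}(0) \ge r$. The natural tool is the relation between changes in conformal radius (equivalently, capacity) and harmonic/excursion measure, exactly as encapsulated in the identity \eqref{excursionexample}: if one removes from a domain $\Omega \ni 0$ a set $K$ whose harmonic measure from $0$ is small, then $\ln r_{\Omega \setminus K}(0)$ decreases by an amount comparable to that harmonic measure. Concretely, I would map $D_\zeta$ conformally to $\mathbb{D}$ sending $0 \mapsto 0$; under this map $\gamma_\xi \cap D_\zeta$ becomes a crosscut $\tilde\eta$ of $\mathbb{D}$, and by Koebe distortion $\dist(0, \tilde\eta) \asymp r_{D_\zeta}(0)/\dist(0,\gamma_\xi) \wedge 1$ up to a universal constant; since $\dist(0,\gamma_\xi) \gtrsim r_{D_\xi}(0) \ge r$ (again by Koebe, using $r_{D_\xi}(0)\ge r$) and $r_{D_\zeta}(0) \le 4\ee$, we get that $\tilde\eta$ stays at distance $\gtrsim \ee/r$ from the origin in $\mathbb{D}$, hence has harmonic measure from $0$ bounded by $C\ee/r$ by a Beurling estimate.

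Next I would quantify the resulting drop in conformal radius. Removing the crosscut $\tilde\eta$ from $\mathbb{D}$ and taking the component of $0$ gives a subdomain $\mathbb{D}'$ with $\ln r_{\mathbb{D}'}(0) \ge -\,C'\,\hm_{\mathbb{D}}(0, \tilde\eta) \ge -\,C'' \ee/r$: this is precisely the infinitesimal form of \eqref{excursionexample}, or can be obtained by comparing with the explicit conformal radius of a disk with a slit and monotonicity of conformal radius under inclusion. Transporting back through the uniformizing map $D_\zeta \to \mathbb{D}$, which multiplies conformal radii at $0$ by the fixed factor $r_{D_\zeta}(0)$, yields
\[
r_{D_{\zeta,\xi}}(0) \ge r_{D_\zeta}(0)\, e^{-C'' \ee/r} \ge r_{D_\zeta}(0)\,\bigl(1 - C''' \ee/r\bigr).
\]
Now I insert the hypothesis \eqref{dec8.1}: since $r_{D_\zeta}(0) > \ee(1 + 3c\ee/r)$ with $c$ the (now fixed) constant $C'''$ from the last display, we obtain $r_{D_{\zeta,\xi}}(0) > \ee(1+3c\ee/r)(1 - c\ee/r) > \ee(1 + c\ee/r)$, where the last inequality holds provided $\ee/r$ is smaller than a universal constant $c'$ (so that $(1+3x)(1-x) > 1+x$ for $0<x<c'$, which is true for, say, $c' = 1/3$). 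This is the claimed bound.

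The main obstacle I anticipate is making the "removing a small-harmonic-measure crosscut changes $\ln$ conformal radius by at most a comparable amount" step quantitatively clean and with the correct one-sided (lower) bound on $r_{D_{\zeta,\xi}}(0)$; one must be a little careful that it is the component of $0$ that matters and that the crosscut $\gamma_\xi$ does not disconnect $0$ from $\gamma_\zeta$ (which is assumed), so that $D_{\zeta,\xi}$ is genuinely $D_\zeta$ with a boundary crosscut removed rather than something more complicated. A convenient rigorous route avoiding any appeal to infinitesimals is: embed the slit disk $\mathbb{D}'$ into $\mathbb{D}\setminus\ell_0$ where $\ell_0$ is a radial slit from the unit circle reaching to distance $\delta := c_0 \ee/r$ from $0$ (legitimate by monotonicity once one checks $\tilde\eta$ is separated from $0$ by such a slit up to a Möbius rotation, using the Beurling bound), and then use the explicit formula for the conformal radius at $0$ of $\mathbb{D}$ minus a radial slit, which is elementary and gives exactly $1 - O(\delta)$. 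Everything else is Koebe distortion and Beurling, both already invoked in the surrounding lemmas, so the argument fits the style and toolkit of the section.
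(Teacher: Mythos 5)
Your overall strategy is the same as the paper's: uniformize $D_\zeta$ onto $\DD$, show that the image of $\gamma_\xi$ is a small crosscut attached to the unit circle, convert this into $\psi'(0)\le 1+C\ee/r$ for the second uniformizing map, and finish with the algebra $(1+3cx)(1-cx)>1+cx$ for small $x=\ee/r$. But the key quantitative step is wrong. The harmonic measure $\omega_{D_\zeta}(0,\gamma_\xi)$ is only $O\big((\ee/r)^{1/2}\big)$, not $O(\ee/r)$: since $r_{D_\zeta}(0)\le 4\ee$, the point $0$ lies within distance $\asymp\ee$ of $\partial D_\zeta$ by \eqref{Koebe}, and the Beurling estimate bounds the probability that Brownian motion from $0$ reaches distance $r/4$ before exiting $D_\zeta$ by $C(\ee/r)^{1/2}$ -- and the square root is sharp (take $D_\zeta$ to be a large disk minus a thin ``wall'' of length $\asymp r$ passing at distance $\ee$ below $0$, with $\gamma_\xi$ a crosscut beyond its far end). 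Consequently your claims that the image crosscut lies within distance $C\ee/r$ of $\partial\DD$ and has harmonic measure $\le C\ee/r$ from $0$ are unjustified and false in general; and even granting your linear inequality ``drop in $\ln$ of the conformal radius $\le C\cdot{}$harmonic measure of the removed crosscut'', the correct input $O\big((\ee/r)^{1/2}\big)$ only yields $r_{D_{\zeta,\xi}}(0)\ge r_{D_\zeta}(0)\big(1-C(\ee/r)^{1/2}\big)$, which is far too weak to beat the hypothesis \eqref{dec8.1}.

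The missing idea -- and what the paper's proof uses -- is that the removed set is attached to the boundary circle, so its effect on the conformal radius at the center is \emph{quadratic}, not linear, in its size: Beurling plus distortion give $\diam\,\phi_\zeta(\gamma_\xi)\le c_1(\ee/r)^{1/2}$, and a connected boundary-attached set of diameter $d$ changes the conformal radius at $0$ only by a factor $1+O(d^2)$ (its capacity relative to $\DD$ is $O(d^2)$), which is exactly $\psi'(0)\le 1+c_4\,\ee/r$. Your own model computation actually contradicts the linear claim: $\DD$ minus a radial slit of length $\delta$ has conformal radius $1-O(\delta^2)$ at $0$, not $1-O(\delta)$ as you assert in the last paragraph. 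Moreover, the comparison you propose there goes the wrong way: to lower bound $r_{\mathbb{D}'}(0)$ by monotonicity you need an explicit domain \emph{contained in} $\mathbb{D}'$, i.e.\ you must enclose the image crosscut in a small boundary neighborhood (say a half-disk of radius comparable to its diameter) and remove that, rather than compare with a slit domain; a radial slit cannot dominate a crosscut subtending an arc. Once the linear harmonic-measure step is replaced by the diameter bound together with the quadratic boundary-capacity estimate, your argument becomes the paper's proof, and the concluding algebra is fine.
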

\begin{proof}
Set $\tilde{\epsilon} := \sqrt{\ee}/2$. Let $\phi_u: D_u \to \mathbb{D}$ be the conformal map with $\phi_u(0)=0, \phi_u'(0) > 0$. Note that $\phi_u(\gamma_v)$ is a crosscut of $\mathbb{D}$. By the distortion estimates (\ref{Koebe}) and the inequalities in (\ref{dec8.1}), $\dist(0,\partial D_{u}) \asymp \Upsilon_{D_u}(0) \asymp \epsilon$ while $\dist(0, \gamma_v) \ge \Upsilon_{D_v}(0)/2 \geq \tilde{\epsilon}$. Therefore conformal invariance, the maximum principle, and the Beurling estimate (Lemma~\ref{beurling1})  show that
\begin{align}\label{hmDDless}
\hm_{\DD}(0, \phi_u(\gamma_v)) = \hm_{D_u}(0, \gamma_v) \le \hm_{\tilde{\epsilon} \DD \smallsetminus \gamma_u}(0, \tilde{\epsilon} \partial \DD)  \leq C (\ee/\tilde{\epsilon})^{1/2}.
\end{align}
We next show that 
\begin{align}\label{hmDDbigger}
\diam( \phi_u(\gamma_v)) \leq C \hm_{\DD}(0, \phi_u(\gamma_v)). 
\end{align}
To prove (\ref{hmDDbigger}), we may assume $\phi_u(\gamma_v)$ separates $1$ from $0$ and that $ \diam(\phi_u(\gamma_v))$ is small. We map $\DD$ conformally to $\HH$ in such a way that $0$ maps to $i$ and $1$ to $0$. Write $\eta$ for the image of $\phi_u(\gamma_v)$ in $\HH$. Let $w \in \eta$ be a point of maximal modulus. There is a curve $\eta_0$ from $0$ to $w$ in $\HH$ that is separated from $\infty$ by $\eta$ in $\HH$ and by conformal invariance and the separation property, $\hm_{\DD}(0, \phi_u(\gamma_v)) = \hm_{\HH}(i, \eta) \ge \hm_{\HH}(i, \eta_0)$. Hall's lemma (see Chapter~III of \cite{GM2005}) yields $\hm_{\HH}(i, \eta_0)  \ge (2/3) \hm_{\HH}(i, \eta_0^*)$, where  $\eta_0^*=\{|z|: z \in \eta_0\}$. Since $\hm_{\HH}(i, \eta_0^*) \geq c\diam(\eta_0^*) \geq c \diam(\eta_0) \geq c\diam( \phi_u(\gamma_v))$,
this proves (\ref{hmDDbigger}). 
By (\ref{hmDDless}) and (\ref{hmDDbigger}), we have
\begin{equation}\label{dec8.2}
\diam ( \phi_u(\gamma_v) ) \le C \, (\ee/\tilde{\epsilon})^{1/2}.
\end{equation}

Write $D'$ for the component containing $0$ of $\mathbb{D} \smallsetminus \phi_u(\gamma_v)$. Let $\psi: D' \to \mathbb{D}$ be the conformal map with $\psi(0)=0, \, \psi'(0)>0$ (since $D' \subset \mathbb{D}$, we actually have $\psi'(0) \geq 1$).
Then the normalized Riemann map of $D_{u,v}$ is $\psi \circ \phi_u$ and so we have
\begin{align}\label{UpsilonDuv0}
\Upsilon_{D_{u,v}}(0) = \Upsilon_{D_u}(0) \cdot \psi'(0)^{-1}.
\end{align}
Using \eqref{dec8.2}, there is $C_0 < \infty$ such that if $\ee > 0$ is small enough, then
\begin{align}\label{psiprime0bound}
1 \le \psi'(0) \le 1 + C_0 \, \ee/\tilde{\epsilon}.
\end{align}
Indeed, this follows, e.g., using Proposition~3.58 of \cite{L2005} and the fact that the half-plane capacity of a hull of diameter $d$ is $O(d^2)$.
Since $\tilde{\epsilon}=\sqrt{\ee}/2$, (\ref{UpsilonDuv0}) and (\ref{psiprime0bound}) imply
\[
\Upsilon_{D_{u,v}}(0) \ge \Upsilon_{D_u}(0) \frac{1}{1 + 2C_0\sqrt{\ee}}.
\]
Consequently, if $\Upsilon_{D_u}(0) > \ee \left(1+C_1 \sqrt{\ee} \right)$ and $C_1 > 2C_0$, then if $\ee$ is small enough,
\[
\Upsilon_{D_{u,v}}(0)  > \ee,
\]
which is the desired estimate.
%
%Hence by choosing $C_1$ large enough, we can ensure the existence of $C_2 > 0$ such that for all $\ee$ small enough,
%\[
%\Upsilon_{D_{u,v}}(0) > \ee \, (1+C_2 \sqrt{\ee}) > \ee,
%\]
\end{proof}

\begin{lemma}\label{lemma:correlation}
Consider commuting SLE$_\kappa$ in $\mathbb{H}$ started from $(0, 1)$. Let $\gamma_1$ and $\gamma_2$ be the curves starting from $0$ and $1$, respectively.
There exist constants $0<c,C < \infty$ such that if $0< \ee  < c \Im z$, then 
\begin{equation}\label{dec10.1}
\PP_{0,1}\left(\Upsilon^1_\infty(z) \le \ee, \, \Upsilon^2_\infty(z) \le \ee \right) \le C\, ( \ee/\Im z)^{2-d + \beta/2 + a},
\end{equation}
where we write $\Upsilon^1_\infty(z)$ and $\Upsilon^2_\infty(z)$ for $1/2$ times the conformal radius of $z$ in $\mathbb{H}\smallsetminus \gamma_{1,\infty}$ and $\mathbb{H}\smallsetminus  \gamma_{2,\infty}$, respectively.
\end{lemma}
\begin{rem}
Notice that if $a \ge 1/2$, i.e., $\kappa \le 4$, then $\beta/2 \ge 2-d$, i.e., half the boundary exponent is larger than the bulk exponent, with strict inequality if $\kappa < 4$. Therefore, \eqref{dec10.1} implies that for every $\kappa \le 4$,
\begin{equation}\label{dec10.2}
\PP_{0,1}\left(\Upsilon^1_\infty(z) \le \sqrt{\ee}, \, \Upsilon^2_\infty(z) \le \sqrt{\ee} \right) =  o(\ee^{2-d}).
\end{equation}
\end{rem}

\begin{proof}[Proof of Lemma~\ref{lemma:correlation}]
Let $z = x+iy \in \mathbb{H}$. We will write $\PP$ for $\PP_{0,1}$. We first grow $\gamma_1$ starting from $\xi^1=0$. The distribution is that of an SLE$_\kappa(2)$ started from $(0, 1)$. Let $\bar{\ball}_r$ denote the closure of the ball $\ball_r = \ball_r(z)$. 
If $\Upsilon^j_\infty(z) \le \ee$, $j = 1,2$, then $\gamma_{j,\infty}$ intersects $\bar{\ball}_{2\ee}$ by (\ref{Koebe}).
Let $\tau_\ee$ be the first time $\gamma_1$ hits $\bar{\ball}_{2\ee}$ and assume $\ee \le y/10$ say. 
By Proposition~\ref{prop:slekr} we have \begin{equation}\label{dec9.3}\PP \left( \tau_\ee < \infty \right)  \le C\, (\ee/y)^{2-d}.\end{equation}
On the event that $\tau_\ee<\infty$, we stop $\gamma_1$ at $\tau_\ee$ and write $H^1_\ee = \mathbb{H} \smallsetminus \gamma_{1,\tau_\ee}$. Then almost surely, $\eta:=\partial \ball_{2\ee} \cap H^1_{\ee}$ is a crosscut of $H^1_{\ee}$.
We have the following estimate on the extremal distance (using the extension rule)
\begin{equation}\label{dec9.1}
d_{H^1_\ee}([1, \infty), \eta) \ge d_{\mathbb{H}}(\mathbb{R}, \ball_{2\ee}) \ge d_{\ball_y}(\partial \ball_y, \ball_{2\ee}) = \frac{1}{2\pi}\log(y/(2\ee)),
\end{equation}
on the event that $\tau_\ee < \infty$. Conditioned on $\gamma_{1, \tau_\ee}$ (after uniformizing $H^1_\ee$) the distribution of $\gamma_2$ is that of SLE$_\kappa(2)$ started from $(\xi^2_{\tau_\ee}, \xi^1_{\tau_\ee})$. We claim that on the event that $\tau_\ee < \infty$
\begin{equation}\label{dec9.2}
\Prob \left( \gamma_{2, \infty} \cap \eta \neq \emptyset \mid \gamma_{1,\tau_\ee}\right) \le C\, (\ee/y)^{\beta/2 + a}.
\end{equation}
Indeed, note that $g_{\tau_\ee}(\eta)$ is a crosscut of $\mathbb{H}$ separating $\xi^1_{\tau_\ee}$ from $\xi^2_{\tau_\ee}$ and, by conformal invariance and  \eqref{dec9.1},  
\[d_{\mathbb{H}}([ \xi^2_{\tau_\ee}, \infty), g_{\tau_\ee}(\eta)) = d_{H^1_\ee}([1, \infty), \eta) \ge \frac{1}{2\pi} \log (y/(2\ee)) .\]
The estimate \eqref{dec9.2} now follows from Lemma~\ref{lem:dec10.4} with $\rho=2$.  We conclude the proof by combining \eqref{dec9.2} with \eqref{dec9.3}.
\end{proof}
We can now give the proof of Lemma~\ref{2SLElemma}.
\begin{proof}[Proof of Lemma~\ref{2SLElemma}]
Without loss of generality, consider a system of multiple SLEs started from $(0,1)$ with corresponding measure $\Prob=\Prob_{0,1}$.  We want to prove that
\[
\lim_{\ee \downarrow 0}\ee^{d-2} \PP \left(\Upsilon_\infty(z) \le \ee \right) = \lim_{\ee \downarrow 0}\ee^{d-2}\PP \left(\Upsilon^1_\infty(z) \le \ee \right) + \lim_{\ee \downarrow 0}\ee^{d-2}\PP \left(\Upsilon^2_\infty(z) \le \ee \right).
\]
We can write
\begin{align*}
\PP \left(\Upsilon_\infty(z) \le \ee \right)  = & \, \PP \left(\Upsilon^1_\infty(z) \le \ee \right) + \PP \left(\Upsilon^2_\infty(z) \le \ee \right)  \\ 
& -  \PP \left(\Upsilon^1_\infty(z) \le \ee, \,  \Upsilon^2_\infty(z) \le \ee \right) \\
&+ \PP \left(\Upsilon^1_\infty(z) > \ee, \,  \Upsilon^2_\infty(z) > \ee, \, \Upsilon_\infty(z) \le \ee \right).
\end{align*}
We know from Theorem~\ref{prop:slekr} that the renormalized limits of the first two terms on the right exist.
% and equal the right-hand side of \eqref{greens-formula}. 
We will show that the remaining terms decay as $o(\ee^{2-d})$ and this will prove the lemma. For the third term the required estimate follows immediately from Lemma~\ref{lemma:correlation}, so it remains to estimate the last term. 

By (\ref{Koebe}), $\Upsilon_\infty \le \ee$ implies $\dist(\gamma_{1,\infty} \cup \gamma_{2,\infty}, z ) \le 2\ee$. We assume that $\dist(\gamma_{1,\infty}, z) \le 2\ee$; the other case is handled in the same way. This in turn implies $\Upsilon^1_\infty(z) \le 4\ee$. Using Lemma~\ref{lemma:dec8.2} we see that there is a constant $C_1$ such that if $\ee \, (1+C_1\sqrt{\ee})< \Upsilon^1_\infty(z) \le 4\ee$ and $\Upsilon^2_\infty(z) \ge \sqrt{\ee}$, then $\Upsilon_\infty(z) > \ee$ for all $\ee$ small enough. Hence we can estimate as follows:
\begin{align*}
\PP (\Upsilon^1_\infty(z) & > \ee, \, \Upsilon^2_\infty(z) > \ee, \, \Upsilon_\infty(z)  \le  \ee, \, \dist(\gamma_{1,\infty}, z) \le 2\ee )  
	 \\
\le & \; \PP \left( \ee \, (1+C_1\sqrt{\ee})< \Upsilon^1_\infty(z) \le 4\ee, \,  \Upsilon^2_\infty(z) < \sqrt{\ee} \right )  
+ \PP \left(\ee \le \Upsilon^1_\infty(z) \le \ee \, (1+C_1\sqrt{\ee})  \right) 
	\\
 \le  & \; \PP \left(\Upsilon^1_\infty(z) \le \sqrt{\ee}, \,  \Upsilon^2_\infty(z) \le \sqrt{\ee} \right) 
+  \PP \left(\ee \le \Upsilon^1_\infty(z) \le \ee(1+C_1\sqrt{\ee})  \right). 
\end{align*}
By \eqref{dec10.2},
\[
 \PP \left( \Upsilon^1_\infty(z) \le \sqrt{\ee}, \,  \Upsilon^2_\infty(z) \le \sqrt{\ee} \right) = o(\ee^{2-d}).
\]
On the other hand, we can use Proposition~\ref{prop:slekr} to see that
\begin{align*}
\PP \left(\ee \le \Upsilon^1_\infty(z) \le \ee(1+C_1\sqrt{\ee})  \right) & = c_*G^{2}(z, 0, 1)\ee^{2-d}\left[ (1+C_1 \sqrt{\ee})^{2-d}-1 \right] + o(\ee^{2-d})\\
& =  o(\ee^{2-d}).\end{align*} 
This completes the proof.
\end{proof}

\section{Fusion}\label{fusionsec}

\subsection{Schramm's formula}
The function $P(z, \xi)$ in (\ref{Pdef}) extends continuously to $\xi = 0$; hence we obtain an expression for Schramm's formula in the fusion limit by simply setting $\xi = 0$ in the formulas of Theorem \ref{thm:schramm}. In this way, we recover the formula of \cite{GC2006} and can give a rigorous proof of this formula.

\begin{thm}[Schramm's formula for fused SLE$_\kappa(2)$]\label{fusedschrammthm}
Let $0 < \kappa \le  4$. Consider chordal SLE$_\kappa(2)$ started from $(0,0+)$. 
Then the probability $P_f(z)$ that a given point $z = x + iy \in \mathbb{H}$ lies to the left of the curve is given by 
\begin{align}\label{Pfdef}
P_f(z) = \frac{1}{c_\alpha} \int_x^\infty \re \mathcal{M}_f(x' +i y) dx',
\end{align}
where $c_\alpha \in \R$ is the normalization constant in (\ref{calphadef}) and
\begin{align}\nonumber
\mathcal{M}_f(z) = &\; y^{\alpha - 2} z^{-\alpha}\bar{z}^{2-\alpha} \int_{\bar{z}}^z(u-z)^{\alpha}(u- \bar{z})^{\alpha - 2} u^{-\alpha} du, \qquad z \in \mathbb{H},
\end{align}
with the contour passing to the right of the origin. 
The function $P_f(z)$ can be alternatively expressed as
\begin{align}\label{PGamsaCardy}
P_f(z) = \frac{\Gamma(\frac{\alpha}{2})\Gamma(\alpha)}{2^{2 - \alpha}\pi \Gamma(\frac{3\alpha}{2} - 1)} \int_{\frac{x}{y}}^\infty S(t') dt',
\end{align}
where the real-valued function $S(t)$ is defined by
\begin{align*}S(t) = &\; (1 + t^2)^{1- \alpha} 
\bigg\{{}_2F_1\bigg(\frac{1}{2} + \frac{\alpha}{2}, 1 - \frac{\alpha}{2}, \frac{1}{2}; - t^2\bigg)
	\\
& - \frac{2\Gamma(1 + \frac{\alpha}{2})\Gamma(\frac{\alpha}{2})t}{\Gamma(\frac{1}{2} + \frac{\alpha}{2})\Gamma(- \frac{1}{2} + \frac{\alpha}{2})}
 {}_2F_1\bigg(1 + \frac{\alpha}{2}, \frac{3}{2} - \frac{\alpha}{2}, \frac{3}{2}; -t^2\bigg)\bigg\}, \qquad t \in \R.
\end{align*}
\end{thm}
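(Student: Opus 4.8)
The formula \eqref{Pfdef} is obtained directly from Theorem~\ref{thm:schramm} by letting $\xi \downarrow 0$: first I would verify that the contour integral $J(z,\xi)$ in \eqref{Jdef}, together with the prefactor in \eqref{Mdef}, extends continuously to $\xi = 0$ (for $\alpha \ge 2$ this is straightforward since the integrand is continuous in $\xi$ uniformly on the fixed contour from $\bar z$ to $z$ passing to the right of the origin, and the factors $z^{-\alpha/2}(z-\xi)^{-\alpha/2}$ etc.\ merge into $z^{-\alpha}$, $\bar z^{2-\alpha}$). Dominated convergence then gives $P_f(z) = \lim_{\xi\downarrow 0}P(z,\xi) = \frac{1}{c_\alpha}\int_x^\infty \re \mathcal{M}_f(x'+iy)\,dx'$, with the $x'$-integral still convergent because the decay of $\mathcal{M}_f$ as $x' \to \infty$ is inherited from the $\xi$-dependent estimate (or can be checked directly from the explicit form). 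Since $P(\cdot,\xi)$ is a probability for each $\xi > 0$, the limit $P_f$ is automatically the probability that $z$ lies to the left of fused SLE$_\kappa(2)$; alternatively one can re-run the martingale/stopping-time argument of Section~\ref{schrammsec} directly for the $\xi = 0$ process, which is legitimate because $P_f$ still satisfies the degenerate PDE and the boundary estimates \eqref{Peverywherea1}--\eqref{Peverywhereb1} pass to the limit.

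\textbf{The hypergeometric reduction.} The substantive content is the identity \eqref{PGamsaCardy}, i.e.\ evaluating $\re \mathcal{M}_f(x+iy)$ in closed form. By scale invariance $\mathcal{M}_f(x+iy) = y^{-1}\Phi(t)$ with $t = x/y$, so the claim reduces to identifying $\frac{1}{c_\alpha}\re\Phi(t)$ with a constant multiple of $S(t)$; integrating in $t$ then gives \eqref{PGamsaCardy} after noting $\int_x^\infty(\cdots)dx' = y\int_{x/y}^\infty(\cdots)dt'$ and collecting constants. To evaluate $\Phi(t)$, I would parametrize the contour from $\bar z$ to $z$ and recognize $J(z,0)$ as an Euler-type integral: after the change of variables $u = z + (\bar z - z)v$ (or $u \mapsto z/u$-type Möbius normalizations) the integral $\int_{\bar z}^z (u-z)^\alpha(u-\bar z)^{\alpha-2}u^{-\alpha}\,du$ becomes a standard hypergeometric integral representation of ${}_2F_1$ (Euler's integral, with one parameter shifted by the $u^{-\alpha}$ factor), possibly as a sum of two such terms corresponding to the two ``ends'' of the contour relative to the branch point at $0$. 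Multiplying by the prefactor $z^{-\alpha}\bar z^{2-\alpha}$ and taking the real part produces precisely the combination of ${}_2F_1\big(\tfrac12+\tfrac\alpha2,1-\tfrac\alpha2,\tfrac12;-t^2\big)$ and $t\cdot{}_2F_1\big(1+\tfrac\alpha2,\tfrac32-\tfrac\alpha2,\tfrac32;-t^2\big)$ in $S(t)$, with the ratio of the two pieces fixed by a contiguous-relation/connection-formula computation that also pins down the $\Gamma$-factor coefficient $\frac{2\Gamma(1+\alpha/2)\Gamma(\alpha/2)}{\Gamma(1/2+\alpha/2)\Gamma(-1/2+\alpha/2)}$. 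The overall constant $\frac{\Gamma(\alpha/2)\Gamma(\alpha)}{2^{2-\alpha}\pi\Gamma(3\alpha/2-1)}$ then comes out of $1/c_\alpha$ combined with the normalization of the Euler integrals, using the value of $c_\alpha$ in \eqref{calphadef} (which itself encodes a beta-integral evaluation of $J$ along the whole real line, i.e.\ the normalization fixed by $P \to 1$ as $\re z \to -\infty$).

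\textbf{Main obstacle.} The bookkeeping of branches and the precise constants is the delicate part: one must track the phases picked up by $(u-z)^\alpha$, $(u-\bar z)^{\alpha-2}$, and $u^{-\alpha}$ as $u$ traverses the contour to the right of $0$, decompose the contour integral into hypergeometric pieces with the correct arguments ($-t^2$ versus $1/(1+t^2)$ etc.), and then apply a hypergeometric connection formula to bring everything to the single argument $-t^2$ appearing in $S$. Getting the coefficient $\frac{\Gamma(\alpha/2)\Gamma(\alpha)}{2^{2-\alpha}\pi\Gamma(3\alpha/2-1)}$ exactly right requires care with the $2^{2-\alpha}$ (from $(z-\bar z)^{\alpha-2} = (2iy)^{\alpha-2}$-type factors and the rescaling $u-\bar z \sim 2\,\Im$) and with the sign/phase of $c_\alpha$. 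A useful consistency check, which I would include, is the case $\alpha = 2$ ($\kappa = 4$): there $S(t)$ should reduce so that \eqref{PGamsaCardy} reproduces the elementary formula $P_f(z,0+) = \tfrac14 - \tfrac{1}{\pi^2(1+t^2)} - \tfrac{\arctan t}{\pi} + \tfrac{(\arctan t)^2}{\pi^2}$ already derived from \eqref{Palpha2} by collapsing the seed, and more generally the formulas must agree with those of \cite{GC2006}. Once the closed-form evaluation of $\re\mathcal{M}_f$ is in hand, the remaining steps (interchange of limit and integral, deducing \eqref{Pfdef}, and recognizing the probabilistic meaning) are routine.
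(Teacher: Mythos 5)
Your proposal follows the paper's proof essentially verbatim: the paper likewise obtains \eqref{Pfdef} by letting $\xi \downarrow 0$ in Theorem \ref{thm:schramm}, and proves \eqref{PGamsaCardy} by reducing it to the pointwise identity \eqref{SMrelation}, evaluating $J_f$ for $x>0$ along the vertical segment via the substitution $v=(u-z)/(\bar z - z)$ as an Euler integral giving ${}_2F_1\big(\alpha,\alpha+1,2\alpha;1-\bar z/z\big)$, then applying a quadratic transformation and a connection formula (DLMF 15.8.13 and 15.8.2) to reach argument $-t^2$ and match $S(t)$ with the stated constants, finally extending to $x\le 0$ by continuity of the resulting expression across $x=0$. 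The one caveat is your parenthetical alternative of re-running the stopping-time argument directly in the fused state using $P_f$ and its degenerate equation, which the paper explicitly cautions against (the seeds separate as soon as the evolution starts), but this does not affect your main line of argument, which coincides with the paper's.
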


\begin{rem}\upshape
  Formula (\ref{PGamsaCardy}) for $P_f(z)$ coincides with equation (15) of \cite{GC2006}. 
\end{rem}
\begin{proof}[Proof of Theorem \ref{fusedschrammthm}]
The expression (\ref{Pfdef}) for $P_f(z)$ follows immediately by letting $\xi \to 0$ in (\ref{Pdef}). 
Since the right-hand side of (\ref{PGamsaCardy}) vanishes as $x\to \infty$, the representation (\ref{PGamsaCardy}) will follow if we can prove that
\begin{align}\label{SMrelation}
\frac{\Gamma(\frac{\alpha}{2})\Gamma(\alpha)}{2^{2 - \alpha}\pi \Gamma(\frac{3\alpha}{2} - 1)}  S(x/y)
= \frac{y}{c_\alpha}\re \mathcal{M}(x+iy, 0), \qquad x \in \R, \ y > 0, \ \alpha > 1.
\end{align}

In order to prove (\ref{SMrelation}), we write $\mathcal{M}_f = y^{\alpha - 2} z^{-\alpha}\bar{z}^{2-\alpha}J_f(z),$
where $J_f(z)$ denotes the function $J(z, \xi)$ defined in (\ref{Jdef}) evaluated at $\xi = 0$, that is,
\begin{align}\label{Jfexp}
J_f(z) = \int_{\bar{z}}^z(u-z)^{\alpha}(u- \bar{z})^{\alpha - 2} u^{-\alpha} du,
\end{align}
where the contour passes to the right of the origin. Let us first assume that $x > 0$. Then we can choose the vertical segment $[\bar{z},z]$ as contour in (\ref{Jfexp}). The change of variables $v = \frac{u-z}{\bar{z} - z}$, which maps the segment $[z, \bar{z}]$ to the interval $[0,1]$, yields
\begin{align*}
J_f(z) = & \; e^{-i\pi \alpha}(z - \bar{z})^{2\alpha - 1}z^{-\alpha} 
\int_0^1  v^{\alpha} 
(1-v)^{\alpha - 2} \Big(1 - v\frac{z-\bar{z}}{z}\Big)^{-\alpha} dv, \qquad x > 0,
\end{align*}
where we have used that $(z - v(z-\bar{z}))^{-\alpha} = z^{-\alpha}(1 - v\frac{z-\bar{z}}{z})^{-\alpha}$ for $v \in [0,1]$ and $x > 0$.
The hypergeometric function ${}_2F_1$ can be defined for $w \in \C \smallsetminus [0, \infty)$ and $0 < b < c$ by\footnote{Throughout the paper, we use the principal branch of ${}_2F_1$ which is defined and analytic for $w \in \C \smallsetminus [1, \infty)$.}
\begin{align}\nonumber
{}_2F_1(a,b,c;w) = \frac{\Gamma(c)}{\Gamma(b)\Gamma(c-b)} \int_0^1 v^{b-1} (1-wv)^{-a} (1-v)^{c-b-1}dv.
\end{align}
This gives, for $x > 0$,
\begin{align}\label{Mfdef2}
\mathcal{M}_f(z) = -iy^{3\alpha - 3} z^{-2\alpha}\bar{z}^{2-\alpha}2^{2\alpha-1}
\frac{\Gamma(\alpha + 1)\Gamma(\alpha-1)}{\Gamma(2\alpha)}
{}_2F_1\Big(\alpha, \alpha + 1, 2\alpha;1 - \frac{\bar{z}}{z}\Big).
\end{align}
The argument $w = 1 - \frac{\bar{z}}{z}$ of ${}_2F_1$ in (\ref{Mfdef2}) crosses the branch cut $[1, \infty)$ for $x = 0$. Therefore, to extend the formula to $x \leq 0$, we need to find the analytic continuation of ${}_2F_1$. This can be achieved as follows. 
Using the general identities 
$${}_2F_1(a,b,c; w) = {}_2F_1(b,a,c; w)$$
and (see \cite[Eq.~15.8.13]{DLMF})
$${}_2F_1(a,b,2b; w) = \Big(1-\frac{w}{2}\Big)^{-a} {}_2F_1\Big(\frac{a}{2},\frac{a+1}{2},b +\frac{1}{2}; \Big(\frac{w}{2-w}\Big)^2\Big),$$
we can write the hypergeometric function in (\ref{Mfdef2}) as
\begin{align}\label{hyperhyper}
{}_2F_1\Big(\alpha, \alpha + 1, 2\alpha;1 - \frac{\bar{z}}{z}\Big)
= \Big(\frac{x}{z}\Big)^{-\alpha-1} {}_2F_1\Big(\frac{\alpha + 1}{2},\frac{\alpha}{2} + 1,\alpha +\frac{1}{2}; -t^{-2}\Big), \qquad x > 0,
\end{align}
where $t = x/y$. Using the identity (see \cite[Eq.~15.8.2]{DLMF})
\begin{align*}
\frac{\sin(\pi(b-a))}{\pi} & \frac{{}_2F_1(a,b,c;w)}{\Gamma(c)}
= \frac{(-w)^{-a}}{\Gamma(b)\Gamma(c-a)}\frac{{}_2F_1(a,a-c+1,a-b+1;\frac{1}{w})}{\Gamma(a-b+1)}
	\\
& - \frac{(-w)^{-b}}{\Gamma(a)\Gamma(c-b)}\frac{{}_2F_1(b,b-c+1,b-a+1;\frac{1}{w})}{\Gamma(b-a+1)}, \qquad w \in \C \smallsetminus [0, \infty),
\end{align*}
with $w = -t^{-2}$ to rewrite the right-hand side of (\ref{hyperhyper}), and substituting the resulting expression into (\ref{Mfdef2}), we find after simplification
\begin{align}\label{MfSt}
\mathcal{M}_f(z) = 
& -\frac{i \sqrt{\pi } 2^{\alpha -1} \bar{z} \Gamma \left(\frac{\alpha -1}{2}\right)}{y^2 \Gamma \left(\frac{\alpha
   }{2}\right)} S(t).
\end{align}
We have derived (\ref{MfSt}) under the assumption that $x > 0$, but since the hypergeometric functions in the definition of $S(t)$ are evaluated at the point $-t^2$ which avoids the branch cut for $z \in \mathbb{H}$, equation (\ref{MfSt}) is valid also for $x \leq 0$. Equation (\ref{SMrelation}) is the real part of (\ref{MfSt}).
\end{proof}

We obtain Schramm's formula for multiple SLE in the fusion limit as a corollary.

\begin{cor}[Schramm's formula for two fused SLEs]\label{schramm2slecor}
Let $0 < \kappa \le  4$. Consider two fused multiple SLE$_\kappa$ paths in $\mathbb{H}$ started from $0$ and growing toward infinity. Then the probability $P_f(z)$ that a given point $z = x + iy \in \mathbb{H}$ lies to the left of both curves is given by (\ref{Pfdef}).
\end{cor}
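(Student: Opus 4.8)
\textbf{Proof proposal for Corollary \ref{schramm2slecor}.}

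The plan is to reduce the statement about two fused commuting SLEs to the already-established fused SLE$_\kappa(2)$ result, Theorem \ref{fusedschrammthm}, by the same observation used to pass from Theorem \ref{thm:schramm} to Corollary \ref{schrammcor}. The key geometric fact is that a point $z \in \mathbb{H}$ lies to the left of \emph{both} curves in a commuting system if and only if it lies to the left of the leftmost curve. For a commuting system started from two distinct points the leftmost curve is the one started at the smaller seed; in the fused limit both seeds coincide at $0$, so I first have to make sense of ``the leftmost curve'' in this degenerate configuration. The cleanest way is to work at positive separation and then pass to the limit: for $\xi > 0$, Corollary \ref{schrammcor} gives that the probability that $z$ lies to the left of both curves of a commuting system started from $(0,\xi)$ equals $P(z,\xi)$, the function in \eqref{Pdef}.

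The main step is then a continuity/limit argument as $\xi \downarrow 0$. On the analytic side, the excerpt already records (in the proof of Corollary \ref{fusioncor} and in Theorem \ref{fusedschrammthm}) that $P(z,\xi)$ extends continuously to $\xi = 0$ with $\lim_{\xi \downarrow 0} P(z,\xi) = P_f(z)$, where $P_f$ is given by \eqref{Pfdef}. So it remains to argue on the probabilistic side that
\[
\lim_{\xi \downarrow 0} \PP_{0,\xi}\big(z \text{ lies to the left of both curves}\big) = \PP_{0,0+}\big(z \text{ lies to the left of both curves}\big),
\]
where the right-hand side is by definition the probability for the fused system (equivalently, by the relationship recalled in Section \ref{relationshipsubsubsec} and used throughout Section \ref{fusionsec}, the probability for the leftmost curve, which has the law of fused SLE$_\kappa(2)$ started from $(0,0+)$). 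Since $P_{f}(z)$ is the corresponding quantity for a single fused SLE$_\kappa(2)$ by Theorem \ref{fusedschrammthm}, and since for the commuting system the left-of-both event coincides with the left-of-the-leftmost-curve event, the two probabilistic quantities are literally equal, so no genuine limit is needed on that side once the reduction to the leftmost curve is in place — the content is entirely that ``left of both $=$ left of leftmost,'' which is the same elementary observation used for Corollary \ref{schrammcor}, now applied in the fused configuration.

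Concretely, the steps are: (i) recall that the fused commuting system started from $0$ has, as its leftmost (indeed only relevant) curve for this event, a curve distributed as fused SLE$_\kappa(2)$ started from $(0,0+)$ — this is the fused analogue of the marginal law statement in Section \ref{relationshipsubsubsec}; (ii) observe that $z$ lies to the left of both curves iff it lies to the left of that curve; (iii) apply Theorem \ref{fusedschrammthm}, which identifies this probability with $P_f(z)$ as in \eqref{Pfdef}. The only point requiring a little care is step (i): justifying that in the fused limit the system still has a well-defined ``leftmost'' curve whose law is the fused SLE$_\kappa(2)$, rather than appealing to a limit of the positive-separation systems. I expect this to be the main obstacle, and the natural way to handle it is exactly as in the rest of Section \ref{fusionsec}: define the fused commuting system via the fused driving diffusion (or simply define $\PP_{0,0+}$ through the leftmost-curve marginal, consistently with the usage of $\PP_{0,0+}$ elsewhere in the paper), so that step (i) becomes a definition and the corollary follows immediately from Theorem \ref{fusedschrammthm}.
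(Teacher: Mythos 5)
Your proposal is correct and follows essentially the same route as the paper: the paper states this corollary without further argument, as an immediate consequence of Theorem \ref{fusedschrammthm} combined with the observation (already used for Corollary \ref{schrammcor}) that $z$ lies to the left of both curves iff it lies to the left of the leftmost one, whose marginal law is (fused) SLE$_\kappa(2)$ started from $(0,0+)$. Your extra care about how the fused commuting system and $\PP_{0,0+}$ are defined is sensible but not something the paper elaborates on either.
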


\begin{rem}\upshape
We remark that the method adopted in \cite{GC2006} was based on exploiting so-called fusion rules, which produces a third order ODE for $P_f$ which can then be solved in order to give the prediction in (\ref{PGamsaCardy}). However, even given the prediction (\ref{PGamsaCardy}) for $P_f$ it is not clear how to proceed to give a proof that it is correct. As soon as the evolution starts, the tips of the curves are separated and the system leaves the fused state, so it seems difficult to apply a stopping time argument in this case. However, as was pointed out to us by one of the referees, if one knows \emph{a priori} that the three probabilities that the point is to the left of, right of, or between the two curves satisfy this third order ODE, and if one is given three linearly independent solutions adding up to $1$ of this ODE with the correct boundary behaviors, then it is possible that this information is enough to deduce the statement of Corollary~\ref{schramm2slecor}. (See \cite{D2015b}.)
\end{rem}

%
%\begin{figure}
%\bigskip\medskip
%\begin{center}
%      \begin{overpic}[width=.6\textwidth]{hfused.pdf}
%      \put(41,20){\makebox(0,0){\rotatebox{55}{{\small $\alpha = 2.5$}}}}
%      \put(49.5,20){\makebox(0,0){\rotatebox{66}{{\small $\alpha = 5.5$}}}}
%      \put(57,20){\makebox(0,0){\rotatebox{72}{{\small $\alpha = 8.5$}}}}
%      \put(102,4){\small $\theta$}
%      \put(0,64){\small $h_f(\theta)$}
%      \end{overpic}
%     \begin{figuretext}\label{hfused.pdf}
%       The graph of $h_f(\theta)$ for three different values of $\alpha = 8/\kappa$.
%       \end{figuretext}
%     \end{center}
%\end{figure}

\subsection{Green's function}\label{sec:fused-green}
In this subsection, we derive an expression for the Green's function for SLE$_\kappa(2)$ started from $(0, 0+)$. 
Let $\alpha = 8/\kappa$. 
For $\alpha \in (1, \infty) \smallsetminus \Z$, we define the `fused' function $h_f(\theta) \equiv h_f(\theta; \alpha)$ for $0 < \theta < \pi$ by
\begin{align}\label{hfdef}
h_f(\theta) = \frac{\pi  2^{\alpha +1}}{\hat{c}}
 \sin \left(\frac{\pi  \alpha }{2}\right) \sin^{2 \alpha-2}(\theta) \Re\left[e^{-\frac{1}{2} i \pi  \alpha } \, _2F_1\left(1-\alpha,\alpha, 1;\frac{1}{2} (1-i \cot (\theta))\right)\right], 
 \end{align}
where the constant $\hat{c} \equiv \hat{c}(\kappa)$ is defined in (\ref{hatcdef}). This definition is motivated by Lemma \ref{hboundarylemma}, which shows that $h_f(\theta)$ is the limiting value of $h(\theta^1, \theta^2)$ in the fusion limit $(\theta^1, \theta^2) \to (\theta,\theta)$. 
The next lemma shows that this definition of $h_f$ can be extended by continuity to all $\alpha > 1$. 

Given $A \in [0,1]$ and $\epsilon > 0$ small, we let $L_A^j \equiv L_A^j(\epsilon)$, $j = 1, \dots, 4$, denote the contours 
\begin{align}\nonumber
& L_A^1 = [A, i\epsilon] \cup [i\epsilon, -\epsilon], && 
L_A^2 = [-\epsilon, -i\epsilon] \cup [-i\epsilon, A],
	\\ \label{Lcontoursdef}
& L_A^3 = [A, 1-i\epsilon] \cup [1-i\epsilon, 1+\epsilon], && L_A^4 = [1+\epsilon, 1+ i\epsilon] \cup [1+i\epsilon, A],
\end{align}
oriented so that $\sum_1^4 L_A^j$ is a counterclockwise contour enclosing $0$ and $1$, see Figure \ref{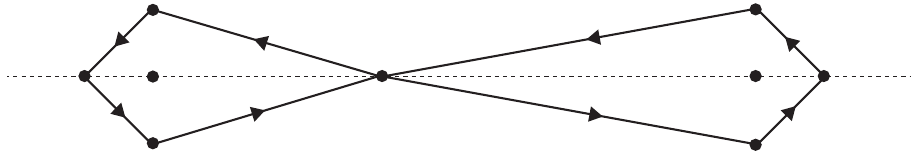}.

\begin{figure}
\medskip
\begin{center}
      \begin{overpic}[width=.7\textwidth]{Lcontours.pdf}
     \put(27,15){\small $L_A^1$}
     \put(27,0){\small $L_A^2$}
     \put(63,0){\small $L_A^3$}
     \put(63,15.5){\small $L_A^4$}
     \put(102,7.5){\small $\re v$}
     \put(6,5.5){\small $-\epsilon$}
     \put(16,5){\small $0$}
     \put(14,-2.5){\small $-i\epsilon$}
     \put(15.5,17.5){\small $i\epsilon$}
     \put(40.5,5){\small $A$}
     \put(82,5){\small $1$}
     \put(79,-2.5){\small $1-i\epsilon$}
     \put(79,18){\small $1+i\epsilon$}
     \put(91,5.5){\small $1+\epsilon$}
     \end{overpic}
      \begin{figuretext}\label{Lcontours.pdf}
       The contours $L_A^j$, $j = 1, \dots, 4$.
       \end{figuretext}
     \end{center}
\end{figure}

\begin{lemma}
For each $\theta \in (0,\pi)$, the function $h_f(\theta; \alpha)$ defined in (\ref{hfdef}) extends to a continuous function of $\alpha \in (1, \infty)$ satisfying
\begin{align}\label{hfalphaevenodd}
h_f(\theta; n) 
& = 2^{n-3} h_n \sin^{2n-2}\theta^1
\times \begin{cases}
\re \big[2Y_2 - i\pi Y_1\big], & n = 2,4, \dots,
	\\ 
\frac{2}{\pi} \re \big[2i Y_2 + \pi Y_1 \big], & n = 3,5, \dots,
   \end{cases}
\end{align}
where the constant $h_n \in \C$ is defined in (\ref{hndef}) and the coefficients $Y_j \equiv Y_j(\theta; n)$, $j = 1,2$, are defined as follows:
Introduce $y_j \equiv y_j(v,\theta; n)$, $j = 0,1$, by
\begin{align*}
& y_0 = v^{n-1} (1-vz)^{n-1}(1-v)^{-n},
	\\
& y_1 = v^{n-1} (1-vz)^{n-1}(1-v)^{-n}\big(\ln v + \ln(1-vz) - \ln(1-v)\big),
\end{align*}
where $z = \frac{1 - i\cot\theta}{2}$.
Then
\begin{align}\label{fusedY1def}
Y_1 = (2\pi i)^2 \underset{v = 1}{\res} y_0(v,\theta; n)
\end{align} 
and
\begin{align}\label{fusedY2def}
Y_2 = 2\pi i \int_{L_A^1 + L_A^2 + L_A^3 + L_A^4} y_1 dv 
+ 2\pi^2 \int_{L_A^1 - L_A^2 - L_A^3 + L_A^4} y_0 dv, 
\end{align}
where $1/z$ lies exterior to the contours and the principal branch is used for the complex powers throughout all integrations. 
\end{lemma}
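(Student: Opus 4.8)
The statement asserts that the function $h_f(\theta;\alpha)$ defined through the hypergeometric formula \eqref{hfdef} for $\alpha \in (1,\infty)\setminus\Z$ admits a continuous extension to all $\alpha > 1$, and that at the integer values $\alpha = n$ this extension is given by the residue/contour-integral expression \eqref{hfalphaevenodd}. The subtlety is the same as the one appearing in the lemma immediately following \eqref{hatcdef}: the prefactor $\pi 2^{\alpha+1}/\hat c$ blows up as $\alpha \to n$ because $\hat c$ (defined in \eqref{hatcdef}) contains $\sin^2(\pi\alpha/2)\sin(\pi\alpha)$ and hence vanishes at every integer, so one must show that the $\Re[\,\cdots\,]$ factor vanishes to matching order and compute the limiting quotient.

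First I would rewrite $h_f(\theta;\alpha)$ so that the hypergeometric function is expressed as an integral over a Pochhammer-type contour (or, for generic $\alpha$, over one of the collapsed contours $L_A^j$ of \eqref{Lcontoursdef}). Using the Euler-type integral representation ${}_2F_1(1-\alpha,\alpha,1;w) = \tfrac{1}{2\pi i}\oint v^{\alpha-1}(1-v)^{-\alpha}(1-vw)^{\alpha-1}\,dv$ around a suitable loop, and with $w = \tfrac12(1-i\cot\theta) = z$, one sees that the combination $e^{-i\pi\alpha/2}\,{}_2F_1(\cdots)$ is, up to the explicit algebraic prefactor, a sum of four Dotsenko–Fateev-type integrals $\int_{L_A^j} y_0(v,\theta;\alpha)\,dv$ where $y_0 = v^{\alpha-1}(1-vz)^{\alpha-1}(1-v)^{-\alpha}$, with branch-jump factors $(1-e^{2\pi i\alpha})$ etc. appearing when one collapses the Pochhammer contour. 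Taking the real part and combining these branch factors, the prefactor $1/\hat c$ cancels the leading $\sin$-factors, leaving a finite expression whose $\alpha\to n$ limit exists by inspection away from the degeneracy.

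The key step, and the main obstacle, is the behavior as $\alpha \to n$: there the integrand $y_0(v,\theta;\alpha)$ develops a pole of order $n$ at $v=1$ when viewed as a function whose contour $L_A^3 + L_A^4$ encircles $v=1$, and simultaneously the overall normalization degenerates. I would handle this by a standard confluence argument: write $\alpha = n + \delta$, expand $y_0$ and the collapsed branch-jump factors $1 - e^{2\pi i\delta}$, $e^{\pm i\pi\delta}$ to first order in $\delta$, and observe that the $O(1)$ term of the contour integral over the loop around $v=1$ reduces, by the residue theorem, to a residue of $y_0$ at $v=1$ (this produces $Y_1$ via \eqref{fusedY1def}, the factor $(2\pi i)^2$ coming from the two copies of $1-e^{2\pi i\delta} \sim -2\pi i\delta$), while differentiating the integrand in $\delta$ produces the logarithmic integrand $y_1 = y_0(\ln v + \ln(1-vz) - \ln(1-v))$ and hence the integral contribution $Y_2$ via \eqref{fusedY2def}. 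The even/odd dichotomy in \eqref{hfalphaevenodd} comes from the $\alpha$-dependent phases $e^{\pm i\pi\alpha/2}$: at $n$ even these are real ($\pm1$), at $n$ odd they are imaginary ($\pm i$), which interchanges the roles of real and imaginary parts and contributes the factor $2/\pi$ versus $2$ and the $i$ in front of $Y_2$.

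Finally I would pin down the constant $h_n$: one checks that $\hat c(\kappa)$ from \eqref{hatcdef} has a simple zero in $\alpha - n$, compute $\lim_{\alpha\to n}(\alpha-n)^{-1}\hat c(\alpha)$ explicitly (it is $\pm 4\pi \sin^2(\pi n/2)\sin(\pi n)'|\cdots$ — a product of $\Gamma$-values and the derivative of the vanishing trig factor), and verify that after cancellation the residual normalization equals $2^{n-3} h_n$ with $h_n$ as in \eqref{hndef}; continuity of $h_f$ at non-integer $\alpha$ is immediate from continuity of ${}_2F_1$ and of the prefactor, and continuity at integers follows because both sides of \eqref{hfalphaevenodd} are the $\delta\to0$ limit just computed. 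The bookkeeping of which combination of the four contours $L_A^j$ carries a plus or minus sign — i.e. the appearance of $L_A^1 + L_A^2 + L_A^3 + L_A^4$ in the $y_1$ term but $L_A^1 - L_A^2 - L_A^3 + L_A^4$ in the $y_0$ term of \eqref{fusedY2def} — is the delicate point and is dictated precisely by the branch-jump phases of $v^{\alpha-1}$ and $(1-v)^{-\alpha}$ across the segments through $0$ and $1$; I expect this sign accounting, together with the confluent limit at $v=1$, to be the part requiring the most care.
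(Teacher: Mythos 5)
Your overall route is the same as the paper's: represent ${}_2F_1(1-\alpha,\alpha,1;z)$ by the Pochhammer contour integral, collapse it onto the four contours $L_A^j$ with the branch-jump phases, expand everything in $\alpha-n$ (which yields $Y_1$ as a residue and $Y_2$ as the logarithmic integral), and match the normalization against the expansion of $\hat{c}$. However, there is a genuine gap at the one point you dismiss as routine. Counting orders at $\alpha=n$: for even $n$ one has $\hat{c}^{-1}\sim (\alpha-n)^{-2}$, $\sin(\pi\alpha/2)\sim(\alpha-n)$, $1/\sin(\pi\alpha)\sim(\alpha-n)^{-1}$, and $Y\sim(\alpha-n)Y_1$, so after all these cancellations there \emph{remains} a simple pole whose coefficient is proportional to $\re\big[e^{-i\pi n/2}Y_1\big]=\pm\re Y_1$; for odd $n$ the same count leaves a simple pole with coefficient proportional to $\im Y_1$. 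The limit does not "exist by inspection": continuity at integer $\alpha$ holds only because $\re Y_1=0$ for even $n$ and $\im Y_1=0$ for odd $n$, and this is a nontrivial identity (e.g.\ for $n=2$ it amounts to $e^{-i\pi}\,{}_2F_1(-1,2,1;z)=-(1-2z)=-i\cot\theta$ being purely imaginary) that depends crucially on $\re z=\tfrac12$, i.e.\ on the specific form $z=(1-i\cot\theta)/2$. The paper proves it by a symmetry argument: writing $Y_1=2\pi i\,J(z)$ with $J(z)=\int_{|v-1|=\epsilon}v^{n-1}(1-vz)^{n-1}(1-v)^{-n}dv$, one shows $\overline{J(z)}=-J(\bar z)$ and, by the substitution $u=\frac{z-1}{z}\tilde u$ after $u=1-v$, $J(z)=(-1)^{n-1}J(1-z)$, whence $\overline{J(z)}=(-1)^nJ(z)$ on $\re z=\tfrac12$. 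Your proposal contains no argument of this kind.

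The same vanishing is also needed to obtain the \emph{exact} right-hand side of \eqref{hfalphaevenodd}: when you assemble the finite term you pick up, besides $2Y_2-i\pi Y_1$ (resp.\ $2iY_2+\pi Y_1$), additional contributions proportional to $Y_1$ with real coefficients coming from $\ln 2$ (from $2^{\alpha+1}$), $2\ln\sin\theta$ (from $\sin^{2\alpha-2}\theta$), and the subleading coefficients $a_n/h_n$ or $b_n/h_n$ in the Laurent expansion of $\hat{c}^{-1}$. These terms are absent from the statement only because their real (resp.\ imaginary) parts vanish by the same identity $\re Y_1=0$ (resp.\ $\im Y_1=0$). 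So without supplying the symmetry lemma, neither the continuity claim nor the stated formula is established; the rest of your outline (contour collapse, sign bookkeeping of the $L_A^j$, confluence producing the residue and the logarithmic integrand, identification of $h_n$) matches the paper's proof.
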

\begin{proof}
Let $n \geq 2$ be an integer.  The standard hypergeometric function ${}_2F_1$ is defined by (see \cite[Eq.~15.6.5]{DLMF})
\begin{align}\nonumber
{}_2F_1(a,b,c;z) = & \; \frac{e^{-c\pi i}\Gamma(c)\Gamma(1-b)\Gamma(1+b-c)}{4\pi^2} 
	\\ \label{2F1def}
&\times \int_A^{(0+,1+,0-,1-)} v^{b-1} (1 - vz)^{-a} (1-v)^{c-b-1} dv,
\end{align}
where $A \in (0,1)$, $z \in \C \smallsetminus [1, \infty)$, $b, c-b \neq 1,2,3, \dots$, and $1/z$ lies exterior to the contour. 
Hence, for $\alpha \notin \Z$ and $z \in \C \smallsetminus [1,\infty)$, 
\begin{align}\label{2F1Y}
{}_2F_1(1-\alpha,\alpha, 1; z) = -\frac{1}{4\pi \sin(\pi \alpha)} Y(z; \alpha).
\end{align}
where 
$$Y(z; \alpha) = \int_A^{(0+,1+,0-,1-)} v^{\alpha-1} (1 - vz)^{\alpha -1} (1-v)^{-\alpha} dv.$$
We first show that the function $Y$ admits the expansion
\begin{align}\label{Yalphan}
Y(\theta; \alpha) = (\alpha - n)Y_1 +  (\alpha - n)^2Y_2 + O((\alpha - n)^3), \qquad \alpha \to n,
\end{align}
where $Y_1$ and $Y_2$ are given by (\ref{fusedY1def}) and (\ref{fusedY2def}). 
Let $A \in (0,1)$.
Then 
\begin{align*}
Y(z) = & \bigg\{ (1-e^{-2\pi i \alpha}) \int_{L_A^1}
+ e^{2\pi i(\alpha-1)} (1-e^{-2\pi i \alpha}) \int_{L_A^2} 
+ (e^{2\pi i(\alpha-1)} -1) \int_{L_A^3} 
	\\
& +e^{-2\pi i \alpha} (e^{2\pi i(\alpha-1)} -1) \int_{L_A^4} \bigg\} v^{\alpha-1} (1-vz)^{\alpha-1}(1-v)^{-\alpha} dv.
\end{align*}
Expansion around $\alpha = n$ gives (cf. the proof of (\ref{Falphan})) equation (\ref{Yalphan}) with $Y_2$ given by (\ref{fusedY2def}) and
$$Y_1 = 2\pi i \int_{L_A^1 + L_A^2 + L_A^3 + L_A^4} y_0 dv.$$ 
Since $y_0$ is analytic at $v = 0$ and has a pole at $v = 1$, we see that $Y_1$ can be expressed as in (\ref{fusedY1def}). This proves (\ref{Yalphan}).

Equations (\ref{hfdef}) and (\ref{2F1Y}) give
\begin{align}\label{hfdef2}
h_f(\theta) = -\frac{\pi  2^{\alpha +1}}{\hat{c}}
 \sin \left(\frac{\pi  \alpha }{2}\right) \sin^{2 \alpha-2}(\theta)
 \Re\left[\frac{e^{-\frac{\pi i\alpha}{2}}}{4\pi \sin(\pi \alpha)}Y(z; \alpha)\right].
 \end{align}
 
As $\alpha \to n$, we have
\begin{align*}
& \hat{c}^{-1} = \begin{cases}-\frac{h_n}{(\alpha-n)^2} + \frac{a_n}{\alpha - n} + O(1), & n = 2,4, \dots, \\
 -\frac{h_n}{\alpha-n} + b_n + O(\alpha-n), & n=3,5, \dots,
 \end{cases}
\end{align*}
where $a_n, b_n \in \R$ are real constants. We also have
\begin{align*}
& 2^{\alpha +1} = 2^{n+1}(1 + (\alpha -n)\ln 2 + O((\alpha-n)^2),
	\\
& \sin \left(\frac{\pi  \alpha }{2}\right) = \begin{cases} \frac{(-1)^{\frac{n}{2}}\pi}{2}(\alpha-n) + O((\alpha-n)^3), & n = 2,4, \dots,\\
(-1)^{\frac{n-1}{2}} + O((\alpha-n)^2), & n=3,5, \dots,
\end{cases}
	\\
& \sin^{2\alpha-2}\theta^1 = \sin^{2n-2}(\theta^1)\big(1 + 2\ln(\sin\theta^1)(\alpha - n) + O((\alpha-n)^2)\big),
	\\
& e^{-\frac{\pi i\alpha}{2}} = e^{-\frac{\pi i n}{2}}\Big(1 - \frac{\pi i}{2}(\alpha -n) + O((\alpha-n)^2)\Big),
	\\
& \frac{1}{4\pi\sin(\pi \alpha)} =  \frac{(-1)^{n}}{4\pi^2(\alpha-n)} + O(\alpha-n),
\end{align*}
Substituting the above expansions into (\ref{hfdef2}) and using (\ref{Yalphan}), we obtain, if $n \geq 2$ is even,
\begin{align} \nonumber
h_f(\theta) =&\; \frac{2^{n-2}h_n\sin^{2n-2}(\theta^1) \re Y_1}{\alpha - n} 
+ 2^{n-3} h_n \sin^{2n-2}(\theta^1)
	\\
&\times \re \bigg[2 Y_2 -i\pi  Y_1 + 2 \Big( \ln 2 - \frac{a_n}{h_n}+ 2\ln(\sin\theta^1)\Big)Y_1\bigg]
+ O(\alpha-n),
 \end{align}
while, if $n \geq 2$ is odd,
\begin{align} \nonumber
h_f(\theta) =& -\frac{2^{n-1}h_n\sin^{2n-2}(\theta^1) \im Y_1}{\pi(\alpha - n)} 
+ \frac{2^{n-2} h_n \sin^{2n-2}(\theta^1)}{\pi}
	\\
&\times \re \bigg[2i Y_2 + \pi Y_1 + 2i \Big( \ln 2 - \frac{b_n}{h_n} + 2\ln(\sin\theta^1)\Big)Y_1\bigg]
+ O(\alpha-n).
 \end{align}
In order to establish (\ref{hfalphaevenodd}), it is therefore enough to show that $\re Y_1 = 0$ for even $n$ and that $\im Y_1 = 0$ for odd $n$. 

Consider the function $J(z)$ defined by
$$J(z) = \int_{|v-1|=\epsilon} v^{n-1} (1-vz)^{n-1}(1-v)^{-n}dv, \qquad z \in \C \smallsetminus \{1\},$$
where $\epsilon > 0$ is so small that $1/z$ lies outside the contour. Then, by (\ref{ccidentity}),
$$\overline{J(z)} = -\int_{|v-1|=\epsilon} v^{n-1} (1-v\bar{z})^{n-1}(1-v)^{-n}dv
= -J(\bar{z}), \qquad z \in \C \smallsetminus  \{1\}.$$
Letting $u = 1-v$, we can express $J(z)$ as
$$J(z) = -\int_{|u|=\epsilon} (1-u)^{n-1} (1-(1-u)z)^{n-1}u^{-n}du.$$
The change of variables $u= \frac{z - 1}{z}\tilde{u}$ then yields
$$J(z) = (-1)^{n} \int_{|\tilde{u}|=\epsilon} (z - z\tilde{u} + \tilde{u})^{n-1} (1-\tilde{u})^{n-1} \tilde{u}^{-n} d\tilde{u}
= (-1)^{n-1} J(1-z), \quad z \in \C \smallsetminus\{0,1\}.$$
Hence, if $\re z = 1/2$, 
$$\overline{J(z)} = -J(\bar{z}) = -J(1-z) = (-1)^n J(z).$$
Since
$$Y_1(\theta; n) = 2\pi i J\Big(\frac{1 - i\cot \theta}{2}\Big),$$
it follows that $\re Y_1 = 0$ ($\im Y_1 = 0$) for even (odd) $n$. This completes the proof of the lemma.
 \end{proof}

Taking $\xi \to 0+$ in Theorem \ref{thm:green}, we obtain the following result for SLE$_\kappa(2)$ in the fusion limit.

\begin{thm}[Green's function for fused SLE$_\kappa(2)$]
Let $0< \kappa \le 4$ and consider chordal SLE$_\kappa(2)$ started from $(0,0+)$. Then, for each $z = x + iy \in \mathbb{H}$,
\begin{equation}\label{fusedgreens-formula}
\lim_{\ee \to 0} \ee^{d-2}\PP^2 \left( \Upsilon_\infty(z) \le \ee \right) = c_* \GG_f(z),
\end{equation}
where $\PP^2$ is the SLE$_\kappa(2)$ measure, the function $\GG_f$ is defined by
\begin{align}\label{fusedGGdef}
\GG_f(z) = (\Im z)^{d-2}h_f(\arg z), \qquad
z \in \mathbb{H},
\end{align}
and the constant $c_* = c_*(\kappa)$ is given by (\ref{cstardef}). 
\end{thm}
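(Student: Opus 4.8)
The formula (\ref{fusedgreens-formula}) is the $\xi\downarrow 0$ limit of Theorem~\ref{thm:green} applied to SLE$_\kappa(2)$ started from $(0,\xi)$, so the plan is to justify passing to this limit in each side of (\ref{greens-formula}). On the right-hand side the convergence is immediate: by (\ref{hdef}) we have $\GG(z,0,\xi)=(\Im z)^{d-2}h(\arg z,\arg(z-\xi))$, and since $0<\kappa\le 4$ forces $\alpha=8/\kappa\ge 2$, Lemma~\ref{hboundarylemma} tells us that $h$ extends continuously to $\bar\Delta$ with $h(\theta,\theta)=h_f(\theta)$ for $\theta\in(0,\pi)$. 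As $\xi\downarrow 0$ one has $\arg(z-\xi)\downarrow\arg z$, so the pair $(\arg z,\arg(z-\xi))\in\Delta$ converges to $(\arg z,\arg z)\in\bar\Delta$, and hence $\GG(z,0,\xi)\to(\Im z)^{d-2}h_f(\arg z)=\GG_f(z)$, recalling the definitions (\ref{fusedGGdef}) and (\ref{hfdef}). Thus it remains to prove that the left-hand side of (\ref{fusedgreens-formula}) exists and equals $\lim_{\xi\downarrow 0}\lim_{\ee\to 0}\ee^{d-2}\PP^2_{0,\xi}(\Upsilon_\infty(z)\le\ee)$.

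For the probabilistic side I would argue as follows. First, SLE$_\kappa(2)$ started from $(0,0+)$ (the process denoted $\PP^2$ in the statement) is the $\xi\downarrow 0$ limit of SLE$_\kappa(2)$ started from $(0,\xi)$: under $\PP^2_{0,\xi}$ the gap $U_t=\xi_t^2-\xi_t^1$ is a Bessel process of dimension $1+4a$, which for $\kappa\le 4$ (so $1+4a\ge 3$) starts cleanly from the origin, and the coefficients of the driving equation converge, so the Loewner hulls converge in the Carath\'eodory sense; consequently, in a coupling, $\Upsilon_\infty(z)$ converges almost surely to its fused counterpart and $\PP^2_{0,\xi}(\Upsilon_\infty(z)\le\ee)\to\PP^2_{0,0+}(\Upsilon_\infty(z)\le\ee)$ for all but countably many $\ee>0$. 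Given this, the theorem follows once we show that the $\ee\to 0$ limit in Theorem~\ref{thm:green} is attained uniformly in $\xi\in(0,1]$, since then the $\ee\to 0$ and $\xi\downarrow 0$ limits may be interchanged by a routine argument. To get the uniformity one revisits the proof of Theorem~\ref{thm:green}, i.e.\ Propositions~\ref{prop:slekr} and~\ref{G2prop}: the two-sided radial reference measure $\PP^*$ does not depend on $\xi$, every geometric input used (Lemmas~\ref{lem:crossing}, \ref{lem:one-point}, \ref{lem:boundary-excursion} and~\ref{lem:radial-bessel}) has constants independent of the initial configuration, and the only $\xi$-dependent ingredient is the Girsanov weight $M_t^{(2)}=(U_t/\xi)^{a}\,g_t'(\xi)^{(3a-1)/2}$, with $g_t'(\xi)=\exp(-a\int_0^t U_s^{-2}\,ds)$; one must verify that the $\xi^{-a}$ normalization, which is offset by the vanishing of $g_t'(\xi)$ as $\xi\downarrow 0$, is controlled uniformly in $\xi\in(0,1]$, so that the bounds of Lemma~\ref{Mt2lemma} and the uniform-integrability estimate (\ref{Muniformlyintegrable}) persist uniformly. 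An equivalent route is to rerun the proof of Proposition~\ref{prop:slekr} directly for the fused process, which is a genuine SLE$_\kappa(\rho)$ with $\rho=2$ and boundary force point at $0+$.

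The main obstacle is precisely the uniform control of the weight $M_t^{(2)}$ as the force point merges with the seed: this is the one step of the passage to the fused limit that is not purely soft, because $M_t^{(2)}$ is degenerately normalized at $t=0$. I expect the Bessel description of $U_t$ together with the explicit formula for $g_t'(\xi)$ to yield the required uniform bounds on the product $\xi^{-a}g_t'(\xi)^{(3a-1)/2}$, with the standing hypothesis $\kappa\le 4$ (hence $a\ge\tfrac12$, $1+4a\ge 3$, and $r-\beta=1-3a<0$) entering exactly as it does in the proof of Proposition~\ref{prop:slekr}; modulo this bookkeeping the interchange of limits is immediate and gives (\ref{fusedgreens-formula}).
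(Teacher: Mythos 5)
Your proposal follows the same route as the paper: the paper obtains this theorem precisely by letting $\xi \downarrow 0$ in Theorem \ref{thm:green}, with Lemma \ref{hboundarylemma} (continuity of $h$ up to the diagonal, $h(\theta,\theta)=h_f(\theta)$) supplying convergence of the right-hand side, exactly as in your first paragraph. On the probabilistic side the paper records nothing further — it treats the passage from the seed pair $(0,\xi)$ to $(0,0+)$ as immediate — so the interchange-of-limits program you outline (weak convergence of $\PP^2_{0,\xi}$ to $\PP^2_{0,0+}$, uniform-in-$\xi$ control of the weight $M_t^{(2)}$, or equivalently rerunning Proposition \ref{prop:slekr} directly for the fused process) goes beyond what the paper itself carries out. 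Since you leave that uniformity step as an expectation rather than a proof, your argument is no more complete there than the paper's, but you have correctly identified the one non-soft point in the deduction, and it is the same point the paper passes over silently.
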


For any given integer $n \geq 2$, we can compute the integrals in (\ref{fusedY1def}) and (\ref{fusedY2def}) defining  $Y_1$ and $Y_2$ explicitly by taking the limit $\epsilon \to 0$. For the first few simplest cases $n=2,3,4$, this leads to the expressions for the fused SLE$_\kappa$(2) Green's function presented in the following proposition.

\begin{prop}\label{hf234prop}
For $\alpha = 2,3,4$ (corresponding to $\kappa = 4, 8/3, 2$, respectively), the function $h_f(\theta)$ in (\ref{fusedGGdef}) is given explicitly by
\begin{align*}
h_f(\theta) = \begin{cases} \frac{2}{\pi}(\sin\theta - \theta \cos\theta) \sin \theta, & \alpha = 2, 
	\\  
\frac{8}{15 \pi} (4 \theta-3 \sin{2 \theta} 
+2 \theta \cos{2 \theta})\sin ^2\theta, & \alpha = 3,
	\\ 
\frac{1}{12 \pi}(27 \sin\theta + 11 \sin{3 \theta}
-6 \theta (9 \cos\theta + \cos{3 \theta})) \sin^3\theta, & \alpha = 4,
\end{cases} \ 0 < \theta < \pi.
\end{align*}
\end{prop}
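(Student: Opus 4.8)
The plan is to specialize the formula \eqref{hfalphaevenodd} to $n=2,3,4$. That formula already expresses $h_f(\theta;n)$ in closed form in terms of the two quantities $Y_1(\theta;n)$ and $Y_2(\theta;n)$ and the explicit constant $h_n$ from \eqref{hndef}, so the entire task is the evaluation of $Y_1$ and $Y_2$ for these three values of $n$. Throughout we use $z=\tfrac{1-i\cot\theta}{2}$; then $z=\tfrac{e^{i(\theta-\pi/2)}}{2\sin\theta}$, $\arg z=\theta-\tfrac{\pi}{2}$, $|z|=\tfrac1{2\sin\theta}$, and (since $\Re z=\tfrac12$) $1-z=\bar z$. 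These identities let every expression below be written as a function of $\theta$ alone, and they account for the powers of $\sin\theta$ that clear denominators in the final answers.

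First I would compute $Y_1$. By \eqref{fusedY1def}, $Y_1=(2\pi i)^2\,\underset{v=1}{\res}\,v^{n-1}(1-vz)^{n-1}(1-v)^{-n}$, i.e.\ $(2\pi i)^2$ times the coefficient of $(v-1)^{n-1}$ in the Taylor expansion of $v^{n-1}(1-vz)^{n-1}$ about $v=1$ (a pole of order $n$). For $n=2,3,4$ this is a short polynomial computation and yields a low-degree polynomial in $z$, hence in $\cot\theta$. Next I would compute $Y_2$ from \eqref{fusedY2def} by letting $\epsilon\downarrow0$ in the integrals over $L_A^1,\dots,L_A^4$. For integer $n$ the function $y_0=v^{n-1}(1-vz)^{n-1}(1-v)^{-n}$ is rational with its only pole at $v=1$, so the signed combination $\int_{L_A^1-L_A^2-L_A^3+L_A^4}y_0\,dv$ reduces, as $\epsilon\to0$, to explicit residue contributions at $v=1$. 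For the term $2\pi i\int_{L_A^1+L_A^2+L_A^3+L_A^4}y_1\,dv$ with $y_1=y_0\big(\ln v+\ln(1-vz)-\ln(1-v)\big)$ one must keep track of the principal branch of each logarithm along the eight segments; as the contour pinches onto $[0,1]$ the $2\pi i$ jumps of $\ln v$ and $\ln(1-v)$ again collapse the limit to residues of $y_0$, and the values of $\ln(1-vz)$ (equivalently $\arg z$) at those residue points are what produce the factors of $\theta$ (through $\arg z=\theta-\tfrac\pi2$) in the final formulas.

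Finally I would substitute the resulting $Y_1,Y_2$ into \eqref{hfalphaevenodd}, multiply by $2^{n-3}h_n\sin^{2n-2}\theta$ with the explicit value of $h_n$, take the real part, and simplify using $\cot\theta=\cos\theta/\sin\theta$, $\arg z=\theta-\tfrac\pi2$, and multiple-angle identities; comparing with \eqref{fusedGGdef} then gives the stated expressions. I expect the main obstacle to be precisely the $\epsilon\downarrow0$ bookkeeping in the logarithmic integrals defining $Y_2$: choosing the correct branch of each logarithm on each segment and correctly assembling the residue contributions. As an independent check, one may instead pass to the fusion limit $(\theta^1,\theta^2)\to(\theta,\theta)$ in the explicit formulas \eqref{halpha2}--\eqref{halpha4} of Proposition~\ref{hexplicit234prop}, which must reproduce the same $h_f(\theta)$ for $\alpha=2,3,4$.
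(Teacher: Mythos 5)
Your proposal takes essentially the same route as the paper: the paper's proof of this proposition is exactly the "long but straightforward" evaluation of $Y_1$ and $Y_2$ from \eqref{fusedY1def}--\eqref{fusedY2def} in the limit $\epsilon\downarrow 0$, followed by substitution into \eqref{hfalphaevenodd} and trigonometric simplification, modelled on the worked computation in the proof of Proposition \ref{hexplicit234prop} (with the fusion limit $\theta^2\downarrow\theta^1$ of that proposition available as the same cross-check you mention). One small caution: the signed $y_0$-integral in $Y_2$ does not collapse to residues at $v=1$ alone --- the individual pieces also produce segment integrals along $[0,1]$ and divergent contributions that cancel only jointly with the logarithmic term, just as the $\ln\epsilon$ terms cancel in the paper's $n=2$ computation --- but since you explicitly flag this $\epsilon$-bookkeeping as the main work, your plan is sound.
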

\begin{proof}
The proof relies on long but straightforward computations and is similar to that of Proposition \ref{hexplicit234prop}. 
\end{proof}

\begin{rem}\upshape
The formulas in Proposition \ref{hf234prop} can also be obtained by taking the limit $\theta^2 \downarrow \theta^1$ in the formulas of Proposition \ref{hexplicit234prop}.
\end{rem}

In view of Lemma \ref{2SLElemma}, it follows from Theorem \ref{thm:green} that the Green's function for two fused multiple SLEs started from $0$ is given by the symmetrized expression $\GG_f(z) + \GG_f(-\bar{z})$. We formulate this as a corollary.

\begin{cor}[Green's function for two fused SLEs]
Let $0< \kappa \le 4$. Consider a system of two fused multiple SLE$_\kappa$ paths in $\mathbb{H}$ started from $0$ and growing towards $\infty$. Then, for each $z = x + iy \in \mathbb{H}$,
\begin{align*}
\lim_{\ee \to 0} \ee^{d-2}\PP \left( \Upsilon_\infty(z) \le \ee \right) = c_* (\GG_f(z) + \GG_f(-\bar{z})),
\end{align*}
where $d=1+\kappa/8$, the constant $c_* = c_*(\kappa)$ is given by (\ref{cstardef}), and the function $\GG_f$ is defined by (\ref{fusedGGdef}).
\end{cor}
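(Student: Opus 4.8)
The plan is to deduce this corollary as a direct consequence of the two main Green's function results already proved, namely Theorem~\ref{thm:green} (for SLE$_\kappa(2)$) and Lemma~\ref{2SLElemma} (the two-paths-near-a-point correlation estimate), specialized to the fused configuration $\xi^1 = \xi^2 = 0$. First I would recall that Lemma~\ref{2SLElemma}, applied to the commuting system started from $(\xi^1,\xi^2)$, asserts
\[
\lim_{\ee \downarrow 0}\ee^{d-2}\PP_{\xi^1,\xi^2}\left(\Upsilon_\infty(z)\le\ee\right)
= \lim_{\ee\downarrow 0}\ee^{d-2}\Big[\PP^2_{\xi^1,\xi^2}\left(\Upsilon_\infty(z)\le\ee\right) + \PP^2_{\xi^2,\xi^1}\left(\Upsilon_\infty(z)\le\ee\right)\Big].
\]
Taking $\xi^1 = \xi^2 = 0+$ (more precisely, letting the two seeds collapse), the right-hand side becomes, by the fused version of Theorem~\ref{thm:green} (equation \eqref{fusedgreens-formula}), the sum $c_*\GG_f(z) + c_*\GG_f(-\bar z)$; the second term arises because the curve $\gamma_2$ started from the \emph{rightmost} seed is, after the reflection $z\mapsto -\bar z$, distributed as an SLE$_\kappa(2)$ growing to the left, and its renormalized hitting probability for $z$ equals that of an SLE$_\kappa(2)$ growing to the right evaluated at $-\bar z$. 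This immediately yields the claimed formula once we know the explicit form of $h_f(\theta)$.

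The second part is to produce the explicit expressions for $h_f(\theta)$ when $\kappa = 4, 8/3, 2$, i.e.\ $\alpha = 2, 3, 4$. Here I would invoke Proposition~\ref{hf234prop}, which gives exactly these three formulas for $h_f(\theta)$, together with the definition \eqref{fusedGGdef} of $\GG_f(z) = (\Im z)^{d-2}h_f(\arg z)$. Combining this with the display above completes the proof. Concretely: the corollary statement's case-list for $h_f(\theta)$ is verbatim the content of Proposition~\ref{hf234prop}, and the relation between $\PP_{0,0+}$ and the pair $\GG_f(z), \GG_f(-\bar z)$ is what Lemma~\ref{2SLElemma} plus \eqref{fusedgreens-formula} deliver.

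The only genuinely non-routine point — and the step I expect to be the main obstacle — is justifying that one may legitimately interchange the two limits: the fusion limit $\xi\downarrow 0$ and the $\ee\downarrow 0$ renormalized-probability limit. Lemma~\ref{2SLElemma} and Theorem~\ref{thm:green} are each stated for fixed $\xi^1 < \xi^2$, so passing to $\xi^1 = \xi^2$ requires either (a) checking that the bounds in Lemma~\ref{2SLElemma}'s proof (which ultimately rest on the uniform correlation estimate Lemma~\ref{lemma:correlation} and the one-point estimate Lemma~\ref{lem:one-point}) are uniform as the two seeds merge, so that the error terms remain $o(\ee^{2-d})$ uniformly, or (b) arguing directly with the fused martingale observable $\Upsilon_t^{d-2}h_f(\arg g_t(z))$, reproving the analogue of Proposition~\ref{G2prop} in the fused case. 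Approach (a) is cleaner: the constants in Lemmas~\ref{lemma:correlation}, \ref{lem:dec10.3}, \ref{lem:dec10.4} depend only on $\kappa$ and on $\Im z$, not on the seed separation, so the correlation bound $\PP_{\xi^1,\xi^2}(\Upsilon^1_\infty(z)\le\sqrt\ee,\,\Upsilon^2_\infty(z)\le\sqrt\ee) = o(\ee^{2-d})$ holds uniformly down to coincident seeds; combined with the continuity in $\xi$ of the SLE$_\kappa(2)$ Green's function $\GG$ (which follows from its explicit contour-integral representation \eqref{GGdef} and the fact, noted after Corollary~\ref{fusioncor}-type statements, that $\GG$ extends continuously to $\xi\to 0+$), this lets us pass to the fusion limit term by term. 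I would spell out this uniformity argument, then quote Proposition~\ref{hf234prop} for the three explicit cases, and conclude.
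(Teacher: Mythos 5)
Your proposal is correct and is essentially the paper's own argument: the corollary is obtained in one step by combining Lemma~\ref{2SLElemma} with the fused SLE$_\kappa(2)$ Green's function \eqref{fusedgreens-formula} (itself obtained from Theorem~\ref{thm:green} by collapsing the force point onto the seed), the term $\GG_f(-\bar z)$ coming from the reflected second curve, and your discussion of the $\xi\downarrow 0$ versus $\ee\downarrow 0$ interchange only makes explicit a point the paper leaves implicit. One small correction: the statement at hand is the general $0<\kappa\le 4$ corollary with $\GG_f$ defined by \eqref{fusedGGdef}, so the second part of your plan---invoking Proposition~\ref{hf234prop} for the explicit formulas at $\kappa=4,8/3,2$---is not needed here (that case-list belongs to the separate proposition stated in Section~\ref{mainsec}).
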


\appendix

\section{The function $\GG(z,\xi^1, \xi^2)$ when $\alpha$ is an integer}\label{alphaintegersec}
In Section \ref{mainsec}, we defined the function $\GG(z,\xi^1, \xi^2)$ for noninteger values of $\alpha = 8/\kappa > 1$ by equation (\ref{GGdef}). We then claimed that $\GG$ can be extended to integer values of $\alpha$ by continuity. The purpose of this section is to verify this claim and to provide formulas for $\GG(z,\xi^1, \xi^2)$ in the case when $\alpha$ is an integer. In particular, we will prove Proposition \ref{hexplicit234prop}.

\subsection{A representation for $h$}
Equations (\ref{Idef}) and (\ref{GGdef}) express $\mathcal{G}$ in terms of an integral with a Pochhammer contour enclosing the variable points $\xi^2$ and $z$. It is convenient to express $\mathcal{G}$ in terms of an integral whose contour encloses the fixed points $0$ and $1$. 
Moreover, instead of considering $\GG$ directly, it is convenient to work with the associated scale invariant function  $h(\theta^1, \theta^2)$ defined in (\ref{hdef}).

\begin{lemma}[Representation for $h$]
Define the function $F(w_1, w_2)$ by
\begin{align}\nonumber
F(w_1, w_2) 
& = \int_A^{(0+,1+,0-,1-)} v^{\alpha -1}(v-w_1)^{\alpha -1} (v-w_2)^{-\frac{\alpha}{2}} (1-v)^{-\frac{\alpha}{2}} dv, 
	\\ \label{Fdefalpha}
& \hspace{8cm} w_1, w_2 \in \C \smallsetminus [0,\infty),
\end{align}
where $A \in (0,1)$ is a basepoint and $w_1, w_2$ are assumed to lie outside the contour. 
For each noninteger $\alpha > 1$, the function $h$ defined in (\ref{hdef}) admits the representation
\begin{align}\label{hF}
h(\theta^1, \theta^2; \alpha) = \frac{\sin^{\alpha-1}\theta^1}{\hat{c}} \im\Big[\sigma(\theta_2) (-e^{i\theta^2})^{\alpha-1} F(w_1, w_2)\Big], \qquad (\theta^1, \theta^2) \in \Delta,
\end{align}
where $w_1 \equiv w_1(\theta^1, \theta^2)$ and $w_2\equiv w_2(\theta^1, \theta^2)$ are given by
\begin{align}\label{w1w2def}
w_1 := 1 - e^{-2i\theta^2}, \qquad w_2 := \frac{1 - e^{-2i\theta^2}}{1 - e^{-2i\theta^1}} = \frac{\sin\theta^2}{\sin\theta^1}e^{-i(\theta^2 - \theta^1)},
\end{align}
the constant $\hat{c}$ is defined in (\ref{hatcdef}), and
\begin{align}\label{sigmadef}
\sigma(\theta^2) = \begin{cases} e^{-i\pi \alpha}, & \theta^2 \geq \frac{\pi}{2}, \\
e^{i\pi \alpha}, & \theta^2 < \frac{\pi}{2}.
\end{cases}
\end{align}
\end{lemma}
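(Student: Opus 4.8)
The plan is to start from the Pochhammer-contour definition \eqref{Idef} of $I(z,\xi^1,\xi^2)$ and perform a change of variables that (i) moves the singular points $z,\bar z,\xi^1,\xi^2$ to the standard positions $0,1,\infty$ relevant for $F$, and (ii) converts the Pochhammer contour around $(z,\xi^2)$ into the Pochhammer contour around $(0,1)$ appearing in \eqref{Fdefalpha}. By scale and translation invariance (the relations \eqref{Hscaletranslation} established for $\GG$, inherited from \eqref{Idef}), it suffices to verify \eqref{hF} for one convenient representative in each similarity class; I would take $\xi^1<\xi^2$ real with $z$ positioned so that $\arg(z-\xi^1)=\theta^1$, $\arg(z-\xi^2)=\theta^2$, e.g.\ normalise so that the M\"obius map sending $z\mapsto 0$, $\xi^2\mapsto 1$, $\xi^1\mapsto\infty$ is explicit. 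Actually the cleanest route: apply the affine substitution $u = \xi^1 + (\xi^2-\xi^1)\,v$ in \eqref{Idef} so that $\xi^1\mapsto$ (point at $v=0$) and $\xi^2\mapsto(v=1)$, and then track where $z$ and $\bar z$ go; one checks $z\mapsto$ some $w$ with $w,\bar w$ off $[0,\infty)$, and using the identities \eqref{w1w2def} one identifies $w$ in terms of $\theta^1,\theta^2$. Under this substitution the four factors $(u-z)^{\alpha-1}(u-\bar z)^{\alpha-1}(u-\xi^1)^{-\alpha/2}(\xi^2-u)^{-\alpha/2}\,du$ become, up to an explicit prefactor involving powers of $(\xi^2-\xi^1)$, $y$, $|z-\xi^1|$, $|z-\xi^2|$ and a phase, the integrand $v^{\alpha-1}(v-w_1)^{\alpha-1}(v-w_2)^{-\alpha/2}(1-v)^{-\alpha/2}\,dv$ of \eqref{Fdefalpha}.

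Next I would collect the prefactors. The overall algebraic factor $y^{\alpha+1/\alpha-2}|z-\xi^1|^{1-\alpha}|z-\xi^2|^{1-\alpha}$ in \eqref{GGdef} combines with the Jacobian and with $(\xi^2-\xi^1)$-powers coming from the substitution; after expressing everything through the angles via $|z-\xi^j| = y/\sin\theta^j$ and $\xi^2-\xi^1 = y(\cot\theta^1-\cot\theta^2)$, the $y$-dependence must collapse to $y^{d-2}$ as required by \eqref{hdef} (this is a consistency check that pins down that the extracted scalar is indeed $h(\theta^1,\theta^2)$), and the $\theta$-dependent algebraic part should reduce to $\sin^{\alpha-1}\theta^1$ times the phase $(-e^{i\theta^2})^{\alpha-1}$. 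The remaining task is to match the branch-of-logarithm bookkeeping: \eqref{GGdef} contains the factor $e^{-i\pi\alpha}$ and takes an imaginary part, whereas \eqref{hF} contains $\sigma(\theta^2)(-e^{i\theta^2})^{\alpha-1}$ and takes an imaginary part. I would argue that the principal branches of the complex powers along the two Pochhammer contours differ by a fixed root of unity that depends only on the sign of $\re w_i$ (equivalently on whether $\theta^2\gtrless\pi/2$, since $\re w_1 = 1-\cos 2\theta^2$ changes the position of $w_1$ relative to the branch cut $[0,\infty)$ exactly at $\theta^2=\pi/2$); this is precisely the origin of the case split in the definition \eqref{sigmadef} of $\sigma(\theta^2)$.

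The key steps in order: (1) fix the normalisation/representative using scale-translation invariance; (2) carry out the affine change of variables in \eqref{Idef}, identifying the images of $z,\bar z$ with $w_1,w_2$ of \eqref{w1w2def}; (3) read off the algebraic prefactor and check the $y$-power collapses to $d-2$, isolating $h(\theta^1,\theta^2)$; (4) compare the two Pochhammer integrals, showing the integrands agree and the contours are homotopic in the punctured plane (so the Pochhammer integrals are literally equal); (5) reconcile the branch prefactors $e^{-i\pi\alpha}$ versus $\sigma(\theta^2)(-e^{i\theta^2})^{\alpha-1}$, thereby also justifying the $\theta^2\gtrless\pi/2$ dichotomy; (6) conclude \eqref{hF} for noninteger $\alpha$, which is where $F$ is well-defined by \eqref{Fdefalpha} (the condition on $\alpha\notin\Z$ enters through the non-vanishing of the Pochhammer normalisation, as in \eqref{2F1def}).

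The main obstacle I expect is step (5): tracking the analytic continuation of the four multivalued factors as $v$ traverses the Pochhammer loops and relating the resulting accumulated phases to the principal-branch conventions used on both sides. One has to be careful that $w_1$ and $w_2$ may sit on either side of $[0,\infty)$ depending on $(\theta^1,\theta^2)\in\Delta$, and that the transformation mapping $(z,\xi^1,\xi^2)$ to $(0,1,\infty)$ can flip orientation; these effects are what force the piecewise definition of $\sigma$. Steps (2)--(3) are ``long but straightforward'' in the style of the paper's other PDE/algebra computations, and step (4) is a routine homotopy argument once the puncture set is identified. I would therefore devote most of the written proof to a careful phase-accounting lemma for the Pochhammer integrand, and relegate the algebraic prefactor manipulation to a displayed computation.
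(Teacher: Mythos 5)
The paper does not prove this lemma in-house: its ``proof'' is a citation to the companion paper \cite{LV2018B}, so your change-of-variables strategy cannot be checked against a written-out argument here. The strategy itself (an affine substitution turning the Pochhammer integral \eqref{Idef} into \eqref{Fdefalpha}, plus prefactor and branch bookkeeping) is indeed the natural route. However, the specific substitution you commit to, $u=\xi^1+(\xi^2-\xi^1)v$, is the wrong one, and with it step (2) fails. That map sends the two points with exponent $-\alpha/2$ to $0$ and $1$, whereas in \eqref{Fdefalpha} the point $v=0$ carries exponent $\alpha-1$; moreover the Pochhammer contour in \eqref{Idef} winds around $z$ and $\xi^2$, and under your map its image winds around $(z-\xi^1)/(\xi^2-\xi^1)$ and $1$, not around $0$ and $1$, so the resulting integral is not of the form $F$. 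Also the images of $z,\bar z$ under your map, e.g.\ $(z-\xi^1)/(\xi^2-\xi^1)=e^{i\theta^1}\sin\theta^2/\sin(\theta^2-\theta^1)$, do not equal the $w_1,w_2$ of \eqref{w1w2def}, so the promised identification cannot go through. (Your opening suggestion of a M\"obius map sending $\xi^1\mapsto\infty$ is likewise off target: $F$ has four finite branch points, so no singular point should be sent to infinity.)

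The fix is to use the affine map that sends the encircled points to the encircled points: $v=(u-z)/(\xi^2-z)$, i.e.\ $z\mapsto 0$, $\xi^2\mapsto 1$, and then $\bar z\mapsto (\bar z-z)/(\xi^2-z)=1-e^{-2i\theta^2}=w_1$ and $\xi^1\mapsto(\xi^1-z)/(\xi^2-z)=\tfrac{\sin\theta^2}{\sin\theta^1}e^{-i(\theta^2-\theta^1)}=w_2$, exactly as in \eqref{w1w2def}. With this choice the integrand and contour match \eqref{Fdefalpha}, the factor $(\xi^2-z)^{\alpha-1}=|z-\xi^2|^{\alpha-1}(-e^{i\theta^2})^{\alpha-1}$ cancels $|z-\xi^2|^{1-\alpha}$ in \eqref{GGdef}, and $|z-\xi^1|^{1-\alpha}=y^{1-\alpha}\sin^{\alpha-1}\theta^1$ produces both the $y^{d-2}$ scaling and the $\sin^{\alpha-1}\theta^1$ prefactor of \eqref{hF}; what remains is precisely your step (5), the phase accounting that converts $e^{-i\pi\alpha}$ into $\sigma(\theta^2)(-e^{i\theta^2})^{\alpha-1}$. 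One more detail there: the dichotomy at $\theta^2=\pi/2$ is governed by the sign of $\Im w_1=\sin 2\theta^2$ (which side of the cut $[0,\infty)$ the point $w_1$ lies on), not by $\Re w_1=2\sin^2\theta^2$, which is positive throughout $\Delta$.
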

\begin{proof}
See \cite{LV2018B}.
\end{proof}

Let $L_A^j := L_A^j(\epsilon)$, $j = 1, \dots, 4$, be the contours defined in (\ref{Lcontoursdef}).

\begin{lemma}\label{halphaintegerlemma}
For each $(\theta^1, \theta^2) \in \Delta$, $h(\theta^1, \theta^2; \alpha)$ extends to a continuous function of $\alpha \in (1, \infty)$ such that
\begin{align}\label{halphaevenodd}
h(\theta^1, \theta^2; n) 
& = h_n \sin^{\alpha - 1}\theta^1
\times \begin{cases}
\im\big[e^{(n-1)i\theta^2} \big(F_2 + i(\theta^2 - 2\pi 1_{\{\theta^2 \geq \frac{\pi}{2}\}}) F_1\big)\big], & n = 2,4, \dots,
	\\ 
\im\big[e^{(n-1)i\theta^2} F_1\big], & n = 3,5, \dots,
   \end{cases}
\end{align}
where the constant $h_n \in \R$ is defined by
\begin{align}\label{hndef}
h_n = \begin{cases} -\frac{i^n \Gamma(\frac{n}{2}) \Gamma(n)}{2 \pi ^3 \Gamma(\frac{3n}{2}-1)}, \qquad n = 2,4, \dots,
	\\
-\frac{i^{n+1} \Gamma(\frac{n}{2}) \Gamma(n)}{4 \pi^2 \Gamma(\frac{3n}{2}-1)}, \qquad n = 3,5,\dots,	
\end{cases}
\end{align}
and the coefficients $F_j \equiv F_j(\theta^1,\theta^2; n)$, $j = 1,2$, are defined as follows:
Let $w_1$ and $w_2$ be given by (\ref{w1w2def}) and define $f_j \equiv f_j(v,\theta_1, \theta_2; n)$, $j = 0,1$, by
\begin{align*}
& f_0 = v^{n-1} (v-w_1)^{n-1}(v-w_2)^{-\frac{n}{2}} (1-v)^{-\frac{n}{2}},
	\\
& f_1 = v^{n-1} (v-w_1)^{n-1}(v-w_2)^{-\frac{n}{2}} (1-v)^{-\frac{n}{2}}\bigg(\ln v + \ln(v-w_1) - \frac{\ln(v-w_2)}{2} - \frac{\ln(1-v)}{2}\bigg).
\end{align*}
Then
\begin{align}\label{F1def}
F_1 = \begin{cases}
\frac{4\pi^2(-1)^{\frac{n}{2}+1}}{(\frac{n}{2} -1)!} \partial_v^{\frac{n}{2}-1}\big|_{v=1}\big(v^{n-1}(v-w_1)^{n-1}(v-w_2)^{-\frac{n}{2}}\big), & n = 2, 4, \dots,
	\\
2\pi i\int_{L_0^3(\epsilon) - L_0^4(\epsilon)} f_0 dv, & n = 3, 5, \dots.
\end{cases}
\end{align} 
and
\begin{align}\label{F2def}
F_2 = 
-2\pi^2 \int_{L_0^3(\epsilon)} f_0 dv + 2\pi i \int_{|v - 1|=\epsilon} f_1 dv, \qquad n = 2, 4, \dots,
\end{align}
where $\epsilon > 0$ is so small that $w_1, w_2$ lie exterior to the contours and the principal branch is used for all complex powers in the integrals. 
\end{lemma}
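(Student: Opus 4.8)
The plan is to obtain the continuous extension of $h(\theta^1,\theta^2;\alpha)$ to integer values of $\alpha$ by a careful Laurent-type expansion of the representation \eqref{hF} in the parameter $\varepsilon = \alpha - n$, exactly mirroring the argument already used for the fused function $h_f$ in the proof of \eqref{hfalphaevenodd}. First I would start from the Pochhammer integral $F(w_1,w_2)$ in \eqref{Fdefalpha} and decompose the Pochhammer contour $\int_A^{(0+,1+,0-,1-)}$ into the four pieces $L_A^1,\dots,L_A^4$ of \eqref{Lcontoursdef}, picking up the monodromy factors $(1-e^{-2\pi i\alpha})$, $e^{2\pi i(\alpha-1)}(1-e^{-2\pi i\alpha})$, $(e^{2\pi i(\alpha-1)}-1)$, $e^{-2\pi i\alpha}(e^{2\pi i(\alpha-1)}-1)$ on the respective legs, just as in the $h_f$ computation. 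The key point is that each of these four prefactors vanishes to first order at $\alpha = n$ (since $e^{\pm 2\pi i\alpha}\to 1$), so $F(w_1,w_2) = \varepsilon F_1 + \varepsilon^2 F_2 + O(\varepsilon^3)$ as $\alpha\to n$, where $F_1$ collects the $O(\varepsilon)$ contributions of the integrand evaluated at $\alpha = n$ (i.e.\ of $f_0$) weighted by the derivatives of the prefactors, and $F_2$ collects the $O(\varepsilon^2)$ terms, which bring in the logarithmic integrand $f_1$ obtained by differentiating the complex powers in $v$.

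Next I would combine this expansion with the behaviour of the other $\alpha$-dependent factors in \eqref{hF}: the constant $\hat c = \hat c(\alpha)$ has a zero of order one at odd integers and order two at even integers (this is visible from \eqref{hatcdef}, where $\sin(\pi\alpha)$ and $\sin^2(\pi\alpha/2)$ both vanish at integers), while $\sin^{\alpha-1}\theta^1$, $(-e^{i\theta^2})^{\alpha-1}$, and $\sigma(\theta^2) = e^{\mp i\pi\alpha}$ are all smooth in $\alpha$ near $n$. Writing out the leading Taylor/Laurent coefficients of each factor and multiplying everything together, the $\varepsilon^{-2}$ (resp.\ $\varepsilon^{-1}$) pole of $\hat c^{-1}$ is exactly cancelled by the $\varepsilon^2$ (resp.\ $\varepsilon$) vanishing of $F$, so the product has a finite limit as $\varepsilon\to 0$, and I would read off that limit as the right-hand side of \eqref{halphaevenodd}. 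The precise form of $h_n$ in \eqref{hndef} comes from tracking the leading coefficient of $\hat c^{-1}$ together with the numerical constants $2^{\alpha+1}$, $\sin(\pi\alpha/2)$, and $1/(4\pi\sin\pi\alpha)$ type factors that appear; the branch-of-logarithm bookkeeping in $\sigma(\theta^2)$ produces the indicator $1_{\{\theta^2\ge\pi/2\}}$ and the term $i(\theta^2 - 2\pi 1_{\{\theta^2\ge\pi/2\}})$ in the even case. For the explicit forms of $F_1$ and $F_2$ in \eqref{F1def}–\eqref{F2def} I would collapse the contours $L_A^j(\epsilon)$ onto small circles around the branch points: near $v=1$ the integrand $f_0$ has, for even $n$, a pole of order $n/2$, so the residue becomes the displayed $\partial_v^{n/2-1}$ derivative, whereas for odd $n$ the square-root branch point persists and one is left with the genuine contour integral $\int_{L_0^3-L_0^4} f_0\,dv$; the $\epsilon$-independence of the answer follows because $w_1,w_2$ stay exterior.

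The main obstacle, as in the $h_f$ case, is the bookkeeping of branch cuts and monodromy: one must fix consistent branches for $v^{\alpha-1}$, $(v-w_1)^{\alpha-1}$, $(v-w_2)^{-\alpha/2}$, $(1-v)^{-\alpha/2}$ along each leg $L_A^j$, verify that the four monodromy prefactors are correct, and then check that, after differentiation in $\varepsilon$, the surviving $O(\varepsilon)$ and $O(\varepsilon^2)$ integrands are precisely $f_0$ and $f_1$ with no missing boundary terms from the basepoint $A$ or from the branch point at $v=0$ (where the integrand is actually analytic for integer $n$, which is what lets the $v=0$ loops $L_0^1,L_0^2$ be dropped in \eqref{F1def}). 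A secondary technical point is to confirm the orders of vanishing of $\hat c$ at even versus odd integers and to match the resulting constant against \eqref{hndef} — this is a finite computation with Gamma-function reflection and duplication formulae but needs care with signs and powers of $\pi$. Once these are in place, equations \eqref{halphaevenodd}–\eqref{F2def} follow by assembling the expansions, and continuity of $h$ in $\alpha$ on all of $(1,\infty)$ follows since the representation \eqref{hF} is manifestly analytic in $\alpha$ away from the integers and has the finite limits just computed at each integer.
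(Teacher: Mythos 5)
Your overall strategy is the paper's: expand the representation \eqref{hF} in $\varepsilon=\alpha-n$ by splitting the Pochhammer contour in \eqref{Fdefalpha} into the legs $L_A^j$, and cancel the pole of $\hat{c}^{-1}$ against the vanishing of the numerator. However, there are two concrete problems. First, the monodromy prefactors you quote are copied from the fused computation for $Y$, where the exponent at $v=1$ is $-\alpha$; in the present integrand the exponent at $v=1$ is $-\alpha/2$, so a loop around $v=1$ produces $e^{-\pi i\alpha}$, not $e^{-2\pi i\alpha}$. With the correct factors one gets combinations such as $(1-e^{-\pi i\alpha})(e^{2\pi i\alpha}-1)$ and $(e^{\pi i\alpha}-e^{-\pi i\alpha})$, and it is \emph{not} true that all four prefactors vanish at $\alpha=n$: at odd $n$ the factor $1-e^{-\pi i n}=2$. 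This parity structure is exactly what produces the different shapes of $F_1$ in \eqref{F1def} (a residue/derivative at $v=1$ for even $n$, versus the genuine contour integral over $L_0^3-L_0^4$, which absorbs the surviving $\int_0^A$ piece, for odd $n$); with your prefactors the expansion and the resulting $F_1,F_2$ would come out wrong.

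Second, and more seriously, your stated cancellation mechanism is incorrect at even integers: $F(w_1,w_2)$ vanishes only to \emph{first} order at every integer ($F=\varepsilon F_1+\varepsilon^2F_2+\dots$ with $F_1\neq0$ in general, e.g.\ $F_1=-(2\pi i)^2\frac{1-w_1}{1-w_2}$ at $n=2$), while $\hat{c}^{-1}$ has a double pole at even $n$ by \eqref{hatcdef}. The finite limit in \eqref{halphaevenodd} therefore does not follow from "the $\varepsilon^2$ vanishing of $F$"; it requires showing that the first-order term of the full combination vanishes, i.e.\ that $\im\big[e^{(n-1)i\theta^2}F_1\big]=0$ for even $n$. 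The paper proves this by writing $F_1$ as a small-circle integral around $v=1$, changing variables $v=1+e^{-i\theta^2}w$, and using the conjugation identity \eqref{ccidentity} to see that the resulting integral is purely imaginary. Your proposal omits this step entirely, so continuity of $h(\theta^1,\theta^2;\alpha)$ at even integers (in particular the cases $\kappa=4$ and $\kappa=2$) and the even-$n$ formula involving $F_2+i(\theta^2-2\pi 1_{\{\theta^2\ge\pi/2\}})F_1$ are not established as written. The remaining ingredients you describe (the expansion of $\hat{c}^{-1}$, $\sigma(\theta^2)$, $(-e^{i\theta^2})^{\alpha-1}$, the matching of $h_n$ in \eqref{hndef}, and the origin of the indicator $1_{\{\theta^2\ge\pi/2\}}$) are in line with the paper.
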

\begin{proof}
Let $n \geq 2$ be an integer. We first show that the function $F$ defined in (\ref{Fdefalpha}) admits the expansion
\begin{align}\label{Falphan}
F(\theta^1,\theta^2; \alpha) = (\alpha - n)F_1 + (\alpha - n)^2F_2 + O((\alpha - n)^3), \qquad \alpha \to n,
\end{align}
where $F_1$ and $F_2$ are given by (\ref{F1def}) and (\ref{F2def}). 
Define $f(v) \equiv f(v,w_1,w_2, \alpha)$ by
$$f(v) = v^{\alpha -1}(v-w_1)^{\alpha -1} (v-w_2)^{-\frac{\alpha}{2}} (1-v)^{-\frac{\alpha}{2}}.$$
Let $\epsilon > 0$ be small and fix $A \in (\epsilon,1-\epsilon)$.
Then we can rewrite (\ref{Fdefalpha}) as
\begin{align*}
F(&w_1, w_2)  = \bigg(\int_{L_A^1 - L_A^3} 
+ e^{2\pi i \alpha} \int_{L_A^2 + L_A^3}
+ e^{\pi i \alpha} \int_{L_A^4 - L_A^2}
+ e^{-\pi i \alpha} \int_{-L_A^1 - L_A^4}\bigg) f(v) dv
	\\
& = (1- e^{-\pi i \alpha})\bigg(\int_{L_A^1} + e^{2\pi i\alpha}\int_{L_A^2}\bigg) f(v) dv
+ (e^{\pi i \alpha} - e^{-\pi i \alpha})\bigg(e^{\pi i\alpha}\int_{L_A^3} + \int_{L_A^4}\bigg) f(v) dv,
\end{align*}
where the principal branch is used for all complex powers in all integrals. 
Since the integral of $f$ converges at $v = 0$, we can take $\epsilon \to 0$ in the first term on the right-hand side, which gives
\begin{align}\label{Fw1w2exp}
F(w_1, w_2) = (1- e^{-\pi i \alpha})(e^{2\pi i\alpha} - 1) \int_0^A f(v) dv
+ (e^{\pi i \alpha} - e^{-\pi i \alpha})\bigg(e^{\pi i\alpha}\int_{L_A^3} + \int_{L_A^4}\bigg) f(v) dv.
\end{align}
As $\alpha \to n$, we have
\begin{align*}
(1-e^{-\pi i\alpha})(e^{2\pi i\alpha} -1) 
& = 2 i \pi  (-1)^n \left((-1)^n-1\right) (\alpha -n)
- 2\pi^2 (\alpha - n)^2 + O((\alpha-n)^3),
	\\
e^{\pi i\alpha} - e^{-\pi i\alpha} 
& = 2i(-1)^n\pi(\alpha -n) + O((\alpha-n)^3),
	\\
(e^{\pi i\alpha} - e^{-\pi i\alpha}) e^{\pi i\alpha} 
& = 2 i \pi  (\alpha -n) -2 \pi ^2 (\alpha -n)^2 + O((\alpha-n)^3),
\end{align*}
and
\begin{align*}
f(v) = e^{(\alpha -1)\ln v} e^{(\alpha -1)\ln(v-w_1)} 
e^{-\frac{\alpha}{2} \ln(v-w_2)}
e^{-\frac{\alpha}{2} \ln(1-v)} 
= f_0 + (\alpha - n)f_1 + O((\alpha - n)^2).
\end{align*}
Substituting these expansions into (\ref{Fw1w2exp}), we obtain the expansion (\ref{Falphan}) with $F_1$ and $F_2$ given for $n \geq 2$ by
\begin{align*}
F_1 & = 2\pi i(1 - (-1)^n) \int_0^A f_0dv + 2\pi i \bigg(\int_{L_A^3} + (-1)^n \int_{L_A^4}\bigg) f_0 dv,
\end{align*}
and
$$F_2 = 2\pi i(1 - (-1)^n)\int_0^A f_1 dv 
-2\pi^2\bigg(\int_0^A  + \int_{L_A^3}\bigg)f_0 dv 
+ 2\pi i \bigg(\int_{L_A^3} + (-1)^n \int_{L_A^4} \bigg) f_1 dv.$$
The expression (\ref{F1def}) for $F_1$ follows immediately if $n$ is odd. 
If $n$ is even, then $f_0$ has a pole of order $n/2$ at $v = 1$. Thus, choosing $A = 1-\epsilon$ and using the residue theorem, we find
\begin{align}\nonumber
F_1(w_1, w_2) & = 2\pi i \int_{|v-1|=\epsilon} f_0 dv
	\\ \label{F1even}
& = (2\pi i)^2 \underset{v = 1}{\res} \frac{v^{n-1} (v-w_1)^{n-1}(v-w_2)^{-\frac{n}{2}}}{(1-v)^{\frac{n}{2}}}, \qquad n = 2, 4, \dots,
\end{align}
which yields the expression (\ref{F1def}) for $F_1$ also for even $n$. 
Finally, letting $A = 0$, we find the expression (\ref{F2def}) for $F_2$ for $n$ even. This completes the proof of (\ref{Falphan}).

We next claim that, as $\alpha \to n$,
\begin{align}\nonumber
& \im\big[\sigma(\theta^2) (-e^{i\theta^2})^{\alpha-1} F(\theta^1, \theta^2; \alpha)\big]
	\\ \label{imFexpansion}
& = \begin{cases} 
-\im\big[e^{(n-1)i\theta^2} \big(F_2 + i(\theta^2 - 2\pi 1_{\{\theta^2 \geq \frac{\pi}{2}\}}) F_1\big)\big] 
 (\alpha - n)^2 + O((\alpha - n)^3), & n = 2,4, \dots,
	\vspace{.1cm} \\
-\im\big[e^{(n-1)i\theta^2} F_1\big] (\alpha - n) + O((\alpha - n)^2), & n = 3,5,\dots.
\end{cases}
\end{align}
Indeed, the expansion (\ref{Falphan}) yields
\begin{align*}
\im\big[&\sigma(\theta^2) (-e^{i\theta^2})^{\alpha-1} F(\theta^1, \theta^2; \alpha)\big]
= -\im\Big[(1 \mp i\pi (\alpha - n) + \cdots) 
e^{(n-1)i\theta^2} 
	\\
&\times \big(1 + \ln(-e^{i\theta^2})(\alpha - n) + \cdots\big) 
\big(F_1(\alpha - n) + F_2 (\alpha - n)^2 + \cdots\big)\Big]
	\\
=& -\im\big[e^{(n-1)i\theta^2} F_1\big] (\alpha - n)
 -\im\big[e^{(n-1)i\theta^2} \big(F_2 \mp i\pi F_1+ i(\theta^2 - \pi) F_1\big)\big] 
 (\alpha - n)^2
	\\
& + O((\alpha - n)^3).	
\end{align*}
where the upper (lower) sign applies for $\theta^2 \geq \pi/2$ ($\theta^2 < \pi/2$) and we have used that $\ln(-e^{i\theta^2}) = i(\theta^2 - \pi)$ in the last step.
Equation (\ref{imFexpansion}) therefore follows if we can show that 
\begin{align}\label{imF1zero}
\im\Big[e^{(n-1)i\theta^2} F_1\Big] = 0, \qquad n = 2, 4, 6, \dots.
\end{align}

Let $n \geq 2$ be even and define $g(w)$ by 
$$g(w) = 
(1 + e^{-i\theta^2} w)^{n-1}(1 + e^{i\theta^2} w)^{n-1}
(w + \cos\theta^2 - \cot\theta^1 \sin\theta^2)^{-\frac{n}{2}} w^{-\frac{n}{2}}.$$
Then, by (\ref{F1even}),
\begin{align*}
\im\Big[e^{(n-1)i\theta^2} F_1\Big] 
 = &\; \im\Big[e^{(n-1)i\theta^2} 2\pi i \int_{|v-1|=\epsilon} f_0 dv \Big]
	\\
 = &\; \im\Big[e^{(n-2)i\theta^2} 2\pi i \int_{|w|=\epsilon} 
(1 + e^{-i\theta^2} w)^{n-1} (e^{-i\theta^2} w + e^{-2i\theta^2})^{n-1}
	\\
&\times (e^{-i\theta^2}(w + \cos\theta^2 - \cot\theta^1 \sin\theta^2))^{-\frac{n}{2}} (- e^{-i\theta^2} w)^{-\frac{n}{2}} dw \Big]
	\\
=&\; (-1)^{-n/2} \im\Big[2\pi i \int_{|w-1|=\epsilon} g(w) dw \Big]
\end{align*}
where we have used the change of variables $v = 1 + e^{-i\theta^2} w$ and the definitions (\ref{w1w2def}) of $w_1$ and $w_2$ in the second equality.
Since $g(w) = \overline{g(\bar{w})}$, the identity
\begin{align}\label{ccidentity}
\overline{\int_\gamma g(w) dw} = \int_{\bar{\gamma}} \overline{g(\bar{v})} dv,
\end{align}
which is valid for a sufficiently smooth contour $\gamma \subset \C$, implies that $\int_{|w-1|=\epsilon} g(w) dw$ is purely imaginary. 
This proves (\ref{imF1zero}) and hence also (\ref{imFexpansion}).

For each integer $n \geq 2$, we have the following asymptotic behavior of $\hat{c}^{-1}$ as $\alpha \to n$:
\begin{align}\label{hatcexpansion}
\frac{1}{\hat{c}} = \begin{cases}
-\frac{h_n}{(\alpha - n)^2} + O\big(\frac{1}{\alpha - n}\big), & \text{$n$ even},
	  \\ 
- \frac{h_n}{\alpha - n} + O(1), & \text{$n$ odd},
\end{cases}
\quad n = 2,3,4, \dots.
\end{align}
Substituting (\ref{imFexpansion}) and (\ref{hatcexpansion}) into (\ref{hF}), we find (\ref{halphaevenodd}).
\end{proof}

By taking the limit as $\epsilon$ approaches zero in the integrals in (\ref{F1def}) and (\ref{F2def}), it is possible to derive explicit expressions for $F_1$ and $F_2$, and hence also for the function $h$. This leads to a proof of the explicit expressions for the SLE$_\kappa(2)$ Green's function given in Proposition \ref{hexplicit234prop}.

\begin{proof}[Proof of Proposition \ref{hexplicit234prop}]
We give the proof for $\kappa = 4$. The proofs for $\kappa = 8/3$ and $\kappa = 2$ are similar. 
Let $n = 2$. As $\epsilon$ goes to zero, we have
\begin{align*}
\int_{L_0^3(\epsilon)} f_0 dv
& = \int_{L_0^3(\epsilon)} \frac{v (v-w_1)}{(v-w_2)(1-v)} dv 
	\\
& = \frac{w_2 (w_1-w_2) \ln (v-w_2)-(w_1-1) \ln (1-v)+v (1-w_2)}{w_2-1}\bigg|_{v = 0}^{1+\epsilon - i0}
%   	\\
%& = \frac{w_2 (w_1-w_2) \ln (-w_2+\epsilon +1)-(w_1-1) (\ln(\epsilon )+i \pi )+(1-w_2) (\epsilon +1)}{w_2-1} - \frac{w_2 (w_1-w_2) \ln (-w_2)}{w_2-1}
 	\\
& = \frac{-(w_1-1) \ln\epsilon}{w_2-1} 
+ J_1(w_1, w_2) + O(\epsilon),
\end{align*}
where the order one term $J(w_1, w_2)$ is given by
$$J_1(w_1, w_2) = \frac{w_2 (w_1-w_2) (\ln (1-w_2)-\ln (-w_2))-i \pi (w_1-1)-w_2+1}{w_2-1}.$$
On the other hand, since the function 
$$\ln v + \ln(v-w_1) - \frac{\ln(v-w_2)}{2}$$
is analytic at $v = 1$, the residue theorem gives
\begin{align*}
\int_{|v - 1|=\epsilon} f_1 dv
= &\; \int_{|v - 1|=\epsilon} \frac{v (v-w_1)}{(v-w_2)(1-v)}\bigg(\ln v + \ln(v-w_1) - \frac{\ln(v-w_2)}{2} - \frac{\ln(1-v)}{2}\bigg) dv
%	\\
%= & - 2\pi i \frac{1-w_1}{1-w_2}\bigg(\ln 1 + \ln(1-w_1) - \frac{\ln(1-w_2)}{2}\bigg)
%	\\
%& - \frac{1}{2}\int_{|v - 1|=\epsilon} \frac{v (v-w_1)}{(v-w_2)(1-v)} \ln(1-v) dv
	\\
= & - 2\pi i \frac{1-w_1}{1-w_2}\bigg(\ln 1 + \ln(1-w_1) - \frac{\ln(1-w_2)}{2}\bigg)
	\\
& - \frac{1}{2}\int_0^{2\pi} \frac{(1 + \epsilon e^{i\varphi}) (1 + \epsilon e^{i\varphi} -w_1)}{(1 + \epsilon e^{i\varphi}-w_2)(-\epsilon e^{i\varphi})} \ln(-\epsilon e^{i\varphi}) i \epsilon e^{i\varphi} d\varphi
	\\
= & - 2\pi i \frac{1-w_1}{1-w_2}\bigg(\ln(1-w_1) - \frac{\ln(1-w_2)}{2}\bigg)
	\\
& + \frac{i }{2}\int_0^{2\pi} \frac{1-w_1}{1-w_2} (\ln \epsilon + i(\varphi - \pi)) d\varphi + O(\epsilon \ln \epsilon)
	\\
= &\;  i\pi \frac{1-w_1}{1-w_2} \ln \epsilon + J_2(w_1, w_2) + O(\epsilon \ln \epsilon),
\end{align*}
where the order one term $J_2(w_1, w_2)$ is given by
$$J_2(w_1, w_2) = - 2\pi i \frac{1-w_1}{1-w_2}\bigg(\ln(1-w_1) - \frac{\ln(1-w_2)}{2}\bigg).$$
Hence, since the singular terms of $O(\ln \epsilon)$ cancel,
\begin{align}\nonumber
F_2 = &\; 
2\pi i \lim_{\epsilon \to 0}\bigg(\pi i \int_{L_0^3} f_0(v) dv + \int_{|v - 1|=\epsilon} f_1(v) dv\bigg)
	\\\label{F2alpha2}
= & \; 2\pi i \big(\pi i J_1(w_1, w_2) + J_2(w_1, w_2)\big), \qquad n = 2.
%	\\
%= & \; \frac{2 \pi ^2 \left(\left(-w_1 (w_2+1)+w_2^2+1\right) \ln(1-w_2)+w_2 (w_1-w_2) \ln (-w_2)+i \pi  (w_1-1)+2(w_1-1) \ln (1-w_1)+w_2-1\right)}{w_2-1}
\end{align}
On the other hand,
\begin{align}\label{F1alpha2}
F_1 & = (2\pi i)^2 \underset{v = 1}{\res} \frac{v (v-w_1)}{(v-w_2)(1-v)} 
= - (2\pi i)^2 \frac{1-w_1}{1-w_2}
= \frac{4\pi^2 e^{-i\theta^2} \sin\theta^1}{\sin(\theta^1 - \theta^2)}, \qquad n = 2.
\end{align}

The terms $J_1$ and $J_2$ involve the logarithms $\ln(1-w_1)$, $\ln(1-w_2)$, and $\ln (-w_2)$. The expressions (\ref{w1w2def}) for $w_1$ and $w_2$ imply that (recall that principal branches are used for all logarithms)
\begin{align}\nonumber
& \ln(1-w_1) = \ln\big(e^{-2i\theta^2}\big) = 2 i \big(\pi 1_{\{\theta^2 \geq \frac{\pi}{2}\}} - \theta^2\big),
	\\\nonumber
& \ln(1-w_2) = \ln\bigg(-e^{-i\theta^2} \frac{\sin(\theta^2 -\theta^1)}{\sin\theta^1}\bigg) 
= \ln\bigg|\frac{\sin(\theta^2 -\theta^1)}{\sin\theta^1}\bigg| + i(\pi - \theta^2),
	\\\label{w1w2logs}
& \ln(-w_2) = \ln\bigg(-e^{-i(\theta^2 - \theta^1)} \frac{\sin \theta^2}{\sin\theta^1}\bigg) 
= \ln\bigg|\frac{\sin \theta^2}{\sin\theta^1}\bigg| + i(\pi + \theta^1 - \theta^2),
\end{align}
for all $(\theta^1, \theta^2) \in \Delta$.
For $n = 2$, equation (\ref{halphaevenodd}) gives 
\begin{align*}
h(\theta^1, \theta^2; 2)  = &\;
\frac{\sin \theta^1}{2\pi^3}\im\Big[e^{i\theta^2} \big(F_2 + i (\theta^2 - 2\pi 1_{\{\theta^2 \geq \frac{\pi}{2}\}}) F_1\big)\Big]. 
\end{align*}
Substituting the expressions (\ref{F2alpha2}) and (\ref{F1alpha2}) into this formula and using (\ref{w1w2logs}), equation (\ref{halpha2}) follows after simplification.
\end{proof}

\bibliographystyle{plain}
\bibliography{is}

\end{document}